\DeclareMathOperator{\Coh}{Coh}
\DeclareMathOperator{\id}{id}
\DeclareMathOperator{\pt}{pt}
\DeclareMathOperator{\End}{End}
\DeclareMathOperator{\aff}{aff}
\DeclareMathOperator{\Hom}{Hom}
\DeclareMathOperator{\Irr}{Irr}
\DeclareMathOperator{\Ind}{Ind}
\DeclareMathOperator{\Perv}{Perv}
\DeclareMathOperator{\perf}{perf}
\DeclareMathOperator{\Spec}{Spec}
\title[Geometric realizations of affine Hecke algebras]{Geometric realizations of affine Hecke algebras with unequal parameters}
\author{Jonas Antor}
\address{University of Bonn, Endenicher Allee 60, 53115 Bonn, Germany}
\email{antor@math.uni-bonn.de}
\newtheorem{theorem}{Theorem}[section]
\newtheorem{lemma}[theorem]{Lemma}
\newtheorem{remark}[theorem]{Remark}
\newtheorem{definition}[theorem]{Definition}
\newtheorem{corollary}[theorem]{Corollary}
\newtheorem{proposition}[theorem]{Proposition}
\newtheorem{example}[theorem]{Example}
\newtheorem{mainthm}{Theorem}
\newtheorem*{theorem*}{Theorem}
\begin{document}
\begin{abstract}
We give a $K$-theoretic realization of all affine Hecke algebras with two unequal parameters including exceptional types. This extends the celebrated work of Kazhdan and Lusztig, who gave a $K$-theoretic realization of affine Hecke algebras with equal parameters, and complements results of Kato, who extended this construction to the three-parameter affine Hecke algebra of type $C$. A key idea behind our new construction is to exploit the reducibility of the adjoint representation in small characteristic. We also show that under suitable geometric conditions, our construction leads to a Deligne-Langlands style classification of simple modules. We verify these geometric conditions for $G_2$ thereby obtaining a full geometric classification of the simple modules for the affine Hecke algebra of $G_2$ with two parameters away from roots of unities.
\end{abstract}
\maketitle
\setcounter{tocdepth}{1}
\tableofcontents
\section{Introduction}
A famous theorem of Kazhdan and Lusztig \cite{kazhdan1987proof} states that affine Hecke algebras with equal parameters can be realized geometrically via the equivariant algebraic $K$-theory of the Steinberg variety (see also \cite{chriss2009representation}). This was used to prove the Deligne-Langlands conjecture which gives a geometric parameterization of the simple modules of these algebras away from roots of unities. In later work, Lusztig also gave a geometric construction of certain graded Hecke algebras via equivariant Borel-Moore homology \cite{lusztig1988cuspidal,lusztig1995cuspidal}, leading to a Langlands classification of unipotent representations of $p$-adic groups \cite{lusztig1995classification,lusztig2002classification}. This construction allows some unequal parameters but only those which are certain specific integral powers of a single parameter. Thus, it cannot be used to study affine Hecke algebras with arbitrary unequal parameters.

In type $C$, this limitation was overcome by Kato who gave a $K$-theoretic construction of the corresponding affine Hecke algebra with three arbitrary unequal parameters \cite{kato2009exotic}. This also yields a Deligne-Langlands style classification of simple modules under some mild assumptions on the parameters. The affine Hecke algebra of type $B$ with two parameters can also be studied with these methods since it can be realized as a quotient of the type $C$ affine Hecke algebra. However, Kato's construction doesn't allow to study affine Hecke algebras with unequal parameters of exceptional type.

In this paper, we give a uniform $K$-theoretic construction of all simple affine Hecke algebras with two unequal parameters including exceptional types. Note that, except for type $C$, affine Hecke algebras of simple type admit at most two parameters, so we cover almost all cases of unequal parameter Hecke algebras. Our construction also extends beyond simple types with possibly more parameters, and it recovers the equal parameter construction as a special case. However, for simplicity, we will focus on the simple case with two parameters in the introduction.

A key idea behind our new construction is to exploit certain small characteristic phenomena, in particular the reducibility of the adjoint representation. Kato already observed in \cite{kato2011deformations} that the geometry underlying his construction of unequal parameter Hecke algebras in type $C$ is closely related to certain characteristic $2$ phenomena, though this wasn't used to study affine Hecke algebras outside of classical types. 

Let $G$ be a simple algebraic group of non-simply laced type defined over an algebraically closed field $k$ with root system $\Phi$. Recall that the characteristic $p$ of $k$ is called special for $G$ (c.f. \cite{steinberg1963representations}) if it is equal to the ratio of the lengths of the long and short roots of $G$, i.e. if
\begin{itemize}
    \item $p =2 $ when $G$ is of type $B_n, C_n$ or $F_4$,
    \item $p=3$ when $G = G_2$.
\end{itemize}
Let $\mathfrak{g}$ be the adjoint representation of $G$. Then there is the following elementary but important observation of Hogeweij and Hiss.
\begin{theorem*}\cite{hogeweij1982almost,hiss1984adjungierten}
    If $p$ is special for $G$, then there is a $G$-stable subspace $\mathfrak{g}_s \subset \mathfrak{g}$ whose non-zero weights are the short roots of $G$.
\end{theorem*}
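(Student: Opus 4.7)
The plan is to build $\mathfrak{g}_s$ explicitly from a Chevalley basis and then reduce $G$-stability to some standard length and Cartan-integer computations in the root system. Fix a maximal torus $T \subset G$, write $\Phi = \Phi_s \sqcup \Phi_l$ for the decomposition into short and long roots, and choose a Chevalley basis $\{h_\alpha\}_{\alpha \in \Phi} \cup \{e_\alpha\}_{\alpha \in \Phi}$ with $h_\alpha = [e_\alpha, e_{-\alpha}]$. Set
\[
\mathfrak{h}_s := \{h \in \mathfrak{h} : \gamma(h) = 0 \text{ for all } \gamma \in \Phi_l\}, \qquad \mathfrak{g}_s := \mathfrak{h}_s \oplus \bigoplus_{\alpha \in \Phi_s} k \cdot e_\alpha.
\]
This is manifestly $T$-stable with non-zero weights exactly $\Phi_s$, so it remains to establish $G$-stability.

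The main step is to check that $\mathfrak{g}_s$ is an ideal of $\mathfrak{g}$. The brackets $[h, e_\alpha] = \alpha(h) e_\alpha$ lie in $\mathfrak{g}_s$ when $\alpha \in \Phi_s$ and vanish when $\alpha \in \Phi_l$ and $h \in \mathfrak{h}_s$. The remaining brackets $[e_\alpha, e_\beta]$ with $\beta \in \Phi_s$ split into (i) $\alpha = -\beta$, giving $h_\beta$, which must lie in $\mathfrak{h}_s$; (ii) $\alpha + \beta \in \Phi$, giving $N_{\alpha, \beta} e_{\alpha + \beta}$, which lies in $\mathfrak{g}_s$ if either $\alpha + \beta \in \Phi_s$ or $N_{\alpha, \beta} \equiv 0 \pmod{p}$. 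Since $\alpha + \beta \in \Phi_l$ with $\alpha \in \Phi_l$, $\beta \in \Phi_s$ would force a non-integer Cartan integer $\langle \beta, \alpha^\vee \rangle = -1/p$, only the short $+$ short $=$ long sub-case of (ii) genuinely requires divisibility.

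Both conditions then follow uniformly from the definition of special $p$ as the squared length ratio of long to short roots. For (i), the identity $\langle \gamma, \beta^\vee \rangle = p \cdot \langle \beta, \gamma^\vee \rangle$ (valid whenever $|\gamma|^2 = p |\beta|^2$) shows $\gamma(h_\beta) \equiv 0 \pmod p$, so $h_\beta \in \mathfrak{h}_s$. For (ii), normalize $|\alpha|^2 = |\beta|^2 = 2$; the equation $|\alpha + \beta|^2 = 2p$ gives $(\alpha, \beta) = p - 2$ and $\langle \beta, \alpha^\vee \rangle = p - 2$, while $|\beta + 2\alpha|^2 = 4p + 2$ is never a root length. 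So the $\alpha$-string through $\beta$ has $r = 1$, hence $q = p - 1$, and Chevalley's formula $N_{\alpha, \beta} = \pm(q+1)$ yields $N_{\alpha, \beta} = \pm p \equiv 0 \pmod p$.

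To pass from the ideal property to $G$-stability, I invoke the standard Chevalley integration $\mathrm{Ad}(u_\alpha(t)) = \sum_{k \geq 0} t^k E_{\alpha, k}$ with $E_{\alpha, k} = \tfrac{1}{k!}(\mathrm{ad}\, e_\alpha)^k$, where each $E_{\alpha, k}$ preserves the Chevalley lattice $\mathfrak{g}_\mathbb{Z}$. The ideal property together with the direct-summand property of $(\mathfrak{g}_s)_\mathbb{Z} \subset \mathfrak{g}_\mathbb{Z}$ force $E_{\alpha, k}$ to preserve $(\mathfrak{g}_s)_\mathbb{Z}$, and this descends to base change over $k$. Combined with the obvious $T$-stability and the fact that $G$ is generated by $T$ and the $U_\alpha$, this yields $G$-stability. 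The main obstacle is simply the bookkeeping in the two arithmetic steps --- in particular the observation that $|\beta + 2\alpha|^2 = 4p + 2$ prevents $r \geq 2$ --- but these are short and transparent once one commits to normalizing short roots to have squared length $2$.
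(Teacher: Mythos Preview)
The paper does not prove this theorem; it is quoted as background from \cite{hogeweij1982almost,hiss1984adjungierten}. Your Chevalley-basis approach is in the spirit of Hiss's computation and the ideal verification (cases (i) and (ii)) is clean and correct.

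The gap is in the last paragraph. Your claim that the ideal property together with $(\mathfrak{g}_s)_\mathbb{Z}$ being a direct summand forces $E_{\alpha,k}$ to preserve $(\mathfrak{g}_s)_\mathbb{Z}$ is simply false: in $G_2$ with $\alpha=\alpha_1$ short and $\beta=\alpha_1+\alpha_2$ short, one computes $E_{\alpha,2}(e_\beta)=\tfrac{1}{2}N_{\alpha,\beta}N_{\alpha,\alpha+\beta}\,e_{3\alpha_1+\alpha_2}=\pm 3\,e_{3\alpha_1+\alpha_2}$, which lies in $3\mathfrak{g}_\mathbb{Z}$ but not in $(\mathfrak{g}_s)_\mathbb{Z}$. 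What you actually need is the weaker statement $E_{\alpha,k}((\mathfrak{g}_s)_\mathbb{Z})\subset(\mathfrak{g}_s)_\mathbb{Z}+p\mathfrak{g}_\mathbb{Z}$. Your ``direct summand'' argument does give this: from $(\mathrm{ad}\,e_\alpha)^k((\mathfrak{g}_s)_\mathbb{Z})\subset(\mathfrak{g}_s)_\mathbb{Z}+p\mathfrak{g}_\mathbb{Z}$ and the splitting $\mathfrak{g}_\mathbb{Z}=(\mathfrak{g}_s)_\mathbb{Z}\oplus N$ one deduces $k!\cdot(\text{$N$-component of }E_{\alpha,k}(v))\in pN$, which forces the $N$-component into $pN$ \emph{provided} $p\nmid k!$, i.e.\ $k<p$. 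For $k\ge p$ this inference breaks down, and in $G_2$ one genuinely needs $k=2$ (the case just exhibited).

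The fix is short but uses more than the abstract ideal property. A weight-space check shows that for $\beta$ short the only way $\beta+k\alpha\in\Phi_l$ with $k\ge 2$ is $p=3$, $\alpha$ short, $k=2$, in which case $\beta+\alpha$ is short and $\beta+2\alpha$ long; then $N_{\alpha,\beta+\alpha}=\pm 3$ by your own case~(ii), and the coefficient $\tfrac{1}{2}N_{\alpha,\beta}N_{\alpha,\beta+\alpha}$ is an integer divisible by $3$. With this additional sentence the argument is complete; as written, the integration step does not go through.
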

Let $p$ be special for $G$ and pick $\mathfrak{g}_s$ as above. The $G$-representation
\begin{equation*}
    V := \mathfrak{g}_s \oplus \mathfrak{g}/\mathfrak{g}_s
\end{equation*}
admits a canonical $G \times \mathbb{G}_m \times \mathbb{G}_m$-action where each $\mathbb{G}_m$ acts by scaling one of the summands. Fix a Borel subgroup and a maximal torus $T \subset B \subset G$ and let $\Phi^- \subset \Phi$ be the set of roots that appear in $Lie(B)$. Taking $V^- := \bigoplus_{\alpha \in \Phi^-} V_{\alpha}$ we can define an analogue of the Springer resolution
\begin{align*}
    \tilde{V} &:= G \times^B V^-  \overset{\mu}{\rightarrow}V, \quad (g,v) \mapsto gv.
\end{align*}
We denote the image of this map by $\mathfrak{N}(V)$ and write $\mathcal{B}_x = \mu^{-1}(x)$ for the fiber over $x \in V$. There also is an analogue of the Steinberg variety:
\begin{equation*}
    Z := \tilde{V} \times_V \tilde{V}.
\end{equation*}
Using the convolution formalism from \cite{chriss2009representation}, we can equip $K^{G \times \mathbb{G}_m \times \mathbb{G}_m}(Z)$ with an algebra structure. Let $\mathcal{H}^{\aff}_{q_1, q_2} $ be the (extended) affine Hecke algebra with two parameters (see \cref{def: affine Hecke algebra} and the paragraph after \cref{theorem: geo rel of affine Hecke algebras with unequal parameters}). The following theorem gives a geometric realization of $\mathcal{H}^{\aff}_{q_1, q_2}  $.

\begin{mainthm}(\cref{theorem: geo rel of affine Hecke algebras with unequal parameters})
    There is an isomorphism of algebras
    \begin{equation*}
        \mathcal{H}^{\aff}_{q_1, q_2} \cong K^{G \times \mathbb{G}_m \times \mathbb{G}_m}(Z).
    \end{equation*}
\end{mainthm}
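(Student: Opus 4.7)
The plan is to follow the Kazhdan--Lusztig and Chriss--Ginzburg strategy, with the map $\mu: \tilde V \to V$ playing the role of the Springer resolution and $Z$ that of the Steinberg variety. I would use a Bernstein-type presentation of $\mathcal{H}^{\aff}_{q_1,q_2}$ with generators $T_s$ for simple reflections $s$ and $\theta_x$ for $x \in X^*(T)$, and construct an algebra homomorphism
\[
    \Phi: \mathcal{H}^{\aff}_{q_1,q_2} \longrightarrow K^{G \times \mathbb{G}_m \times \mathbb{G}_m}(Z)
\]
by assigning explicit geometric classes to these generators. The crucial design feature is that the two $\mathbb{G}_m$ factors scale $\mathfrak{g}_s$ and $\mathfrak{g}/\mathfrak{g}_s$ separately, so by the Hogeweij--Hiss theorem their equivariant parameters couple respectively to short and to long roots, and should match the Hecke parameters $q_1, q_2$ attached to short and long simple reflections.

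For the lattice part, the diagonal $\tilde V \hookrightarrow Z$ is closed and $\tilde V = G \times^B V^-$ is a $G$-equivariant vector bundle over $G/B$, so pushforward followed by the standard Thom and induction identifications yields
\[
    K^{G \times \mathbb{G}_m^2}(\tilde V) \cong K^{B \times \mathbb{G}_m^2}(V^-) \cong K^{T \times \mathbb{G}_m^2}(\pt) \cong \mathbb{Z}[q_1^{\pm 1}, q_2^{\pm 1}][X^*(T)],
\]
producing the $\theta_x$. For each simple reflection $s = s_\alpha$ I would take $T_s$ to be (a suitably shifted version of) $[\mathcal{O}_{Z_\alpha}]$, where $Z_\alpha := \tilde V_\alpha \times_V \tilde V_\alpha$ and $\tilde V_\alpha := P_\alpha \times^B V^-$ for the minimal parabolic $P_\alpha \supset B$ associated to $\alpha$.

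The essential verification is then the quadratic relation $(T_s - q_s)(T_s + 1) = 0$ with the correct $q_s \in \{q_1, q_2\}$. This is a rank-one computation over $P_\alpha/B \cong \mathbb{P}^1$ whose outcome is controlled by the $\mathbb{G}_m^2$-weight of the root line $V_\alpha \subset V^-$; by Hogeweij--Hiss this weight is $q_1$ if $\alpha$ is short and $q_2$ if $\alpha$ is long, reproducing exactly the two unequal Hecke parameters. The braid relations should follow from rank-two calculations over $P_{\alpha,\beta}/B$, and bijectivity of $\Phi$ from the cellular fibration structure on $Z$ coming from the Bruhat stratification of $G/B$, which will exhibit $K^{G \times \mathbb{G}_m^2}(Z)$ as free of the correct rank over $K^{G \times \mathbb{G}_m^2}(\pt)$, matching the Bernstein basis of $\mathcal{H}^{\aff}_{q_1,q_2}$. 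The step I expect to be hardest is the Bernstein--Lusztig commutation
\[
    T_s\, \theta_x - \theta_{s(x)}\, T_s = (q_s - 1) \frac{\theta_x - \theta_{s(x)}}{1 - \theta_{-\alpha^\vee}},
\]
since the rational denominator must be interpreted carefully on the geometric side and the unequal parameters block any symmetric shortcut that works in Kazhdan--Lusztig. I would attack it via equivariant localization at the fixed points of a generic cocharacter of $T \times \mathbb{G}_m^2$, which reduces the identity to a finite sum over $T$-fixed points of $\tilde V_\alpha \times_V \tilde V_\alpha$ where the $\mathbb{G}_m^2$-weights of $V$ — and hence the way $q_s$ enters — become completely explicit.
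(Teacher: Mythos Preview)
Your approach follows the classical Kazhdan--Lusztig/Chriss--Ginzburg template: build $\Phi:\mathcal{H}^{\aff}\to K^{G\times\mathbb{G}_m^2}(Z)$ on generators and verify the relations in the convolution algebra. The paper takes a different and shorter route that sidesteps the direct verification of the quadratic, braid, and Bernstein--Lusztig relations entirely. Instead, it computes the convolution action of $[\mathcal{O}_{\overline{Z}_{s_\alpha}}]$ on $K^{G\times\mathbb{G}_m^2}(\tilde V)\cong\mathcal{A}[X^*(T)]$ and obtains the Demazure-type operator $e^\lambda\mapsto(1-q_\alpha e^\alpha)\tfrac{e^\lambda-e^{s_\alpha(\lambda)-\alpha}}{1-e^{-\alpha}}$. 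This is then matched with the action of $-(T_{s_\alpha}+q_\alpha e^\alpha)$ on the \emph{antispherical} module $M_{\mathrm{asph}}=\mathcal{H}^{\aff}\otimes_{\mathcal{H}}\mathrm{sgn}$. Since $M_{\mathrm{asph}}$ is faithful, the two images in $\End_{\mathcal{A}}(M_{\mathrm{asph}})$ coincide, yielding a surjection $K^{G\times\mathbb{G}_m^2}(Z)\twoheadrightarrow\mathcal{H}^{\aff}$; both sides are free of rank $|W|$ over the diagonal subalgebra $\mathcal{A}[X^*(T)]$, so the surjection is an isomorphism. The upshot is that the Bernstein--Lusztig relation you flag as hardest never has to be checked geometrically---it is absorbed into a single action computation plus Macdonald's faithfulness result. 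Your route would work but requires more labor; the paper's buys the commutation relation for free.

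One technical correction: your $\tilde V_\alpha=P_\alpha\times^B V^-$ is not $G$-stable, so $[\mathcal{O}_{Z_\alpha}]$ as you define it does not live in $K^{G\times\mathbb{G}_m^2}(Z)$. The correct object is $\overline{Z}_{s_\alpha}$, which the paper identifies with $\Lambda_\alpha=\tilde V_\alpha\times_{\bar V_\alpha}\tilde V_\alpha$ where $\tilde V_\alpha=G\times^B(V^-\cap{}^{s_\alpha}V^-)$ and $\bar V_\alpha=G\times^{P_\alpha}(V^-\cap{}^{s_\alpha}V^-)$; the fiber product is over $\bar V_\alpha$, not over $V$. This is the variety on which the rank-one computation actually takes place.
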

Note that the theorem above does not assume that $G$ is simply connected which is the running assumption in \cite{kazhdan1987proof,chriss2009representation}. However, this assumption will come in when we study the representation theory of $\mathcal{H}^{\aff}_{q_1, q_2}$ which is what we discuss next. 

While our geometry is in positive characteristic, we only consider representations over the complex numbers. To classify the simple $\mathcal{H}^{\aff}_{q_1, q_2}$-modules, we first parameterize the central characters and then study the simple modules one central character at a time. Consider the complex torus $\mathcal{T} := \mathbb{C}^{\times} \otimes_{\mathbb{Z}} X_*(T)$ and set
\begin{align*}
    \hat{\mathcal{T}} := \mathcal{T} \times \mathbb{C}^{\times} \times \mathbb{C}^{\times}.
\end{align*}
The (complexified) center of $\mathcal{H}^{\aff}_{q_1, q_2}$ can be identified with the ring of functions on $\hat{\mathcal{T}}/W$. This yields a bijection
\begin{align*}
   \hat{\mathcal{T}}/W \overset{1:1}&{\longleftrightarrow} \{\text{Central characters } Z(\mathcal{H}^{\aff}_{q_1, q_2}) \rightarrow \mathbb{C} \} \\
   a & \longmapsto \chi_a
\end{align*}
classifying the central characters. In the equal parameter setting, the approach in \cite{kazhdan1987proof}, \cite{chriss2009representation} studies representations of affine Hecke algebras with a fixed central character by considering the $a$-fixed point locus of the Steinberg variety and Springer fibers. Since our geometry is defined in positive characteristic and $a$ lives in the complex torus $\hat{\mathcal{T}}$, we cannot take $a$-fixed points directly. Instead, we consider the characteristic $p$ torus
\begin{equation*}
    \hat{T} := T \times \mathbb{G}_m \times \mathbb{G}_m
\end{equation*}
which has the same character lattice as $\hat{\mathcal{T}}$. We then take fixed points with respect to the diagonalizable subgroup scheme
\begin{equation*}
    \hat{T}_a := Spec(k[X^*(\hat{\mathcal{T}}) / \{ \lambda \in X^*(\hat{\mathcal{T}}) \mid \lambda(a) = 1 \}]) \subset \hat{T}.
\end{equation*}

When $G$ is simply connected, the centralizer $G^{\hat{T}_a}$ is connected and reductive and its root system consists of all roots that vanish on $a$ (see \cref{lemma: explicit descrpition of centralizer in reductive group}, \cref{lemma: simply connected implies connected centralizer}). Note however that the group scheme $\hat{T}_a$ may not be reduced and the centralizer and fixed point loci depend on that non-reduced structure.

For each $x \in \mathfrak{N}(V)^{\hat{T}_a}$, the component group $A(a,x) := G^{\hat{T}_a}_x / (G^{\hat{T}_a}_x)^{\circ}$ naturally acts on $H_*(\mathcal{B}^{\hat{T}_a}_x)$. In analogy with the Deligne-Langlands correspondence, one expects that the irreducible representations of $\mathcal{H}^{\aff}_{q_1, q_2}$ with central character $\chi_a$ should be parameterized by the following set of `Langlands parameters':
\begin{equation*}
    \mathcal{P}(G,V,a) := \{(x,\rho) \mid x \in \mathfrak{N}(V)^{\hat{T}_a}, \rho \in \Irr(A(a,x)),  H_*(\mathcal{B}_x^{\hat{T}_a})_{\rho} \neq 0 \}/G^{\hat{T}_a}.
\end{equation*}
One of our main results shows that there indeed is such a parameterization under certain geometric conditions. For this, we will construct in \eqref{eq: def of Sa} another diagonalizable subgroup scheme $S_a \subset T$ associated to $a$ and consider the geometry of the pair $((G^{S_a})^{\circ}, V^{S_a})$. This is motivated by the reduction theorems in \cite{lusztig1989affine,barbasch1993reduction} which replace the affine Hecke algebra by a smaller affine Hecke algebra which is better behaved with respect to the chosen central character. We then define some geometric conditions for the pair $((G^{S_a})^{\circ}, V^{S_a})$: Condition (A1) states that there are finitely many nilpotent orbits associated to this pair. (A2) essentially says that there is a well-behaved associated Springer correspondence. (A3) states that the odd Borel-Moore homology of the associated Springer fibers vanishes. (A2') is a stronger version of (A2) which states that certain component groups act trivially (see \cref{section: geometric conditions} for the precise statements). Assuming (A1)-(A3), we obtain the following parameterization of simple modules.

\begin{mainthm}(\cref{thm: general DL correspondence})
    Let $a = (s,t_1, t_2) \in \hat{\mathcal{T}}$ and assume that
    \begin{itemize}
        \item $G$ is simply connected,
        \item $\langle t_1, t_2 \rangle \subset \mathbb{C}^{\times}$ is torsion-free,
        \item (A1)-(A3) hold for $((G^{S_a})^{\circ}, V^{S_a})$.
    \end{itemize}
    Then the number of $G^{\hat{T}_a}$-orbits on $\mathfrak{N}(V)^{\hat{T}_a}$ is finite and there is a bijection
    \begin{equation*}
        \left\{ L \in \Irr( \mathcal{H}^{\aff}_{q_1, q_2})  \mid L \text{ has central character } \chi_a  \right\}  \overset{1:1}{\leftrightarrow}  \mathcal{P}(G,V,a) .
    \end{equation*}
    If in addition (A2') holds for $((G^{S_a})^{\circ}, V^{S_a})$, then there is a bijection
    \begin{equation*}
        \left\{ L \in \Irr( \mathcal{H}^{\aff}_{q_1, q_2})  \mid L \text{ has central character } \chi_a  \right\} \overset{1:1}{\leftrightarrow}  \mathfrak{N}(V)^{\hat{T}_a}/G^{\hat{T}_a}.
\end{equation*}
\end{mainthm}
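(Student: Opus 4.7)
The plan is to follow the overall strategy of Kazhdan-Lusztig and Chriss-Ginzburg, adapted to the positive-characteristic setting with non-reduced fixed-point schemes. The proof breaks into three stages: first, reduce the classification at central character $\chi_a$ to a localized $K$-theoretic convolution algebra on the $\hat{T}_a$-fixed locus; second, pass from $K$-theory to equivariant Borel-Moore homology and invoke the conditions (A1)-(A3) for the reduced pair $((G^{S_a})^\circ, V^{S_a})$ to control the geometry; third, extract the simple modules via standard modules and a Ginzburg-type argument.

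For the first stage, I would complete the isomorphism $\mathcal{H}^{\aff}_{q_1,q_2} \cong K^{G \times \mathbb{G}_m \times \mathbb{G}_m}(Z)$ of Theorem A at the maximal ideal of the center cut out by $\chi_a$, and apply a Thomason-type localization theorem to identify this completion with the corresponding completion of $K^{\hat{T}_a}(Z^{\hat{T}_a})$, viewed with its residual $G^{\hat{T}_a}$-action. Since $G$ is simply connected, $G^{\hat{T}_a}$ is connected reductive by the lemmas cited earlier, so the standard convolution formalism applies. The auxiliary subgroup scheme $S_a$ enters as a geometric incarnation of the Lusztig and Barbasch-Moy reduction theorems: first taking $S_a$-fixed points replaces $\chi_a$ by a ``positive real'' central character of a smaller affine Hecke algebra modelled on $((G^{S_a})^\circ, V^{S_a})$, and it is to this reduced pair that the hypotheses (A1)-(A3) apply. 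The torsion-freeness of $\langle t_1, t_2 \rangle$ ensures that no parasitic central characters appear during this reduction.

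For the second stage, after reducing to the $S_a$-geometry I would transport the convolution algebra from $K$-theory to equivariant Borel-Moore homology via the bivariant Riemann-Roch / Chern character arguments of Chriss-Ginzburg Chapter 5. Condition (A1) supplies finiteness of nilpotent orbits in the reduced pair, which, together with the identification of $G^{\hat{T}_a}$-orbits on $\mathfrak{N}(V)^{\hat{T}_a}$ with those arising from the $S_a$-fixed picture, yields the finiteness claim in the theorem. Condition (A2) guarantees a well-behaved Springer correspondence so that the pushforward $\mu_* \underline{\mathbb{C}}[\dim \tilde{V}^{S_a}]$ decomposes into the expected simple perverse summands indexed by pairs $(x,\rho)$, and (A3) ensures the standard modules $H_*(\mathcal{B}_x^{\hat{T}_a})$ are pure and concentrated in even degree, so they behave algebraically like those in the equal-parameter case.

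In the final stage, with the standard modules $M_{x,\rho} := H_*(\mathcal{B}_x^{\hat{T}_a})_\rho$ for $(x,\rho) \in \mathcal{P}(G,V,a)$ at hand, I would apply the general principle of Chriss-Ginzburg Theorem 8.6.12 to conclude that each nonzero $M_{x,\rho}$ has a unique simple quotient $L_{x,\rho}$ and that the family $\{L_{x,\rho}\}$ exhausts the simples with central character $\chi_a$. The stronger hypothesis (A2') forces the $A(a,x)$-action on $H_*(\mathcal{B}_x^{\hat{T}_a})$ to be trivial, so $\rho$ is forced to be the trivial representation and the parameterization collapses to $\mathfrak{N}(V)^{\hat{T}_a}/G^{\hat{T}_a}$. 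The main obstacle I anticipate is the careful bookkeeping in the first stage: the possible non-reducedness of $\hat{T}_a$ and $S_a$ affects the fixed-point loci, centralizers, and the applicability of localization theorems, and one must verify that the $S_a$-reduction really does produce a smaller Hecke-type convolution algebra whose $K$-theory matches the geometry of the reduced pair.
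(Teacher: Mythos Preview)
Your overall architecture---localize at $\chi_a$, pass to the $\hat{T}_a$-fixed geometry, reduce via $S_a$ to a smaller pair where (A1)--(A3) apply, then read off simples from perverse summands---matches the paper's. But there is a real gap in your third stage, and a point of confusion in the first.

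The gap is surjectivity. Chriss--Ginzburg 8.6.12 (together with the stalk computation you implicitly use) gives an \emph{injection} $\Irr(\mathcal{H}^{\aff}_{\chi_a}) \hookrightarrow \mathcal{P}(G,V,a)$; it does not automatically give exhaustion. You must show that every $IC(G^{\hat{T}_a}\cdot x,\rho)$ with $(x,\rho)\in\mathcal{P}(G,V,a)$ actually occurs as a summand of $\textbf{S}^a$. The paper does this in two steps. First, for positive real $a$ (where $\hat{T}_a$ is a genuine torus), (A3) forces $H_*^{\hat{T}_a}(\mathcal{B}_x)$ to be equivariantly formal, giving an isomorphism $H_*(\mathcal{B}_x)\cong H_*(\mathcal{B}_x^{\hat{T}_a})$ of $\mathbb{C}[W\times A(a,x)]$-modules by a deformation argument. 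Combined with the exotic Springer correspondence at $a_e$ (which is a bijection precisely by the numerical condition (A2)), this produces for each $(x,\rho)$ a distinguished $W$-irreducible sitting inside $M(a,x,\rho)$ but absent from any standard module attached to a strictly larger orbit; the simple $\mathcal{H}^{\aff}_{\chi_a}$-module containing it is then forced to land on $(x,\rho)$. Your proposal does not mention this deformation step or how (A2), which is only a statement about $a_e$, is leveraged at other $a$.

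Second, for general $a$ the paper does not argue directly but instead \emph{counts}: it constructs an auxiliary positive real $a'\in\hat{\mathcal{T}}_{\mathbb{R}_{>0}}$ with the same fixed-point geometry on $((G^{S_a})^\circ,V^{S_a})$ as $a$ (this construction uses the torsion-freeness of $\langle t_1,t_2\rangle$ in an essential way), applies the already-proven positive real case to the reduced pair at $a'$, and chains together equalities $\#\Irr(\textbf{S}^a)=\#\Irr(\bar{\textbf{S}}^a)=\#\Irr(\bar{\textbf{S}}^{a'})=\#\mathcal{P}((G^{S_a})^\circ,V^{S_a},a')=\#\mathcal{P}(G,V,a)$. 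Your description of the $S_a$-step (``replaces $\chi_a$ by a positive real central character'') conflates two things: taking $S_a$-fixed points does not change $a$, it only shrinks the ambient group; the positive real $a'$ is a separate object that must be constructed, and showing $\bar{\textbf{S}}^a=\bar{\textbf{S}}^{a'}$ requires checking that the relevant root and weight conditions match.
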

Without the assumption that $\langle t_1, t_2 \rangle$ is torsion-free, this correspondence breaks down in general. In fact, it can happen that $\mathfrak{N}(V)^{\hat{T}_a}$ has infinitely many $G^{\hat{T}_a}$-orbits (c.f. \cref{example: example where fixed point space has infinitely many orbits}) but there are always only finitely many irreducible $\mathcal{H}^{\aff}_{q_1, q_2}$-representations with a given central character. This is in contrast to the characteristic $0$ setting where the fixed-point space always has finitely many orbits (c.f \cite[Prop. 8.1.17]{chriss2009representation}).

For $G=G_2$ we verify by direct computation that (A1), (A2), (A3) and (A2') always hold. This yields the following `exotic Deligne-Langlands correspondence' for $G_2$.
\begin{mainthm}(\cref{theorem: dl correspondence for G2})
    Let $G = G_2$. Then for any $a = (s, t_1, t_2) \in \hat{\mathcal{T}}$ such that $\langle t_1, t_2 \rangle \subset \mathbb{C}^{\times}$ is torsion-free, there is a bijection
    \begin{equation*}
        \left\{ L \in \Irr( \mathcal{H}^{\aff}_{q_1, q_2})  \mid L \text{ has central character } \chi_a  \right\}  \overset{1:1}{\leftrightarrow} \mathfrak{N}(V)^{\hat{T}_a}/G^{\hat{T}_a}.
    \end{equation*}
\end{mainthm}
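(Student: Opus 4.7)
The plan is to deduce Theorem C directly from Theorem B by verifying that the geometric conditions (A1), (A2), (A2'), and (A3) hold for the pair $((G^{S_a})^\circ, V^{S_a})$ for every $a = (s, t_1, t_2) \in \hat{\mathcal{T}}$ with $\langle t_1, t_2\rangle$ torsion-free. Once these conditions are in hand, the second bijection of Theorem B delivers exactly the desired parameterization by $\mathfrak{N}(V)^{\hat{T}_a}/G^{\hat{T}_a}$. The hypothesis that $G_2$ is simply connected is automatic since the center is trivial, so the reduction is clean.

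The first task is to enumerate the pairs $((G^{S_a})^\circ, V^{S_a})$ that can arise. The connected centralizer $(G^{S_a})^\circ$ is a connected reductive subgroup of $G_2$ containing $T$, with root system $\{\alpha \in \Phi(G_2) : \alpha(s) \in \langle t_1, t_2\rangle\}$. The possible sub-root systems of $G_2$ are $\emptyset$, $A_1^{\text{short}}$, $A_1^{\text{long}}$, $A_1\times A_1$, $A_2^{\text{short}}$, $A_2^{\text{long}}$, and $G_2$, yielding a short list of groups. The torsion-free assumption on $\langle t_1, t_2\rangle$ ensures that $V^{S_a}$ is the sum of those $T$-weight subspaces of $V = \mathfrak{g}_s \oplus \mathfrak{g}/\mathfrak{g}_s$ whose characters vanish on $S_a$, so $V^{S_a}$ is intrinsically the analogue of $V$ for the smaller pseudo-Levi (with zero-weight contributions from both summands added in). Thus only finitely many combinatorial shapes must be checked.

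For every pair with $(G^{S_a})^\circ \subsetneq G_2$, the problem reduces to Springer theory on a reductive group of semisimple rank at most $2$. Here (A1)-(A3) and (A2') follow from classical input: nilpotent cones in rank $\leq 2$ have finitely many orbits; the Springer fibers for $SL_2$, $SL_2 \times SL_2$, and $SL_3$ admit explicit descriptions as points, $\mathbb{P}^1$'s, or iterated $\mathbb{P}^1$-bundles, and these are paved by affine cells; the component groups for small rank are trivial so (A2) upgrades to (A2') automatically. When the pseudo-Levi is of type $A_2^{\text{short}}$ the argument uses that $\mathfrak{g}_s$ restricts to the full adjoint representation, so the analysis is identical to the classical $SL_3$ case.

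The remaining and essential case is $(G^{S_a})^\circ = G_2$ with $V^{S_a} = V = \mathfrak{g}_s \oplus \mathfrak{g}/\mathfrak{g}_s$ in characteristic $3$, and this is where the main work lies. I would (a) classify the $G_2$-orbits on the exotic nilpotent cone $\mathfrak{N}(V) = \mu(\tilde V)$ by choosing explicit Chevalley root-vector bases for $\mathfrak{g}_s$ and $\mathfrak{g}/\mathfrak{g}_s$ in characteristic $3$ and reducing vectors to normal form via the $B$-action; (b) compute each Springer fiber $\mathcal{B}_x = \mu^{-1}(x)$ and exhibit a paving by affine spaces using a Bia\l{}ynicki-Birula decomposition induced by a generic cocharacter of $T$, which simultaneously yields finiteness (A1), parity vanishing (A3), and a basis of the top Borel-Moore homology; (c) identify the resulting Springer-type representations of $W(G_2)$, check that all component groups $A(x)$ act trivially, and conclude (A2) and (A2'). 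The main obstacle is step (a)-(c): because the decomposition $V = \mathfrak{g}_s \oplus \mathfrak{g}/\mathfrak{g}_s$ is genuinely a characteristic-$3$ phenomenon for $G_2$ and no off-the-shelf Springer theory applies, the orbit classification and Springer fiber geometry must be built by hand. Kato's characteristic-$2$ exotic Springer theory in type $C$ supplies a useful template and a strong expectation of good behavior, and the small dimensions involved ($\dim V = 14$, $\dim G_2/B = 6$) keep the explicit computation tractable.
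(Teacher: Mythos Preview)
Your proposal is correct and follows essentially the same strategy as the paper: reduce to Theorem~B and verify (A1)--(A3) and (A2') for all possible pairs $((G^{S_a})^\circ, V^{S_a})$, splitting into the case $(G^{S_a})^\circ = G_2$ and the case where the root system is a proper closed subsystem (hence of type $A$). For the $G_2$ case the paper does exactly what you outline---explicit orbit classification via Chevalley relations, explicit Springer fiber computation, and a check that the one nontrivial component group $A(v_{2\alpha+\beta}+v_\beta)\cong S_2$ acts trivially---except that it obtains the affine paving directly from the Bruhat decomposition rather than Bia\l{}ynicki--Birula. For the proper subsystems, the paper packages all type $A$ cases into a single lemma (reducing via isogeny and the ``rooted morphism'' lemma to classical Springer theory for $SL_n$), which is cleaner than your case-by-case list but amounts to the same thing.
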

Note that this correspondence does not involve local systems which gives a cleaner picture than the classical Deligne-Langlands correspondence for $G_2$ (see \cref{example: comparison with classical DL corr}). Moreover, for $a = (e, 1,1) \in \hat{\mathcal{T}}$ the theorem above specializes to an `exotic Springer correspondence' for $G_2$:
\begin{equation*}
    \Irr(W) \overset{1:1}{\leftrightarrow} \mathfrak{N}(V)/G.
\end{equation*}
This is again cleaner than the classical Springer correspondence of $G_2$ in characteristic $0$ which involves local systems. The classical Springer correspondence for $G_2$ in characteristic $3$, on the other hand, also doesn't have non-trivial local systems \cite{xue2017springer}.

We expect that the conditions (A1)-(A3) hold for all reductive groups and a class of representations $V$ which we call `rooted' (see \cref{def: rooted representation}) while condition (A2') only holds in certain cases. The class of rooted representations includes the representations $V = \mathfrak{g}_s \oplus \mathfrak{g}/\mathfrak{g}_s$ considered above as well as the adjoint representation. Note that for the adjoint representation condition (A1) states that there are only finitely many nilpotent orbits (in the classical sense). In good characteristic there is a uniform proof of the fact that there are only finitely many nilpotent orbits \cite{richardson1967conjugacy}, however in bad characteristic, this has only been established using a case by case analysis and explicit computation \cite{stuhler1971unipotente,hesselink1979nilpotency,spaltenstein1983unipotent,spaltenstein1984nilpotent,holt1985nilpotent}. Since our main interest is in special characteristic which is always bad, we expect that finding a uniform proof of (A1)-(A3) for the representations $V = \mathfrak{g}_s \oplus \mathfrak{g}/\mathfrak{g}_s$ is difficult while a proof via case by case analysis might be more manageable. We will investigate conditions (A1)-(A3) in the $F_4$ case in separate work. This case is of particular interest since $G_2$ and $F_4$ are the only exceptional types for which the affine Hecke algebra admits unequal parameters.

This paper is organized is follows. In \cref{section: geometric realization} we prove our geometric realization theorem. In \cref{section: simple modules and perverse sheaves} we deduce from this that the affine Hecke algebra at a central character can be identified with the Ext-algebra of a certain semisimple complex of perverse sheaves (see \cref{thm: affine Hecke algebra at central character is Ext algebra}). In \cref{section: a dl correspodence with unequal parameters} we discuss the geometric conditions (A1)-(A3) and (A2') and show that they imply a Deligne-Langlands classification of simple modules. We first show this for `positive real central characters' by adapting arguments from \cite{kato2009exotic} and \cite{reeder2002isogenies} and then show that the general case can be reduced to the positive real case. In \cref{section: affine Hecke for G2} we verify conditions (A1)-(A3) and (A2') for $G_2$ and deduce the Deligne-Langlands classification of simple modules for $G_2$. In \cref{section: appendix} we summarize the necessary background material on equivariant $K$-theory and Borel-Moore homology.

\subsection*{Acknowledgements}
I am thankful to Dan Ciubotaru, Ian Grojnowski and Kevin McGerty for useful discussions.
\section{A geometric realization}\label{section: geometric realization}
Throughout this paper, let $G$ be a reductive group defined over an algebraically closed field $k$. Fix a maximal torus $T\subset G$ and let $(X^*(T), X_*(T), \Phi, \Phi^{\vee})$ be the corresponding root datum. Fix a system of positive roots $\Phi^+ \subset \Phi$ with simple roots $\Pi \subset \Phi^+$ and denote the set of negative roots by $\Phi^- = - \Phi^+$. Let $B$ be the corresponding (geometric) Borel subgroup, i.e. the Borel subgroup whose Lie algebra has non-zero weights $\Phi^-$. We denote by $\mathcal{B} = \mathcal{B}(G) = G/B$ the flag variety of $G$. Let $W = N_G(T)/T$ be the Weyl group of $G$. For any $w \in W$ we denote by $\dot{w} \in N_G(T)$ a lift of $w$. The root group associated to $\alpha$ will be denoted by $U_{\alpha} \subset G$. The $G$-equivariant $K$-theory (i.e. the Grothendieck group of equivariant coherent sheaves) will be denoted by $K^G(-)$ and the Grothendieck group of equivariant perfect complexes by $K_G(-)$ (see \cref{section: app k theory}).
\subsection{$K$-theory of vector bundles and Steinberg varieties}\label{section: k theory of vector bundles}
For any $B$-module $W$ there is a corresponding $G$-equivariant vector bundle $\pi: G \times^B W \rightarrow \mathcal{B}$. Using the Thom isomorphism and the induction isomorphism \eqref{eq: induction iso} we can identify
\begin{equation}\label{eq: identification of character group algebra with K-theory of vector bundle}
    K^G( G \times^B W) \cong K^G(\mathcal{B}) \cong  K^B(\pt) \cong K^T(\pt) \cong \mathbb{Z}[X^*(T)].
\end{equation}
Under this isomorphism, the element $e^{\lambda} \in \mathbb{Z}[X^*(T)]$ corresponds to $\pi^* [\mathcal{E}_{\lambda}] \in K^G( G \times^B W)$ where $\mathcal{E}_{\lambda}$ denotes the line bundle on $\mathcal{B}$ whose fiber over $B$ has weight $\lambda$. From now on we will identify $\mathbb{Z}[X^*(T)] $ and $K^G(G \times^B W)$ via this isomorphism. We will need the following two standard results about the equivariant $K$-theory of vector bundles.
\begin{lemma}\label{lemma: formula for inclusion of vector bundles}
    Let $V$ be a $B$-module, $V' \subset V$ a $B$-stable subspace and denote by $\iota: G \times^B V' \hookrightarrow G \times^B V$ the inclusion. Then the map
    \begin{equation*}
        K^G(G \times^B V') \overset{\iota_*}{\longrightarrow} K^G(G \times^B V) 
    \end{equation*}
    is given by multiplication with $\lambda^{\vee}(V/V') := \prod_{\lambda \in X^*(T)} (1-e^{-\lambda})^{\dim (V/V')_\lambda}$.
\end{lemma}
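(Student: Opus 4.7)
The plan is to reduce the computation of $\iota_*$ to the computation of a single class $\iota_*(1) \in K^G(G\times^B V)$ via the projection formula, then evaluate that class by a Koszul resolution and the weight filtration of $V/V'$.

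First, I would observe that under the identifications in \eqref{eq: identification of character group algebra with K-theory of vector bundle}, the pullback $\iota^*\colon K^G(G\times^B V)\to K^G(G\times^B V')$ is the identity on $\mathbb{Z}[X^*(T)]$. Indeed, writing $\pi\colon G\times^B V\to\mathcal B$ and $\pi'\colon G\times^B V'\to\mathcal B$ for the bundle projections, we have $\pi\circ\iota=\pi'$, so $\iota^*\pi^*[\mathcal E_\lambda]=(\pi')^*[\mathcal E_\lambda]$, i.e. $\iota^*(e^\lambda)=e^\lambda$. The projection formula then gives $\iota_*(x)=\iota_*(\iota^*x\cdot 1)=x\cdot\iota_*(1)$ for every $x$, so $\iota_*$ is multiplication by $\iota_*(1)=[\iota_*\mathcal O_{G\times^B V'}]$.

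Next, I would compute this class via a Koszul resolution. The inclusion $\iota$ is a regular closed embedding of vector bundles over $\mathcal B$; in fact $G\times^B V'$ is the zero locus of the tautological section of the pulled-back quotient bundle $\pi^*(G\times^B (V/V'))$, and the corresponding section is transverse. Hence the Koszul complex gives
\begin{equation*}
[\iota_*\mathcal O_{G\times^B V'}]=\pi^*\lambda^{-1}\bigl([G\times^B (V/V')^*]\bigr)\in K^G(G\times^B V),
\end{equation*}
where $\lambda^{-1}(\mathcal E)=\sum_j(-1)^j[\Lambda^j\mathcal E]$. Note that assembling the Koszul resolution globally (rather than only locally) uses precisely that $V'\subset V$ is a sub-$B$-module, so the conormal sheaf is globally the pullback of $G\times^B(V/V')^*$.

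Finally, I would evaluate $\lambda^{-1}\bigl([G\times^B (V/V')^*]\bigr)$ in $K^G(\mathcal B)\cong\mathbb Z[X^*(T)]$ by the weight filtration. Since $B$ is solvable, the $B$-module $(V/V')^*$ admits a filtration with one-dimensional subquotients, and the associated graded contributes a copy of $\mathcal E_{-\lambda}$ for each $\lambda\in X^*(T)$ with multiplicity $\dim (V/V')_\lambda$. Since $\lambda^{-1}$ is multiplicative on short exact sequences in $K$-theory and $\lambda^{-1}(\mathcal E_{-\lambda})=1-[\mathcal E_{-\lambda}]=1-e^{-\lambda}$, one obtains
\begin{equation*}
\iota_*(1)=\prod_{\lambda\in X^*(T)}(1-e^{-\lambda})^{\dim(V/V')_\lambda}=\lambda^\vee(V/V'),
\end{equation*}
which is the asserted multiplier. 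The only step requiring a little care is the globalization of the Koszul resolution, but this is immediate from the fact that $\iota$ is a sub-bundle embedding and not merely a regular embedding on the nose.
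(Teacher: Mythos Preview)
Your proof is correct and follows essentially the same strategy as the paper: reduce to computing $\iota_*[\mathcal{O}]$ via the projection formula, then evaluate this class by a Koszul resolution. The only organizational difference is that the paper first passes to the case $G=B=T$ via the induction isomorphism (where $V\cong V'\oplus V/V'$ splits as $T$-modules, making the Koszul computation immediate), whereas you run the Koszul complex globally on $G\times^B V$ and then decompose the quotient bundle through the $B$-stable weight filtration and multiplicativity of $\lambda^{-1}$; both routes arrive at the same product formula.
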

\begin{proof}
    Consider the projections $\pi : G \times^B V \rightarrow \mathcal{B} $ and $ \pi' : G \times^B V'  \rightarrow \mathcal{B}$. Then, for any $\mu \in X^*(T)$, we have
    \begin{equation*}
        \iota_* [(\pi')^* \mathcal{E}_{\mu} ] \overset{functoriality}{=} \iota_* \iota^* \pi^* [\mathcal{E}_{\mu}] \overset{\eqref{eq: projection formula for coherent sheaves and vector bundles}}{=} \iota_*[\mathcal{O}_{G \times^B V'}] \otimes \pi^*[\mathcal{E}_{\mu}].
    \end{equation*}
    Thus, $\iota_*$ acts by multiplication with $\iota_*[\mathcal{O}_{G \times^B V'}]$, so it suffices to show that $\iota_*[\mathcal{O}_{G \times^B V'}] = \lambda^{\vee}(V/V')$. Using the induction isomorphism \eqref{eq: induction iso} and the fact that $K^B(-) \cong K^T(-)$, we may reduce to the case where $G = B = T$. Choosing a splitting $V \cong V' \oplus V/V'$ as $T$-modules, we can view the inclusion $\iota: V' \hookrightarrow V$ as the zero section of the $T$-equivariant vector bundle $p_1: V \rightarrow V'$ with fiber $V/V'$. Let $\mathcal{E}$ be the $T$-equivariant locally free sheaf on $V'$ corresponding to this vector bundle. It now follows from the Koszul resolution
    \begin{equation*}
        ... \rightarrow \Lambda^1 p_1^* \mathcal{E}^{\vee} \rightarrow \Lambda^0 p_1^* \mathcal{E}^{\vee} \rightarrow \iota_* \mathcal{O}_{V'} \rightarrow 0
    \end{equation*}
    that 
\begin{align*}
    \iota_*[ \mathcal{O}_{V'}]&= \sum_i (-1)^i [\Lambda^i p_1^* \mathcal{E}^{\vee}] \\
    &= \sum_i \sum_{\lambda \in X^*(T)} (-1)^i \dim (\Lambda^i(V/V'))_\lambda \cdot e^{-\lambda}\\
    &= \prod_{\lambda \in X^*(T)} (1-e^{-\lambda})^{\dim (V/V')_\lambda} \\
    &= \lambda^{\vee}(V/V').
\end{align*}
\end{proof}
\begin{lemma}\label{lemma: projection yields demazure-like operator}
    Let $\alpha \in \Pi$ be a simple root and let $P_\alpha= B \sqcup B\dot{s}_\alpha B$ be the corresponding parabolic subgroup. Let $V$ be a $P_\alpha$-module and consider the projection $\pi_\alpha : G \times^B V \rightarrow G \times^{P_{\alpha}} V$. Then we have
    \begin{equation*}
        \pi_{\alpha}^* (\pi_{\alpha})_* e^{\lambda} = \tfrac{e^{\lambda} - e^{s_{\alpha}(\lambda) - \alpha}}{1-e^{-\alpha}} 
    \end{equation*}
    in $K^G(G \times^B V)$.
\end{lemma}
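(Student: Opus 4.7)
The plan is to reduce to a standard computation on the flag variety via flat base change, and then evaluate on a $\mathbb{P}^1$-fiber.

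First, the square
\[
\begin{tikzcd}
G\times^B V \arrow[r,"\pi"] \arrow[d,"\pi_\alpha"'] & \mathcal{B} \arrow[d,"p"] \\
G\times^{P_\alpha}V \arrow[r,"\pi'"'] & G/P_\alpha
\end{tikzcd}
\]
with $\pi,\pi'$ the vector bundle projections is Cartesian, since both the upper left corner and $\mathcal{B}\times_{G/P_\alpha}(G\times^{P_\alpha}V)$ are $V$-bundles over $\mathcal{B}$ with the natural map between them an isomorphism on fibers. Since $\pi'$ is flat and $p$ is proper, flat base change gives $(\pi_\alpha)_*\pi^*=\pi'^*p_*$. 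Combined with $e^\lambda=\pi^*[\mathcal{E}_\lambda]$ from \eqref{eq: identification of character group algebra with K-theory of vector bundle}, this yields
\[
\pi_\alpha^*(\pi_\alpha)_*e^\lambda=\pi^*p^*p_*[\mathcal{E}_\lambda].
\]
Since $\pi^*$ \emph{is} the identification \eqref{eq: identification of character group algebra with K-theory of vector bundle}, the problem reduces to the case $V=0$, that is, to proving
\[
p^*p_*[\mathcal{E}_\lambda]=\tfrac{e^\lambda-e^{s_\alpha(\lambda)-\alpha}}{1-e^{-\alpha}}
\]
in $K^G(\mathcal{B})\cong\mathbb{Z}[X^*(T)]$.

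For this remaining flag-variety computation, I would identify $K^G(G/P_\alpha)\cong K^{P_\alpha}(\pt)\cong\mathbb{Z}[X^*(T)]^{s_\alpha}$ via the induction isomorphism, so that $p^*$ becomes the standard inclusion into $\mathbb{Z}[X^*(T)]$ and $p_*[\mathcal{E}_\lambda]$ corresponds to the $P_\alpha$-equivariant Euler characteristic $\chi(P_\alpha/B,\mathcal{E}_\lambda|_{P_\alpha/B})$. On $P_\alpha/B\cong\mathbb{P}^1$ this restriction is the line bundle of degree $n:=\langle\lambda,\alpha^\vee\rangle$ with $T$-fiber weights $\lambda$ at $B$ and $s_\alpha(\lambda)$ at $\dot{s}_\alpha B$, while the tangent weights at these two fixed points are $\alpha$ and $-\alpha$. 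Atiyah--Bott localization on $\mathbb{P}^1$ then gives
\[
\chi(P_\alpha/B,\mathcal{E}_\lambda|_{P_\alpha/B})=\frac{e^\lambda}{1-e^{-\alpha}}+\frac{e^{s_\alpha(\lambda)}}{1-e^\alpha}=\frac{e^\lambda-e^{s_\alpha(\lambda)-\alpha}}{1-e^{-\alpha}}
\]
after simplification; this is manifestly $s_\alpha$-invariant, so it lies in the image of $p^*$ and is its own preimage.

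The main subtlety is making the Atiyah--Bott step precise in the algebraic setting. Alternatively, one may avoid localization altogether by splitting into the cases $n\geq 0$ (where Borel--Weil gives $\chi=e^\lambda+e^{\lambda-\alpha}+\cdots+e^{\lambda-n\alpha}$), $n=-1$ (where $\chi=0$), and $n\leq-2$ (where Serre duality supplies the corresponding $-H^1$ contribution), then summing the resulting geometric series; the four-line identity collapses to the claimed closed form in each case. Either way, the real content of the lemma is the Cartesian-square reduction, which reduces an arbitrary $P_\alpha$-module $V$ to a one-dimensional calculation that is independent of $V$.
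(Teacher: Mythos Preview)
Your proof is correct and structurally identical to the paper's: reduce to $V=0$, then to a rank-$1$ computation on $P_\alpha/B\cong\mathbb{P}^1$. The paper makes the first reduction via the Thom isomorphism (your Cartesian-square/base-change argument is an equivalent, slightly more explicit way of seeing why the squares commute), then passes from $P_\alpha$- to $L_\alpha$-equivariance and simply cites the rank-$1$ Weyl character formula from Jantzen \cite[II.5.10]{jantzen2003representations}, valid in arbitrary characteristic. Your localization and Borel--Weil/Serre-duality alternatives are just two ways of rederiving that formula; the hedge about Atiyah--Bott in the algebraic setting is unnecessary, since Thomason's $K$-theoretic localization (used elsewhere in the paper) handles it, but your direct-cohomology alternative already makes the argument self-contained.
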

\begin{proof}
    Let $L_{\alpha}$ be the standard Levi in $P_{\alpha}$. There is a commutative diagram
    \begin{equation*}
        \begin{tikzcd}
        K^G(G\times^B V) \arrow[d, "\pi_{\alpha}"] \arrow[r, equal]  & K^G(G/B) \arrow[d] \arrow[r, equal]  & K^{P_{\alpha}}(P_{\alpha}/B) \arrow[d] \arrow[r, equal]  & K^{L_{\alpha}}(P_{\alpha}/B) \arrow[d] \\
        K^G(G\times^{P_{\alpha}} V) \arrow[r, equal]       & K^G(G/P_{\alpha}) \arrow[r, equal]     & K^{P_{\alpha}}(\pt )   \arrow[r, equal]     & K^{L_{\alpha}}(\pt )           
        \end{tikzcd}
    \end{equation*}
    where the horizontal identifications are induced by Thom isomorphisms, induction isomorphisms and restriction along $L_{\alpha} \subset P_{\alpha}$. Note that $P_{\alpha}/B$ is the flag variety of $L_{\alpha}$, so the map $K^{L_{\alpha}}(P_{\alpha}/B) \rightarrow K^{L_{\alpha}}(\pt )$ is given by the (rank 1) Weyl character formula $e^{\lambda} \mapsto \tfrac{e^{\lambda} - e^{s_{\alpha}(\lambda) - \alpha}}{1-e^{-\alpha}}$ which holds in arbitrary characteristic (c.f. \cite[II.5.10]{jantzen2003representations}).
\end{proof}
Let $V$ be a $G$-representation and $V^-$ a $B$-stable subspace. Let
\begin{equation*}
    \tilde{V}  := G \times^B V^- \cong \{ (gB,v) \in \mathcal{B} \times V \mid  g^{-1}\cdot v \in V^- \}
\end{equation*}
which comes with a canonical morphism
\begin{equation*}
    \tilde{V} \overset{\mu}{\longrightarrow} V, \quad (gB, v) \mapsto v.
\end{equation*}
The associated `Steinberg variety'
\begin{equation*}
    Z := \tilde{V} \times_V \tilde{V} = \{ ((g_1B,v_1),(g_2B,v_2)) \in \tilde{V} \times \tilde{V} \mid v_1 = v_2 \}
\end{equation*}
comes with a canonical projection map
\begin{equation*}
    \pi \times \pi: Z \rightarrow \mathcal{B} \times \mathcal{B}.
\end{equation*}
\begin{remark}
    The fiber product $Z = \tilde{V} \times_V \tilde{V}$ is taken in the category of varieties here, i.e. $Z$ is the underlying reduced scheme of the scheme theoretic fiber product $\tilde{V} \times^{sch}_V \tilde{V}$. One can also work with $\tilde{V} \times^{sch}_V \tilde{V}$ which may not be reduced. The difference between the two essentially does not matter for our purposes since $K$-theory does not detect the non-reduced structure.
\end{remark}
Note that we can view $Z$ as a closed subvariety of the smooth variety $\tilde{V} \times \tilde{V}$. Then convolution with respect to the ambient space $\tilde{V} \times \tilde{V}$ equips $K^G(Z)$ with the structure of a $K^G(\pt)$-algebra (see \eqref{eq: convolution def}). The inclusion of the diagonal  $ Z_{\Delta} := \tilde{V} \times_{\tilde{V}} \tilde{V} \subset Z$ induces a map $K^G(Z_{\Delta}) \rightarrow  K^G(Z)$ which is an algebra homomorphism by \cref{lemma: convolution along diagonal}. Let
\begin{equation*}
    \mathcal{B} \times \mathcal{B} = \bigsqcup_{w \in W} Y_w
\end{equation*}
be the decomposition into $G-$orbits where $Y_w = G \cdot (eB, \dot{w}B)$ and let
\begin{equation*}
    Z_w :=(\pi \times \pi)^{-1}(Y_w).
\end{equation*}
We write ${}^w V^- := \dot{w} V^-$ and ${}^w B := \dot{w} B \dot{w}^{-1}$.
\begin{lemma}\label{lemma: basic properties of Z}
    $Z_y \rightarrow Y_w $ is a $G$-equivariant vector bundle and there is an isomorphism $G \times^{B \cap {}^w B} (V^- \cap {}^w V^-) \cong Z_w$. In particular, we have $\dim Z_w = \dim G - \dim B \cap {}^w B + \dim V^- \cap {}^w V^-$.
 \end{lemma}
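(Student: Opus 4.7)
The plan is to identify $Z_w$ as a homogeneous vector bundle by analyzing the orbit structure. First I would note that $Y_w = G\cdot(eB,\dot{w}B)$ is the $G$-orbit of the base point $(eB,\dot{w}B)$, and the stabilizer of this point is precisely $B \cap \dot{w}B\dot{w}^{-1} = B \cap {}^w B$. Hence $Y_w \cong G/(B\cap{}^wB)$ as $G$-varieties.

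Next I would compute the fiber of $(\pi \times \pi): Z_w \to Y_w$ over $(eB,\dot{w}B)$. Unwinding the definitions, a point of $Z$ lying over $(eB,\dot{w}B)$ is a pair $((eB,v_1),(\dot{w}B,v_2))$ with $v_1=v_2 =: v \in V$. The condition $(eB,v) \in \tilde{V}$ means $v \in V^-$, while $(\dot{w}B,v) \in \tilde{V}$ requires $\dot{w}^{-1} v \in V^-$, i.e. $v \in \dot{w}V^- = {}^w V^-$. So the fiber is precisely the linear subspace $V^- \cap {}^w V^- \subset V$, and this is evidently stable under the stabilizer $B \cap {}^w B$ since both $V^-$ and ${}^w V^-$ are stable under $B$ and ${}^w B$ respectively.

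Now I would invoke the standard fact that a $G$-equivariant morphism to a homogeneous space $G/H$ is determined by its fiber $F$ over the base point (together with its $H$-action) via $G \times^H F$. Applying this to the $G$-equivariant morphism $Z_w \to Y_w \cong G/(B \cap {}^w B)$ with fiber $V^- \cap {}^w V^-$ yields the desired isomorphism
\begin{equation*}
    Z_w \cong G \times^{B \cap {}^w B} (V^- \cap {}^w V^-).
\end{equation*}
Since the fiber is a vector space and the base point stabilizer acts linearly on it, this exhibits $Z_w \to Y_w$ as a $G$-equivariant vector bundle. The dimension formula is then immediate:
\begin{equation*}
    \dim Z_w = \dim G/(B \cap {}^w B) + \dim (V^- \cap {}^w V^-) = \dim G - \dim(B \cap {}^w B) + \dim(V^- \cap {}^w V^-).
\end{equation*}

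There is no real obstacle; the only point requiring mild care is correctly reading off the fiber condition, in particular the asymmetric appearance of $\dot{w}^{-1}$ coming from the second factor of $\tilde{V}$. Everything else is a direct application of the associated-bundle construction and the orbit-stabilizer description of $Y_w$.
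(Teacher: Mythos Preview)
Your proposal is correct and follows essentially the same approach as the paper: identify the stabilizer of $(eB,\dot{w}B)$ as $B\cap{}^wB$, compute the fiber as $V^-\cap{}^wV^-$, and then recognize $Z_w$ as the associated bundle. The paper phrases the vector-bundle verification as something one checks ``with the help of the Bruhat decomposition'' before invoking the associated-bundle map, whereas you package this into the standard equivalence between $G$-varieties over $G/H$ and $H$-varieties; these amount to the same argument.
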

 \begin{proof}
     The fact that $Z_w \rightarrow Y_w \cong G/ B \cap {}^w B$ is a vector bundle can easily be checked with the help of the Bruhat decomposition. The fiber over $(B, \dot{w}B) \in Y_w$ is $V^- \cap {}^w V^-$ which implies that the canonical map of vector bundles $G \times^{B \cap {}^w B} (V^- \cap {}^w V^-) \rightarrow Z_w$ is an isomorphism. The equality $\dim Z_w = \dim G - \dim B \cap {}^w B + \dim V^- \cap {}^w V^-$ follows directly from this.
 \end{proof}
We pick a total order on $W$ extending the Bruhat order and define
\begin{align*}
    Z_{\le w} &:= \bigsqcup_{y \le w} Z_y \\
    Z_{< w} &:= \bigsqcup_{y < w} Z_y.
\end{align*}
Note that $Z_{<w} = Z_{\le w'}$ where $w'$ is the maximal element with $w'<w$ and $Z_{\Delta} = Z_{\le e }  = Z_e$.
\begin{lemma}\label{lemma: K-theory of Z is free over the diagonal} Let $H \subset G$ be diagonalizable subgroup scheme or $H= G$. Then $K^H(Z_{\le w})$ is a free left $K^H(Z_{\Delta})$-module with basis $\{ [\mathcal{O}_{\overline{Z}_y}] \mid y \le w \}$. In particular, $K^H(Z)$ is a free left $K^H(Z_{\Delta})$-module with basis $\{ [\mathcal{O}_{\overline{Z}_w}] \mid w \in W \}$.
\end{lemma}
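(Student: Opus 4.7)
I would argue by induction on $w$ with respect to the chosen total order extending the Bruhat order, following the cellular fibration strategy of Chriss--Ginzburg. The base case $w = e$ is trivial: $Z_{\le e} = Z_\Delta$ and $[\mathcal{O}_{Z_\Delta}]$ freely generates $K^H(Z_\Delta)$ over itself. The ``in particular'' statement for $K^H(Z)$ follows by taking $w = w_0$ the longest element.

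For the inductive step, I would consider the open-closed decomposition $i : Z_{<w} \hookrightarrow Z_{\le w} \hookleftarrow Z_w : j$ and the associated excision sequence
\begin{equation*}
    K^H(Z_{<w}) \xrightarrow{i_*} K^H(Z_{\le w}) \xrightarrow{j^*} K^H(Z_w) \longrightarrow 0.
\end{equation*}
The first key step is to identify $K^H(Z_w)$ as a free rank-one $K^H(Z_\Delta)$-module generated by $[\mathcal{O}_{Z_w}]$, where $K^H(Z_\Delta)$ acts by convolution. By \cref{lemma: basic properties of Z}, $Z_w \to Y_w$ is an $H$-equivariant vector bundle, and the projection $Y_w \to \mathcal{B}$, $(g_1B, g_2B) \mapsto g_1 B$, is an affine bundle with fibres $B \dot w B/B \cong \mathbb{A}^{\ell(w)}$. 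Iterated Thom and homotopy-invariance isomorphisms then yield
\begin{equation*}
    K^H(Z_\Delta) \xleftarrow{\sim} K^H(\mathcal{B}) \xrightarrow{\sim} K^H(Y_w) \xrightarrow{\sim} K^H(Z_w)
\end{equation*}
matching $[\mathcal{O}_{Z_\Delta}]$ with $[\mathcal{O}_{Z_w}]$. I would then check that this identification is $K^H(Z_\Delta)$-linear: the intersection $p_{12}^{-1}(Z_\Delta) \cap p_{23}^{-1}(Z_w)$ inside $\tilde V \times \tilde V \times \tilde V$ is the transverse copy of $Z_w$ embedded via $(x,z) \mapsto (x,x,z)$, so convolution by $\alpha \in K^H(Z_\Delta) = K^H(\tilde V)$ reduces to multiplication by $p_1^* \alpha$ for the first projection $p_1 : Z_w \to \tilde V$, which transports through the Thom chain to the regular $K^H(Z_\Delta)$-module structure.

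Since $\overline{Z}_w \cap Z_w = Z_w$ scheme-theoretically, $j^*[\mathcal{O}_{\overline{Z}_w}] = [\mathcal{O}_{Z_w}]$, and the previous step lets me extend by $K^H(Z_\Delta)$-linearity to a section $s : K^H(Z_w) \to K^H(Z_{\le w})$ of $j^*$ sending the generator to $[\mathcal{O}_{\overline{Z}_w}]$. Combined with the inductive hypothesis applied to $K^H(Z_{<w})$ (whose basis $\{[\mathcal{O}_{\overline{Z}_y}] : y < w\}$ maps under $i_*$ to the like-named classes in $K^H(Z_{\le w})$), this realises $K^H(Z_{\le w})$ as the direct sum yielding the claimed basis $\{[\mathcal{O}_{\overline{Z}_y}] : y \le w\}$. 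The main obstacle is the vanishing of the connecting map from $K_1^H(Z_w)$ needed for injectivity of $i_*$; this is handled by the standard cellular fibration argument (e.g.\ \cite[Lemma 5.5.1]{chriss2009representation}), which essentially observes that the sections analogous to $s$ lift to all degrees thanks to the equivariant vector bundle structure of each $Z_{w'}$ over a flag-variety cell. A secondary difficulty is verifying the convolution compatibility in the first step uniformly over all diagonalizable subgroup schemes $H$, since for general $H$ one cannot reduce $K^H(G/B)$ to $K^T(\pt)$ by a single induction equivalence; a careful case analysis using the Bruhat decomposition of $\mathcal{B}$ together with transversality handles this.
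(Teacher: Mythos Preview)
Your proposal is correct and follows essentially the same cellular-fibration/excision strategy as the paper. The one place where the paper is cleaner is your flagged ``secondary difficulty'': rather than verifying convolution compatibility by transversality and case analysis in $H$, the paper invokes \cref{lemma: convolution along diagonal} once to identify the left $K^H(Z_\Delta)$-action on any $K^H(X)$ (for $X \subset \tilde V \times \tilde V$ locally closed) with the action $[\mathcal{E}]\cdot[\mathcal{F}] = [p^*\mathcal{E}\otimes\mathcal{F}]$ where $p:X\to\mathcal{B}$ is the first projection, so the module structure is uniform in $H$ from the outset and no separate case analysis is required.
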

\begin{proof}
    For any $G$-stable locally closed subvariety $X \subset \tilde{V} \times \tilde{V}$, there is a canonical $K_H(\mathcal{B})$-module structure on $K^H(X)$ via
    \begin{equation*}
        [\mathcal{E}] \cdot [\mathcal{F}] := [p^* \mathcal{E} \otimes \mathcal{F}]
    \end{equation*}
    where $p$ is the composition $X \hookrightarrow \tilde{V} \times \tilde{V} \overset{p_1}{\rightarrow} \tilde{V} \rightarrow \mathcal{B}$. Note that we can identify
    \begin{equation*}
        K^H(Z_{\Delta}) \cong K^H(\mathcal{B})\cong K_H(\mathcal{B})
    \end{equation*}
    via the Thom isomorphism. By \cref{lemma: convolution along diagonal}, the $K^H(Z_{\Delta})$-module structure on $K^H(X)$ via convolution agrees with the canonical $K_H(\mathcal{B})$-module structure just defined. Thus, it suffices to show that $K^H(Z_{\le w})$ is a free $K_H(\mathcal{B})$-module with basis $\{ [\mathcal{O}_{\overline{Z}_y}] \mid y \le w  \}$. We prove this by induction on $l(w)$. Note that for any $y \in W$, the Thom isomorphism together with \cref{lemma: basic properties of Z} implies that $K^H(Z_y)$ is a free $K_H(\mathcal{B})$-module of rank $1$ with basis $[\mathcal{O}_{Z_y}]$. This proves the claim for $Z_{\le e} = Z_e$. We have a short exact sequence of $K_H(\mathcal{B})$-modules
    \begin{equation*}
        \begin{tikzcd}
        0 \arrow[r] & K^H(Z_{<w}) \arrow[r] & K^H(Z_{\le w}) \arrow[r] & K^H(Z_w) \arrow[r] & 0
    \end{tikzcd}
    \end{equation*}
    where the exactness on the left is proved exactly as in \cite[(5.4.4)]{chriss2009representation}. We know by the induction hypothesis that the outer terms are free $K_H(\mathcal{B})$-modules with bases $\{ [\mathcal{O}_{\overline{Z}_y}] \mid y < w\}$ and $\{[\mathcal{O}_{Z_w}] \}$ respectively. Thus, the sequence splits and $K^H(Z_{\le w})$ is also a free $K_H(\mathcal{B})$-module. Note that $[\mathcal{O}_{\overline{Z}_w}] \in K^H(Z_{\le w}) $ lifts the element $[\mathcal{O}_{Z_w}] \in K^H(Z_{w})$. Thus, $K^H(Z_{\le w})$ has the basis $\{ [\mathcal{O}_{\overline{Z}_y}] \mid y \le w\}$ as a $K_H(\mathcal{B})$-module. Since $Z = Z_{\le w_0}$ we get that $K^H(Z)$ is a free left $K^H(Z_{\Delta})$-module with basis $\{ [\mathcal{O}_{\overline{Z}_w}] \mid w \in W \}$.
\end{proof}
The result above implies that the algebra homomorphism $K^G(Z_{\Delta}) \rightarrow K^G(Z)$ is injective. Hence, we can view $K^G(Z_{\Delta})$ as a subalgebra of $K^G(Z)$.

\subsection{Affine Hecke algebras with unequal parameters}
Fix a root datum $\mathcal{R} = (X^*, X_*, \Phi, \Phi^{\vee})$ with a system of positive roots $\Phi^+ \subset \Phi$. Let $I$ be a finite set and denote the ring of Laurent polynomials with $|I|$ variables by
\begin{equation*}
    \mathcal{A} := \mathbb{Z}[q_i^{\pm 1} \mid i \in I].
\end{equation*}
\begin{definition}
    A parameter function is a function
\begin{equation*}
    \textbf{q} : \Phi \rightarrow \{ q_i \mid i \in I\} \subset \mathcal{A}
\end{equation*}
which is constant on $W$-orbits.
\end{definition}

\begin{definition}\label{def: affine Hecke algebra}
    The affine Hecke algebra $\mathcal{H}^{\aff} = \mathcal{H}^{\aff}_{\textbf{q}}$ is the $\mathcal{A}$-algebra with basis $\{ T_w e^{\lambda} \mid w \in W, \lambda \in X^*\} $ and multiplication rules
    \begin{enumerate}[label=(\roman*)]
        \item \makebox[8cm]{ $T_{s_{\alpha}}^2 = (\textbf{q}(\alpha)-1) T_{s_{\alpha}} + \textbf{q}(\alpha)$\hfill} $\alpha \in \Pi$,
        \item \makebox[8cm]{ $T_w T_{w'} = T_{ww'}$ \hfill} $l(ww') = l(w) + l(w')$,
        \item \makebox[8cm]{ $e^{\lambda} e^{\mu} = e^{\lambda + \mu}$ \hfill} $\lambda, \mu \in X^*$,
        \item \makebox[8cm]{$e^{\lambda} T_{s_{\alpha}}- T_{s_{\alpha}} e^{s_{\alpha}(\lambda )} = (\textbf{q}(\alpha) -1) \frac{e^{\lambda} - e^{s_{\alpha}(\lambda)}}{1- e^{-\alpha}}$ \hfill} $\alpha \in \Pi$, $\lambda \in X^*$,
    \end{enumerate}
    where $T_w := T_we^{0}$ and $e^{\lambda} := T_e e^{\lambda}$.
\end{definition}
\begin{remark}
    Our definition of the affine Hecke algebra is similar to the one given in \cite{chriss2009representation}. In the literature, the affine Hecke algebra is also often defined replacing the character lattice with the cocharacter lattice in our definition. This is natural from a $p$-adic groups perspective but would introduce a lot of Langlands dual notation in our geometry. Since we never consider $p$-adic groups in this paper, we stick to the convention in \cite{chriss2009representation}. 
\end{remark}
\begin{remark}
    If $\mathcal{R}$ is simple, then $\textbf{q}(\alpha) = \textbf{q}(\beta)$ whenever $\alpha$ and $\beta$ have the same length. In particular, there is only one parameter in simply laced types and at most two in non-simply laced types. This covers all the simple affine Hecke algebras with unequal parameters in the literature except for a third parameter which can occur for the affine Hecke algebra of $SO(2n+1)$ (or $Sp(2n)$ in the dual cocharacter lattice convention). In our definition this third parameter is equal to the parameter of the short roots (resp. the parameter of the long roots in the dual convention). In terms of the parameter functions in \cite[3.1(c)]{lusztig1989affine} this corresponds to the case where $\lambda^* = \lambda|_{\{ \alpha \in \Pi \mid \check{\alpha} \in 2 X_* \}}$.
\end{remark}
Here are some elementary algebraic properties of $\mathcal{H}^{\aff}_{\textbf{q}}$ which easily follow from the definition (c.f. \cite{lusztig1989affine}).
\begin{lemma}\label{lemma: alg properties of aff heck with unequal parmaeters}
    \begin{enumerate}[label=(\roman*)]
        \item The $\{ T_w \mid w \in W \}$ span a subalgebra $\mathcal{H}_{\textbf{q}} \subset \mathcal{H}^{\aff}_{\textbf{q}}$ which is the (finite) Hecke algebra with unequal parameters.
        \item The $e^{\lambda}$ span a commutative subalgebra of $\mathcal{H}^{\aff}_{\textbf{q}}$ which is isomorphic to the group algebra $\mathcal{A}[X^*]$.
        \item $\mathcal{H}^{\aff}_{\textbf{q}}$ is a free right (and left) $\mathcal{A}[X^*]$-module with basis $\{ T_w \mid w \in W \}$.
        \item $\mathcal{H}^{\aff}_{\textbf{q}}$ is generated as an algebra by $\mathcal{A}[X^*]$ together with the $T_{s_{\alpha}}$ for $\alpha \in \Pi$.
        \item $Z(\mathcal{H}^{\aff}_{\textbf{q}}) = \mathcal{A}[X^*]^W$.
    \end{enumerate}
\end{lemma}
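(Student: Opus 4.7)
The plan is to dispatch (i)--(iv) directly from the presentation and then focus the real work on (v), which is the only substantive claim.

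For (i), the braid-like relation (ii) together with the quadratic relation (i) shows that products of the $T_{s_\alpha}$ expand back into $\mathcal{A}$-linear combinations of the $T_w$; together with the $\mathcal{A}$-linear independence of the $T_w e^\lambda$ given in the definition, this identifies the $\mathcal{A}$-span of $\{T_w\}$ with the finite Hecke algebra. For (ii), relation (iii) says that $\lambda \mapsto e^\lambda$ realizes an injective ring homomorphism $\mathcal{A}[X^*] \hookrightarrow \mathcal{H}^{\aff}_{\mathbf{q}}$, and injectivity is again built into the given $\mathcal{A}$-basis. Statement (iii) on the right is the assumed basis. For freeness on the left, I would repeatedly use the cross relation (iv) to rewrite any expression of the form $e^\lambda T_w$ as $T_w e^{w^{-1}\lambda}$ plus a sum of terms $T_{w'} g_{w'}$ with $w' < w$ and $g_{w'} \in \mathcal{A}[X^*]$; an induction on length then yields a left $\mathcal{A}[X^*]$-basis $\{T_w\}$. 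Finally (iv) is immediate from (iii) and relation (ii), since every $T_w$ is a product of $T_{s_\alpha}$'s for $\alpha \in \Pi$.

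For (v), the key is the following consequence of (iv): writing an arbitrary $f = \sum_\lambda c_\lambda e^\lambda \in \mathcal{A}[X^*]$ and summing the cross relation yields
\begin{equation*}
    f \, T_{s_\alpha} = T_{s_\alpha} \, s_\alpha(f) + (\mathbf{q}(\alpha)-1) \, \frac{f - s_\alpha(f)}{1 - e^{-\alpha}},
\end{equation*}
where the last fraction is a genuine element of $\mathcal{A}[X^*]$ (divisibility by $1 - e^{-\alpha}$ is clear since $f - s_\alpha(f)$ vanishes modulo $1-e^{-\alpha}$). The inclusion $\mathcal{A}[X^*]^W \subseteq Z(\mathcal{H}^{\aff}_{\mathbf{q}})$ now follows: if $s_\alpha(f)=f$ for all $\alpha\in\Pi$, the displayed identity collapses to $fT_{s_\alpha}=T_{s_\alpha}f$, and $f$ obviously commutes with all $e^\mu$, so by (iv) $f$ is central.

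The reverse inclusion $Z(\mathcal{H}^{\aff}_{\mathbf{q}}) \subseteq \mathcal{A}[X^*]^W$ is where I expect the only real difficulty. I would argue by a length filtration: take a central $z = \sum_w T_w f_w$ with $f_w \in \mathcal{A}[X^*]$, let $w_{\max}$ be maximal in the Bruhat order among $w$ with $f_w \neq 0$, and compute $e^\lambda z - z e^\lambda$ for arbitrary $\lambda \in X^*$. Iterating relation (iv) gives $e^\lambda T_w = T_w e^{w^{-1}\lambda} + \sum_{w'<w} T_{w'} h_{w',w,\lambda}$, so the coefficient of $T_{w_{\max}}$ in $e^\lambda z - z e^\lambda$ is $f_{w_{\max}}(e^{w_{\max}^{-1}\lambda} - e^\lambda)$, which must vanish in the domain $\mathcal{A}[X^*]$. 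Since $f_{w_{\max}}\neq 0$, this forces $w_{\max}^{-1}\lambda = \lambda$ for every $\lambda \in X^*$, hence $w_{\max}=e$ and $z = f \in \mathcal{A}[X^*]$. Plugging this $f$ into the boxed identity above, central­ity with $T_{s_\alpha}$ forces both the $T_{s_\alpha}$-component and the $\mathcal{A}[X^*]$-component to vanish (using the basis from (iii)), giving $s_\alpha(f) = f$ for every simple $\alpha$ and therefore $f \in \mathcal{A}[X^*]^W$. The main subtlety is checking divisibility of $f - s_\alpha(f)$ by $1-e^{-\alpha}$ and that the ``lower order'' terms $h_{w',w,\lambda}$ really do lie strictly below $w$ in Bruhat order; both are standard but must be done carefully using the fact that $\mathbf{q}$ is $W$-invariant.
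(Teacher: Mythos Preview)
Your proof is correct and is the standard argument; the paper itself does not give a proof but simply declares the lemma elementary and refers to Lusztig \cite{lusztig1989affine}. Your write-up therefore supplies exactly the details the paper omits, and the length-filtration argument you give for the inclusion $Z(\mathcal{H}^{\aff}_{\mathbf{q}}) \subseteq \mathcal{A}[X^*]^W$ is the expected one.

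One small comment on presentation: when you pick $w_{\max}$ ``maximal in the Bruhat order'', bear in mind that this is only a partial order, so there may be several maximal elements among the $w$ with $f_w\neq 0$. Your argument still goes through verbatim, since the lower-order correction terms produced by iterating relation (iv) lie strictly below $w$ in Bruhat order and hence cannot hit any maximal $w_{\max}$; it is just worth saying ``choose any Bruhat-maximal $w_{\max}$'' to avoid confusion. The divisibility of $f - s_\alpha(f)$ by $1 - e^{-\alpha}$ that you flag as a subtlety is indeed routine: it reduces to $e^\lambda - e^{s_\alpha\lambda} = e^\lambda(1 - e^{-\langle\lambda,\alpha^\vee\rangle\alpha})$ and the factorization of $1 - e^{n\alpha}$ in $\mathcal{A}[X^*]$ for $n\in\mathbb{Z}$.
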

Let $sgn$ be the sign representation of $\mathcal{H}_{\textbf{q}}$, i.e. the free $\mathcal{A}$-module of rank $1$ on which $T_{s_{\alpha}}$ acts as $-1$. The antispherical module is the $\mathcal{H}^{\aff}_{\textbf{q}}$-module
\begin{equation*}
    M_{asph} := \mathcal{H}^{\aff}_{\textbf{q}} \otimes_{\mathcal{H}_{\textbf{q}}} sgn
\end{equation*}
which is a free $\mathcal{A}$-module with standard basis $\{ e^{\lambda} \otimes 1 \mid \lambda \in X^* \}$.
\begin{lemma}\label{lemma: explicit formulas for antispherical module}
$M_{asph}$ is the unique $\mathcal{H}^{\aff}_{\textbf{q}}$-module with $\mathcal{A}$-basis $\{ e^{\lambda} \otimes 1 \mid \lambda \in X^* \}$ and multiplication rules
\begin{enumerate}[label=(\roman*)]
    \item $e^\mu \cdot ( e^{\lambda} \otimes 1) = e^{\mu + \lambda} \otimes 1$
    \item $-( T_{s_{\alpha}} + \textbf{q}(\alpha)e^{\alpha}) \cdot  e^{\lambda} \otimes 1 = (1-\textbf{q}(\alpha)e^{\alpha})\frac{e^{\lambda}- e^{s_{\alpha}(\lambda) - \alpha}}{1- e^{-\alpha}}  \otimes 1$.
\end{enumerate}
\end{lemma}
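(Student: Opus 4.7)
The plan is to first identify the natural spanning set of $M_{asph}$ with the claimed basis, then verify the two multiplication rules by pulling the generators of $\mathcal{H}^{\aff}_{\textbf{q}}$ past the tensor using the Bernstein relation, and finally deduce uniqueness from the fact that $\mathcal{H}^{\aff}_{\textbf{q}}$ is generated by $\mathcal{A}[X^*]$ together with the simple $T_{s_\alpha}$.

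For the basis statement, I would first upgrade \cref{lemma: alg properties of aff heck with unequal parmaeters}(iii) by showing that $\{e^\lambda T_w \mid \lambda \in X^*, w \in W\}$ is also an $\mathcal{A}$-basis of $\mathcal{H}^{\aff}_{\textbf{q}}$. Starting from the standard basis $\{T_w e^\lambda\}$ and repeatedly applying the Bernstein relation (iv) to move weight elements across Hecke generators, one checks by induction on $l(w)$ that the transition matrix is triangular with respect to Bruhat order, hence invertible. Consequently $\mathcal{H}^{\aff}_{\textbf{q}}$ is free as a right $\mathcal{H}_{\textbf{q}}$-module on $\{e^\lambda\}$, and tensoring over $\mathcal{H}_{\textbf{q}}$ with $sgn$ yields the claimed $\mathcal{A}$-basis $\{e^\lambda \otimes 1\}$ of $M_{asph}$.

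For the multiplication rules, formula (i) is immediate from $e^\mu \cdot (e^\lambda \otimes 1) = (e^\mu e^\lambda) \otimes 1 = e^{\mu+\lambda} \otimes 1$. To verify (ii), I would apply the Bernstein relation to obtain
\begin{equation*}
T_{s_\alpha}\cdot (e^\lambda \otimes 1) = e^{s_\alpha(\lambda)} T_{s_\alpha}\otimes 1 + (\textbf{q}(\alpha)-1)\tfrac{e^\lambda - e^{s_\alpha(\lambda)}}{1-e^{-\alpha}}\otimes 1,
\end{equation*}
and then use $T_{s_\alpha} \otimes 1 = 1 \otimes (T_{s_\alpha} \cdot 1) = -(1 \otimes 1)$ coming from the sign representation to eliminate $T_{s_\alpha}$ from the right hand side. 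Adding $\textbf{q}(\alpha) e^\alpha \cdot (e^\lambda \otimes 1) = \textbf{q}(\alpha)\, e^{\alpha + \lambda} \otimes 1$, negating, and clearing the common denominator $1 - e^{-\alpha}$ reduces the desired identity to a short polynomial cancellation in $\mathcal{A}[X^*]$ that one checks by direct expansion.

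Uniqueness then follows from \cref{lemma: alg properties of aff heck with unequal parmaeters}(iv): since $\mathcal{H}^{\aff}_{\textbf{q}}$ is generated by $\mathcal{A}[X^*]$ together with the $T_{s_\alpha}$, and (i) and (ii) pin down the action of these generators on the basis, any $\mathcal{H}^{\aff}_{\textbf{q}}$-module with the stated basis and multiplication rules must coincide with $M_{asph}$. The one non-formal step is the algebraic verification of (ii); it is routine but calls for careful bookkeeping of signs, and I expect it to be the only mild obstacle.
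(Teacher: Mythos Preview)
Your proposal is correct and follows essentially the same approach as the paper: both compute (ii) by pushing $T_{s_\alpha}$ past $e^\lambda$ via the Bernstein relation, using $T_{s_\alpha}\otimes 1 = -1\otimes 1$, and then simplifying over the common denominator $1-e^{-\alpha}$; uniqueness is deduced identically from generation by $\mathcal{A}[X^*]$ and the $T_{s_\alpha}$. The only difference is that you explicitly justify the basis claim $\{e^\lambda\otimes 1\}$ via the alternate basis $\{e^\lambda T_w\}$, whereas the paper simply asserts this fact in the sentence preceding the lemma.
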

\begin{proof}
    The uniqueness follows from the fact that the $e^{\lambda}$ together with the $T_{s_{\alpha}}$ generate $\mathcal{H}^{\aff}_{\textbf{q}}$ (\cref{lemma: alg properties of aff heck with unequal parmaeters}). The equality $e^\mu \cdot ( e^{\lambda} \otimes 1) = e^{\mu + \lambda} \otimes 1$ follows from the definition. Using the relation
    \begin{equation*}
       -T_{s_{\alpha}} e^{\lambda} = -e^{s_{\alpha}(\lambda)} T_{s_{\alpha}}  +(\textbf{q}(\alpha) -1) \frac{e^{s_{\alpha}(\lambda)}- e^{\lambda}}{1- e^{-\alpha}}
    \end{equation*}
    we get
    \begin{align*}
        & -( T_{s_{\alpha}} + \textbf{q}(\alpha)e^{\alpha}) \cdot  e^{\lambda} \otimes 1 \\
        & = \left( -e^{s_{\alpha}(\lambda)} T_{s_{\alpha}} + (\textbf{q}(\alpha) -1) \frac{e^{s_{\alpha}(\lambda)}- e^{\lambda}}{1- e^{-\alpha}} - \textbf{q}(\alpha)e^{\alpha} e^{\lambda} \right) \otimes 1 \\
        &=  \left( e^{s_{\alpha}(\lambda)} +(\textbf{q}(\alpha) -1) \frac{e^{s_{\alpha}(\lambda)}- e^{\lambda}}{1- e^{-\alpha}} - \textbf{q}(\alpha) e^{\lambda + \alpha} \right) \otimes 1 \\
        &=  \left( \frac{ e^{s_{\alpha}(\lambda)} - e^{s_{\alpha}(\lambda) - \alpha} }{1-e^{-\alpha}}  + (\textbf{q}(\alpha) -1) \frac{e^{s_{\alpha}(\lambda)}- e^{\lambda}}{1- e^{-\alpha}} - \textbf{q}(\alpha) \frac{e^{\lambda + \alpha} - e^{\lambda}}{1-e^{-\alpha}}\right) \otimes 1 \\
        &=  (1-\textbf{q}(\alpha)e^{\alpha})\frac{e^{\lambda} - e^{s_{\alpha}(\lambda) - \alpha}}{1- e^{-\alpha}}  \otimes 1. 
    \end{align*}
\end{proof}
\begin{lemma}\label{lemma: antispherical module is faithful}\cite[(4.3.10)]{macdonald2003affine}
    $M_{asph}$ is a faithful $\mathcal{H}^{\aff}_{\textbf{q}}$-module, i.e. the $\mathcal{H}^{\aff}_{\textbf{q}}$-module structure on $M_{asph}$ induces an injective algebra homomorphism $\mathcal{H}^{\aff}_{\textbf{q}} \hookrightarrow \End_{\mathcal{A}}(M_{asph})$.
\end{lemma}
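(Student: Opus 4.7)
The strategy is to show that if $h \in \mathcal{H}^{\aff}_{\textbf{q}}$ acts as zero on $M_{asph}$, then $h = 0$, via a Bruhat-order triangularity of the action on dominant weights followed by downward induction. Since $\mathcal{H}^{\aff}_{\textbf{q}}$ is free as a right $\mathcal{A}[X^*]$-module with basis $\{T_w\}_{w \in W}$ by \cref{lemma: alg properties of aff heck with unequal parmaeters}(iii), I would write $h = \sum_{w \in W} T_w f_w$ with $f_w = \sum_\mu a_{w,\mu} e^\mu \in \mathcal{A}[X^*]$ and aim to show that every $f_w$ vanishes.

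The central technical input is the following triangularity claim: for $\nu \in X^*$ regular dominant (so that the vertices $\{w\nu\}_{w \in W}$ of $\mathrm{Conv}(W\nu)$ are all distinct), the coefficient of $e^{w\nu} \otimes 1$ in $T_{w'} \cdot (e^\nu \otimes 1)$ is nonzero only if $w \le w'$ in the Bruhat order, and the coefficient of $e^{w\nu} \otimes 1$ in $T_w \cdot (e^\nu \otimes 1)$ is precisely $(-1)^{\ell(w)}$. I would prove this by induction on $\ell(w')$ using the explicit formula in \cref{lemma: explicit formulas for antispherical module}(ii). For dominant $\nu$ with $n = \langle \nu, \alpha^\vee \rangle > 0$ a direct computation gives
\begin{equation*}
T_{s_\alpha} \cdot (e^\nu \otimes 1) = -e^{s_\alpha \nu} \otimes 1 + (\textbf{q}(\alpha)-1)\sum_{k=0}^{n-1} e^{\nu-k\alpha} \otimes 1,
\end{equation*}
with all weights lying in $\mathrm{Conv}(W\nu)$; an analogous bounded formula handles $\langle \nu, \alpha^\vee\rangle \le 0$. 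In the inductive step, writing $w' = s_\alpha w''$ with $\ell(w') = \ell(w'')+1$, I must verify that the vertex $w'\nu = s_\alpha w''\nu$ is only reached via the leading-term contribution $-e^{s_\alpha w''\nu}$ produced by applying $T_{s_\alpha}$ to $e^{w''\nu}$, yielding coefficient $(-1)^{\ell(w'')+1} = (-1)^{\ell(w')}$, and that no vertex outside $\{w\nu : w \le w'\}$ is ever reached.

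Granted the triangularity, the argument concludes by choosing $\nu$ sufficiently deep in the dominant chamber so that every $\nu + \mu$ with $\mu \in \bigcup_w \mathrm{supp}(f_w)$ remains regular dominant, and the identity $w''(\nu+\mu') = w(\nu+\mu)$ forces $w'' = w$ and $\mu'=\mu$ (the linear-in-$\nu$ discrepancy $w\nu-w''\nu$ overwhelms the bounded discrepancy $w''\mu'-w\mu$). Then the coefficient of $e^{w(\nu+\mu)} \otimes 1$ in the identity $h \cdot (e^\nu \otimes 1) = 0$ receives contributions only from terms $T_{w'} \cdot (e^{\nu+\mu} \otimes 1)$ with $w' \ge w$, giving
\begin{equation*}
0 = \sum_{w' \ge w} a_{w',\mu}\, c_{w,w'}^{\nu+\mu}, \qquad c_{w,w}^{\nu+\mu} = (-1)^{\ell(w)}.
\end{equation*}
A downward induction on the Bruhat order (starting from $w = w_0$, where only the $w' = w_0$ term survives) then yields $a_{w,\mu} = 0$ for every $w$ and every $\mu$, hence $f_w = 0$ for all $w$.

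The main obstacle is the triangularity claim itself: its inductive step must carefully track all $W$-vertex contributions arising from interior weights in $T_{w''}(e^\nu \otimes 1)$ when $T_{s_\alpha}$ is applied, and in particular rule out unexpected cancellations or spurious vertices outside $\{w\nu : w \le s_\alpha w''\}$. Once the triangularity is in hand, the separation argument and downward induction are routine.
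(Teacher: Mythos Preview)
The paper does not give a proof of this lemma; it merely cites \cite[(4.3.10)]{macdonald2003affine}. Your proposal supplies a direct argument, and it is essentially correct and follows the standard approach (which is also, in spirit, Macdonald's).

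Your triangularity claim is valid, and the points you flag as obstacles can be dispatched as follows. First, when writing $w' = s_\alpha w''$ with $\ell(w') = \ell(w'')+1$, one has $(w'')^{-1}\alpha \in \Phi^+$, so $\langle w''\nu, \alpha^\vee\rangle > 0$ for regular dominant $\nu$, and your displayed formula for $T_{s_\alpha}$ applies to the leading term with the correct sign. Second, for a non-vertex weight $\mu \in \mathrm{Conv}(W\nu)$, the $\alpha$-string through $\mu$ lies in the segment where the line $\mu + \mathbb{R}\alpha$ meets the polytope; since vertices are extreme points and $s_\alpha$ permutes them, a vertex on this line would have to be an endpoint of the segment, and the endpoints are $s_\alpha$-conjugate, so if one is a vertex both are---but then $\mu$, being neither endpoint, lies strictly between them, as does $s_\alpha\mu$, so no vertex is produced. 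Third, for the vertex terms $c_y e^{y\nu}$ with $y < w''$, applying $T_{s_\alpha}$ produces at most the vertices $y\nu$ and $s_\alpha y\nu$; that both $y$ and $s_\alpha y$ lie below $w' = s_\alpha w''$ in Bruhat order is exactly the lifting property. Finally, the only contribution to the coefficient of $e^{w'\nu}$ comes from the leading term, since $s_\alpha y = w'$ would force $y = w''$, contradicting $y < w''$. With these observations in hand, your downward induction goes through cleanly.
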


\subsection{Rooted representations}
We now introduce a special class of $G$-representations.
\begin{definition}\label{def: rooted representation}
    A $G$-representation $V$ is called rooted if the non-zero weights of $V$ are precisely the roots of $G$ and $\dim V_{\alpha} = 1$ for all $\alpha \in \Phi$. Let $I$ be a finite set. A rooted $G$-representation $V$ together with a decomposition of $G$-modules $V = \bigoplus_{i \in I}V_i$ is called in $I$-rooted $G$-representation.
\end{definition}
Fix an $I$ rooted $G$-representation $V$. There is an associated parameter function
\begin{equation*}
    \textbf{q}_V : \Phi \rightarrow \{ q_i \mid i \in I \}
\end{equation*}
with $\textbf{q}_V(\alpha) = q_{i_{\alpha}}$ where $i_{\alpha} \in I$ is the unique element with $(V_i)_{\alpha} \neq \{ 0\}$. Moreover, we can equip $V$ with the structure of a $G \times (\mathbb{G}_m)^I$-module via
\begin{equation*}
    (g,(t_i)_{i \in I}) \cdot(v_i)_{i \in I} = (t_i^{-1} gv_i)_{i \in I}.
\end{equation*}
Consider the $B$-stable subspace
    \begin{equation*}
        V^- := \bigoplus_{\alpha \in \Phi^-} V_{\alpha} \subset V. 
    \end{equation*}
Then we can consider the varieties $\tilde{V} = G \times^B V^-, Z = \tilde{V} \times_V \tilde{V}, ...$ as in \cref{section: k theory of vector bundles}. For any simple root $\alpha \in \Pi$, we denote the corresponding parabolic subgroup by $ P_{\alpha} = B \sqcup B\dot{s}_{\alpha}B$.
\begin{lemma}
    For each simple root $\alpha \in \Pi$, the subspace $V^- \cap {}^{s_{\alpha}} V^-$ is $P_{\alpha}$-stable.
\end{lemma}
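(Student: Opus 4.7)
The plan is to first compute the subspace $V^-\cap{}^{s_\alpha}V^-$ explicitly as a sum of weight spaces, and then to check $P_\alpha$-stability by verifying stability under a convenient set of generators.

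Since $V$ is rooted, $V^-=\bigoplus_{\gamma\in\Phi^-}V_\gamma$ and, because $\dot{s}_\alpha$ sends $V_\gamma$ isomorphically onto $V_{s_\alpha\gamma}$, we obtain
\begin{equation*}
    {}^{s_\alpha}V^- \;=\; \bigoplus_{\gamma\in s_\alpha(\Phi^-)} V_\gamma.
\end{equation*}
Combined with the standard fact that $s_\alpha(\Phi^-)=(\Phi^-\setminus\{-\alpha\})\cup\{\alpha\}$ for a simple root $\alpha$, this gives
\begin{equation*}
    V^-\cap{}^{s_\alpha}V^- \;=\; \bigoplus_{\gamma\in\Phi^-\setminus\{-\alpha\}} V_\gamma.
\end{equation*}

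Next, I would note that $P_\alpha$ is generated by $T$ together with the root groups $U_\beta$ for $\beta\in\Phi^-\cup\{\alpha\}$ (these are exactly the non-zero weights of $\mathrm{Lie}(P_\alpha)$). Since $V^-\cap{}^{s_\alpha}V^-$ is manifestly a sum of $T$-weight spaces, only the $U_\beta$-stability remains. For this, recall that the action of $U_\beta$ on a weight vector $v\in V_\gamma$ lands in $\bigoplus_{n\ge 0}V_{\gamma+n\beta}$, and since the non-zero weights of $V$ are precisely the roots of $G$, a summand $V_{\gamma+n\beta}$ contributes trivially unless $\gamma+n\beta\in\Phi$. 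So the whole statement reduces to the combinatorial claim: for every $\gamma\in\Phi^-\setminus\{-\alpha\}$, $\beta\in\Phi^-\cup\{\alpha\}$, and $n\ge 1$, either $\gamma+n\beta\notin\Phi\cup\{0\}$, or $\gamma+n\beta\in\Phi^-\setminus\{-\alpha\}$.

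The verification of this combinatorial claim is the heart of the argument, and I would handle it by a sign/coefficient analysis in the basis of simple roots. Writing $\gamma=\sum_i c_i\alpha_i$ with $c_i\le 0$ and similarly for $\beta$, the coefficients of $\gamma+n\beta$ are still non-positive in the case $\beta\in\Phi^-$, so if $\gamma+n\beta$ is a root it must be negative, and the equality $\gamma+n\beta=-\alpha$ would force $\alpha=-\gamma-n\beta$ to be a non-trivial sum of positive roots, contradicting simplicity of $\alpha$; similarly $\gamma+n\beta=0$ would force $-\gamma$ to be a sum of positive roots equal to $n\beta$, which is ruled out by $\gamma\in\Phi^-$. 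In the case $\beta=\alpha$, a root $\gamma+n\alpha$ whose non-$\alpha$ coefficients vanish is forced to lie in $\{\pm\alpha\}$, and a short direct check excludes these (the excluded case $\gamma=-\alpha$ is precisely what is being avoided); if some non-$\alpha$ coefficient is non-zero, all coefficients are $\le 0$ so $\gamma+n\alpha\in\Phi^-\setminus\{-\alpha\}$ automatically.

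The main obstacle is this last bookkeeping, in particular making sure that the only troublesome targets $-\alpha$ and $0$ are excluded. The key inputs are that $\alpha$ is \emph{simple} (so it cannot be written as a non-trivial sum of two positive roots) together with the rootedness assumption (so that no non-zero vectors of weight $0$ appear in $V^-$). Once these are in place, everything else is straightforward coefficient-counting.
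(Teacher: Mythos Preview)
Your argument is correct. The computation of $V^-\cap{}^{s_\alpha}V^-=\bigoplus_{\gamma\in\Phi^-\setminus\{-\alpha\}}V_\gamma$ matches the paper, and your root-string combinatorics for the $U_\beta$-action goes through (the crucial point being that a root with one strictly negative simple-root coefficient has all coefficients $\le 0$, which you use implicitly in the $\beta=\alpha$ case).

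The paper takes a shorter path at the final step. Rather than checking $U_\alpha$-stability via the coefficient analysis you outline, it observes that $P_\alpha$ is generated by $B$ together with the single element $\dot{s}_\alpha$. Stability under $B$ is your $\beta\in\Phi^-$ case, and stability under $\dot{s}_\alpha$ is immediate because $s_\alpha$ permutes the set $\Phi^-\setminus\{-\alpha\}$. This bypasses the case split on the $\alpha$-coefficient entirely. Your approach has the minor advantage of being phrased uniformly in terms of root groups, but the paper's use of $\dot{s}_\alpha$ as a generator is cleaner here and avoids the need to invoke the positive/negative dichotomy for roots a second time.
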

\begin{proof}
    Note that $V^- \cap {}^{s_{\alpha}} V^- = \bigoplus_{\gamma \in \Phi^- \backslash \{- \alpha \}} V_{\mathfrak{\gamma}}$. Since $U_\beta \cdot V_{\lambda} \subset \bigoplus_{n \ge 0} V_{\lambda + n \beta}$, the subspace $V^- \cap {}^{s_{\alpha}} V^-$ is $U_{\beta}$ stable for all $\beta \in \Phi^-$. It is also $T$-stable and thus also $B$-stable. It is also stable under $\dot{s}_{\alpha}$ and hence also under $P_{\alpha}$.
\end{proof}
We define varieties
\begin{align*}
    \tilde{V}_{\alpha} &:= G \times^B (V^- \cap {}^{s_{\alpha}} V^- )  \\
    \bar{V}_{\alpha} &:= G \times^{P_{\alpha}} (V^- \cap {}^{s_{\alpha}} V^-)\\
    \Lambda_{\alpha} &:= \tilde{V}_{\alpha} \times_{\bar{V}_{\alpha}} \tilde{V}_{\alpha}.
\end{align*}
Note that $\tilde{V}_{\alpha} \subset \tilde{V}$ is a subbundle and $\Lambda_{\alpha} \subset Z $ is closed.
\begin{lemma}\label{lemma: Lambda alpha is closure of Zs}
    We have $\Lambda_{\alpha} = \overline{Z}_{s_{\alpha}}$.
\end{lemma}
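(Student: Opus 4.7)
The plan is to identify $\Lambda_\alpha$ and $\overline{Z}_{s_\alpha}$ as the same irreducible closed subvariety of $\tilde{V} \times \tilde{V}$: I will first show $\overline{Z}_{s_\alpha} \subseteq \Lambda_\alpha$, then conclude by matching irreducibility and dimensions. To unwind $\Lambda_\alpha$, note that a point $((g_1 B, v_1), (g_2 B, v_2)) \in \Lambda_\alpha$ consists of two points of $\tilde{V}_\alpha$ with equal image in $\bar{V}_\alpha$, which unpacks to $v_1 = v_2$ (so the pair lies in $Z$) and $g_1^{-1} g_2 \in P_\alpha = B \sqcup B \dot{s}_\alpha B$ (so the image in $\mathcal{B} \times \mathcal{B}$ lies in $Y_e \sqcup Y_{s_\alpha}$). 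Hence $\Lambda_\alpha \subseteq Z_{\le s_\alpha}$.

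The crucial inclusion is $Z_{s_\alpha} \subseteq \Lambda_\alpha$. Given $((g_1 B, v), (g_2 B, v)) \in Z_{s_\alpha}$, write $g_1^{-1} g_2 = b_1 \dot{s}_\alpha b_2$ with $b_i \in B$. Then $g_2^{-1} v = b_2^{-1} \dot{s}_\alpha^{-1} b_1^{-1} g_1^{-1} v \in V^-$, so applying $b_2$ gives $\dot{s}_\alpha^{-1}(b_1^{-1} g_1^{-1} v) \in V^-$, i.e.\ $b_1^{-1} g_1^{-1} v \in {}^{s_\alpha} V^-$. Combined with $b_1^{-1} g_1^{-1} v \in V^-$ (by $B$-stability of $V^-$), I obtain $b_1^{-1} g_1^{-1} v \in V^- \cap {}^{s_\alpha} V^-$. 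Since this intersection is $B$-stable (it is $P_\alpha$-stable), applying $b_1$ yields $g_1^{-1} v \in V^- \cap {}^{s_\alpha} V^-$, and the symmetric argument gives $g_2^{-1} v \in V^- \cap {}^{s_\alpha} V^-$. Hence both factors lie in $\tilde{V}_\alpha$ with common image in $\bar{V}_\alpha$, so the pair lies in $\Lambda_\alpha$. As $\Lambda_\alpha$ is closed in $\tilde{V}\times\tilde{V}$, this upgrades to $\overline{Z}_{s_\alpha} \subseteq \Lambda_\alpha$.

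To finish I would compare dimensions. The projection $\tilde{V}_\alpha \to \bar{V}_\alpha$ is a $P_\alpha/B \cong \mathbb{P}^1$-bundle, so $\Lambda_\alpha \to \tilde{V}_\alpha$ is also a $\mathbb{P}^1$-bundle; thus $\Lambda_\alpha$ is irreducible of dimension $\dim \tilde{V}_\alpha + 1 = \dim G - \dim B + 1 + \dim(V^- \cap {}^{s_\alpha} V^-)$. On the other hand $\overline{Z}_{s_\alpha}$ is irreducible as the closure of the vector bundle $Z_{s_\alpha}$ over the irreducible orbit $Y_{s_\alpha}$, and by \cref{lemma: basic properties of Z} has dimension $\dim G - \dim(B \cap {}^{s_\alpha} B) + \dim(V^- \cap {}^{s_\alpha} V^-)$, which agrees with $\dim \Lambda_\alpha$ since $B \cap {}^{s_\alpha} B$ has codimension one in $B$. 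Equality of irreducible closed subvarieties with matching dimensions then gives $\overline{Z}_{s_\alpha} = \Lambda_\alpha$. The main obstacle is the middle Bruhat step: the verification that a point of $Z_{s_\alpha}$ automatically drops into the smaller subbundle $\tilde{V}_\alpha$ in both trivializations, rather than lying only in $\tilde{V}$; once that is in place, irreducibility and the dimension count are routine.
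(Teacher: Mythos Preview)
Your proof is correct and follows essentially the same strategy as the paper: establish $Z_{s_\alpha}\subset\Lambda_\alpha$, show $\Lambda_\alpha$ is irreducible via the $\mathbb{P}^1$-fibration $\Lambda_\alpha\to\tilde{V}_\alpha$, and conclude by matching dimensions. The only cosmetic difference is that for the inclusion $Z_{s_\alpha}\subset\Lambda_\alpha$ the paper checks the single fiber over $(B,\dot{s}_\alpha B)$ and invokes $G$-equivariance, whereas you carry out the equivalent Bruhat manipulation at a general point.
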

\begin{proof}
    The fiber of $(B , \dot{s}_{\alpha}B) \in \mathcal{B} \times \mathcal{B}$ under the projection $ Z \rightarrow \mathcal{B} \times \mathcal{B}$ is precisely $V^- \cap {}^{s_{\alpha}}V^-$ which is also the fiber under $\Lambda_{\alpha} \rightarrow \mathcal{B} \times \mathcal{B}$. By $G$-equivariance, this implies $Z_{s_{\alpha}} \subset \Lambda_{\alpha}$ and hence $\overline{Z}_{s_{\alpha}} \subset \Lambda_{\alpha}$. Note that the map $\tilde{V}_{\alpha} \rightarrow \bar{V}_{\alpha}$ is a locally trivial $\mathbb{P}^1$-fibration. By base change, $\Lambda_{\alpha} \rightarrow \tilde{V}_{\alpha}$ is also a locally trivial $\mathbb{P}^1$-fibration. In particular, this map is flat and hence open \cite[\href{https://stacks.math.columbia.edu/tag/01UA}{Tag 01UA}]{stacks-project} which implies that $\Lambda_{\alpha}$ is irreducible by \cite[\href{https://stacks.math.columbia.edu/tag/004Z}{Tag 004Z}]{stacks-project}. Moreover, we have
    \begin{align*}
        \dim \Lambda_{\alpha} &= \dim \tilde{V}_{\alpha} +1  \\
        &= \dim G - \dim B  + \dim V^-\cap {}^{s_{\alpha}}V^- + 1\\
        &= \dim Y_{s_{\alpha}}  + \dim V^-\cap {}^{s_{\alpha}}V^- \\
        &= \dim Z_{s_{\alpha}}\\
        &= \dim \overline{Z}_{s_{\alpha}}.
    \end{align*}
    This implies that $\Lambda_{\alpha} = \overline{Z}_{s_{\alpha}}$ by the irreducibility of $\Lambda_{\alpha}$.
\end{proof}
We can equip $K^{G \times (\mathbb{G}_m)^I} ( \tilde{V})$ with the structure of a left $K^{G \times (\mathbb{G}_m)^I} ( Z)$-module via convolution (with respect to the smooth ambient varieties $X_1 = X_2 = \tilde{V}$, $X_3 = \{ pt \} $). Given $[\mathcal{F}] \in K^{G \times (\mathbb{G}_m)^I} ( Z)$ and $ [\mathcal{G}] \in K^{G \times (\mathbb{G}_m)^I} ( \tilde{V})$, their convolution is given by
\begin{equation*}
    [\mathcal{F}] \star [\mathcal{G}] = (p_1)_* ([\mathcal{F}] \otimes^L_{\tilde{V} \times \tilde{V}} p_2^* [\mathcal{G}]) \in K^{G \times (\mathbb{G}_m)^I} ( \tilde{V}).
\end{equation*}
Recall from \eqref{eq: identification of character group algebra with K-theory of vector bundle} that we have a canonical identification
\begin{equation}\label{eq: identification of K(tilde V) with polynomial ring}
    \mathcal{A}[X^*(T)]\cong K^{G \times (\mathbb{G}_m)^I} ( \tilde{V}) .
\end{equation}
\begin{proposition}\label{prop: formula for convolution with Lambda alpha}
    For any $\lambda \in X^*(T)$, $\alpha \in \Pi$, convolution of $[\mathcal{O}_{\overline{Z}_{s_{\alpha}}}] \in K^{G \times (\mathbb{G}_m)^I}(Z)$ with $e^{\lambda} \in K^{G \times (\mathbb{G}_m)^I} ( \tilde{V})$ is given by
    \begin{equation*}
        [\mathcal{O}_{\overline{Z}_{s_{\alpha}}}] \star e^{\lambda} = (1 - \textbf{q}_V(\alpha) e^{\alpha})\frac{e^{\lambda} - e^{s_{\alpha}(\lambda) - \alpha}}{1-e^{-\alpha}} \in K^{G \times (\mathbb{G}_m)^I } ( \tilde{V}).
    \end{equation*}
    
\end{proposition}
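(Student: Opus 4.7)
The plan is to decompose the convolution into a pullback, a proper push, and a closed push, and then identify each factor using the preceding two lemmas.

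By \cref{lemma: Lambda alpha is closure of Zs} we may replace $\overline{Z}_{s_\alpha}$ by $\Lambda_\alpha = \tilde{V}_\alpha \times_{\bar{V}_\alpha} \tilde{V}_\alpha$. Let $\iota : \tilde{V}_\alpha \hookrightarrow \tilde{V}$ be the inclusion of the subbundle, and let $q_1,q_2 : \Lambda_\alpha \to \tilde{V}_\alpha$ be the two canonical projections, so that each $p_i|_{\Lambda_\alpha}$ factors as $\iota \circ q_i$. Using the projection formula for the closed embedding $\Lambda_\alpha \hookrightarrow \tilde{V} \times \tilde{V}$, the convolution unwinds to
\begin{equation*}
    [\mathcal{O}_{\overline{Z}_{s_\alpha}}] \star e^\lambda \;=\; \iota_* (q_1)_* q_2^* \iota^* e^\lambda.
\end{equation*}

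Next, I would identify each of the four operations. Under the identification \eqref{eq: identification of K(tilde V) with polynomial ring}, both $\tilde{V}$ and $\tilde{V}_\alpha$ have $K$-theory equal to $\mathcal{A}[X^*(T)]$ (via the Thom isomorphism with the common base $\mathcal{B}$), and $\iota$ commutes with the bundle projections, so $\iota^* e^\lambda = e^\lambda$. The square
\begin{equation*}
\begin{tikzcd}
\Lambda_\alpha \arrow[r,"q_2"] \arrow[d,"q_1"'] & \tilde{V}_\alpha \arrow[d,"\pi_\alpha"] \\
\tilde{V}_\alpha \arrow[r,"\pi_\alpha"'] & \bar{V}_\alpha
\end{tikzcd}
\end{equation*}
is Cartesian and $\pi_\alpha$ is a locally trivial $\mathbb{P}^1$-fibration (hence flat), so flat base change gives $(q_1)_* q_2^* = \pi_\alpha^* (\pi_\alpha)_*$. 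Then \cref{lemma: projection yields demazure-like operator}, applied to the $P_\alpha$-module $V^- \cap {}^{s_\alpha}V^-$, computes
\begin{equation*}
    \pi_\alpha^*(\pi_\alpha)_* e^\lambda \;=\; \frac{e^\lambda - e^{s_\alpha(\lambda) - \alpha}}{1 - e^{-\alpha}}.
\end{equation*}

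Finally, by \cref{lemma: formula for inclusion of vector bundles}, $\iota_*$ acts by multiplication with $\lambda^\vee\bigl(V^-/(V^- \cap {}^{s_\alpha}V^-)\bigr)$. The quotient is the one-dimensional weight space $V_{-\alpha}$, so everything reduces to tracking its weight as a $T \times (\mathbb{G}_m)^I$-module. This is the step that needs the most care: the $T$-weight is $-\alpha$, and the convention $(t_i) \cdot v = t_{i_\alpha}^{-1} v$ on $V_{i_\alpha} \supset V_{-\alpha}$ gives $(\mathbb{G}_m)^I$-weight $-q_{i_\alpha}$, so $\lambda^\vee(V_{-\alpha}) = 1 - e^\alpha q_{i_\alpha} = 1 - \mathbf{q}_V(\alpha)e^\alpha$. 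Combining everything yields the asserted formula. The only genuine obstacle is the bookkeeping with the $(\mathbb{G}_m)^I$-weights, since the sign conventions in the action must line up exactly to produce $\mathbf{q}_V(\alpha)e^\alpha$ rather than its inverse.
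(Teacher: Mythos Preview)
Your proof is correct and follows essentially the same route as the paper: both reduce the convolution to $\iota_*\,\pi_\alpha^*(\pi_\alpha)_*\,\iota^*\,e^\lambda$ via the projection formula and flat base change along the Cartesian square defining $\Lambda_\alpha$, and then invoke \cref{lemma: projection yields demazure-like operator} and \cref{lemma: formula for inclusion of vector bundles}. The paper spells out the ``unwinds to'' step in slightly more detail (using \cref{lemma: cap product with support for vector bundle} before the projection formula), but the argument and the weight bookkeeping for $V_{-\alpha}$ are the same.
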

\begin{proof}
    There is a commutative diagram
    \begin{equation*}
    \begin{tikzcd}
    \tilde{V} \times \tilde{V} \arrow[rr, "p_2"] &  & \tilde{V}\\
    Z \arrow[d, "p_1"'] \arrow[u, "\iota_Z", hook]  & \Lambda_{\alpha} \arrow[r, "p_2"] \arrow[d, "p_1"'] \arrow[l, "\tilde{\iota}_{\alpha}"', hook] & \tilde{V}_{\alpha} \arrow[u, "\iota_{\alpha}", hook] \arrow[d, "\pi_{\alpha}"'] \\
    \tilde{V}  & \tilde{V}_{\alpha} \arrow[r, "\pi_{\alpha}"] \arrow[l, "\iota_{\alpha}"', hook]                & \bar{V}_{\alpha}                   \end{tikzcd}
    \end{equation*}
    where the bottom right square is cartesian. Using these maps, we have
    \begin{align*}
        [\mathcal{O}_{\Lambda_{\alpha}}] \star e^{\lambda} \overset{def}&{=} (p_1)_*( (\tilde{\iota}_{\alpha})_*[\mathcal{O}_{\Lambda_{\alpha}}] \otimes^L_{\tilde{V} \times \tilde{V}} p_2^*  e^{\lambda} )\\
        \overset{\cref{lemma: cap product with support for vector bundle}}&{=} (p_1)_*( (\tilde{\iota}_{\alpha})_*[\mathcal{O}_{\Lambda_{\alpha}}] \otimes \iota_Z^*p_2^*  e^{\lambda} ) \\
        \overset{\eqref{eq: projection formula for coherent sheaves and vector bundles}}&{=}(p_1)_*(\tilde{\iota}_{\alpha})_* \tilde{\iota}_{\alpha}^*\iota_Z^*p_2^* e^{\lambda} \\
        \overset{functoriality}&{=} (\iota_{\alpha})_* (p_1)_* p_2^* (\iota_{\alpha})^* e^{\lambda} \\
        \overset{base\text{ }change}&{=} (\iota_{\alpha})_* (\pi_{\alpha})^* (\pi_{\alpha})_* (\iota_{\alpha})^* e^{\lambda}.
    \end{align*}
By \cref{lemma: projection yields demazure-like operator} $(\pi_{\alpha})^* (\pi_{\alpha})_*$ acts as $e^{\lambda} \mapsto \frac{e^{\lambda} - e^{s_{\alpha}(\lambda) - \alpha}}{1-e^{-\alpha}}$. Note that $V^-/(V^- \cap {}^{s_{\alpha}}V^-) \cong V_{-\alpha}$ is one-dimensional of weight $\textbf{q}_V(\alpha)^{-1}e^{-\alpha}$. Thus, by \cref{lemma: formula for inclusion of vector bundles} $(\iota_{\alpha})_* $ acts as multiplication with $1- \textbf{q}_V(\alpha) e^{\alpha}$. Finally, $(\iota_{\alpha})^*$ is given by $e^{\lambda} \mapsto e^{\lambda}$. Hence, we have
\begin{equation*}
    [\mathcal{O}_{\Lambda_{\alpha}}] \star e^{\lambda} =  (1 - \textbf{q}_V(\alpha) e^{\alpha})\frac{e^{\lambda} - e^{s_{\alpha}(\lambda) - \alpha}}{1-e^{-\alpha}}.
\end{equation*}
This implies the claim since $\overline{Z}_{s_{\alpha}} = \Lambda_{\alpha}$ by \cref{lemma: Lambda alpha is closure of Zs}.
\end{proof}
Our next goal is to determine a nice generating set of the algebra $K^{G \times (\mathbb{G}_m)^I}(Z)$. For this, we need the following transversality result.
\begin{lemma}\label{lemma: Zs and Zw intersection transversally}
    Let $w \in W$ and $\alpha \in \Pi$ such that $l(s_{\alpha}w) = l(w) +1$. Then the varieties $Z_{s_{\alpha}} \times \tilde{V}$ and $\tilde{V} \times Z_w$ intersect transversally in $\tilde{V}^3$. Moreover, the projection onto the first and third factor induces an isomorphism
    \begin{equation}\label{eq: iso for intersection when lengths add}
        p_{13} : (Z_{s_{\alpha}} \times \tilde{V}) \cap (\tilde{V} \times Z_w) \overset{\sim}{\rightarrow} Z_{s_{\alpha}w}.
    \end{equation}
\end{lemma}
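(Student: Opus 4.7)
My plan is to describe the intersection explicitly using the Bruhat-type geometry and then derive both the isomorphism and transversality from one root-theoretic observation tied to the length condition.

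First I would unpack the set underlying $(Z_{s_\alpha}\times\tilde V)\cap(\tilde V\times Z_w)$: it consists of triples $((g_1B,v_1),(g_2B,v_2),(g_3B,v_3))$ with $v_1=v_2=v_3=:v$, with $(g_1B,g_2B)\in Y_{s_\alpha}$ and $(g_2B,g_3B)\in Y_w$, and with $g_i^{-1}v\in V^-$ for $i=1,2,3$. Projecting away the middle flag gives a morphism to $\{((g_1B,v),(g_3B,v))\}\subset Z$. Forgetting the vector entirely, the projection $Y_{s_\alpha}\times_{\mathcal B}Y_w\to\mathcal B\times\mathcal B$ onto the outer factors is, by the standard Bruhat calculus, an isomorphism onto $Y_{s_\alpha w}$ precisely because $l(s_\alpha w)=l(s_\alpha)+l(w)$. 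Using $G$-equivariance to reduce to the point $(eB,\dot s_\alpha B,\dot s_\alpha\dot wB)$, the unique intermediate flag attached to a pair in $Y_{s_\alpha w}$ is $\dot s_\alpha B$.

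The substantive step is to check that the vector constraints are preserved by this extension, i.e.\ that whenever $(g_1B,v)$ and $(g_3B,v)$ lie in $\tilde V$ with $(g_1B,g_3B)\in Y_{s_\alpha w}$, the unique intermediate flag $g_2B$ automatically satisfies $g_2^{-1}v\in V^-$. After $G$-translation this reduces to the inclusion
\[
V^-\cap{}^{s_\alpha w}V^-\subset{}^{s_\alpha}V^-.
\]
Since $V$ is rooted, this is the combinatorial statement $\Phi^-\cap s_\alpha w\Phi^-\subset s_\alpha\Phi^-$, which after applying $s_\alpha$ and noting $s_\alpha\Phi^-=(\Phi^-\cup\{\alpha\})\setminus\{-\alpha\}$ is equivalent to $\alpha\notin w\Phi^-$, i.e.\ $w^{-1}\alpha\in\Phi^+$. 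This is exactly the length condition $l(s_\alpha w)=l(w)+1$, so the required inclusion holds. This simultaneously constructs an inverse to $p_{13}$ (send $((g_1B,v),(g_3B,v))\in Z_{s_\alpha w}$ to the triple obtained by inserting the unique intermediate flag), showing that $p_{13}$ is an isomorphism onto $Z_{s_\alpha w}$.

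For the transversality statement I would argue by dimension. Since $V$ is rooted, $\dim V^-\cap{}^w V^-=|\Phi^-|-l(w)$, so from \cref{lemma: basic properties of Z} each $Z_y$ has codimension $\dim\tilde V$ in $\tilde V\times\tilde V$; hence both $Z_{s_\alpha}\times\tilde V$ and $\tilde V\times Z_w$ have codimension $\dim\tilde V$ in $\tilde V^3$, giving an expected intersection dimension of $\dim\tilde V$. The isomorphism just constructed identifies the scheme-theoretic intersection set-theoretically with $Z_{s_\alpha w}$, which by the same count also has dimension $\dim\tilde V$. Since $Z_{s_\alpha w}$ is smooth (a vector bundle over the smooth Schubert cell $Y_{s_\alpha w}$) and the two smooth ambient subvarieties meet in a smooth variety of the expected dimension, the intersection is transversal, and the bijective morphism $p_{13}$ between smooth varieties of the same dimension is an isomorphism. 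The main potential obstacle is ensuring that the scheme-theoretic intersection is reduced and agrees with $Z_{s_\alpha w}$ on the nose; the dimension computation plus smoothness of $Z_{s_\alpha w}$ takes care of this, so no surprises are expected.
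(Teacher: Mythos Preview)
Your argument for the isomorphism \eqref{eq: iso for intersection when lengths add} is essentially the paper's, just carried out at the basepoint $(eB,\dot s_\alpha B,\dot s_\alpha\dot wB)$ rather than $(\dot s_\alpha B,eB,\dot wB)$; the key root inclusion you verify is the $\dot s_\alpha$-translate of the equality ${}^{s_\alpha}V^-\cap V^-\cap{}^wV^-={}^{s_\alpha}V^-\cap{}^wV^-$ that the paper checks.

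Your transversality argument, however, has a genuine gap. Knowing that two smooth subvarieties meet set-theoretically in a smooth variety of the expected dimension does \emph{not} force transversality: take $\{y=0\}$ and $\{y=x^2\}$ in $\mathbb{A}^2$, whose reduced intersection is a single smooth point of the expected dimension $0$, yet the tangent lines coincide. What you would need is that the \emph{scheme-theoretic} intersection is smooth of expected dimension (equivalently, that $T_pA\cap T_pB$ has the right dimension at each point), and neither the dimension count nor the smoothness of $Z_{s_\alpha w}$ gives you that.

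The paper closes this by a direct fibrewise check. Both $Z_{s_\alpha}\times\tilde V$ and $\tilde V\times Z_w$ are vector subbundles of $\tilde V^3\to\mathcal B^3$ restricted to $Y_{s_\alpha}\times\mathcal B$ and $\mathcal B\times Y_w$ respectively; since these bases meet transversally in $\mathcal B^3$, it suffices to prove that the fibres over a point of $(Y_{s_\alpha}\times\mathcal B)\cap(\mathcal B\times Y_w)$ span the ambient fibre. At $(\dot s_\alpha B,eB,\dot wB)$ this reduces to
\[
V^-=(V^-\cap{}^{s_\alpha}V^-)+(V^-\cap{}^wV^-),
\]
which holds because $-\alpha\in\Phi^-\cap{}^w\Phi^-$, i.e.\ $w^{-1}\alpha\in\Phi^+$ --- the same length hypothesis you already used. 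So the missing step is a one-line fibre computation rather than anything new, but the global dimension argument you propose does not substitute for it.
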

\begin{proof}
    We first prove that \eqref{eq: iso for intersection when lengths add} is an isomorphism. Note that there is a commutative diagram
    \begin{equation}\label{eq: restriction to intersection on flag variety}
        \begin{tikzcd}
            (Z_{s_{\alpha}} \times \tilde{V}) \cap (\tilde{V} \times Z_w)  \arrow[r, "p_{13}"] \arrow[d, "\pi^3"] & Z_{s_{\alpha}w} \arrow[d,"\pi^2"] \\
            (Y_{s_{\alpha}} \times \mathcal{B}) \cap (\mathcal{B} \times Y_w)  \arrow[r, "p_{13}"]       & Y_{s_{\alpha}w}   .       
            \end{tikzcd}
    \end{equation}
    It is well-known that horizontal the map in the bottom row an isomorphism. Moreover, the vertical maps are vector bundles. Thus, to prove that the map from \eqref{eq: iso for intersection when lengths add} is an isomorphism, it suffices to show that the map in the top row in \eqref{eq: restriction to intersection on flag variety} induces an isomorphism on fibers. By equivariance, it suffices to do this for the fibers over $(\dot{s}_{\alpha}B,eB,\dot{w}B) \in (Y_{s_{\alpha}} \times \mathcal{B}) \cap (\mathcal{B} \times Y_w) $. There the fibers are given by ${}^{s_{\alpha}}V^- \cap V^- \cap {}^w V^-$ and ${}^{s_{\alpha}} V^- \cap {}^{w} V^-$ respectively. These fibers are in fact equal since
    \begin{equation*}
        {}^{s_{\alpha}}\Phi^-  \cap  \Phi^-   \cap {}^w \Phi^-  = ({}^{s_{\alpha}} \Phi^- \backslash \{ \alpha \} )\cap {}^{w} \Phi^- = {}^{s_{\alpha}} \Phi^- \cap {}^{w} \Phi^-
    \end{equation*}
    using that $ \alpha \not\in {}^w \Phi^-$ by the assumption that $l(s_{\alpha} w ) = l(w)+ 1$.

    Next, we show transversality. Note that $Z_{s_{\alpha}} \times \tilde{V} \rightarrow Y_{s_{\alpha}} \times \mathcal{B} $ (resp.  $\tilde{V} \times Z_w \rightarrow Y_{s_{\alpha}} \times \mathcal{B}$) is a subbundle of the vector bundle $\tilde{V}^3 \rightarrow \mathcal{B}^3$ restricted to $Y_{s_{\alpha}} \times \mathcal{B}$ (resp. to $Y_{s_{\alpha}} \times \mathcal{B}$). Clearly $(Y_{s_{\alpha}} \times \mathcal{B})$ and $(\mathcal{B} \times Y_w)$ intersect transversally in $\mathcal{B}^3$. Thus, to prove that $Z_{s_{\alpha}} \times \tilde{V}$ and $\tilde{V} \times Z_w$ intersect transversally, it suffices to show that their fibers intersect transversally in the corresponding fiber of $\tilde{V} \rightarrow \mathcal{B}^3$. By equivariance and the isomorphism in the bottom row of \eqref{eq: restriction to intersection on flag variety} it suffices to prove this for the fiber over $(\dot{s}_{\alpha}B,eB,\dot{w}B) \in (Y_{s_{\alpha}} \times \mathcal{B}) \cap (\mathcal{B} \times Y_w) $. There the transversality claim is equivalent to showing that
    \begin{equation*}
        V^- = (V^- \cap  {}^{s_{\alpha}}V^-) + (V^- \cap  {}^wV^-)
    \end{equation*}
    which follows again from the observation that $- \alpha \in \Phi^- \cap {}^w \Phi^-$.
\end{proof}

\begin{corollary}\label{corollary: generated for equivariant K-theory of Steinberg}
    $K^{G \times (\mathbb{G}_m)^I}(Z)$ is generated as an algebra by $K^{G \times (\mathbb{G}_m)^I}(Z_{\Delta})$ together with the $[\mathcal{O}_{\overline{Z}_{s_{\alpha}}}]$ for $\alpha \in \Pi$.
\end{corollary}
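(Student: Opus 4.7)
The plan is to prove that each element $[\mathcal{O}_{\overline{Z}_w}]$ in the left $K^{G \times (\mathbb{G}_m)^I}(Z_\Delta)$-basis of $K^{G \times (\mathbb{G}_m)^I}(Z)$ provided by \cref{lemma: K-theory of Z is free over the diagonal} lies in the subalgebra $\mathcal{S}$ generated by $K^{G \times (\mathbb{G}_m)^I}(Z_\Delta)$ together with the classes $[\mathcal{O}_{\overline{Z}_{s_\alpha}}]$ for $\alpha \in \Pi$. I induct on $l(w)$; the base case $w = e$ is immediate since $\overline{Z}_e = Z_\Delta$.

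For the inductive step, write $w = s_\alpha w'$ with $\alpha \in \Pi$ and $l(w) = l(w') + 1$. The heart of the argument is the identity
\[
[\mathcal{O}_{\overline{Z}_{s_\alpha}}] \star [\mathcal{O}_{\overline{Z}_{w'}}] = [\mathcal{O}_{\overline{Z}_w}] + \sum_{y < w} c_y \cdot [\mathcal{O}_{\overline{Z}_y}]
\]
for suitable $c_y \in K^{G \times (\mathbb{G}_m)^I}(Z_\Delta)$. Once this is established, the inductive hypothesis places the right-hand sum in $\mathcal{S}$, and since $[\mathcal{O}_{\overline{Z}_{s_\alpha}}] \in \mathcal{S}$ and $[\mathcal{O}_{\overline{Z}_{w'}}] \in \mathcal{S}$ by induction, solving for $[\mathcal{O}_{\overline{Z}_w}]$ shows it too belongs to $\mathcal{S}$.

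To justify the identity I argue in two steps. First, unwinding the definition of convolution, the product $[\mathcal{O}_{\overline{Z}_{s_\alpha}}] \star [\mathcal{O}_{\overline{Z}_{w'}}]$ is supported on the image under $p_{13}$ of $(\overline{Z}_{s_\alpha} \times \tilde V) \cap (\tilde V \times \overline{Z}_{w'})$, and a standard Bruhat-order computation (using $l(s_\alpha w') = l(w') + 1$) shows this image is contained in $\overline{Z}_{\le w}$. By \cref{lemma: K-theory of Z is free over the diagonal} applied to $Z_{\le w}$, the product therefore expands as a $K^{G \times (\mathbb{G}_m)^I}(Z_\Delta)$-linear combination of $[\mathcal{O}_{\overline{Z}_y}]$ for $y \le w$. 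Second, to pin down the top coefficient, I restrict to the open stratum $Z_w$: by \cref{lemma: Zs and Zw intersection transversally}, $Z_{s_\alpha} \times \tilde V$ and $\tilde V \times Z_{w'}$ meet transversally in $\tilde V^3$, so the derived tensor $p_{12}^*[\mathcal{O}_{Z_{s_\alpha}}] \otimes^L p_{23}^*[\mathcal{O}_{Z_{w'}}]$ has no higher Tors and equals the structure sheaf of the transverse intersection; since $p_{13}$ restricts to an isomorphism from this intersection onto $Z_w$, the pushforward yields exactly $[\mathcal{O}_{Z_w}]$, so the coefficient of $[\mathcal{O}_{\overline{Z}_w}]$ is $1$.

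The main subtlety I anticipate is the transversality and support bookkeeping in the convolution, in particular verifying that the composition of derived pullbacks along $p_{12}, p_{23}$ with the pushforward along $p_{13}$ really produces $[\mathcal{O}_{Z_w}]$ with coefficient $1$, and not some twist coming from the relative dimensions of the projections or from a normal bundle. The containment $\overline{Z}_{s_\alpha} \circ \overline{Z}_{w'} \subset \overline{Z}_{\le w}$ under the length-additive hypothesis is standard for Steinberg-type varieties, so I expect that part to be routine, and the overall argument is the natural unequal-parameter analogue of the Chriss--Ginzburg calculation.
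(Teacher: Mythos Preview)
Your proposal is correct and follows essentially the same approach as the paper: induction on length, using the freeness from \cref{lemma: K-theory of Z is free over the diagonal} together with the transversality from \cref{lemma: Zs and Zw intersection transversally} to show that $[\mathcal{O}_{\overline{Z}_{s_\alpha}}] \star [\mathcal{O}_{\overline{Z}_{w'}}]$ equals $[\mathcal{O}_{\overline{Z}_w}]$ modulo $K^{G \times (\mathbb{G}_m)^I}(Z_{<w})$. The paper carries out your ``restrict to the open stratum'' step by introducing an explicit open subset $U \subset \tilde{V}^3$ over which the closures agree with the open cells, then applies base change and \cref{lemma: K theory restriction with supp commutes with tensor product with supp} to reduce to the transversal situation---precisely the bookkeeping you anticipated.
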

\begin{proof}
    Let $\mathfrak{A}$ be the subalgebra of $K^{G \times (\mathbb{G}_m)^I}(Z)$ generated by $K^{G \times (\mathbb{G}_m)^I}(Z_{\Delta})$ together with the $[\mathcal{O}_{\overline{Z}_{s_{\alpha}}}]$ for $\alpha \in \Pi$. We prove by induction along the length of $w$ that $[\mathcal{O}_{\overline{Z}_w}] \in \mathfrak{A}$ for all $w \in W$. If $l(w)  \in \{ 0, 1 \}$, the claim is true by construction. Now let $w \in W$ and $\alpha \in \Pi$ such that $l(s_{\alpha} w) = l(w) +1$ and assume we have shown the claim for all $y \in W$ with $l(y) \le l(w)$.
    Let
    \begin{align*}
        Z_{s_{\alpha}, w} &:= (\overline{Z}_{s_{\alpha}} \times \tilde{V}) \cap (\tilde{V} \times \overline{Z}_{w}) \\
        Z_{s_{\alpha}, w}^{\circ} &:= (Z_{s_{\alpha}} \times \tilde{V}) \cap (\tilde{V} \times Z_{w}).
    \end{align*}
    The projection onto the first and third factor restricts to a map
    \begin{equation*}
        p_{13} : Z_{s_{\alpha}, w} \rightarrow Z_{\le s_{\alpha}w}.
    \end{equation*}
    Using that $ K^{G \times (\mathbb{G}_m)^I}(Z_{\le s_{\alpha}w}) \subset K^{G \times (\mathbb{G}_m)^I}(Z)$ by \cref{lemma: K-theory of Z is free over the diagonal}, we get
    \begin{equation*}
        [\mathcal{O}_{\overline{Z}_{s_{\alpha}}}] \star [\mathcal{O}_{\overline{Z}_{w}}] \in K^{G \times (\mathbb{G}_m)^I}(Z_{\le s_{\alpha}w}).
    \end{equation*}
    Let
    \begin{equation*}
        U := \tilde{V}^3 \backslash ( (Z_{< s_{\alpha}}  \times \tilde{V}) \cup (\tilde{V} \times Z_{< w}))
    \end{equation*}
    which is an open subset of $\tilde{V}^3$. Note that
    \begin{equation*}
        \begin{aligned}
        U \cap (\overline{Z}_{s_{\alpha}} \times \tilde{V}) & = U \cap (Z_{s_{\alpha}} \times \tilde{V} )\\
        U \cap ( \tilde{V} \times \overline{Z}_w) & = U \cap (  \tilde{V} \times Z_{w}).
        \end{aligned}
    \end{equation*}
    In particular, we have
    \begin{equation*}
        U \cap Z_{s_{\alpha}, w} = U \cap Z_{s_{\alpha}, w}^{\circ} = Z_{s_{\alpha}, w}^{\circ}.
    \end{equation*}
    Thus, $Z_{s_{\alpha}, w}^{\circ} \subset Z_{s_{\alpha}, w}$ is open and the flat pullback
    \begin{equation*}
        \iota_{Z_{s_{\alpha}, w}^{\circ}}^* : K^{G \times (\mathbb{G}_m)^I}(Z_{s_{\alpha}, w}) \rightarrow K^{G \times (\mathbb{G}_m)^I}(Z_{s_{\alpha}, w}^{\circ})
    \end{equation*}
    agrees with the restriction with support map
    \begin{equation*}
        \iota_U^* : K^{G \times (\mathbb{G}_m)^I}(Z_{s_{\alpha}, w}) \rightarrow K^{G \times (\mathbb{G}_m)^I}(Z_{s_{\alpha}, w}^{\circ})
    \end{equation*}
    coming from the inclusion $\iota_U : U \hookrightarrow \tilde{V}^3$. Using this, we can compute the restriction of $[\mathcal{O}_{\overline{Z}_{s_{\alpha}}}] \star [\mathcal{O}_{\overline{Z}_{w}}] \in K^{G \times (\mathbb{G}_m)^I}(Z_{\le s_{\alpha}w})$ to $Z_{s_{\alpha}w} \subset Z_{\le s_{\alpha}w}$:
    \begin{align*}
        \iota_{Z_{s_{\alpha}w}}^* ([\mathcal{O}_{\overline{Z}_{s_{\alpha}}}] \star [\mathcal{O}_{\overline{Z}_{w}}]) \overset{def}&{=} \iota_{Z_{s_{\alpha}w}}^*(p_{13})_* (p_{12}^* [\mathcal{O}_{\overline{Z}_{s_{\alpha}}}]\otimes^L_{\tilde{V}^3} p_{23}^*[\mathcal{O}_{\overline{Z}_{w}}] ) \\
        \overset{base \text{ } change}&{=}(p_{13})_* \iota_{Z_{s_{\alpha}, w}^{\circ}}^*(p_{12}^* [\mathcal{O}_{\overline{Z}_{s_{\alpha}}}]\otimes^L_{\tilde{V}^3} p_{23}^*[\mathcal{O}_{\overline{Z}_{w}}] ) \\
        &= (p_{13})_* \iota_U^*(p_{12}^* [\mathcal{O}_{\overline{Z}_{s_{\alpha}}}]\otimes^L_{\tilde{V}^3} p_{23}^*[\mathcal{O}_{\overline{Z}_{w}}] ) \\
        \overset{\cref{lemma: K theory restriction with supp commutes with tensor product with supp}}&{=} (p_{13})_* (\iota_U^*p_{12}^* [\mathcal{O}_{\overline{Z}_{s_{\alpha}}}]\otimes^L_{U} \iota_U^*p_{23}^*[\mathcal{O}_{\overline{Z}_{w}}] ) \\ 
        &= (p_{13})_* ([\mathcal{O}_{U \cap (\overline{Z}_{s_{\alpha}} \times \tilde{V})}]\otimes^L_{U} [\mathcal{O}_{U \cap ( \tilde{V} \times \overline{Z}_w )}] ) \\
        \overset{\cref{lemma: transversal derived tensor product is trivial}}&{=} (p_{13})_* [\mathcal{O}_{Z_{s_{\alpha},w}^{\circ}}] \\
        \overset{\cref{lemma: Zs and Zw intersection transversally}}&{=} [\mathcal{O}_{Z_{s_{\alpha}w}}].
    \end{align*}
    This implies that
    \begin{equation*}
        [\mathcal{O}_{\overline{Z}_{s_{\alpha}}}] \star [\mathcal{O}_{\overline{Z}_{w}}]  \in [\mathcal{O}_{\overline{Z}_{s_{\alpha}w}}] + K^{G \times (\mathbb{G}_m)^I}(Z_{< s_{\alpha}w}).
    \end{equation*}
    By \cref{lemma: K-theory of Z is free over the diagonal} we have that $K^{G \times (\mathbb{G}_m)^I}(Z_{< s_{\alpha}w})$ is a free $K^{G \times (\mathbb{G}_m)^I}(Z_{\Delta})$-module with basis given by the $[\mathcal{O}_{\overline{Z}_y}]$ with $y < s_{\alpha}w$. Thus, by the induction hypothesis, we get $[\mathcal{O}_{\overline{Z}_{s_{\alpha}w}}] \in \mathfrak{A}$. Hence, $\mathfrak{A}$ contains all $[\mathcal{O}_{\overline{Z}_{w}}]$ for $w \in W$ and thus $\mathfrak{A} = K^{G \times (\mathbb{G}_m)^I}(Z)$ by \cref{lemma: K-theory of Z is free over the diagonal}. This completes the proof.
\end{proof}
We are now ready to prove the main result of this section.
\begin{theorem}\label{theorem: geo rel of affine Hecke algebras with unequal parameters}
    There is an isomorphism of algebras $K^{G \times (\mathbb{G}_m)^I} (Z) \cong \mathcal{H}^{\aff}_{\textbf{q}_V}$.
\end{theorem}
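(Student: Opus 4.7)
The plan is to identify both $K^{G \times (\mathbb{G}_m)^I}(Z)$ and $\mathcal{H}^{\aff}_{\textbf{q}_V}$ with the same subalgebra of $\End_\mathcal{A}(\mathcal{A}[X^*(T)])$, using convolution on the geometric side and the faithful antispherical module on the algebraic side. On one hand, convolution with respect to the smooth ambient spaces $X_1 = X_2 = \tilde{V}$, $X_3 = \pt$ yields an $\mathcal{A}$-algebra homomorphism
\[\Phi \colon K^{G \times (\mathbb{G}_m)^I}(Z) \longrightarrow \End_\mathcal{A}\bigl(K^{G \times (\mathbb{G}_m)^I}(\tilde{V})\bigr) \cong \End_\mathcal{A}(\mathcal{A}[X^*(T)]),\]
where the last identification is \eqref{eq: identification of K(tilde V) with polynomial ring}. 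On the other hand, the identification $M_{asph} \cong \mathcal{A}[X^*(T)]$ via $e^\lambda \otimes 1 \leftrightarrow e^\lambda$ yields an algebra map $\Psi \colon \mathcal{H}^{\aff}_{\textbf{q}_V} \to \End_\mathcal{A}(\mathcal{A}[X^*(T)])$, which is injective by \cref{lemma: antispherical module is faithful}.

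Next I would match the two maps on a generating set. Direct comparison of \cref{prop: formula for convolution with Lambda alpha} with \cref{lemma: explicit formulas for antispherical module}(ii) shows that their right-hand sides coincide, so
\[\Phi\bigl([\mathcal{O}_{\overline{Z}_{s_\alpha}}]\bigr) = \Psi\bigl(-T_{s_\alpha} - \textbf{q}_V(\alpha) e^\alpha\bigr) \qquad (\alpha \in \Pi),\]
while $\Phi(e^\lambda) = \Psi(e^\lambda)$ tautologically on the subalgebra $K^{G \times (\mathbb{G}_m)^I}(Z_\Delta) \cong \mathcal{A}[X^*(T)]$. Because $K^{G \times (\mathbb{G}_m)^I}(Z)$ is generated by $K^{G \times (\mathbb{G}_m)^I}(Z_\Delta)$ together with the $[\mathcal{O}_{\overline{Z}_{s_\alpha}}]$ by \cref{corollary: generated for equivariant K-theory of Steinberg}, and $\mathcal{H}^{\aff}_{\textbf{q}_V}$ is generated by $\mathcal{A}[X^*(T)]$ together with the $T_{s_\alpha}$ by \cref{lemma: alg properties of aff heck with unequal parmaeters}(iv), these generator-wise identities propagate to an equality $\im(\Phi) = \im(\Psi)$. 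The injectivity of $\Psi$ then produces a unique algebra map $\phi \colon K^{G \times (\mathbb{G}_m)^I}(Z) \to \mathcal{H}^{\aff}_{\textbf{q}_V}$ with $\Psi \circ \phi = \Phi$, and $\phi$ is surjective since $\Phi$ hits all of $\im(\Psi)$.

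Finally I would show $\phi$ is injective by a free-module comparison. By construction $\phi$ restricts to the identity on $\mathcal{A}[X^*(T)]$, so it is a left $\mathcal{A}[X^*(T)]$-module map; by \cref{lemma: K-theory of Z is free over the diagonal} and \cref{lemma: alg properties of aff heck with unequal parmaeters}(iii) its source and target are both free left $\mathcal{A}[X^*(T)]$-modules of rank $|W|$. Any surjective map between free modules of equal finite rank over a commutative ring is automatically an isomorphism (for instance by reducing modulo each maximal ideal and comparing determinants), and this finishes the proof.

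The main obstacle I anticipate is the bookkeeping in the generator matching step, but this is already largely absorbed into \cref{prop: formula for convolution with Lambda alpha} and \cref{lemma: explicit formulas for antispherical module}(ii): the two formulas are engineered to line up under the twist $T_{s_\alpha} \leftrightarrow -[\mathcal{O}_{\overline{Z}_{s_\alpha}}] - \textbf{q}_V(\alpha) e^\alpha$, so the remaining content is a clean application of faithfulness and a formal rank count.
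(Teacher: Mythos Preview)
Your proof is correct and follows essentially the same route as the paper: both compare the convolution action on $K^{G\times(\mathbb{G}_m)^I}(\tilde V)\cong\mathcal A[X^*(T)]$ with the antispherical action, match generators via \cref{prop: formula for convolution with Lambda alpha} and \cref{lemma: explicit formulas for antispherical module}, invoke faithfulness (\cref{lemma: antispherical module is faithful}) to get a surjection, and conclude by the free-rank-$|W|$ argument over $\mathcal A[X^*(T)]$. The only cosmetic difference is that the paper cites \cref{lemma: convolution along diagonal} explicitly for the ``tautological'' step $\Phi(e^\lambda)=\Psi(e^\lambda)$, which you might also want to reference.
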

\begin{proof}
    We can identify
    \begin{equation*}
        M_{asph} \overset{\cref{lemma: explicit formulas for antispherical module}}{\cong} \mathcal{A}[X^*(T)]  \overset{\eqref{eq: identification of K(tilde V) with polynomial ring}}{\cong} K^{G \times (\mathbb{G}_m)^I} (\tilde{V}).
    \end{equation*}
    Thus, we can view $M_{apsh}$ both as a $K^{G \times (\mathbb{G}_m)^I} (Z)$-module and an $\mathcal{H}^{\aff}_{\textbf{q}_V}$-module which yields algebra homomorphisms
    \begin{align*}
        \alpha_1 &: K^{G \times (\mathbb{G}_m)^I} (Z)  \rightarrow \End_{\mathcal{A}}(M_{asph}) \\
        \alpha_2 &: \mathcal{H}^{\aff}_{\textbf{q}_V}  \rightarrow \End_{\mathcal{A}}(M_{asph}).
    \end{align*}
    By \cref{prop: formula for convolution with Lambda alpha}, \cref{lemma: convolution along diagonal} and \cref{lemma: explicit formulas for    antispherical module} we have
    \begin{align*}                            \alpha_1([\mathcal{O}_{\overline{Z}_{s_{\alpha}}}]) &= -\alpha_2( T_{s_{\alpha}} + \textbf{q}_V(\alpha)e^{\alpha}) \\
    \alpha_1( (\Delta_{\tilde{V}})_* e^{\lambda}) &= \alpha_2(e^{\lambda})
    \end{align*}
    where $\Delta_{\tilde{V}} : \tilde{V} \hookrightarrow Z$ is the diagonal embedding. Thus, by \cref{corollary: generated for equivariant K-theory of Steinberg} and \cref{lemma: alg properties of aff heck with unequal parmaeters}, $\alpha_1$ and $\alpha_2$ have the same image in $\End_{\mathcal{A}}(M_{asph})$. Moreover, $\alpha_2$ is injective by \cref{lemma: antispherical module is faithful}. Hence, we get a surjective homomorphism of algebras
    \begin{equation*}
        \alpha_2^{-1} \circ \alpha_1 : K^{G \times (\mathbb{G}_m)^I} (Z) \rightarrow \mathcal{H}^{\aff}_{\textbf{q}_V}
    \end{equation*}
    which restricts to an isomorphism on polynomial subalgebras
    \begin{equation*}
        \alpha_2^{-1} \circ \alpha_1: K^{G \times (\mathbb{G}_m)^I} (Z_{\Delta}) \overset{\sim}{\rightarrow} \mathcal{A}[X^*].
    \end{equation*}
    Note that $K^{G \times (\mathbb{G}_m)^I} (Z)$ and $\mathcal{H}^{\aff}_{\textbf{q}_V}$ are free modules of rank $|W|$ over this commutative noetherian subalgebra (by \cref{lemma: alg properties of aff heck with unequal parmaeters,lemma: K-theory of Z is free over the diagonal}). Surjective maps between free modules of the same (finite) rank over a commutative noetherian ring are always isomorphisms (c.f. \cite[Exercise 6.1]{atiyah1994introduction}), so $\alpha_2^{-1} \circ \alpha_1$ is an isomorphism.
\end{proof}
We briefly explain how the construction above specializes to the setting from the introduction. Let $G$ be a simple algebraic group of non simply-laced type defined over an algebraically closed field $k$ of special characteristic. Let $\mathfrak{g}_s \subset \mathfrak{g}$ be a $G$-stable subspace whose non-zero weights are the short roots. Then $V := \mathfrak{g}_s \oplus \mathfrak{g}/\mathfrak{g}_s$ is an $I$-rooted $G$-representation for $I = \{ 1,2 \}$. The associated parameter function is given by
\begin{equation*}
    q_{\textbf{V}}(\alpha) = \begin{cases}
        q_1 & \alpha  \text{ short}, \\
        q_2 & \alpha  \text{ long}.
    \end{cases}
\end{equation*}
\cref{theorem: geo rel of affine Hecke algebras with unequal parameters} then yields an isomorphism
\begin{equation*}
    \mathcal{H}^{\aff}_{q_1, q_2} = \mathcal{H}^{\aff}_{\textbf{q}_V}\cong K^{G \times \mathbb{G}_m \times \mathbb{G}_m}(Z).
\end{equation*}
\section{Simple modules and perverse sheaves}\label{section: simple modules and perverse sheaves}
\subsection{Centralizers and fixed points}
For any variety $Y$ with an action of a diagonalizable group scheme $\mathcal{D}$, we can consider the fixed point locus
\begin{equation*}
    Y^{\mathcal{D}} = \{ y \in Y(k) \mid t_A y_A = y_A \text{ for all } k\text{-algebras } A \text{ and } t_A \in \mathcal{D}(A) \} .
\end{equation*}
Note that $Y^{\mathcal{D}} \subset Y(k)$ is always a closed subset, so we can consider $Y^{\mathcal{D}}$ as a subvariety of $Y$. When $\mathcal{D}$ is reduced, $Y^{\mathcal{D}}$ agrees with the naive fixed point locus $\{y \in Y(k) \mid ty = y \text{ for all } t \in \mathcal{D}(k) \}$.
\begin{remark}
    One can also work with the fixed-point scheme $(Y^{\mathcal{D}})^{sch}$ whose underlying reduced scheme is the subvariety $Y^{\mathcal{D}} \subset Y$ we consider. For our purposes, the difference between the two will not matter, since we eventually always apply $K(-)$, $H_*(-)$ or $D^b_c(-)$ to our fixed-point varieties/schemes which give the same result on the underlying reduced scheme. Note however, that for non-reduced diagonalizable group schemes $\mathcal{D}$ it may happen that $Y^{\mathcal{D}} \neq Y^{\mathcal{D}_{red}}$, so the non-reduced structure on $\mathcal{D}$ is important.
\end{remark}
We need the following well-known fact (c.f \cite[A.8.10]{conrad2015pseudo}).
\begin{lemma}\label{lemma: taking fixed points preserves smoothness}
        Let $Y$ be a smooth variety with an action of a diagonalizable group scheme $\mathcal{D}$. Then $Y^{\mathcal{D}}$ is smooth and $T_y (Y^{\mathcal{D}}) = (T_y Y)^{\mathcal{D}}$ for any $y \in Y^{\mathcal{D}}$.
\end{lemma}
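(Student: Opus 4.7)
The plan is to reduce to a purely local question by showing that near any fixed point, the $\mathcal{D}$-action on $Y$ is $\mathcal{D}$-equivariantly étale-locally linear. Once the action is linearized, fixed points in a linear $\mathcal{D}$-representation are just the weight-zero summand, from which both claims follow immediately.

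First I would fix $y \in Y^{\mathcal{D}}$. Since $\mathcal{D}$ is diagonalizable, the category of $\mathcal{D}$-representations is semisimple: every representation decomposes as a direct sum of characters indexed by $X^*(\mathcal{D})$. Because $y$ is a fixed point, the maximal ideal $\mathfrak{m}_y \subset \mathcal{O}_{Y,y}$ is $\mathcal{D}$-stable, and hence so is the cotangent space $\mathfrak{m}_y/\mathfrak{m}_y^2$. Picking a homogeneous basis of $\mathfrak{m}_y/\mathfrak{m}_y^2$ and lifting each vector to a $\mathcal{D}$-semi-invariant element of $\mathfrak{m}_y$ (which is possible by averaging / semisimplicity applied to finite-dimensional $\mathcal{D}$-subrepresentations of $\mathfrak{m}_y$), I obtain elements $f_1,\dots, f_n \in \mathfrak{m}_y$ of definite $\mathcal{D}$-weights $\chi_1,\dots,\chi_n$, whose images form a basis of $\mathfrak{m}_y/\mathfrak{m}_y^2$. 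Relabel so that $\chi_1, \dots, \chi_r = 0$ and $\chi_{r+1},\dots,\chi_n \neq 0$.

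Since $Y$ is smooth of dimension $n$ at $y$, the tuple $(f_1,\dots,f_n)$ defines an étale morphism
\begin{equation*}
    \varphi : U \longrightarrow \mathbb{A}^n
\end{equation*}
from an open neighborhood $U \subset Y$ of $y$, sending $y$ to the origin. By construction $\varphi$ is $\mathcal{D}$-equivariant for the linear $\mathcal{D}$-action on $\mathbb{A}^n$ with weights $\chi_1,\dots,\chi_n$, and under the natural identification $T_y Y = (\mathfrak{m}_y/\mathfrak{m}_y^2)^{\vee}$ this linear action agrees with the $\mathcal{D}$-action on $T_y Y$.

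Now I would take $\mathcal{D}$-fixed points. Passing to fixed points commutes with étale morphisms (the formation of the fixed-point subvariety, as defined in the excerpt via all $k$-algebra valued points, is functorial and compatible with flat base change; or equivalently one uses that $\mathcal{O}_{Y,y}^{\mathcal{D}}$ is the weight-zero part of the grading), so $\varphi$ restricts to an étale morphism $U^{\mathcal{D}} \to (\mathbb{A}^n)^{\mathcal{D}}$. The target is the linear subspace $\mathbb{A}^r \subset \mathbb{A}^n$ cut out by $f_{r+1} = \dots = f_n = 0$, which is smooth of dimension $r = \dim (T_y Y)^{\mathcal{D}}$. Since étale morphisms preserve smoothness, $Y^{\mathcal{D}}$ is smooth at $y$ of the same dimension, and the tangent space is
\begin{equation*}
    T_y(Y^{\mathcal{D}}) \;=\; T_0\bigl((\mathbb{A}^n)^{\mathcal{D}}\bigr) \;=\; (T_y Y)^{\mathcal{D}},
\end{equation*}
as desired.

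The only non-routine step is the equivariant lift of the cotangent basis to semi-invariant generators of $\mathfrak{m}_y$; this is the real content and is where the diagonalizability of $\mathcal{D}$ (including the possibly non-reduced case) enters, via the fact that $\mathcal{D}$-representations are nothing more than $X^*(\mathcal{D})$-graded vector spaces, so semisimplicity holds in arbitrary characteristic. Everything after that is formal.
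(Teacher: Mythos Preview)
The paper does not give its own proof here; it simply records the statement as well-known and cites \cite[A.8.10]{conrad2015pseudo}. Your argument---choose $\mathcal{D}$-semi-invariant local coordinates at a fixed point to obtain an equivariant \'etale map to a linear $\mathcal{D}$-representation, then compute the fixed locus in the linear model---is precisely the standard proof found in that reference. One minor slip in your parenthetical: the coordinate ring of $Y^{\mathcal{D}}$ near $y$ is $\mathcal{O}/J$ with $J$ the ideal generated by nonzero-weight elements, not the weight-zero subring $\mathcal{O}^{\mathcal{D}}$ (the latter computes the quotient $Y/\!/\mathcal{D}$, not the fixed subscheme). Your claim that $\varphi$ restricts to an \'etale map $U^{\mathcal{D}} \to (\mathbb{A}^n)^{\mathcal{D}}$ is nonetheless correct: passing to completed local rings at $y$, the map $\hat{\varphi}$ is an isomorphism of $X^*(\mathcal{D})$-graded rings, and the ideals of nonzero-weight elements correspond under it.
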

If $\mathcal{D}$ is a closed subgroup scheme of the reductive group $G$, the fixed point locus $G^{\mathcal{D}}$ is the centralizer of $\mathcal{D}$ in $G$. Moreover, the centralizer $G^{\mathcal{D}}$ has the following concrete description.
\begin{lemma}\label{lemma: explicit descrpition of centralizer in reductive group}
    Let $\mathcal{D} \subset G$ be a diagonalizable subgroup scheme. Then $(G^{\mathcal{D}})^{\circ}$ is reductive. Moreover, if $\mathcal{D}$ is contained in the maximal torus $T \subset G$ with $K_{\mathcal{D}} := \ker(X^*(T) \rightarrow X^*(D))$, then
    \begin{align*}
        (G^{\mathcal{D}})^{\circ} & = \langle T, U_{\alpha} \mid \alpha \in \Phi \cap K_{\mathcal{D}} \rangle \\
        G^{\mathcal{D}} &= \langle (G^{\mathcal{{D}}})^\circ , \dot{w} \mid \dot{w} \in N_G(T), w(K_{\mathcal{D}}) = K_{\mathcal{D}} , w|_{X^*(T)/K_{\mathcal{D}}} = \id \rangle.
    \end{align*}
\end{lemma}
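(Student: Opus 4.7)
The plan is to prove the explicit descriptions first and then read off reductivity. Since any diagonalizable subgroup scheme of $G$ is conjugate into a maximal torus (both statements of the lemma being invariant under conjugation), I may assume $\mathcal{D} \subset T$. The key input is \cref{lemma: taking fixed points preserves smoothness}: since $G$ is smooth, $G^{\mathcal{D}}$ is smooth and
\[ \mathrm{Lie}(G^{\mathcal{D}}) = T_e(G^{\mathcal{D}}) = (T_e G)^{\mathcal{D}} = \mathfrak{g}^{\mathcal{D}}. \]
Decomposing $\mathfrak{g} = \mathfrak{t} \oplus \bigoplus_{\alpha \in \Phi} \mathfrak{g}_{\alpha}$ into $T$-weight spaces and noting that $\mathcal{D} \subset T$ acts on $\mathfrak{g}_{\alpha}$ through the character $\alpha|_{\mathcal{D}} \in X^*(\mathcal{D}) = X^*(T)/K_{\mathcal{D}}$, which is trivial exactly when $\alpha \in K_{\mathcal{D}}$, I obtain
\[ \mathfrak{g}^{\mathcal{D}} = \mathfrak{t} \oplus \bigoplus_{\alpha \in \Phi \cap K_{\mathcal{D}}} \mathfrak{g}_{\alpha}. \]

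Next, set $H := \langle T, U_{\alpha} \mid \alpha \in \Phi \cap K_{\mathcal{D}} \rangle$. Since $K_{\mathcal{D}}$ is a subgroup of $X^*(T)$, the subset $\Phi \cap K_{\mathcal{D}}$ is a closed symmetric subsystem of $\Phi$, and the standard theory of subgroups of reductive groups generated by $T$ and root subgroups yields that $H$ is connected reductive with maximal torus $T$, root system $\Phi \cap K_{\mathcal{D}}$, and Lie algebra equal to the right-hand side displayed above. Conversely, $T$ commutes with $\mathcal{D}$ and $\mathcal{D}$ acts trivially on each $U_{\alpha} \cong \mathbb{G}_a$ with $\alpha \in K_{\mathcal{D}}$, so $H \subset G^{\mathcal{D}}$, and by connectedness $H \subset (G^{\mathcal{D}})^{\circ}$. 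Both sides are smooth connected subgroups with the same Lie algebra, so $H = (G^{\mathcal{D}})^{\circ}$. This simultaneously proves the first formula and shows that $(G^{\mathcal{D}})^{\circ}$ is reductive.

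For the second formula, take any $g \in G^{\mathcal{D}}$. Because $g$ centralises $\mathcal{D}$, conjugation by $g$ preserves $G^{\mathcal{D}}$, so $gTg^{-1}$ is again a maximal torus of $(G^{\mathcal{D}})^{\circ}$. Conjugacy of maximal tori in the connected reductive group $(G^{\mathcal{D}})^{\circ}$ gives $h \in (G^{\mathcal{D}})^{\circ}$ with $hgTg^{-1}h^{-1} = T$; then $hg \in N_G(T) \cap G^{\mathcal{D}}$, yielding $g \in (G^{\mathcal{D}})^{\circ} \cdot (N_G(T) \cap G^{\mathcal{D}})$. An element $\dot{w} \in N_G(T)$ centralises $\mathcal{D}$ if and only if the $w$-action on $T$ is trivial on $\mathcal{D}$, equivalently, $w$ acts trivially on $X^*(\mathcal{D}) = X^*(T)/K_{\mathcal{D}}$; trivial action on the quotient automatically forces $w(K_{\mathcal{D}}) = K_{\mathcal{D}}$, matching the description in the lemma. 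The only real delicacy throughout is that $\mathcal{D}$ may be non-reduced, so one cannot compute $G^{\mathcal{D}}$ on $k$-points; the point of invoking \cref{lemma: taking fixed points preserves smoothness} is precisely to get $\dim G^{\mathcal{D}} = \dim \mathfrak{g}^{\mathcal{D}}$, which is what promotes the Lie-algebra comparison into the identification $H = (G^{\mathcal{D}})^{\circ}$.
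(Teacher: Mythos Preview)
Your proof is correct and follows essentially the same line as the paper's: compute $\mathrm{Lie}(G^{\mathcal{D}}) = \mathfrak{g}^{\mathcal{D}}$ via \cref{lemma: taking fixed points preserves smoothness}, identify it with the Lie algebra of $\langle T, U_{\alpha} \mid \alpha \in \Phi \cap K_{\mathcal{D}}\rangle$, and for the full centraliser conjugate an arbitrary maximal torus back to $T$ inside $(G^{\mathcal{D}})^{\circ}$ to reduce to $N_G(T)^{\mathcal{D}}$. The only notable difference is that the paper cites \cite[A.8.12]{conrad2015pseudo} for the reductivity of $(G^{\mathcal{D}})^{\circ}$ in general, whereas you reduce to $\mathcal{D}\subset T$ by conjugating $\mathcal{D}$ into a maximal torus and then read reductivity off from the explicit description; your observation that $w|_{X^*(T)/K_{\mathcal{D}}}=\id$ already forces $w(K_{\mathcal{D}})=K_{\mathcal{D}}$ is also a mild streamlining of the paper's two-step ``normalise, then centralise'' argument.
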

\begin{proof}
    The fact that $(G^{\mathcal{D}})^{\circ}$ is reductive is contained in \cite[A.8.12]{conrad2015pseudo}. Now let $\mathcal{D}$ be contained in a maximal torus $T \subset G$. Then $T$ is a maximal torus in $ (G^{\mathcal{D}})^{\circ} $ and by \cref{lemma: taking fixed points preserves smoothness} we have $Lie(G^{\mathcal{D}}) = Lie(G)^{\mathcal{D}}$. Note that $Lie(G)^{\mathcal{D}}$ is spanned by $Lie(T)$ together with the root spaces on which $\mathcal{D}$ acts trivially. These are precisely the root spaces of those $\alpha$ with $\alpha \in K_{\mathcal{D}}$. This proves that $ (G^{\mathcal{D}})^{\circ} = \langle T, U_{\alpha} \mid \alpha \in \Phi \cap K_{\mathcal{D}} \rangle  $. 
    
    Let $g \in G^{\mathcal{D}}$. Then $gTg^{-1} \subset (G^{\mathcal{D}})^{\circ}$ is a maximal torus. Since all maximal tori in a reductive group are conjugate, we can find $h \in (G^{\mathcal{D}})^{\circ}$ such that $hgTg^{-1}h^{-1} = T$ and thus $hg \in N_G(T)$. This shows that $G^{\mathcal{D}}$ is generated by $(G^{\mathcal{{D}}})^\circ $ together with $N_G(T)^{\mathcal{D}}$. An element $\dot{w} \in N_G(T)$ normalizes $\mathcal{D}$ if and only if the corresponding Weyl group element $w \in W$ normalizes $K_{\mathcal{D}}$. Moreover, if $\dot{w}$ normalizes $\mathcal{D}$, then it centralizes $\mathcal{D}$ if and only if $w$ acts trivially on $X^*(\mathcal{D}) \cong X^*(T)/ K_{\mathcal{D}}$. This proves that $G^{\mathcal{D}} = \langle (G^{\mathcal{{D}}})^\circ , \dot{w} \mid \dot{w} \in N_G(T), w(K_{\mathcal{D}}) = K_{\mathcal{D}} , w|_{X^*(T)/K_{\mathcal{D}}} = \id \rangle$.
\end{proof}
\begin{lemma}\cite[Theorem A]{richardson1982orbits}\label{lemma: orbit finiteness for fixed points}
    Let $Y$ be a $G$-variety, $\mathcal{D} \subset G$ a smooth (i.e. reduced) diagonalizable subgroup scheme and $H \subset G$ a closed connected normal subgroup. If $Y$ has finitely many $H$-orbits, then $Y^{\mathcal{D}}$ has finitely many $H^{\mathcal{D}}$-orbits. Moreover, if $Y$ is a single $H$-orbit, then any $H^{\mathcal{D}}$-orbit in $Y^{\mathcal{D}}$ is closed.
\end{lemma}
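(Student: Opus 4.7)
The plan is to reduce to the case where $Y$ is a single $H$-orbit and then prove that every $H^{\mathcal{D}}$-orbit in $Y^{\mathcal{D}}$ is open by a smoothness-of-orbit-map argument. For the reduction, I first observe that since $H \triangleleft G$, each $d \in \mathcal{D}$ sends an $H$-orbit $Hy$ to $H \cdot dy$, so $\mathcal{D}$ merely permutes the finite set of $H$-orbits of $Y$. Moreover, if some point of $Hy$ is $\mathcal{D}$-fixed then $Hy$ is $\mathcal{D}$-stable: indeed, for $y_0 \in (Hy)^{\mathcal{D}}$ one has $d \cdot Hy_0 = (dHd^{-1}) \cdot y_0 = H \cdot y_0$. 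Consequently $Y^{\mathcal{D}} = \bigsqcup_i Y_i^{\mathcal{D}}$ where the union runs over those $H$-orbits $Y_i$ containing a $\mathcal{D}$-fixed point, so it suffices to handle a single orbit $Y = Hy_0$ with $y_0 \in Y^{\mathcal{D}}$.

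Fix such $y_0$ and consider the orbit map $\phi : H \to Y$, $h \mapsto hy_0$. Letting $\mathcal{D}$ act on $H$ by conjugation, I would verify that $\phi$ is $\mathcal{D}$-equivariant, since $\phi(dhd^{-1}) = dhd^{-1} y_0 = dh y_0 = d \phi(h)$ using $d y_0 = y_0$. Restricting to fixed points gives $\phi^{\mathcal{D}} : H^{\mathcal{D}} \to Y^{\mathcal{D}}$, whose image is exactly $H^{\mathcal{D}} \cdot y_0$. The key step is to show $\phi^{\mathcal{D}}$ is smooth at $e$. Since $Y$ is a homogeneous space for $H$, the differential $d\phi_e : Lie(H) \to T_{y_0} Y$ is surjective; it is $\mathcal{D}$-equivariant, and taking invariants under the diagonalizable $\mathcal{D}$ is exact on rational representations, so $Lie(H)^{\mathcal{D}} \to (T_{y_0} Y)^{\mathcal{D}}$ is surjective. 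Using \cref{lemma: taking fixed points preserves smoothness} (applied both to the smooth variety $H$ and to $Y$) together with the smoothness of $\mathcal{D}$, this identifies with the surjection $Lie(H^{\mathcal{D}}) \to T_{y_0}(Y^{\mathcal{D}})$. Since both source and target of $\phi^{\mathcal{D}}$ are smooth and the differential at $e$ is surjective, $\phi^{\mathcal{D}}$ is smooth at $e$, hence its image contains an open neighborhood of $y_0$ in $Y^{\mathcal{D}}$. Thus $H^{\mathcal{D}} \cdot y_0$ is open in $Y^{\mathcal{D}}$.

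Since every $H^{\mathcal{D}}$-orbit in $Y^{\mathcal{D}}$ is open, the orbits form a partition of $Y^{\mathcal{D}}$ into open sets; hence each orbit is also closed in $Y^{\mathcal{D}}$, proving the second assertion. Noetherianity of $Y^{\mathcal{D}}$ then implies there are only finitely many such orbits, and combined with the first paragraph this yields the finiteness claim in general. The main subtlety I anticipate is justifying the identification $Lie(H)^{\mathcal{D}} = Lie(H^{\mathcal{D}})$ needed in the smoothness step: this relies on $\mathcal{D}$ being reduced, since otherwise the reduced fixed-point subvariety $H^{\mathcal{D}}$ could have strictly smaller Lie algebra than the naive invariants. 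Everything else reduces to the tangent-space identity from \cref{lemma: taking fixed points preserves smoothness} and exactness of $\mathcal{D}$-invariants.
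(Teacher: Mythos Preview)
The paper does not provide its own proof of this lemma; it simply cites Richardson's Theorem~A. Your argument is essentially the standard tangent-space proof of Richardson's result, and it is correct in characteristic zero.

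There is, however, a genuine gap in positive characteristic, which is precisely the setting of this paper. Your claim that ``since $Y$ is a homogeneous space for $H$, the differential $d\phi_e: Lie(H) \to T_{y_0}Y$ is surjective'' requires the orbit map $\phi: H \to Y$ to be separable, equivalently that the scheme-theoretic stabilizer of $y_0$ in $H$ is smooth. This is automatic in characteristic zero but can fail in positive characteristic: for instance, take $H = \mathbb{G}_m$ acting transitively on $Y = \mathbb{G}_m$ via $t \cdot x = t^p x$; then $d\phi_e = 0$. Without surjectivity of $d\phi_e$ you cannot pass to $\mathcal{D}$-invariants and conclude that $(d\phi^{\mathcal{D}})_e$ is surjective, so the openness of the $H^{\mathcal{D}}$-orbit does not follow from your argument as written. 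Richardson's result does hold in arbitrary characteristic for linearly reductive $\mathcal{D}$, but the proof requires more care (e.g.\ a cohomological argument via $H^1(\mathcal{D}, H_{y_0})$, or an \'etale slice).

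Incidentally, the subtlety you flag at the end---that $Lie(H)^{\mathcal{D}} = Lie(H^{\mathcal{D}})$ needs $\mathcal{D}$ reduced---is not actually where the difficulty lies: \cref{lemma: taking fixed points preserves smoothness} gives this identification for \emph{any} diagonalizable $\mathcal{D}$, since the fixed-point scheme of a diagonalizable group scheme acting on a smooth variety is already smooth (hence reduced). The genuine characteristic-$p$ obstruction sits one step earlier, in the separability of $\phi$ itself.
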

\begin{remark}
    The assumption that $\mathcal{D}$ is smooth in \cref{lemma: orbit finiteness for fixed points} cannot be dropped (see \cref{example: example where fixed point space has infinitely many orbits}).
\end{remark}

\begin{lemma}\label{lemma: fixed points of flag variety}
    Let $\mathcal{D} \subset T$ be a diagonalizable subgroup scheme. Then for any $gB \in \mathcal{B}^{\mathcal{D}}$ the following are true
    \begin{enumerate}[label = (\roman*)]
        \item There is a maximal torus of $gBg^{-1}$ containing $\mathcal{D}$,
        \item $(gBg^{-1})^{\mathcal{D}}$ is a Borel subgroup of $(G^{\mathcal{D}})^{\circ}$,
        \item Each connected component of $\mathcal{B}^{\mathcal{D}}$ is $(G^{\mathcal{D}})^{\circ}$-equivariantly isomorphic to the flag variety of $(G^{\mathcal{D}})^\circ$.
    \end{enumerate}
\end{lemma}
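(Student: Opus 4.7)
The plan is to use the equivalence $gB \in \mathcal{B}^{\mathcal{D}} \Leftrightarrow \mathcal{D} \subset gBg^{-1}$ and then combine the explicit description of $(G^{\mathcal{D}})^{\circ}$ from \cref{lemma: explicit descrpition of centralizer in reductive group} with the smoothness of fixed-point loci from \cref{lemma: taking fixed points preserves smoothness}. Part (i) is immediate: the condition $gB \in \mathcal{B}^{\mathcal{D}}$ is the same as $\mathcal{D} \subset gBg^{-1}$, and a diagonalizable subgroup scheme of a connected solvable group is always contained in some maximal torus, so there exists a maximal torus $T' \subset gBg^{-1}$ with $\mathcal{D} \subset T'$.

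For (ii), I take $T'$ from (i); since $T'$ is a torus centralizing $\mathcal{D}$, of dimension $\dim T$, it is also a maximal torus of $(G^{\mathcal{D}})^{\circ}$. By \cref{lemma: explicit descrpition of centralizer in reductive group}, the root system of $(G^{\mathcal{D}})^{\circ}$ with respect to $T'$ is $\Phi_H := \{\alpha \in \Phi(T', G) \mid \alpha|_{\mathcal{D}} = 0\}$. I pick a regular cocharacter $\lambda \in X_*(T')$ cutting out the roots of $gBg^{-1}$ with respect to $T'$. Since $\lambda$ is regular on all of $\Phi(T', G)$, it is a fortiori regular on the sub-root system $\Phi_H$, and $\{\alpha \in \Phi_H : U_{\alpha} \subset gBg^{-1}\}$ is thus a ``negative'' system in $\Phi_H$, determining a Borel $B'' \subset (G^{\mathcal{D}})^{\circ}$. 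By \cref{lemma: taking fixed points preserves smoothness}, $(gBg^{-1})^{\mathcal{D}}$ is smooth with Lie algebra
\[
\mathrm{Lie}(gBg^{-1})^{\mathcal{D}} = \mathrm{Lie}(T') \oplus \bigoplus_{\substack{\alpha \in \Phi_H \\ U_{\alpha} \subset gBg^{-1}}} \mathfrak{g}_{\alpha} = \mathrm{Lie}(B'').
\]
Since $B'' \subset (gBg^{-1})^{\mathcal{D}}$ is a closed smooth subgroup with matching Lie algebras, their identity components coincide; a separate connectedness check for $(gBg^{-1})^{\mathcal{D}}$, carried out via the semidirect decomposition $gBg^{-1} = T' \ltimes U'$ and a filtration argument on $(U')^{\mathcal{D}}$, then yields the equality $(gBg^{-1})^{\mathcal{D}} = B''$.

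For (iii), I form the $(G^{\mathcal{D}})^{\circ}$-equivariant morphism $\phi : (G^{\mathcal{D}})^{\circ}/B'' \to \mathcal{B}^{\mathcal{D}}$, $hB'' \mapsto hgB$, which is well-defined since $B''$ stabilizes $gB$. The tangent space comparison
\[
T_{gB}\mathcal{B}^{\mathcal{D}} = (T_{gB}\mathcal{B})^{\mathcal{D}} = (\mathfrak{g}/\mathrm{Lie}(gBg^{-1}))^{\mathcal{D}} \cong \mathrm{Lie}((G^{\mathcal{D}})^{\circ})/\mathrm{Lie}(B'')
\]
shows that $\phi$ is an isomorphism on tangent spaces at the base point, and hence étale everywhere by homogeneity of the source. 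Projectivity of $(G^{\mathcal{D}})^{\circ}/B''$ forces the image to be closed, which combined with étale-ness makes $\phi$ an open immersion onto the connected component of $\mathcal{B}^{\mathcal{D}}$ containing $gB$. The main obstacle is the connectedness statement in (ii) when $\mathcal{D}$ is non-reduced, since this underlies both the Borel identification and the identification of $(G^{\mathcal{D}})^{\circ}$-orbits with full connected components; the remaining steps are routine applications of reductive group structure theory and étale morphisms of homogeneous spaces.
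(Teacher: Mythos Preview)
Your argument is correct and runs parallel to the paper's, but the execution differs in a way worth noting. The paper works throughout with the explicit Bruhat decomposition: it writes $g = u\dot{w}$ with $u \in \prod_{\alpha \in \Phi^- \cap {}^w\Phi^+} U_\alpha$, observes that $gB \in \mathcal{B}^{\mathcal{D}}$ forces $u \in G^{\mathcal{D}}$, and takes $uTu^{-1}$ as the required maximal torus in $gBg^{-1}$. For (ii) it then reduces by conjugation to the standard Borel and computes $B^{\mathcal{D}} = T \times \prod_{\alpha \in \Phi^- \cap K_{\mathcal{D}}} U_\alpha$ directly from the product decomposition, so connectedness is visible without any filtration argument. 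For (iii) it verifies that the orbit map is an isomorphism by checking it on the big Bruhat cell of $(G^{\mathcal{D}})^\circ$, then uses the tangent-space count to see that every $(G^{\mathcal{D}})^\circ$-orbit is a full component. Your route is more structure-theoretic: you invoke the general fact that diagonalizable subgroup schemes of connected solvable groups lie in maximal tori, match Lie algebras to identify $(gBg^{-1})^{\mathcal{D}}$ with a Borel, and use an \'etale/properness argument for (iii). The paper's explicit computation sidesteps the connectedness subtlety in (ii) that you correctly flag as the delicate point; your approach is more portable but needs the filtration step you sketch.

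One small point in your (iii): ``closed image plus \'etale'' does not by itself yield an open immersion---a nontrivial finite \'etale cover would satisfy both. What you need is that $\phi$ is injective, and this follows immediately from your (ii): the stabilizer of $gB$ in $(G^{\mathcal{D}})^\circ$ is $(G^{\mathcal{D}})^\circ \cap gBg^{-1} \subset (gBg^{-1})^{\mathcal{D}} = B''$. With injectivity in hand, \'etale plus injective gives an open immersion, and properness then forces the image to be a connected component.
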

\begin{proof}
    Let $gB \in \mathcal{B}$. By the Bruhat decomposition we may assume $g = u\dot{w} \in G$ with $u \in \prod_{\alpha \in \Phi^- \cap {}^w \Phi^+} U_{\alpha}$. Then $gB \in \mathcal{B}^{\mathcal{D}}$ if and only if $ u \in G^{\mathcal{D}}$. Hence, $\mathcal{D} = u\mathcal{D}u^{-1}$ is contained in the maximal torus $S:= uTu^{-1} = u\dot{w} T(u\dot{w})^{-1} \subset gBg^{-1}$ which proves the first claim.
    
    Using the first claim, it suffices to prove the second claim for $gBg^{-1} = B$. Let $K_{\mathcal{D}} := ker(X^*(T) \rightarrow X^*(\mathcal{D}))$. We then have
    \begin{equation*}
        B^{\mathcal{D}} \cong (T \times \prod_{\alpha \in \Phi^-} U_{\alpha})^{\mathcal{D}} = T \times  \prod_{\alpha \in \Phi^- \cap K_{\mathcal{D}}} U_{\alpha}.
    \end{equation*}
    It now follows from \cref{lemma: explicit descrpition of centralizer in reductive group} that $B^{\mathcal{D}}$ is a Borel subgroup of $(G^{\mathcal{D}})^{\circ}$. This proves the second claim.

    The orbit map gives rise to a bijective morphism
    \begin{equation}\label{eq: orbit is flag variety}
        (G^{\mathcal{D}})^{\circ} /B^{\mathcal{D}} \rightarrow (G^{\mathcal{D}})^{\circ} \cdot B/B \subset  \mathcal{B}^{\mathcal{D}}.
    \end{equation}
    One can easily check that this is an isomorphism: By equivariance it suffices to check this after replacing $(G^{\mathcal{D}})^{\circ}$ with the open subset $B^{\mathcal{D}} \dot{w}_0 B^{\mathcal{D}} \subset (G^{\mathcal{D}})^{\circ}$ where $w_0$ is the longest element of the Weyl group of $(G^{\mathcal{D}})^{\circ}$. Then both sides naturally identify with $\prod_{\alpha \in \Phi^- \cap K_{\mathcal{D}}}U_{\alpha}$ and the map is the identity. By \cref{lemma: taking fixed points preserves smoothness} we have $T_{B} \mathcal{B}^{\mathcal{D}} =(\mathfrak{n}^+)^{\mathcal{D}}$. Note that $(\mathfrak{n}^+)^{\mathcal{D}}$ is also the positive part of $Lie((G^{\mathcal{D}})^{\circ})$ by \cref{lemma: explicit descrpition of centralizer in reductive group}. Thus, using the isomorphism from \eqref{eq: orbit is flag variety}, we see that the $(G^{\mathcal{D}})^{\circ}$-orbit through $B \in \mathcal{B}^{\mathcal{D}}$ has the same dimension as $\mathcal{B}^{\mathcal{D}}$. The same argument applies to all other orbits, which shows that the orbits all have full dimension. Thus, they have to be the connected components. Note that by \eqref{eq: orbit is flag variety} the orbits are $(G^{\mathcal{D}})^{\circ}$-equivariantly isomorphic to the flag variety of $(G^{\mathcal{D}})^{\circ}$.
\end{proof}

\subsection{The affine Hecke algebra at a central character}
For any algebraic group (or diagonalizable group scheme) $H$, we write $R(H) = K^H(pt)$ for the representation ring. Let $V$ be an $I$-rooted $G$-representation. Recall from \cref{theorem: geo rel of affine Hecke algebras with unequal parameters} that there is an isomorphism $K^{G\times (\mathbb{G}_m)^I}(Z) \cong \mathcal{H}^{\aff}_{\textbf{q}_V}$. Under this isomorphism, the center of $\mathcal{H}^{\aff}_{\textbf{q}_V}$ corresponds to
\begin{equation*}
    R(G \times (\mathbb{G}_m)^I) = \mathbb{Z}[X^*(\hat{T})]^W = Z(\mathcal{H}^{\aff}_{\textbf{q}_V})
\end{equation*}
where
\begin{equation*}
    \hat{T} := T \times (\mathbb{G}_m)^I.
\end{equation*}
Setting $\mathcal{T} := Spec(\mathbb{C}[X^*(T)])$ and 
\begin{equation*}
    \hat{\mathcal{T}} := Spec(\mathbb{C}[X^*(\hat{T})]) = \mathcal{T} \times (\mathbb{C}^{\times})^I
\end{equation*}
we can identify
\begin{equation*}
    \mathbb{C} \otimes_{\mathbb{Z}} Z(\mathcal{H}^{\aff}_{\textbf{q}_V}) = \mathbb{C}\otimes_{\mathbb{Z}} \mathcal{A}[X^*(T)]^W \cong \mathcal{O}(\hat{\mathcal{T}})^W \cong \mathcal{O}(\hat{\mathcal{T}}/W).
\end{equation*}
This yields a bijection
\begin{align*}
    \hat{\mathcal{T}}/W  &\overset{1:1}{\leftrightarrow} \{ \text{Central characters } Z(\mathcal{H}^{\aff}_{\textbf{q}_V})\rightarrow \mathbb{C} \} \\
    a&\mapsto \chi_a.
\end{align*}
We denote the $1$-dimensional $Z(\mathcal{H}^{\aff}_{\textbf{q}_V})$-module corresponding to $\chi_a$ by $\mathbb{C}_a$. The goal of this section is to give a geometric description of
\begin{equation*}
    \mathcal{H}^{\aff}_{\chi_a} := \mathcal{H}^{\aff}_{\textbf{q}_V} \otimes_{Z(\mathcal{H}^{\aff}_{\textbf{q}_V})} \mathbb{C}_a = (\mathbb{C} \otimes_{\mathbb{Z}} \mathcal{H}^{\aff}_{\textbf{q}_V})/(\ker(\chi_a)).
\end{equation*}

For this, we will need the following consequence of the Pittie-Steinberg theorem \cite{steinberg1975theorem}.
\begin{lemma}\label{lemmma: Kunneth formula for flag variety}
    Assume that $G$ has simply connected derived subgroup. Then there is an isomorphism of $R(T)$-modules $ R(T) \otimes_{R(G)}K^{G}(\mathcal{B}) \cong K^{T}(\mathcal{B})$ induced by restriction along $T \subset G$.
    
\end{lemma}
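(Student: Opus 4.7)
The plan is to combine the Pittie--Steinberg theorem, whose conclusion under the simply-connected-derived-subgroup hypothesis is that $R(T)$ is a free $R(G)$-module of rank $|W|$, with a change-of-groups identification and an equivariant Kunneth isomorphism.

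First, by the induction isomorphism \eqref{eq: induction iso}, I identify $K^T(\mathcal{B}) \cong K^G(G \times^T \mathcal{B})$. The $G$-equivariant map $[g,xB] \mapsto (gT, gxB)$ is an isomorphism $G \times^T \mathcal{B} \overset{\sim}{\to} (G/T) \times \mathcal{B}$, where the target carries the diagonal $G$-action (a direct check shows well-definedness and invertibility). Combining these gives $K^T(\mathcal{B}) \cong K^G((G/T) \times \mathcal{B})$.

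Next, I apply the equivariant Kunneth formula. The external product
\begin{equation*}
K^G(G/T) \otimes_{R(G)} K^G(\mathcal{B}) \longrightarrow K^G((G/T) \times \mathcal{B}), \quad [\mathcal{E}] \otimes [\mathcal{F}] \mapsto [p_1^*\mathcal{E} \otimes^L p_2^* \mathcal{F}]
\end{equation*}
is an isomorphism because $K^G(G/T) = K^T(\pt) = R(T)$ is flat (indeed free of rank $|W|$) over $R(G)$ by Pittie--Steinberg, so the relevant higher Tor groups vanish. Composing with the previous identification and $K^G(G/T) \cong R(T)$ yields $R(T) \otimes_{R(G)} K^G(\mathcal{B}) \cong K^T(\mathcal{B})$. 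Unraveling the maps shows that $r \otimes [\mathcal{F}]$ is sent to $r \cdot \mathrm{Res}^G_T[\mathcal{F}]$, so the resulting isomorphism is indeed induced by restriction.

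The main obstacle is justifying the Kunneth step: equivariant Kunneth does not hold for arbitrary pairs of $G$-spaces, and its validity here hinges entirely on the freeness of $R(T)$ over $R(G)$ supplied by Pittie--Steinberg. This is also the sole point in the argument where the hypothesis on the derived subgroup is used.
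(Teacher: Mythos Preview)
Your proposal is correct and follows essentially the same route as the paper, which simply defers to \cite[§6.1]{chriss2009representation}: there the argument is precisely induction $K^T(\mathcal{B}) \cong K^G(G\times^T \mathcal{B}) \cong K^G(\mathcal{B}\times\mathcal{B})$ followed by a K\"unneth formula whose validity rests on the Pittie--Steinberg freeness of $R(T)$ over $R(G)$. One small caveat: the phrase ``higher Tor groups vanish'' is not by itself a proof of equivariant K\"unneth---Chriss--Ginzburg establish it via the Bruhat cell filtration together with freeness---so you should cite that result rather than treat Tor-vanishing as sufficient on its own.
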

\begin{proof}
    The proof in characteristic $0$ in \cite[§6.1]{chriss2009representation} applies word by word in positive characteristic.
\end{proof}
For any $a \in \hat{\mathcal{T}}$ we get a subgroup
\begin{equation*}
    X^*_a := \{ \lambda \in X^*(\hat{\mathcal{T}}) \mid \lambda(a) = 1 \}\subset X^*(\hat{\mathcal{T}}) \cong X^*(\hat{T})
\end{equation*}
and a corresponding diagonalizable subgroup scheme
\begin{equation*}
    \hat{T}_a = Spec(k[X^*(\hat{T})/X^*_a])\subset \hat{T}.
\end{equation*}
\begin{lemma}\label{lemma: restriction of K theory on Z to Ta}
    Assume that $G$ has simply connected derived subgroup. Then restriction along $ \hat{T}_a \hookrightarrow G \times (\mathbb{G}_m)^I$ induces an isomorphism of $R(\hat{T}_a)$-algebras $ R(\hat{T}_a) \otimes_{R(G\times (\mathbb{G}_m)^I)} K^{G \times (\mathbb{G}_m)^I}(Z) \cong K^{\hat{T}_a}(Z)$.
\end{lemma}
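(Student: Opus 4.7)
The plan is to reduce the statement to the analogous base-change identity for the flag variety $\mathcal{B}$, and then to factor the restriction $R(G\times(\mathbb{G}_m)^I)\to R(\hat{T}_a)$ through $R(\hat{T})$, so that I can apply Pittie--Steinberg (\cref{lemmma: Kunneth formula for flag variety}) for the first leg and a Bruhat-cell argument for the second.

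First I would observe that the proof of \cref{lemma: K-theory of Z is free over the diagonal} goes through verbatim for $H=G\times(\mathbb{G}_m)^I$ and for any diagonalizable subgroup scheme thereof (e.g.\ $\hat{T}$ or $\hat{T}_a$), since it only uses the Thom isomorphism for the vector bundles $Z_w\to Y_w$ and an induction along the Bruhat filtration. Consequently both $K^{G\times(\mathbb{G}_m)^I}(Z)$ and $K^{\hat{T}_a}(Z)$ are free modules over $K^{G\times(\mathbb{G}_m)^I}(Z_\Delta)$ resp. $K^{\hat{T}_a}(Z_\Delta)$ with the same distinguished basis $\{[\mathcal{O}_{\overline{Z}_w}]\}_{w\in W}$, and the restriction map sends this basis to itself. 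It therefore suffices to prove the corresponding base-change identity for $Z_\Delta$, and via the Thom isomorphism $K^{H}(Z_\Delta)\cong K^{H}(\mathcal{B})$ this reduces to establishing
\begin{equation*}
    R(\hat{T}_a)\otimes_{R(G\times(\mathbb{G}_m)^I)} K^{G\times(\mathbb{G}_m)^I}(\mathcal{B}) \;\cong\; K^{\hat{T}_a}(\mathcal{B}).
\end{equation*}

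To prove the identity on $\mathcal{B}$, I would split it into two steps. Since $G\times(\mathbb{G}_m)^I$ has the same derived subgroup as $G$, which is simply connected by hypothesis, \cref{lemmma: Kunneth formula for flag variety} applies and gives
\begin{equation*}
    K^{\hat{T}}(\mathcal{B}) \;\cong\; R(\hat{T})\otimes_{R(G\times(\mathbb{G}_m)^I)} K^{G\times(\mathbb{G}_m)^I}(\mathcal{B}).
\end{equation*}
For the remaining step $\hat{T}\to\hat{T}_a$, I would use the Bruhat decomposition $\mathcal{B}=\bigsqcup_{w\in W} U\dot{w}B/B$, which is a $\hat{T}$-equivariant cellular filtration whose strata are affine spaces $\mathbb{A}^{l(w)}$ with linear $\hat{T}$-action (the $(\mathbb{G}_m)^I$ factor acting trivially). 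For any diagonalizable subgroup scheme $\mathcal{D}\subset\hat{T}$, possibly non-reduced, the Thom isomorphism gives $K^{\mathcal{D}}(\mathbb{A}^{l(w)})\cong R(\mathcal{D})$, and induction along the localization long exact sequences of the filtration shows that $K^{\mathcal{D}}(\mathcal{B})$ is free over $R(\mathcal{D})$ with basis $\{[\mathcal{O}_{\overline{B\dot wB/B}}]\}_{w\in W}$. Applying this to $\mathcal{D}=\hat{T}$ and $\mathcal{D}=\hat{T}_a$ and noting that restriction preserves the basis yields $K^{\hat{T}_a}(\mathcal{B})\cong R(\hat{T}_a)\otimes_{R(\hat{T})} K^{\hat{T}}(\mathcal{B})$. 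Chaining the two isomorphisms above gives the required identity on $\mathcal{B}$, and hence the theorem.

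The main obstacle I anticipate lies in the cellular freeness step for the possibly non-reduced scheme $\hat{T}_a$: one must verify that the Thom isomorphism for linear $\hat{T}_a$-representations on affine space and the left-exactness of the localization sequences both remain valid when the acting diagonalizable group scheme is non-reduced, and that the Bruhat basis is compatible under restriction from $\hat{T}$ to $\hat{T}_a$. Once this is in place, the argument is a formal combination of freeness and base change.
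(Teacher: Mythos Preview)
Your proposal is correct and follows essentially the same route as the paper: reduce to $Z_\Delta$ via the freeness result of \cref{lemma: K-theory of Z is free over the diagonal} with basis $\{[\mathcal{O}_{\overline{Z}_w}]\}$, apply the Thom isomorphism to pass to $\mathcal{B}$, factor the restriction through $\hat{T}$, invoke Pittie--Steinberg (\cref{lemmma: Kunneth formula for flag variety}) for the first leg, and use the Schubert basis $\{[\mathcal{O}_{\overline{B\dot wB/B}}]\}$ to handle the second leg $\hat{T}\to\hat{T}_a$. The obstacle you flag about non-reduced $\hat{T}_a$ is not raised explicitly in the paper, but is covered by the general validity of the Thom isomorphism and the localization sequence for diagonalizable group schemes used throughout.
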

\begin{proof}
    Recall from \cref{lemma: K-theory of Z is free over the diagonal} that $K^{\hat{T}_a}(Z)$ (resp. $K^{G \times (\mathbb{G}_m)^I}(Z)$) is a free $K^{\hat{T}_a}(\Delta_Z)$-module (resp. $K^{G \times (\mathbb{G}_m)^I}(\Delta_Z)$-module) with basis $\{ [\mathcal{O}_{\overline{Z}_w}]\}$. Hence, it suffices to show that the canonical map
    \begin{equation*}
        R(\hat{T}_a) \otimes_{R(G\times (\mathbb{G}_m)^I)} K^{G \times (\mathbb{G}_m)^I}(\Delta_Z) \rightarrow K^{\hat{T}_a}(\Delta_Z)
    \end{equation*}
    is an isomorphism. Since $\Delta_Z \cong \tilde{V} \rightarrow \mathcal{B}$ is a vector bundle, the Thom isomorphism reduces this to showing that the canonical map
    \begin{equation*}
        R(\hat{T}_a) \otimes_{R(G\times (\mathbb{G}_m)^I)} K^{G \times (\mathbb{G}_m)^I}(\mathcal{B}) \rightarrow K^{\hat{T}_a}(\mathcal{B})
    \end{equation*}
    is an isomorphism. For this it suffices to show that the maps
    \begin{align*}
        R(\hat{T}) \otimes_{R(G\times (\mathbb{G}_m)^I)} K^{G \times (\mathbb{G}_m)^I}(\mathcal{B}) &\rightarrow K^{\hat{T}}(\mathcal{B}) \\
        R(\hat{T}_a) \otimes_{R(\hat{T})} K^{\hat{T}}(\mathcal{B}) &\rightarrow K^{\hat{T}_a}(\mathcal{B})
    \end{align*}
    are isomorphisms. The first map is an isomorphism by \cref{lemmma: Kunneth formula for flag variety}. The second is an isomorphism since $K^{\hat{T}}(\mathcal{B})$ and $K^{\hat{T}_a}(\mathcal{B})$ are both free modules over $R(\hat{T})$ (resp. $R(\hat{T}_a)$) with the Schubert basis $\{ [\mathcal{O}_{\overline{B\dot{w}B/B}}] \mid w \in W \}$.
\end{proof}

We define
\begin{equation*}
    \mathfrak{N}(V) := im (\tilde{V} \rightarrow V).
\end{equation*}
Note that $\mathfrak{N}(V) \subset V$ is closed since $ \tilde{V} \rightarrow V$ is proper.
\begin{lemma}\label{lemma: basic results about V le 0}
    Let $V^{\le 0} := V^- \oplus V_0$. Then we have:
    \begin{enumerate}[label = (\roman*)]
        \item $V = G \cdot V^{\le 0}$;
        \item $ \mathfrak{N}(V) \cap V^{\le 0} = V^-$.
    \end{enumerate}
\end{lemma}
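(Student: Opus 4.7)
Both statements concern the $B$-stable subspace $V^{\le 0}$ and its behavior under the $G$-action. I plan to prove (ii) via a Bialynicki--Birula style flow argument using a regular dominant cocharacter $\rho^\vee : \mathbb{G}_m \to T$, and (i) via a dimension and differential calculation on the proper morphism $\mu_{\le 0} : \tilde{V}_{\le 0} := G \times^B V^{\le 0} \to V$. Throughout the proof of (ii), $\rho^\vee$ is chosen so that $V^{\rho^\vee} = V_0$ and any $v \in V^{\le 0}$ flows under $\rho^\vee(t)$, as $t \to \infty$, to its $V_0$-component.

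For (ii) the key intermediate claim is $\mathfrak{N}(V) \cap V_0 = \{0\}$. The map $\mu : \tilde{V} \to V$ is proper and $\mathbb{G}_m$-equivariant with respect to the natural $\rho^\vee$-actions on both sides. Given any $v \in \mathfrak{N}(V) \cap V_0 = \mathfrak{N}(V)^{\rho^\vee}$, the fiber $\mu^{-1}(v)$ is non-empty, complete, and $\rho^\vee$-stable, hence contains a $\rho^\vee$-fixed point by the Borel fixed-point theorem. A direct computation in $\tilde{V} = G \times^B V^-$ shows that $\tilde{V}^{\rho^\vee} = \{[\dot{w}, 0] : w \in W\}$: the fixed-point condition on $[g, u]$ forces $\rho^\vee \subset gBg^{-1}$, and regularity of $\rho^\vee$ then yields $T \subset gBg^{-1}$, so that $g \in N_G(T) \cdot B$; reducing to $[\dot{w}, u]$, the condition that $(w^{-1}\rho^\vee)$ fix $u \in V^-$ forces $u = 0$ by regularity. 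All such fixed points are sent to $0$ under $\mu$, so $v = 0$. Having established $\mathfrak{N}(V) \cap V_0 = \{0\}$, pick any $v \in \mathfrak{N}(V) \cap V^{\le 0}$; the limit $v_0 := \lim_{t \to \infty} \rho^\vee(t) v$ equals the $V_0$-component of $v$, and by closedness of $\mathfrak{N}(V)$ together with $\mathbb{G}_m$-stability of both $\mathfrak{N}(V)$ and $V^{\le 0}$ we have $v_0 \in \mathfrak{N}(V) \cap V^{\le 0} \cap V_0 = \{0\}$. Hence the $V_0$-component of $v$ vanishes and $v \in V^-$.

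For (i), $\tilde{V}_{\le 0}$ is a rank-$(\dim V^{\le 0})$ vector bundle over $\mathcal{B}$, so $\dim \tilde{V}_{\le 0} = |\Phi^+| + \dim V^{\le 0} = \dim V$. Since $\mu_{\le 0}$ is proper its image is closed, and since $V$ is irreducible of the same dimension, the map is surjective iff it is dominant, for which it suffices to exhibit one point at which the differential is surjective. At $[e, v_0]$ with $v_0 \in V_0$, the tangent space identifies with $\mathfrak{g}/\mathfrak{b} \oplus V^{\le 0}$ and the differential sends $(X + \mathfrak{b}, w) \mapsto Xv_0 + w$. Because $\mathfrak{b}$ preserves $V^{\le 0}$, surjectivity reduces to showing that the induced linear map $\mathfrak{n}^+ \to V^+ \cong V/V^{\le 0}$, $X \mapsto (Xv_0)^+$, is surjective; by $T$-equivariance this map is block-diagonal with $\alpha$-block $\mathfrak{g}_\alpha \to V_\alpha$, $X \mapsto X v_0$, so existence of a suitable $v_0$ amounts to the non-vanishing $\mathfrak{g}_\alpha V_0 = V_\alpha$ for each $\alpha \in \Phi^+$. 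This I intend to deduce from the $SL_2^{(\alpha)}$-structure of $V$: since $V_\alpha$ is one-dimensional and cannot be a lowest weight in any non-trivial $\mathfrak{sl}_2^{(\alpha)}$-irreducible, $X_{-\alpha}$ sends $V_\alpha$ non-trivially into $V_0$, and applying $X_\alpha$ to the resulting vector recovers $V_\alpha$ via a non-zero scalar on the one-dimensional weight-$\alpha$ space.

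The main technical obstacle lies in step (i) when $p$ is special for $G$: then $\alpha(\alpha^\vee) = 0$ in $k$ for short $\alpha$, and the naive $\mathfrak{sl}_2^{(\alpha)}$-representation theory degenerates, so one has to argue either via divided powers/the algebraic group $SL_2^{(\alpha)}$ directly, or by choosing $v_0$ in $V^{\le 0}$ rather than in $V_0$ to exploit off-diagonal contributions in the block matrix $\mathfrak{n}^+ \to V^+$. For the representations $V = \mathfrak{g}_s \oplus \mathfrak{g}/\mathfrak{g}_s$ the non-vanishing $\mathfrak{g}_\alpha V_0 \neq 0$ can also be verified directly from the explicit description of $\mathfrak{g}_s$ in types $B_n, C_n, F_4, G_2$, which secures (i) in the cases relevant to the paper.
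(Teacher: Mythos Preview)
Your argument for (ii) is correct and takes a different route from the paper. The paper argues directly via the Bruhat decomposition: if $v \in \mathfrak{N}(V) \cap V^{\le 0}$ and $gv \in V^-$, write $g = b\dot w b'$ and observe $b'v \in {}^{w^{-1}}V^- \cap V^{\le 0} \subset V^-$. Your Bialynicki--Birula flow argument (first showing $\mathfrak{N}(V)\cap V_0=\{0\}$ via Borel fixed points on the proper fiber, then flowing off the $V_0$-component) is a clean alternative, and your computation of $\tilde V^{\rho^\vee}$ is correct.

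Your argument for (i), however, has a genuine gap. The differential of $\mu_{\le 0}$ need not be surjective at \emph{any} point for a general rooted representation in positive characteristic, so no choice of $v_0\in V^{\le 0}$ rescues the approach. A concrete counterexample: take $G=SL_2$ in characteristic~$2$ and $V=L(2)=L(1)^{(1)}$, the Frobenius twist of the standard representation. Its weights are exactly $\pm\alpha$, each with multiplicity one, and $V_0=0$, so $V$ is rooted. But the Lie algebra $\mathfrak g$ acts trivially on $V$ (the group action factors through Frobenius), so the differential of $\mu_{\le 0}$ at every point has image contained in $V^{\le 0}\subsetneq V$. Yet $\mu_{\le 0}$ is surjective, since $SL_2$ acts transitively on $V\setminus\{0\}$ through Frobenius. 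Your $\mathfrak{sl}_2^{(\alpha)}$ heuristic ``$X_\alpha X_{-\alpha}$ recovers $V_\alpha$ up to a nonzero scalar'' breaks down precisely because that scalar is $\alpha(\alpha^\vee)=2$, which vanishes in characteristic~$2$. The fallback of verifying $\mathfrak g_\alpha V_0 = V_\alpha$ case-by-case also cannot help when $V_0=0$.

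The paper avoids differentials entirely. It picks $v\in V^-$ with nonzero component exactly in $V_{-\alpha}$ for each \emph{simple} $\alpha$, and shows via Bruhat that $gv\in V^{\le 0}$ forces $g\in B$, so $\tilde\mu^{-1}(v)$ is a single point. The fiber-dimension inequality then gives $\dim G\cdot V^{\le 0}\ge \dim G\times^B V^{\le 0}=\dim V$, hence equality by irreducibility. This argument is characteristic-free and uses only that $V$ is rooted; you should replace your differential computation with it.
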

\begin{proof}
    Consider the map $\tilde{\mu}: G \times^B V^{\le 0} \rightarrow V$, $(g,v) \mapsto gv$. Let $v \in V^- = \bigoplus_{\alpha \in \Phi^+} V_{- \alpha}$ be an element with non-zero component in $V_{-\alpha}$ for each simple root $\alpha$ and a zero component in each $V_{-\alpha}$ for $\alpha \in \Phi^+$ not simple. Let $g \in G$ such that $gv \in V^{\le 0}$ and write $g = b\dot{w} b'$ with $b,b' \in B$ and $\dot{w} \in N_G(T)$. Then $\dot{w}b'v \in V^{\le 0}$. Now $b'v \in V^{\le 0}$ has a non-zero component in $V_{-\alpha}$ for each simple $\alpha$ and thus $w(-\alpha) \in \Phi^-$ for all simple $\alpha$. This implies $w = e$ and thus $g \in B$. Hence, $\tilde{\mu}^{-1}(v) = \{ eB\}$ is a singleton. By a standard theorem on fiber dimensions of dominant morphisms (c.f. \cite[\href{https://stacks.math.columbia.edu/tag/0B2L}{Tag 0B2L}]{stacks-project}), we get
    \begin{equation*}
        0 = \dim \tilde{\mu}^{-1}(v) \ge \dim G \times ^B V^{\le 0} -  \dim G \cdot V^{\le 0}.
    \end{equation*}
    Since $\dim G \times ^B V^{\le 0} = \dim V^{\le 0} + |\Phi^+| = \dim V$, this implies  $\dim  G \cdot V^{\le 0}  \ge \dim V$. By the irreducibility of $V$ this implies $G \cdot V^{\le 0} =  V$.

    Clearly, $V^- \subset \mathfrak{N}(V) \cap V^{\le 0}$. Let $v \in \mathfrak{N}(V) \cap V^{\le 0}$. Then we can find $g \in G$ such that $gv \in V^-$. Let $g = b\dot{w}b' \in G$ with $b,b' \in B$. Then $b'v = \dot{w}^{-1} b^{-1} gv \in {}^{w^{-1}} V^- \cap V^{\le 0} $. But ${}^{w^{-1}}V^- \cap V^{\le 0} \subset V^-$ since ${}^{w^{-1}}V^- = \bigoplus_{\alpha \in \Phi^+} V_{- w^{-1}(\alpha)}$. Hence, $b'v \in V^-$ and thus also $v\in V^-$. This shows that $V^- = \mathfrak{N}(V) \cap V^{\le 0}$.
\end{proof}
Taking $\hat{T}_a$-fixed points in $\mu : \tilde{V} \rightarrow \mathfrak{N}(V)$, we obtain a map $\mu^a : \tilde{V}^{\hat{T}_a} \rightarrow \mathfrak{N}(V)^{\hat{T}_a}$. We can then consider
\begin{equation}\label{eq: def of fixed point Springer sheaf}
    \textbf{S}^{a} := \mu^a_* \textbf{1}_{\tilde{V}^{\hat{T}_a}} \in D^b_c(\mathfrak{N}(V)^{\hat{T}_a})
\end{equation}
where $D^b_c(-)$ denotes the constructible derived category (see \cref{section: app Borel Moore}). Let $\Perv_{G^{\hat{T}_a}}(\mathfrak{N}(V)^{\hat{T}_a})$ be the abelian category of $G^{\hat{T}_a}$-equivariant perverse sheaves on $\mathfrak{N}(V)^{\hat{T}_a}$. The map $\mu^a$ is proper, so $ \textbf{S}^{a} $ is a $G^{\hat{T}_a}$-equivariant semisimple complex, i.e. it can be written as a direct sum of shifts of elements in $\Irr(\Perv_{G^{\hat{T}_a}}(\mathfrak{N}(V)^{\hat{T}_a}))$. We set
\begin{equation*}
    \Irr(\textbf{S}^a) := \left\{  \mathcal{F} \in \Irr(\Perv_{G^{\hat{T}_a}}(\mathfrak{N}(V)^{\hat{T}_a})) \mid \substack{ \mathcal{F}[n] \text{ is a direct summand} \\\text{of } \textbf{S}^a \text{ for some } n\in \mathbb{Z}} \right\}.
\end{equation*}
By \cite[Thm 8.6.12]{chriss2009representation} there is a canonical bijection
\begin{equation}\label{eq: irreps of Ext algebra are simple perv sheaves}
    \Irr(\Hom_{D^b_c(\mathfrak{N}(V)^{\hat{T}_a})}^* (\textbf{S}^a, \textbf{S}^a)) \overset{1:1}{\leftrightarrow} \Irr(\textbf{S}^a).
\end{equation}

\begin{theorem}\label{thm: affine Hecke algebra at central character is Ext algebra}
    Let $G$ be a reductive group and $V$ an $I$-rooted $G$-representation. Assume that $G$ has simply connected derived subgroup. Then, for any $a \in \hat{\mathcal{T}}$, there is an isomorphism of $\mathbb{C}$-algebras
    \begin{equation*}
        \mathcal{H}^{\aff}_{\chi_a} \cong  H_*(Z^{\hat{T}_a}) \cong \Hom_{D^b_c(\mathfrak{N}(V)^{\hat{T}_a})}^* (\textbf{S}^a, \textbf{S}^a).
    \end{equation*}
    In particular, there is a canonical bijection
    \begin{equation*}
        \Irr(\mathcal{H}^{\aff}_{\chi_a}) \overset{1:1}{\longleftrightarrow} \Irr(\textbf{S}^a).
    \end{equation*}
\end{theorem}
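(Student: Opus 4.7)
The plan is to split the proof into two parts: first, identify $\mathcal{H}^{\aff}_{\chi_a}$ with $H_*(Z^{\hat{T}_a})$ via Thomason concentration and equivariant Riemann--Roch; second, identify the latter with the Ext-algebra of $\textbf{S}^a$ via the standard Springer-sheaf formalism.

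For the first identification, the starting point is the geometric realization of \cref{theorem: geo rel of affine Hecke algebras with unequal parameters}. Under that isomorphism, the center $Z(\mathcal{H}^{\aff}_{\textbf{q}_V}) = \mathcal{A}[X^*]^W$ (\cref{lemma: alg properties of aff heck with unequal parmaeters}(v)) corresponds to $R(G \times (\mathbb{G}_m)^I) = R(\hat{T})^W$, so specializing at $\chi_a$ and applying \cref{lemma: restriction of K theory on Z to Ta} gives
\begin{equation*}
    \mathcal{H}^{\aff}_{\chi_a} \cong \mathbb{C}_{\chi_a} \otimes_{R(\hat{T}_a)} K^{\hat{T}_a}(Z),
\end{equation*}
where $\chi_a: R(\hat{T}_a)_{\mathbb{C}} \to \mathbb{C}$ is the augmentation $e^{\bar{\lambda}} \mapsto \lambda(a)$, which is well-defined by the very definition of $X^*_a$. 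Next I would invoke Thomason's concentration theorem for the diagonalizable group scheme $\hat{T}_a$ acting on $Z$: the closed inclusion $i: Z^{\hat{T}_a} \hookrightarrow Z$ induces an isomorphism $i_*: K^{\hat{T}_a}(Z^{\hat{T}_a}) \otimes_{R(\hat{T}_a)} \mathbb{C}_{\chi_a} \overset{\sim}{\to} K^{\hat{T}_a}(Z) \otimes_{R(\hat{T}_a)} \mathbb{C}_{\chi_a}$. Since $\hat{T}_a$ acts trivially on $Z^{\hat{T}_a}$, the left hand side collapses to $K(Z^{\hat{T}_a})_{\mathbb{C}}$, and a bivariant Riemann--Roch / Chern character argument then identifies this with $H_*(Z^{\hat{T}_a})$ as convolution algebras, yielding the first isomorphism.

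For the second identification, note that the fixed-point morphism $\mu^a: \tilde{V}^{\hat{T}_a} \to \mathfrak{N}(V)^{\hat{T}_a}$ is proper (as the restriction of the proper morphism $\mu$ to the closed fixed-point locus), and $\tilde{V}^{\hat{T}_a}$ is smooth by \cref{lemma: taking fixed points preserves smoothness}. Since taking $\hat{T}_a$-fixed points commutes with fiber products at the level of underlying reduced varieties, one has $Z^{\hat{T}_a} \cong \tilde{V}^{\hat{T}_a} \times_{\mathfrak{N}(V)^{\hat{T}_a}} \tilde{V}^{\hat{T}_a}$. The standard convolution-to-Ext isomorphism for a proper map from a smooth source (\cite[Thm.~8.6.7]{chriss2009representation}) then produces an algebra isomorphism
\begin{equation*}
    H_*(Z^{\hat{T}_a}) \cong \Hom^*_{D^b_c(\mathfrak{N}(V)^{\hat{T}_a})}(\textbf{S}^a, \textbf{S}^a),
\end{equation*}
and the bijection on simples follows immediately from \eqref{eq: irreps of Ext algebra are simple perv sheaves}.

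The main obstacle is bookkeeping: at each step one must verify that the algebra structure is preserved, meaning that convolution in equivariant $K$-theory matches specialization at $\chi_a$, which matches convolution in Borel--Moore homology, which in turn matches Yoneda composition on Ext. A secondary difficulty is that $\hat{T}_a$ may be non-reduced and the ambient geometry lives in positive characteristic, so one must use sufficiently general versions of both Thomason concentration and bivariant Riemann--Roch; these should be available via the $K$-theoretic and Borel--Moore formalism recalled in the appendix.
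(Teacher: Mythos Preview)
Your plan is correct and follows essentially the same chain of reductions as the paper's proof: geometric realization, base change to $R(\hat{T}_a)$ via \cref{lemma: restriction of K theory on Z to Ta}, Thomason localization (compatibly with convolution as in \cref{lemma: convolution compatible with K theory localization}), collapse via trivial action, Riemann--Roch, and finally the convolution-to-Ext isomorphism \eqref{eq: identification of BM homology and Ext algebra}. The one point you gloss over is that the Riemann--Roch map $\mathbb{C}\otimes_{\mathbb{Z}} K(Z^{\hat{T}_a}) \to H_*(Z^{\hat{T}_a})$ is only an algebra \emph{homomorphism} a priori; the paper verifies it is an \emph{isomorphism} by reducing to the cycle class map and invoking the explicit description of the connected components of $\tilde{V}^{\hat{T}_a}$ from \cref{lemma: connnected components of fixed point resolution} together with a cellular-type argument, so you should be prepared to supply that ingredient.
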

\begin{proof}
    Note that
    \begin{equation}\label{eq: k theory for trivial action}
        K^{\hat{T}_a}(Z^{\hat{T}_a}) \cong  K(Z^{\hat{T}_a}) \otimes_{\mathbb{Z}} R(\hat{T}_a)
    \end{equation}
    since $\hat{T}_a$ acts trivially on $ Z^{\hat{T}_a}$ (see \cite[Lemme  5.6]{thomason1986lefschetz}). Moreover, we claim that algebra homomorphism
    \begin{equation}\label{eq: RR map for Z fixed point space}
        \mathbb{C} \otimes_{\mathbb{Z}} K(Z^{\hat{T}_a}) \overset{RR}{\rightarrow}H_*(Z^{\hat{T}_a})
    \end{equation}    
    from \cref{lemma: convolution compatible with Chern character} is an isomorphism. By \cref{thm: chern character} this can be reduced to showing that the cycle class map $c_{Z^{\hat{T}_a}}: \mathbb{C} \otimes_{\mathbb{Z}} A(Z^{\hat{T}_a}) \rightarrow H_*(Z^{\hat{T}_a})$ is an isomorphism. We will show in \cref{lemma: connnected components of fixed point resolution} that each connected component of $\tilde{V}^{\hat{T}_a}$ is of the form $(G^{\hat{T}_a})^{\circ} \times^{B^{\hat{T}_a}} (V^-)^{\hat{T}_a}$. It then follows from \cite[Prop. 2.7]{antor2024formality} that $c_{Z^{\hat{T}_a}}$ is an isomorphism (see also \cite[§6.2]{chriss2009representation}). Also, note that the central character $\chi_a : R(G \times (\mathbb{G}_m)^I) \rightarrow \mathbb{C}_a$ factors through a map $R(\hat{T}_a) = \mathbb{Z}[X^*(\hat{\mathcal{T}})/X^*_a]\overset{ev_a}{\rightarrow} \mathbb{C}_a$. Using this, we get 
    \begin{align*}
        \mathcal{H}^{\aff}_{\chi_a} \overset{def}&{=} \mathcal{H}^{\aff}_{\textbf{q}_V} \otimes_{Z(\mathcal{H}^{\aff}_{\textbf{q}_V})} \mathbb{C}_a \\
        \overset{\cref{theorem: geo rel of affine Hecke algebras with unequal parameters}}&{\cong} K^{G \times (\mathbb{G}_m)^I}(Z) \otimes_{R(G\times (\mathbb{G}_m)^I)} \mathbb{C}_a \\
        &\cong K^{G \times (\mathbb{G}_m)^I}(Z) \otimes_{R(G\times (\mathbb{G}_m)^I)} R(\hat{T}_a) \otimes_{R(\hat{T}_a)} \mathbb{C}_a \\
        \overset{\cref{lemma: restriction of K theory on Z to Ta}}&{\cong} K^{\hat{T}_a}(Z) \otimes_{R(\hat{T}_a)} \mathbb{C}_a \\
        \overset{\cref{lemma: convolution compatible with K theory localization}}&{\cong} K^{\hat{T}_a}(Z^{\hat{T}_a}) \otimes_{R(\hat{T}_a)} \mathbb{C}_a \\
        &\overset{\eqref{eq: k theory for trivial action}}{\cong}  K(Z^{\hat{T}_a}) \otimes_{\mathbb{Z}} R(\hat{T}_a) \otimes_{R(\hat{T}_a)} \mathbb{C}_a\\
        &\cong  K(Z^{\hat{T}_a}) \otimes_{\mathbb{Z}} \mathbb{C}\\
        \overset{\eqref{eq: RR map for Z fixed point space}}&{\cong} H_*(Z^{\hat{T}_a}) .
    \end{align*}
    The isomorphism $H_*(Z^{\hat{T}_a}) \cong \Hom_{D^b_c(\mathfrak{N}(V)^{\hat{T}_a})}^* (\textbf{S}^a, \textbf{S}^a)$ follows from \eqref{eq: identification of BM homology and Ext algebra} and the parameterization of simples follows from \eqref{eq: irreps of Ext algebra are simple perv sheaves}.
\end{proof}
\begin{remark}
    The correspondence from \cref{thm: affine Hecke algebra at central character is Ext algebra} can also be lifted to an equivalence of triangulated categories. More specifically, it follows from \cite[Theorem 4.26, Corollary 5.6]{antor2024formality} that the full triangulated subcategory of $D^b_c(\mathfrak{N}(V)^{\hat{T}_a}) $ generated by the direct summands of $\textbf{S}^a$ is equivalent to the perfect derived category of dg-modules $D_{\text{perf}}(\mathcal{H}^{\aff}_{\chi_a}-\text{dgMod})$ where we view $\mathcal{H}^{\aff}_{\chi_a} \cong \Hom_{D^b_c(\mathfrak{N}(V)^{\hat{T}_a})}^* (\textbf{S}^a, \textbf{S}^a)$ as a dg-algebra with vanishing differential and the $\Hom^*$-grading.
\end{remark}

\section{A Deligne-Langlands correspondence for unequal parameters}\label{section: a dl correspodence with unequal parameters}
Throughout this section, we fix an $I$-rooted $G$-representation $V$.
\subsection{The geometric conditions}\label{section: geometric conditions}
Let $a = (s, (t_i)_{i \in I} ) \in \hat{\mathcal{T}}$. By \cref{thm: affine Hecke algebra at central character is Ext algebra} parameterizing the simple $\mathcal{H}^{\aff}$-modules with central character $\chi_a$ comes down to determining which simple perverse sheaves occur in $\textbf{S}^a \in D^b_c(\mathfrak{N}(V)^{\hat{T}_a})$. A favorable situation for this is when there are only finitely many $G^{\hat{T}_a}$-orbits in $\mathfrak{N}(V)^{\hat{T}_a}$. In this case, there is a bijection
\begin{align*}
    \Irr(\Perv_{G^{\hat{T}_a}}(\mathfrak{N}(V)^{\hat{T}_a})) &\overset{1:1}{\leftrightarrow} \{(x,\rho) \mid x \in \mathfrak{N}(V)^{\hat{T}_a},  \rho \in \Irr( A(a,x))\}/G^{\hat{T}_a} \\
    IC(G^{\hat{T}_a} \cdot x, \rho) & \mapsto (x,\rho)
\end{align*}
where
\begin{equation*}
    A(a,x) =A_G(a,x):= G_x^{\hat{T}_a} / (G_x^{\hat{T}_a})^\circ.
\end{equation*}
Let
\begin{equation*}
    \mathcal{B}_x := \mu^{-1}(x) \cong \{ gB \in \mathcal{B} \mid g^{-1}x \in V^-\}
\end{equation*}
be the `exotic Springer fiber' of $x$. Then there is a canonical $A(a,x)$-action on $H_*(\mathcal{B}_x^{\hat{T}_a})$ and we can consider the set
\begin{equation*}
    \mathcal{P}(G, V, a) := \{(x,\rho) \mid x \in \mathfrak{N}(V)^{\hat{T}_a},  \rho \in \Irr( A(a,x)),  H_*(\mathcal{B}_x^{\hat{T}_a})_{\rho} \neq 0\}/G^{\hat{T}_a}
\end{equation*}
where for any $A(a,x)$-representation $M$, we write
\begin{equation*}
    M_{\rho} := \Hom_{A(a,x)}(\rho, M).
\end{equation*}
For each $(x,\rho) \in \mathcal{P}(G, V, a)$ there is a canonical $H_*(Z^{\hat{T}_a})$-module structure on
\begin{equation*}
    M(a,x,\rho) := H_*(\mathcal{B}_x^{\hat{T}_a})_{\rho}
\end{equation*}
via convolution. Using the isomorphism $\mathcal{H}^{\aff}_{\chi_a} \cong H_*(Z^{\hat{T}_a})$ from \cref{thm: affine Hecke algebra at central character is Ext algebra}, we can view $M(a,x,\rho)$ as an $\mathcal{H}^{\aff}_{\chi_a} $-module. The $M(a,x,\rho)$ are also called `standard modules'.
\begin{lemma}\label{lemma: description of DL corr with standard modules}
    If there are only finitely many $G^{\hat{T}_a}$-orbits on $\mathfrak{N}(V)^{\hat{T}_a}$, then there is a canonical injection
    \begin{equation}\label{eq: injective DL parametrization}
        \begin{aligned}
            \Irr(\mathcal{H}^{\aff}_{\chi_a}) &\hookrightarrow \mathcal{P}(G, V, a) \\
            L &\mapsto (x_L, \rho_L). 
        \end{aligned}    
        \end{equation}
        The pair $(x_L, \rho_L)$ is uniquely determined by the following two properties:
        \begin{enumerate}[label = (\roman*)]
            \item $L$ appears as a composition factor of $M(a,x_L,\rho_L)$,
            \item If $L$ appears as a composition factor of $M_{a,x',\rho'}$, then $x' \in \overline{G^{\hat{T}_a} \cdot x_L}$.
        \end{enumerate}
\end{lemma}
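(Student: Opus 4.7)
The plan is to read the classification off the BBD decomposition of $\textbf{S}^a$, using \cref{thm: affine Hecke algebra at central character is Ext algebra} to translate between simple $\mathcal{H}^{\aff}_{\chi_a}$-modules and simple perverse summands of $\textbf{S}^a$. Under the hypothesis that $\mathfrak{N}(V)^{\hat{T}_a}$ has finitely many $G^{\hat{T}_a}$-orbits, every simple $G^{\hat{T}_a}$-equivariant perverse sheaf on $\mathfrak{N}(V)^{\hat{T}_a}$ has the form $IC(\overline{G^{\hat{T}_a}\cdot x},\rho)$ for some $x$ and some $\rho\in\Irr(A(a,x))$, so the BBD decomposition reads
\begin{equation*}
    \textbf{S}^a \;\cong\; \bigoplus_{(x,\rho)} IC(\overline{G^{\hat{T}_a}\cdot x},\rho)[d_{x,\rho}] \otimes L_{x,\rho}
\end{equation*}
(sum over orbit representatives). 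Combining the bijection of \cref{thm: affine Hecke algebra at central character is Ext algebra} with \eqref{eq: irreps of Ext algebra are simple perv sheaves}, the nonzero multiplicity spaces $L_{x,\rho}$ are exactly the simple $\mathcal{H}^{\aff}_{\chi_a}$-modules, and the sought map $L\mapsto(x_L,\rho_L)$ is defined by inverting this identification.

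Next I would identify standard modules with costalks. Proper base change applied to $\mu^a$ and to the inclusion $i_x\colon\{x\}\hookrightarrow\mathfrak{N}(V)^{\hat{T}_a}$ yields $i_x^!\textbf{S}^a \cong H^{BM}_*(\mathcal{B}_x^{\hat{T}_a})$ up to shifts, as an $A(a,x)$-equivariant graded module, and the convolution $\End^*(\textbf{S}^a)$-action on the costalk agrees with the convolution $H_*(Z^{\hat{T}_a})$-action on Borel-Moore homology used to define $M(a,x,\rho)$. Taking $\rho$-isotypic components then gives an $\mathcal{H}^{\aff}_{\chi_a}$-module decomposition
\begin{equation*}
    M(a,x,\rho) \;\cong\; \bigoplus_{(y,\sigma)} \Hom_{A(a,x)}\bigl(\rho,\; H^*(i_x^! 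IC(\overline{G^{\hat{T}_a}\cdot y},\sigma))\bigr) \otimes L_{y,\sigma},
\end{equation*}
in which the $\mathcal{H}^{\aff}_{\chi_a}$-action sits entirely in the $L_{y,\sigma}$ tensor factors. Hence the composition factors of $M(a,x,\rho)$ are exactly those $L_{y,\sigma}$ for which the corresponding Hom-space is non-zero.

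The final step applies two standard properties of $IC$-sheaves. The sheaf $IC(\overline{G^{\hat{T}_a}\cdot y},\sigma)$ has support $\overline{G^{\hat{T}_a}\cdot y}$, so its costalk at $x$ vanishes unless $x \in \overline{G^{\hat{T}_a}\cdot y}$; reading this off the formula above yields property (ii). On the open orbit the costalk $i_x^! IC(\overline{G^{\hat{T}_a}\cdot x},\rho)$ is concentrated in a single degree with value $\rho$, so by Schur's lemma $L_{x,\rho}$ appears with multiplicity one in $M(a,x,\rho)$; this gives property (i) and simultaneously shows $H_*(\mathcal{B}_x^{\hat{T}_a})_\rho \ne 0$, so $(x_L,\rho_L)$ genuinely lies in $\mathcal{P}(G,V,a)$. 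Uniqueness is then formal: (ii) characterizes $G^{\hat{T}_a}\cdot x_L$ as the unique closure-minimal orbit among $G^{\hat{T}_a}\cdot x'$ with $L$ appearing in some $M(a,x',\rho')$, and (i) then pins down $\rho_L$ within $\Irr(A(a,x_L))$; injectivity is immediate. The only real nuisance is bookkeeping the BBD and base-change shifts $[d_{x,\rho}]$, but these are invisible at the level of ungraded composition factors, so no essential obstacle appears.
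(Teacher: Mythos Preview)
Your argument is essentially the same as the paper's: both identify $H^*(i_x^!\textbf{S}^a)\cong H_*(\mathcal{B}_x^{\hat{T}_a})$ by proper base change, compute the costalk of $IC(\overline{G^{\hat{T}_a}\cdot x},\rho)$ at a point of the open orbit to be $\rho$ (the paper does this via Verdier duality), and then read properties (i) and (ii) off the support behaviour of IC sheaves. The paper simply cites \cite[Thm.~8.6.23]{chriss2009representation} for the last step, whereas you unpack that argument explicitly via the BBD decomposition and the costalk formula for $M(a,x,\rho)$.

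One small slip: in your uniqueness paragraph you call $G^{\hat{T}_a}\cdot x_L$ the ``closure-minimal'' orbit among those $G^{\hat{T}_a}\cdot x'$ for which $L$ occurs in some $M(a,x',\rho')$, but condition (ii) says every such $x'$ lies in $\overline{G^{\hat{T}_a}\cdot x_L}$, so $G^{\hat{T}_a}\cdot x_L$ is the closure-\emph{maximal} such orbit. The rest of the argument is unaffected.
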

\begin{proof}
    By base change there is an isomorphism of $A(a,x)$-modules $H^*(\iota_x^!\textbf{S}^a)  \cong H_*(\mathcal{B}_x^{\hat{T}_a})$. Moreover, there are isomorphisms of $A(a,x)$-modules
    \begin{align*}
    H^{ \dim G^{\hat{T}_a}\cdot x }(\iota_x^! IC(G^{\hat{T}_a}\cdot x, \rho))& \cong H^{ \dim G^{\hat{T}_a}\cdot x } (\mathbb{D} \iota_x^* \mathbb{D} IC(G^{\hat{T}_a}\cdot x, \rho)) \\
    &\cong H^{- \dim G^{\hat{T}_a}\cdot x }( \iota_x^* IC(G^{\hat{T}_a}\cdot x, \rho^{\vee}))^{\vee} \\
    &\cong (\rho^{\vee})^{\vee} \\
    &\cong \rho .
    \end{align*}
    Thus, if $ IC(G^{\hat{T}_a}\cdot x, \rho)$ appears in $\textbf{S}^a$, then $\rho$ appears in $H_*(\mathcal{B}_x^{\hat{T}_a})$. It now follows from \cref{thm: affine Hecke algebra at central character is Ext algebra} that there is an injection $\Irr(\mathcal{H}^{\aff}_{\chi_a}) \hookrightarrow \mathcal{P}(G, V, a)$. The second claim follows from \cite[Thm. 8.6.23]{chriss2009representation}
\end{proof}
\begin{remark}
    It can be shown that $L$ appears as a quotient of the standard module $M(a,x_L,\rho_L)$ using the filtration constructed in the proof of \cite[Thm. 8.6.23]{chriss2009representation}.
\end{remark}
Note that for $a_e = (e, (1)_{i \in I}) \in \hat{\mathcal{T}}$, we have $\hat{T}_{a_e} = \{ e\}$. Moreover, it follows from the definition of the affine Hecke algebra that
\begin{equation*}
    \mathcal{H}^{\aff}_{\chi_{a_e}} \cong \mathbb{C}[W]  \# ( \mathbb{C}[X^*(T)] \otimes_{\mathbb{C}[X^*(T)]^W} \mathbb{C}_e) = \mathbb{C}[W] \# H^*(\mathcal{B}). 
\end{equation*}
Thus, \cref{thm: affine Hecke algebra at central character is Ext algebra} yields an isomorphism (see also \cite{douglass2009homology,kwon2009borel})
\begin{equation}\label{eq: affine Hecke algebra at trivial central character}
    \mathbb{C}[W] \# H^*(\mathcal{B})  \cong H_*(Z).
\end{equation}
We also write
\begin{equation*}
    A(x) := A(a_e, x)=G_x/(G_x)^{\circ}
\end{equation*}
and consider the following conditions for the pair $(G,V)$:
\begin{enumerate}
    \item[(A1)] $G$ acts on $\mathfrak{N}(V)$ with finitely many orbits.
    \item[(A2)] $\# \Irr(W) = \# \mathcal{P}(G,V,a_e)$.
    \item [(A3)] $H_{odd}(\mathcal{B}_x) = 0$ for all $x \in \mathfrak{N}(V)$.
\end{enumerate}
Note that the maximal semisimple quotient of $\mathbb{C}[W] \# H^*(\mathcal{B})$ is $\mathbb{C}[W]$. Hence, combining \eqref{eq: affine Hecke algebra at trivial central character} and \eqref{eq: injective DL parametrization} with (A1) and (A2) yield the following 'exotic Springer correspondence'.
\begin{corollary}\label{cor: exotic springer correspondence}
    If (A1) and (A2) hold for $(G,V)$, then there is a canonical bijection
    \begin{equation*}
    \Irr(W) \overset{1:1}{\longleftrightarrow} \mathcal{P}(G,V,a_e).
\end{equation*} 
\end{corollary}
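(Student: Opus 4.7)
The plan is to combine Lemma \ref{lemma: description of DL corr with standard modules} with the identification \eqref{eq: affine Hecke algebra at trivial central character} and a simple counting argument. Since $a_e = (e,(1)_{i\in I})$ satisfies $\hat{T}_{a_e} = \{e\}$, all fixed point loci collapse to the ambient varieties and $G^{\hat{T}_{a_e}} = G$. Condition (A1) therefore supplies exactly the finiteness hypothesis needed to invoke Lemma \ref{lemma: description of DL corr with standard modules}, producing a canonical injection
\[
\Irr(\mathcal{H}^{\aff}_{\chi_{a_e}}) \hookrightarrow \mathcal{P}(G,V,a_e).
\]

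The next step is to identify the left-hand side with $\Irr(W)$. By \eqref{eq: affine Hecke algebra at trivial central character}, $\mathcal{H}^{\aff}_{\chi_{a_e}} \cong \mathbb{C}[W] \# H^*(\mathcal{B})$. Since $H^*(\mathcal{B})$ is a finite-dimensional connected graded algebra, its augmentation ideal $H^{>0}(\mathcal{B})$ is nilpotent and $W$-stable; hence $\mathbb{C}[W] \# H^{>0}(\mathcal{B})$ is a nilpotent two-sided ideal whose quotient is the group algebra $\mathbb{C}[W]$. As $W$ is finite, $\mathbb{C}[W]$ is semisimple, so this quotient is the maximal semisimple quotient and pullback gives a canonical bijection $\Irr(W) \overset{1:1}{\longleftrightarrow} \Irr(\mathcal{H}^{\aff}_{\chi_{a_e}})$.

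Composing the two steps yields a canonical injection $\Irr(W) \hookrightarrow \mathcal{P}(G,V,a_e)$. Condition (A2) then says these finite sets have the same cardinality, so the injection must be a bijection, which is exactly the claim. I do not anticipate any genuine obstacle: both ingredients — the standard-module parameterization from Lemma \ref{lemma: description of DL corr with standard modules} and the semisimple quotient computation for a smash product with a connected graded algebra — are either already established in the paper or completely standard, and (A2) provides the numerical input that promotes the injection to a bijection.
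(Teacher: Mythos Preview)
Your proposal is correct and follows essentially the same route as the paper: use (A1) to invoke the canonical injection of Lemma~\ref{lemma: description of DL corr with standard modules}, identify $\Irr(\mathcal{H}^{\aff}_{\chi_{a_e}})$ with $\Irr(W)$ via the observation that $\mathbb{C}[W]$ is the maximal semisimple quotient of $\mathbb{C}[W]\# H^*(\mathcal{B})$, and then apply the cardinality equality (A2) to upgrade the injection to a bijection. The only difference is that you spell out the nilpotence argument for $H^{>0}(\mathcal{B})$ explicitly, whereas the paper states this as a one-line remark.
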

We will also sometimes consider the condition:
\begin{enumerate}
    \item[(A2')] $A(x)$ acts trivially on $H_*(\mathcal{B}_x)$ for all $x \in \mathfrak{N}(V)$ and $\# \Irr(W) = \# \mathfrak{N}(V)/G$.
\end{enumerate}
Clearly (A2') implies (A2). Under the stronger condition (A2') we can identify $\mathcal{P}(G,V,a_e)$ with $\mathfrak{N}(V)/G $, so the exotic Springer correspondence from \cref{cor: exotic springer correspondence} becomes
\begin{equation*}
    \Irr(W) \overset{1:1}{\longleftrightarrow} \mathfrak{N}(V)/G.
\end{equation*}
The following two lemmas show that conditions (A1)-(A3) and (A2') are invariant under isogenies and certain changes of representations.
\begin{lemma}\label{lemma: Geometric conditions compatible with isogeny}
    \begin{enumerate}[label = (\roman*)]
        \item Let $f: G' \rightarrow G$ be a central isogeny. Then each of (A1), (A2), (A3) and (A2') holds for $(G',V)$ if and only if it holds for $(G,V)$.
        \item Let $\mathcal{D}(G)$ be the derived subgroup of $G$. Then each of (A1), (A2), (A3) and (A2') holds for $(G,V)$ if and only if it holds for $(\mathcal{D}(G),V)$.
    \end{enumerate}
\end{lemma}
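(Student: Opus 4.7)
The plan in both parts is to observe that the geometric data entering conditions (A1)--(A3) and (A2')---the $G$-action on $\mathfrak{N}(V)$, the flag variety, the Springer fibers $\mathcal{B}_x$, the Weyl group $W$, and the component groups together with their action on $H_*(\mathcal{B}_x)$---is essentially unchanged when replacing $G$ by $G'$ or by $\mathcal{D}(G)$. Once this is made precise, every one of the four conditions transfers directly, and there is nothing further to check.

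For (i), I would fix a compatible maximal torus and Borel $T' \subset B' \subset G'$ lying over $T \subset B \subset G$, and note that the isogeny $f$ provides a canonical identification of flag varieties $\mathcal{B}(G') \cong \mathcal{B}(G)$, of root systems, and of Weyl groups; in particular the subspace $V^-$ and the variety $\tilde V$ do not depend on whether we work with $G$ or $G'$. Since $V$ is a $G$-representation pulled back along $f$, the central kernel $K := \ker f$ acts trivially on $V$, so $G$- and $G'$-orbits on $\mathfrak{N}(V)$ coincide and the Springer fibers $\mathcal{B}_x$ are literally the same. The one nontrivial point is to compare component groups: there is a natural surjection $A_{G'}(x) \twoheadrightarrow A_G(x)$ whose kernel is a subquotient of $K$, and I would argue that $K$ acts trivially on $H_*(\mathcal{B}_x)$ using that $K \subset Z(G')$ is contained in every Borel of $G'$, so left translation by $K$ acts trivially on $\mathcal{B}(G') = G'/B'$. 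This makes $H_*(\mathcal{B}_x)$ an $A_G(x)$-module and produces a bijection between the isotypic decompositions on either side, yielding $\mathcal{P}(G', V, a_e) = \mathcal{P}(G, V, a_e)$ and hence the equivalence of each of the four conditions.

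For (ii) I would use the decomposition $G = \mathcal{D}(G) \cdot Z(G)^\circ$ and check that $Z(G)^\circ$ acts trivially on $V$: every non-zero $T$-weight of $V$ is a root (because $V$ is rooted), and every root vanishes on $Z(G)^\circ$, while the zero-weight subspace $V_0$ is fixed by all of $T$. It follows immediately that $G$- and $\mathcal{D}(G)$-orbits on $\mathfrak{N}(V)$ agree, that $G_x = \mathcal{D}(G)_x \cdot Z(G)^\circ$, and that the connected central factor contributes nothing to the component group, so $A_G(x) \cong A_{\mathcal{D}(G)}(x)$; the flag varieties and Weyl groups agree for the same reason. All four conditions transfer tautologically. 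The only genuine obstacle---and a minor one---is the kernel-action argument in (i); once it is in place, the lemma reduces to a routine translation between $G$ and the adjacent group.
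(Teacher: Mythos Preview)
Your approach matches the paper's almost line for line: identify flag varieties, Springer fibers, and Weyl groups across the two groups; observe that the relevant central subgroup acts trivially on $\mathfrak{N}(V)$ and on $\mathcal{B}$; and conclude that the $\mathcal{P}$-sets coincide.

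One small point worth flagging (the paper has the same imprecision): in part (ii) the assertion $A_G(x) \cong A_{\mathcal{D}(G)}(x)$ is not literally true in general. For example with $G = GL_2$, $\mathcal{D}(G) = SL_2$, and $x$ regular nilpotent, one has $A_{GL_2}(x) = 1$ but $A_{SL_2}(x) \cong \mathbb{Z}/2$. The issue is that $\mathcal{D}(G) \cap Z(G)^\circ$ may be a nontrivial finite group whose image in $A_{\mathcal{D}(G)}(x)$ gets absorbed into $(G_x)^\circ$ via $Z(G)^\circ$. What you actually have is a surjection $A_{\mathcal{D}(G)}(x) \twoheadrightarrow A_G(x)$ whose kernel is central and hence acts trivially on $\mathcal{B}_x$; this is exactly the situation you already handled carefully in (i), and it is all that is needed to conclude $\#\mathcal{P}(G,V,a_e) = \#\mathcal{P}(\mathcal{D}(G),V,a_e)$. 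So the fix is just to replace the claimed isomorphism by this surjection and reuse your argument from (i).
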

\begin{proof}
    Since the $G'$ action on $\mathfrak{N}(V)$ factors through $G$, (A1) for $(G,V)$ and $(G',V)$ are equivalent. For any $x \in \mathfrak{N}(V)$, the isogeny $f$ restricts to a surjection $f: G_x \rightarrow G_{x'}$ and thus to a surjection $A(x) \rightarrow A(x')$. Moreover, the canonical map on flag varieties $\mathcal{B}(G) \rightarrow \mathcal{B}(G')$ is an isomorphism which restricts to an isomorphism $\mathcal{B}(G)_x \cong \mathcal{B}(G')_x$ for each $x \in \mathfrak{N}(V)$. It follows from this that $\# \mathcal{P}(G,V,a_e) = \# \mathcal{P}(G',V,a_e') $. This shows that (A2), (A3) and (A2') for $(G,V)$ and $(G',V)$ are equivalent.

    Since all roots vanish on $Z(G)$, we see that $Z(G)$ acts trivially on $V^-$ and thus also on $\mathfrak{N}(V)$. By the surjectivity of $\mathcal{D}(G) \times Z(G)^{\circ} \rightarrow G$, this implies that (A1) for $(G,V)$ and $(\mathcal{D}(G),V)$ are equivalent. We also get that $G_x = Z(G)^{\circ} \mathcal{D}(G)_x $ which implies that $A_{G}(x) \cong A_{\mathcal{D}(G)}(x)$. Moreover, the canonical map $\mathcal{B}(\mathcal{D}(G)) \rightarrow \mathcal{B}(G)$ is an isomorphism which restricts to an isomorphism $\mathcal{B}(G)_x \cong \mathcal{B}(\mathcal{D}(G))_x$ for each $x \in \mathfrak{N}(V)$. Hence, $\# \mathcal{P}(G,V,a_e) = \# \mathcal{P}(\mathcal{D}(G),V,a_e) $. It follows from this that (A2), (A3) and (A2') for $(G,V)$ and $(\mathcal{D}(G),V)$ are equivalent.
\end{proof}
\begin{lemma}\label{lem: conditions Ai are equivalent for rooted morphisms}
    Let $f:V_1 \rightarrow V_2$ be a morphism of rooted $G$-representations which restricts to an isomorphism $(V_1)_{\alpha} \overset{\sim}{\rightarrow} (V_2)_{\alpha}$ for all $\alpha \in \Phi$. Then $f$ restricts to a $G$-equivariant bijection $\mathfrak{N}(V_1) \rightarrow \mathfrak{N}(V_2)$. Moreover, each of (A1), (A2), (A3) and (A2') holds for $(G,V_1)$ if and only if it holds for $(G,V_2)$.
\end{lemma}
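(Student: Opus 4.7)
The plan is to reduce the claim to the Springer-type resolutions $\tilde V_i$ and then apply the equality $\mathfrak{N}(V) \cap V^{\le 0} = V^-$ from \cref{lemma: basic results about V le 0} to promote surjectivity to a bijection on $\mathfrak{N}$. First, since $V_i^-$ only involves nonzero weight spaces, the hypothesis on $f$ yields a $B$-equivariant isomorphism $V_1^- \xrightarrow{\sim} V_2^-$; inducing up from $B$ to $G$ this gives a $G$-equivariant isomorphism $\tilde f : \tilde V_1 \xrightarrow{\sim} \tilde V_2$ commuting with $f$ via the Springer maps $\mu_i$. Taking images then shows that $f$ restricts to a $G$-equivariant surjection $\mathfrak{N}(V_1) \twoheadrightarrow \mathfrak{N}(V_2)$.

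The main step, which I expect to be the only substantive obstacle, is injectivity of this restriction. I would first note that $\ker f \subset V_1$ is a $G$-submodule supported in the zero $T$-weight space (since $f$ is an isomorphism on each nonzero weight space); a standard weight argument using that the root subgroups shift weights then shows that $G$ acts trivially on $\ker f$. Given $x,y \in \mathfrak{N}(V_1)$ with $f(x) = f(y)$, I would write $x = g \cdot v^-$ with $v^- \in V_1^-$ and set $z := y-x \in \ker f \subset (V_1)_0$; since $g$ fixes $z$, we obtain $y = g \cdot (v^- + z)$, so $v^- + z \in \mathfrak{N}(V_1) \cap V_1^{\le 0} = V_1^-$ by \cref{lemma: basic results about V le 0}, and the directness of the decomposition $V_1^{\le 0} = V_1^- \oplus (V_1)_0$ forces $z = 0$.

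Once the bijection $f : \mathfrak{N}(V_1) \xrightarrow{\sim} \mathfrak{N}(V_2)$ is in place, transferring the four conditions should be formal. (A1) is immediate as $G$-orbits correspond. Injectivity of $f|_{\mathfrak{N}(V_1)}$ implies $G_x = G_{f(x)}$ for $x \in \mathfrak{N}(V_1)$, hence $A(x) \cong A(f(x))$, and $\tilde f$ restricts to an $A(x)$-equivariant isomorphism of exotic Springer fibers $\mathcal{B}_x^{V_1} \xrightarrow{\sim} \mathcal{B}_{f(x)}^{V_2}$. These identifications match the Borel-Moore homologies together with their component-group actions, and thus the sets $\mathcal{P}(G,V_i,a_e)$ as well as the orbit spaces $\mathfrak{N}(V_i)/G$, from which (A2), (A3) and (A2') all follow.
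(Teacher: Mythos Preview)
Your proposal is correct and follows essentially the same approach as the paper: both arguments hinge on \cref{lemma: basic results about V le 0}, using $\mathfrak{N}(V_1)\cap V_1^{\le 0}=V_1^-$ to upgrade the obvious surjection to a bijection, and then observing that the Springer fibers $\mathcal{B}_x$ and $\mathcal{B}_{f(x)}$ coincide as subvarieties of $\mathcal{B}$. The only cosmetic difference is in the injectivity step: you first note that $\ker f\subset (V_1)_0$ is $G$-fixed and argue via $z=y-x$, whereas the paper instead observes $f^{-1}(V_2^-)\subset V_1^{\le 0}$ and uses injectivity of $f|_{V_1^-}$ directly; these are minor rearrangements of the same idea.
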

\begin{proof}
    Since $f$ restricts to an isomorphism $V_1^- \rightarrow V_2^-$, the $G$-equivariant map $f: \mathfrak{N}(V_1) \rightarrow \mathfrak{N}(V_2)$ is surjective. To prove injectivity assume that we have $x, x' \in \mathfrak{N}(V_1)$ with $f(x) = f(x')$. We can then find $g \in G$ such that $gx \in V_1^-$. It then follows that $f( gx') = gf(x') = gf(x) = f(gx) \in V_2^-$ and thus
    \begin{equation*}
        gx' \in f^{-1}(V_2^-) \cap \mathfrak{N}(V_1) \subset  V_1^{\le 0} \cap \mathfrak{N}(V_1) \overset{\cref{lemma: basic results about V le 0}}{=} V_1^-.
    \end{equation*}
    Since $f$ restricts to an isomorphism $V_1^- \rightarrow V_2^-$, this implies $gx = gx'$ and thus $x = x'$. This proves the injectivity of $\mathfrak{N}(V_1) \rightarrow \mathfrak{N}(V_2)$.
    
    If $x \in \mathfrak{N}(V_1)$ such that $g^{-1} f(x) \in V_2^-$, then we get by a similar argument as above
    \begin{equation*}
        g^{-1}x \in f^{-1}( V_2^{-1}) \cap \mathfrak{N}(V_1) \subset V_1^{\le 0 } \cap \mathfrak{N}(V_1) \overset{\cref{lemma: basic results about V le 0}}{=} V_1^-.
    \end{equation*}
    This shows that $\mathcal{B}_x = \mathcal{B}_{f(x)}$ for any $x \in \mathfrak{N}(V_1)$. In particular, we have $\# \mathcal{P}(G,V_1,a_e) = \# \mathcal{P}(G,V_2,a_e)$. It follows from these results that (A1), (A2), (A3) and (A2') for $(G,V_1)$ and $(G,V_2)$ are equivalent.
\end{proof}

\subsection{Positive real central characters}
The complex torus $\hat{\mathcal{T}}$ has a polar decomposition
\begin{equation*}
    \hat{\mathcal{T}}= \hat{\mathcal{T}}_{S^1} \times \hat{\mathcal{T}}_{\mathbb{R}_{>0}}
\end{equation*}
where $\hat{\mathcal{T}}_{S^1} := S^1 \otimes_{\mathbb{Z}} X_*(\hat{\mathcal{T}})$ and $\hat{\mathcal{T}}_{\mathbb{R}_{>0}} := \mathbb{R}_{>0} \otimes_{\mathbb{Z}} X_*(\hat{\mathcal{T}})$. Thus, for any $a \in \hat{\mathcal{T}}$ there is a corresponding polar decomposition $a=a_{S^1} \cdot a_{\mathbb{R}_{>0}}$.
\begin{definition}
    We say that $a$ is positive real if $a_{S^1} = 1$.
\end{definition}
\begin{lemma}\label{lemma: positive real implies Ta is a torus}
     If $a$ is positive real, then the diagonalizable subgroup scheme $\hat{T}_a \subset \hat{T}$ is a torus.
 \end{lemma}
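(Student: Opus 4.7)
The plan is to reduce the claim to the assertion that the character lattice quotient $X^*(\hat T)/X^*_a$ is torsion-free, and then verify this using the polar decomposition.

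First I would recall the general fact that a finitely generated diagonalizable group scheme $\operatorname{Spec}(k[M])$ is a torus if and only if the abelian group $M$ is torsion-free (since $M \cong \mathbb{Z}^r \oplus M_{\mathrm{tors}}$ gives $\operatorname{Spec}(k[M]) \cong \mathbb{G}_m^r \times \operatorname{Spec}(k[M_{\mathrm{tors}}])$, and the second factor is a torus only when it is trivial). Applied to $M = X^*(\hat T)/X^*_a$, it thus suffices to show that $X^*_a \subset X^*(\hat T) = X^*(\hat{\mathcal{T}})$ is saturated, i.e.\ that $n\lambda \in X^*_a$ with $n \ge 1$ forces $\lambda \in X^*_a$.

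For this I would use the polar decomposition. Any character $\lambda \in X^*(\hat{\mathcal{T}})$ is a homomorphism $\hat{\mathcal{T}} \to \mathbb{C}^\times$, and under the decomposition $\hat{\mathcal{T}} = \hat{\mathcal{T}}_{S^1} \times \hat{\mathcal{T}}_{\mathbb{R}_{>0}}$ it restricts to a homomorphism $\hat{\mathcal{T}}_{\mathbb{R}_{>0}} \to \mathbb{R}_{>0}$ (concretely, since $\hat{\mathcal{T}}_{\mathbb{R}_{>0}} = \mathbb{R}_{>0} \otimes_\mathbb{Z} X_*(\hat{\mathcal{T}})$, a pure tensor $r \otimes v$ is sent to $r^{\langle \lambda,v\rangle} \in \mathbb{R}_{>0}$). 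Hence if $a$ is positive real, then $a = a_{\mathbb{R}_{>0}}$ and $\lambda(a) \in \mathbb{R}_{>0}$.

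The saturation is now immediate: if $n\lambda \in X^*_a$, then $\lambda(a)^n = (n\lambda)(a) = 1$ in $\mathbb{R}_{>0}$, and since $\mathbb{R}_{>0}$ is torsion-free this forces $\lambda(a) = 1$, i.e.\ $\lambda \in X^*_a$. Therefore $X^*(\hat T)/X^*_a$ is torsion-free and $\hat T_a$ is a torus. There is no real obstacle here; the only conceptual point is identifying the positive-real condition with the vanishing of the torsion subgroup of the quotient, and the key ingredient is just that $\mathbb{R}_{>0}$ has no torsion.
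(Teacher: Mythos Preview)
Your proof is correct and follows essentially the same approach as the paper: both reduce to showing that $X^*(\hat{\mathcal{T}})/X^*_a$ is torsion-free by using that evaluation at a positive real $a$ lands in $\mathbb{R}_{>0}$, which has no torsion. The paper phrases this slightly more compactly by noting that evaluation at $a$ gives a homomorphism $X^*(\hat{\mathcal{T}}) \to \mathbb{R}_{>0}$ with kernel $X^*_a$, so the quotient embeds in $\mathbb{R}_{>0}$; your saturation argument is the same observation unpacked.
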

 \begin{proof}
     The evaluation at $a$ defines a group homomorphism $X^*(\hat{\mathcal{T}}) \rightarrow \mathbb{R}_{>0}$ with kernel $X^*_{a}$. Hence, $X^*(\hat{\mathcal{T}})/X^*_{a} $ embeds into $\mathbb{R}_{>0}$ which is torsion-free. Thus, $\hat{T}_{a} = Spec(k[X^*(\hat{\mathcal{T}})/X^*_a])$ is a torus.
\end{proof}
\begin{corollary}
    If $a$ is positive real, then the group $G^{\hat{T}_a}$ is connected.
\end{corollary}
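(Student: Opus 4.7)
The plan is to reduce to the classical fact that the centralizer of a torus in a connected reductive group is connected. By Lemma~\ref{lemma: positive real implies Ta is a torus}, when $a$ is positive real the subgroup scheme $\hat{T}_a$ is a (smooth, connected) torus, so the non-reduced subtleties that motivated introducing diagonalizable group schemes disappear in this case and we are in a completely classical situation.

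Next, I would observe that the $(\mathbb{G}_m)^I$-factor of $\hat{T} = T \times (\mathbb{G}_m)^I$ acts on $V$ but does not act on $G$, so the conjugation action of $\hat{T}_a$ on $G$ factors through the first projection $\hat{T} \to T$. Writing $S \subset T$ for the (scheme-theoretic) image of $\hat{T}_a$ under this projection, we get $G^{\hat{T}_a} = G^S = Z_G(S)$, and $S$ is again a torus as the image of a torus under a homomorphism of diagonalizable group schemes.

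Finally, invoking the classical theorem that the centralizer of a torus in a connected reductive group is connected (Borel, \emph{Linear Algebraic Groups}, IV.11.12) gives the result. Alternatively, one may derive the connectedness directly from Lemma~\ref{lemma: explicit descrpition of centralizer in reductive group}: since $X^*(T)/K_S \cong X^*(S)$ is torsion-free when $S$ is a torus, a classical fixator theorem of Steinberg ensures that every $w \in W$ acting trivially on $X^*(T)/K_S$ already lies in the subgroup generated by the reflections $s_\alpha$ for $\alpha \in \Phi \cap K_S$, so the corresponding lift $\dot{w}$ lies in $(G^S)^{\circ}$, forcing $G^S = (G^S)^{\circ}$. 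There is no real technical obstacle; the corollary is essentially a bookkeeping statement that packages Lemma~\ref{lemma: positive real implies Ta is a torus} with this classical structural fact about reductive groups.
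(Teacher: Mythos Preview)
Your proposal is correct and follows essentially the same approach as the paper, which simply states that centralizers of tori in a reductive group are always connected. Your extra step of projecting $\hat{T}_a$ to its image $S \subset T$ makes explicit why the action of $\hat{T}_a$ on $G$ is by conjugation through a subtorus, but the paper evidently regards this as implicit and dispatches the corollary in one sentence.
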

\begin{proof}
    Centralizers of tori in a reductive group are always connected.
\end{proof}
\begin{corollary}\label{cor: a positive real implies orbit finiteness}
     If $a$ is positive real and (A1) holds for $(G,V)$, then there are only finitely many $G^{\hat{T}_a}$-orbits in $\mathfrak{N}(V)^{\hat{T}_a}$.
\end{corollary}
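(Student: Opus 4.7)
The plan is a direct application of Richardson's finiteness theorem (the Lemma attributed to Richardson cited earlier in the paper) together with the smoothness of $\hat{T}_a$ guaranteed by positive reality.

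First I would invoke the preceding Lemma which shows that when $a$ is positive real, $\hat{T}_a$ is a torus, hence in particular smooth (reduced) as a diagonalizable group scheme. This is the crucial ingredient that makes the hypotheses of Richardson's orbit-finiteness lemma applicable here; in general $\hat{T}_a$ can fail to be reduced, and as the author notes, without smoothness one genuinely does obtain infinitely many orbits in examples.

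Next, I would set up Richardson's lemma in the ambient group $\tilde{G} := G \times (\mathbb{G}_m)^I$, which acts on $\mathfrak{N}(V)$ (the $(\mathbb{G}_m)^I$-factor acts by scaling the summands $V_i$, and $\mathfrak{N}(V)$ is stable since it is the image of the $(\mathbb{G}_m)^I$-stable bundle $\tilde{V}$). Take $\mathcal{D} = \hat{T}_a \subset \hat{T} \subset \tilde{G}$, which is smooth by the previous step, and take $H = G$, which is a closed connected normal subgroup of $\tilde{G}$. Hypothesis (A1) says that $H = G$ has finitely many orbits on $\mathfrak{N}(V)$. Note that the $(\mathbb{G}_m)^I$-factor acts trivially on $G$ by conjugation, so the centralizer $G^{\hat{T}_a}$ appearing in the statement coincides with $H^{\mathcal{D}}$ in the sense of Richardson's lemma; likewise $\mathfrak{N}(V)^{\hat{T}_a}$ is exactly the fixed-point subvariety the lemma applies to.

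Richardson's lemma then yields that $\mathfrak{N}(V)^{\hat{T}_a}$ has finitely many $G^{\hat{T}_a}$-orbits, which is the desired conclusion. There is no real obstacle here; the only subtle point is ensuring that we apply the lemma with a \emph{smooth} $\mathcal{D}$, and that is precisely what the positive-real hypothesis buys us via the preceding Lemma.
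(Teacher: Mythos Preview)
Your proposal is correct and follows exactly the same approach as the paper: the paper's proof simply cites the previous lemma (that $\hat{T}_a$ is a torus when $a$ is positive real) and Richardson's orbit-finiteness lemma, which is precisely what you do, with the additional care of spelling out the ambient group $G \times (\mathbb{G}_m)^I$, the normal subgroup $H = G$, and why $H^{\mathcal{D}} = G^{\hat{T}_a}$.
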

\begin{proof}
     This follows from \cref{lemma: positive real implies Ta is a torus} and \cref{lemma: orbit finiteness for fixed points}.
\end{proof}

\begin{lemma}\label{lemma: top Borel Moore is group algebra of W}
    There is an isomorphism of algebras $\mathbb{C}[W] \cong H_{2 \dim Z}^{\hat{T}_a}(Z)$.
\end{lemma}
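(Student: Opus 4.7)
The plan is to identify a basis of $H^{\hat{T}_a}_{2\dim Z}(Z)$ indexed by $W$ via fundamental classes of the top-dimensional components of $Z^{\hat{T}_a}$, and then verify that convolution reproduces the group algebra multiplication on this basis.

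For the vector space isomorphism, I would start from the decomposition $Z = \bigsqcup_{w \in W} Z_w$. Since each $Z_w \to Y_w$ is a $G$-equivariant vector bundle by \cref{lemma: basic properties of Z}, taking $\hat{T}_a$-fixed points gives a vector bundle $Z_w^{\hat{T}_a} \to Y_w^{\hat{T}_a}$ by \cref{lemma: taking fixed points preserves smoothness}. Combined with \cref{lemma: fixed points of flag variety} on the structure of $\mathcal{B}^{\hat{T}_a}$, a dimension count using \cref{lemma: basic properties of Z} should show that every closure $\overline{Z_w^{\hat{T}_a}}$ has the same top dimension, so that the collection $\{\overline{Z_w^{\hat{T}_a}}\}_{w \in W}$ exhausts the top-dimensional irreducible components of $Z^{\hat{T}_a}$. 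The standard identification of top BM homology with the span of fundamental classes then furnishes a vector space isomorphism $\mathbb{C}[W] \xrightarrow{\sim} H^{\hat{T}_a}_{2\dim Z}(Z)$ sending $w \mapsto [\overline{Z_w^{\hat{T}_a}}]$, up to a possible sign normalization encoding the length function.

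For the algebra structure, I would argue that the top BM homology is closed under convolution, since the dimension shift in the convolution formula is compatible with the equality $\dim Z^{\hat{T}_a} = \dim \tilde{V}^{\hat{T}_a}$. To verify the $W$-relations on the basis, I would reduce to simple reflections: when $l(s_\alpha w) = l(w)+1$, the transversality in \cref{lemma: Zs and Zw intersection transversally} descends to the $\hat{T}_a$-fixed locus and yields $[\overline{Z_{s_\alpha}^{\hat{T}_a}}] \star [\overline{Z_w^{\hat{T}_a}}] = [\overline{Z_{s_\alpha w}^{\hat{T}_a}}]$. For the quadratic relation, I would specialize \cref{prop: formula for convolution with Lambda alpha} through the Riemann--Roch map used in the proof of \cref{thm: affine Hecke algebra at central character is Ext algebra}, retaining only the leading term: the corrections proportional to $e^\lambda$ and $\textbf{q}_V(\alpha)$ fall into strictly lower BM degree, so all that survives in top degree is the class of the diagonal, giving $[\overline{Z_{s_\alpha}^{\hat{T}_a}}]^2 = [\overline{Z_e^{\hat{T}_a}}]$.

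The main obstacle is identifying the leading term of the Riemann--Roch image of $[\mathcal{O}_{\overline{Z_w}}]$ with precisely the fundamental class $[\overline{Z_w^{\hat{T}_a}}]$. A priori one expects corrections from Chern classes of the normal bundle to the fixed-point locus, which would contribute to lower BM degrees rather than the top one, but ruling them out at the top requires careful use of excess intersection and control of the possibly non-reduced structure of $\hat{T}_a$. An alternative, which I suspect is cleaner, is to bypass the component-level analysis entirely and realize $\mathbb{C}[W]$ as the image in $H_*(Z^{\hat{T}_a})$ of the finite Hecke subalgebra $\mathcal{H}_{\textbf{q}_V} \subset \mathcal{H}^{\aff}_{\textbf{q}_V}$ (from \cref{lemma: alg properties of aff heck with unequal parmaeters}) after specializing parameters, and to check that this image lands precisely in the top BM degree.
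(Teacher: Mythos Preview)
Your proposal conflates two different objects: the lemma concerns the \emph{$\hat{T}_a$-equivariant} Borel--Moore homology $H_{2\dim Z}^{\hat{T}_a}(Z)$ of $Z$ itself, not the ordinary Borel--Moore homology of the fixed-point locus $Z^{\hat{T}_a}$. The fundamental classes $[\overline{Z_w^{\hat{T}_a}}]$ you describe live in $H_*(Z^{\hat{T}_a})$, and since $\dim Z^{\hat{T}_a}$ is typically strictly smaller than $\dim Z$, they do not lie in degree $2\dim Z$ at all. The Riemann--Roch map you invoke from \cref{thm: affine Hecke algebra at central character is Ext algebra} likewise targets $H_*(Z^{\hat{T}_a})$, not $H_*^{\hat{T}_a}(Z)$, so it cannot be used directly here. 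If you want to run a fundamental-class argument, the relevant classes are the \emph{equivariant} fundamental classes $[\overline{Z_w}]^{\hat{T}_a} \in H_{2\dim Z}^{\hat{T}_a}(Z)$ (or, after reducing to the non-equivariant statement, the ordinary classes $[\overline{Z_w}] \in H_{2\dim Z}(Z)$); the fixed-point pieces $Z_w^{\hat{T}_a}$ play no role in the statement.

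The paper's proof avoids component-by-component computation entirely. It first observes that $\dim Z_w = \dim \tilde{V}$ for all $w$, so $\mu:\tilde{V}\to V$ is semismall and $\textbf{S}=\mu_*\textbf{1}_{\tilde{V}}$ is perverse; then the identification \eqref{eq: identification of BM homology and Ext algebra} gives $H_{2\dim Z}^{\hat{T}_a}(Z)\cong \Hom^0_{D^b_{c,\hat{T}_a}}(\textbf{S},\textbf{S})$. Full faithfulness of the forgetful functor $\Perv_{\hat{T}_a}(V)\to\Perv(V)$ strips the equivariance, reducing to $H_{2\dim Z}(Z)\cong \Hom^0_{D^b_c}(\textbf{S},\textbf{S})$, which is the maximal semisimple quotient of $\Hom^*(\textbf{S},\textbf{S})\cong \mathbb{C}[W]\# H^*(\mathcal{B})$ and hence equals $\mathbb{C}[W]$. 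Your closing alternative (pushing the finite Hecke subalgebra through and tracking degrees) is closer in spirit, but the key step you are missing is the passage from equivariant to non-equivariant in top degree, which the paper handles via perversity and full faithfulness rather than any explicit specialization.
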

\begin{proof}
    Let $\textbf{S} := \textbf{S}^{a_e} = \mu_* \textbf{1}_{\tilde{V}}$ where $\mu :\tilde{V} \rightarrow V$ is the canonical map. By \cref{lemma: basic properties of Z} we have
    \begin{equation*}
        \dim Z_w = \dim G - \dim B \cap {}^w B + \dim V^- \cap {}^w V^- = \dim G - \dim T
    \end{equation*}
    for all $w \in W$. Hence, $\dim Z = \dim G - \dim T = \dim \tilde{V}$ which implies that the space $H_{2 \dim Z}^{\hat{T}_a}(Z)$ is stable under convolution (see \eqref{eq: convol in BM homology}). Moreover, the isomorphism of algebras from \eqref{eq: identification of BM homology and Ext algebra} restricts to an isomorphism (c.f. \cite[Prop. 8.6.1]{chriss2009representation}).
    \begin{align*}
        H_{2 \dim Z}^{\hat{T}_a} (Z) \cong \Hom_{D^b_{c,\hat{T}_a}(\mathfrak{N}(V))}^0(\textbf{S}, \textbf{S}).
    \end{align*}
    Note that $\dim Z = \dim \tilde{V}$ also implies that $\tilde{V} \rightarrow V$ is semismall (c.f. \cite[3.8.1]{achar2021perverse}). Hence, $\textbf{S}$ is a semisimple perverse sheaves. Since the forgetful functor $\Perv_{\hat{T}_a}(V) \rightarrow \Perv(V)$ is fully faithful (c.f. \cite[Prop. 6.2.15]{achar2021perverse}), we get an isomorphism of algebras
    \begin{equation*}
        H_{2 \dim Z}(Z) \cong \Hom^0_{D^b_{c}(\mathfrak{N}(V))}(\textbf{S}, \textbf{S}) \cong  \Hom_{D^b_{c,\hat{T}_a}(\mathfrak{N}(V))}^0(\textbf{S}, \textbf{S}) \cong H_{2 \dim Z}^{\hat{T}_a} (Z).
    \end{equation*}
    Note that $\Hom^0_{D^b_c(\mathfrak{N}(V))}(\textbf{S}, \textbf{S})$ is the maximal semisimple quotient of the algebra $\Hom^*_{D^b_{c}(\mathfrak{N}(V))} (\textbf{S}, \textbf{S})$. Since $\mathbb{C}[W]\# H^*(\mathcal{B}) \cong \Hom^*_{D^b_c(\mathfrak{N}(V))}(\textbf{S}, \textbf{S})$ by \cref{thm: affine Hecke algebra at central character is Ext algebra} this maximal semisimple quotient is isomorphic to $\mathbb{C}[W]$. This shows that
    \begin{equation*}
        \mathbb{C}[W] \cong \Hom^0_{D^b_c(\mathfrak{N}(V))}(\textbf{S}, \textbf{S}) \cong H_{2 \dim Z}^{\hat{T}_a} (Z).
    \end{equation*}
\end{proof}
Let us now fix a positive real element $a \in \hat{\mathcal{T}}_{\mathbb{R}_{>0}}$ and set
\begin{equation*}
    \hat{\mathfrak{t}} := Lie(\hat{\mathcal{T}}) = \mathbb{C} \otimes X_*(\hat{T}).
\end{equation*}
Then the exponential map $exp: \hat{\mathfrak{t}}  \rightarrow \hat{\mathcal{T}}$ restricts to an isomorphism of abelian groups
\begin{equation*}
    \hat{\mathfrak{t}}_{\mathbb{R}} = \mathbb{R} \otimes_{\mathbb{Z}} X_*(\hat{T}) \rightarrow \mathbb{R}_{>0} \otimes X_*(\hat{T}) = \hat{\mathcal{T}}_{\mathbb{R}_{>0}}. 
\end{equation*}
We denote the inverse of this map by $log: \hat{\mathcal{T}}_{\mathbb{R}_{>0}} \rightarrow \hat{\mathfrak{t}}_{\mathbb{R}}$. Note that for any $\lambda \in X^*(\hat{T})$, we have $\lambda(log(a)) = log(\lambda(a))$. In particular, we have
\begin{equation*}
    X^*_a = \{\lambda\in X^*(\hat{T}) \mid \lambda(log(a)) = 0 \}
\end{equation*}
and evaluation at $log(a)$ defines a group homomorphism $X^*(\hat{T}_a) = X^*(\hat{T})/X^*_a \rightarrow \mathbb{C}$. This induces an algebra homormophism
\begin{equation*}
    ev_{log(a)} : H^*_{\hat{T}_a}(pt) \cong Sym^*(\mathbb{C} \otimes X^*(\hat{T}_a )) \rightarrow \mathbb{C}
\end{equation*}
and thus a 1-dimensional $H^*_{\hat{T}_a}(pt)$-module $\mathbb{C}_{log(a)}$. By the localization theorem (\cref{thm: localization for BM homology}), we have an isomorphism of convolution algebras
\begin{equation*}
    H_*(Z^{\hat{T}_a}) \cong H_*^{\hat{T}_a}(Z) \otimes_{H^*_{\hat{T}_a} (pt)} \mathbb{C}_{log(a)}.
\end{equation*}
Thus, we obtain an algebra homomorphism 
\begin{equation*}
    \mathbb{C}[W] \overset{\cref{lemma: top Borel Moore is group algebra of W}}{\hookrightarrow} H^{\hat{T}_a}_*(Z) \rightarrow H_*(Z^{\hat{T}_a}).
\end{equation*}
In particular, $H_*^{\hat{T}_a}( \mathcal{B}_x)$ and $H_*(\mathcal{B}_x^{\hat{T}_a})$ carry a natural $\mathbb{C}[W\times A(a,x)]$-module structure where the $W$-action comes from the convolution actions of $H_*^{\hat{T}_a}(Z)$ and $H_*(Z^{\hat{T}_a})$. Note that for $a = a_e$, this yields a $\mathbb{C}[W\times A(a,x)]$-module structure on $H_*(\mathcal{B}_x)$.
\begin{lemma}\label{lemma: deformation result for H(Bx)}
    Assume that (A3) hold for $(G,V)$. Then for any $x \in \mathfrak{N}(V)^{\hat{T}_a}$ we have $H_*(\mathcal{B}_x) \cong H_*(\mathcal{B}_x^{\hat{T}_a})$ as $\mathbb{C}[W \times A(a,x)]$-modules.
\end{lemma}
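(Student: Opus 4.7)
The plan is to realize both $H_*(\mathcal{B}_x)$ and $H_*(\mathcal{B}_x^{\hat{T}_a})$ as specializations, at two different points of $\hat{\mathfrak{t}}_{\mathbb{R}}$, of the common equivariant Borel-Moore homology $H_*^{\hat{T}_a}(\mathcal{B}_x)$, and then to conclude by observing that complex representations of finite groups are rigid in flat families over a graded polynomial ring.

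First, since $a$ is positive real, \cref{lemma: positive real implies Ta is a torus} tells us that $\hat{T}_a$ is a torus, so the standard equivariant machinery for torus actions applies. Condition (A3) gives $H_{odd}(\mathcal{B}_x) = 0$, so a Leray-type spectral sequence argument shows that $\mathcal{B}_x$ is equivariantly formal for the $\hat{T}_a$-action: $H_*^{\hat{T}_a}(\mathcal{B}_x)$ is a free $H^*_{\hat{T}_a}(pt)$-module whose specialization at $0 \in \hat{\mathfrak{t}}$ recovers $H_*(\mathcal{B}_x)$, compatibly with the $W \times A(a,x)$-action. On the other side, the localization theorem (\cref{thm: localization for BM homology}), applied exactly as in the proof of \cref{thm: affine Hecke algebra at central character is Ext algebra}, identifies $H_*(\mathcal{B}_x^{\hat{T}_a})$ with the specialization of $H_*^{\hat{T}_a}(\mathcal{B}_x)$ at $log(a) \in \hat{\mathfrak{t}}_\mathbb{R}$, again compatibly with the $W \times A(a,x)$-action.

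Once this picture is in place, the conclusion will be a rigidity argument. Decomposing
\begin{equation*}
    H_*^{\hat{T}_a}(\mathcal{B}_x) = \bigoplus_{\sigma \in \Irr(W \times A(a,x))} M_\sigma \otimes \sigma
\end{equation*}
into isotypic components, each multiplicity module $M_\sigma = \Hom_{\mathbb{C}[W \times A(a,x)]}(\sigma, H_*^{\hat{T}_a}(\mathcal{B}_x))$ is a $H^*_{\hat{T}_a}(pt)$-direct summand of the free module $H_*^{\hat{T}_a}(\mathcal{B}_x)$, hence a graded projective and therefore graded free module over the positively graded polynomial ring $H^*_{\hat{T}_a}(pt)$. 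Its rank is then independent of the specialization point, so the multiplicity of $\sigma$ in $H_*(\mathcal{B}_x)$ equals its multiplicity in $H_*(\mathcal{B}_x^{\hat{T}_a})$, giving the required isomorphism of $\mathbb{C}[W \times A(a,x)]$-modules.

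The main obstacle will be the compatibility statement in the second paragraph — namely that the $W$-actions on the two specializations, which a priori come from different convolution algebras $H_*(Z)$ and $H_*(Z^{\hat{T}_a})$, are genuinely induced by specialization from a single $W$-action on $H_*^{\hat{T}_a}(\mathcal{B}_x)$. To nail this down, I would revisit the construction of $\mathbb{C}[W] \hookrightarrow H_{2 \dim Z}^{\hat{T}_a}(Z)$ in \cref{lemma: top Borel Moore is group algebra of W}, whose proof passes through the forgetful identification with $H_{2 \dim Z}(Z)$, and use this to check that both specialization maps intertwine the $W$-action. The subsequent rigidity step is then entirely routine.
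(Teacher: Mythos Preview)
Your proposal is correct and follows essentially the same route as the paper: both arguments view $H_*^{\hat{T}_a}(\mathcal{B}_x)$ as a free $H^*_{\hat{T}_a}(pt)$-module (via (A3) and equivariant formality) carrying a $\mathbb{C}[W\times A(a,x)]$-action, specialize at $0$ and at $\log(a)$ using the localization theorem, and then invoke rigidity of finite-group representations. Your isotypic decomposition with graded-free multiplicity modules is simply an explicit unpacking of the paper's one-line assertion that ``representations of finite groups cannot be deformed in a non-trivial way,'' and your identification of the compatibility of the $W$-actions as the only point requiring care matches exactly what the paper addresses via the forgetful isomorphism $H_{2\dim Z}^{\hat{T}_a}(Z)\cong H_{2\dim Z}(Z)\cong\mathbb{C}[W]$.
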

\begin{proof}
    By the localization theorem (\cref{thm: localization for BM homology} applied with $T= \hat{T}_a$) there is an isomorphism
    \begin{equation*}
        H_*(\mathcal{B}_x^{\hat{T}_a}) \cong H_*^{\hat{T}_a}(\mathcal{B}_x^{\hat{T}_a} ) \otimes_{H^*_{\hat{T}_a}(pt)} \mathbb{C}_{log(a)} \cong H_*^{\hat{T}_a}(\mathcal{B}_x) \otimes_{H^*_{\hat{T}_a}(pt)} \mathbb{C}_{log(a)}
    \end{equation*}
    as modules over $H_*(Z^{\hat{T}_a}) \cong H_*^{\hat{T}_a}(Z) \otimes_{H^*_{\hat{T}_a} (pt)} \mathbb{C}_{log(a)}$. Moreover, it follows from (A3) and \cref{lemma: odd homology vanishing implies equivariantly formal} that $H_*^{\hat{T}_a}(Z)$ is a free $H^*_{\hat{T}_a}(pt)$-module of finite rank and the canonical forgetful map
    \begin{equation*}
        H_*^{\hat{T}_a}(\mathcal{B}_x) \otimes_{H^*_{\hat{T}_a}(pt)} \mathbb{C}_0 \rightarrow H_*(\mathcal{B}_x)
    \end{equation*}
    is an isomorphism. This is in fact an isomorphism of $\mathbb{C}[W \times A(a,x)]$-modules since the $W$-action on $H_*^{\hat{T}_a}(\mathcal{B}_x)$ comes from the forgetful isomorphism $H_{2 \dim Z}^{\hat{T}_a}(Z) \cong H_{2 \dim Z} (Z) \cong \mathbb{C}[W] $. Thus, $H_*^{\hat{T}_a}(\mathcal{B}_x)$ is a formal deformation of $\mathbb{C}[W \times A(a,x)]$-modules over the polynomial ring $H^*_{\hat{T}_a}(pt)$ which specializes to $H_*(\mathcal{B}_x)$ at $0$ and to $H_*(\mathcal{B}_x^{\hat{T}_a})$ at $log(a)$. It is well-known that representations of finite groups cannot be deformed in a non-trivial way. Hence, $H_*(\mathcal{B}_x) \cong H_*(\mathcal{B}_x^{\hat{T}_a})$ as $\mathbb{C}[W \times A(a,x)]$-modules.
\end{proof} 
\begin{lemma}\label{lemma: orbits in fixed point orbit are disjoint}
    Let $\mathcal{O} \subset \mathfrak{N}(V)$ be a $G$-orbit. Then for any two distinct $G^{\hat{T}_a}$-orbits $\mathcal{O}_1, \mathcal{O}_2 \subset \mathcal{O}^{\hat{T}_a}$, we have $\mathcal{O}_2 \not \subset  \overline{\mathcal{O}}_1$ where $\overline{\mathcal{O}}_1$ is the closure of $\mathcal{O}_1$ in $\mathfrak{N}(V)$.
\end{lemma}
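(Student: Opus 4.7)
The plan is to reduce the closure statement to a closedness statement about $G^{\hat{T}_a}$-orbits within $\mathcal{O}^{\hat{T}_a}$, and then to quote Richardson's theorem. Since the lemma sits in the positive real subsection, I read $a \in \hat{\mathcal{T}}_{\mathbb{R}_{>0}}$, so that by \cref{lemma: positive real implies Ta is a torus} the diagonalizable subgroup scheme $\hat{T}_a$ is actually a torus, in particular smooth.

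The key input is \cref{lemma: orbit finiteness for fixed points} (Richardson): applied with the smooth diagonalizable subgroup scheme $\hat{T}_a \subset G$, the connected normal subgroup $H = G \trianglelefteq G$, and the $G$-variety $Y = \mathcal{O}$ (a single $G$-orbit), the conclusion is that every $G^{\hat{T}_a}$-orbit in $\mathcal{O}^{\hat{T}_a}$ is closed in $\mathcal{O}^{\hat{T}_a}$. In particular $\mathcal{O}_1$ is closed in $\mathcal{O}^{\hat{T}_a}$.

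To convert this to the desired closure statement in $\mathfrak{N}(V)$, I use two standard topological facts. First, since $\mathcal{O}^{\hat{T}_a}$ is closed in $\mathcal{O}$ (fixed points of a group action on a variety form a closed subset), any subset of $\mathcal{O}^{\hat{T}_a}$ has the same closure in $\mathcal{O}$ as in $\mathcal{O}^{\hat{T}_a}$. Hence $\mathcal{O}_1$ is also closed in $\mathcal{O}$. Second, for any subset $A \subset \mathcal{O}$, the closure of $A$ in $\mathcal{O}$ equals $\overline{A} \cap \mathcal{O}$, where $\overline{A}$ is taken in $\mathfrak{N}(V)$. Combining these, $\overline{\mathcal{O}_1} \cap \mathcal{O} = \mathcal{O}_1$.

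Now suppose for contradiction that $\mathcal{O}_2 \subset \overline{\mathcal{O}_1}$ with $\mathcal{O}_1 \neq \mathcal{O}_2$. Since $\mathcal{O}_2 \subset \mathcal{O}^{\hat{T}_a} \subset \mathcal{O}$, the containment $\mathcal{O}_2 \subset \overline{\mathcal{O}_1}$ gives $\mathcal{O}_2 \subset \overline{\mathcal{O}_1} \cap \mathcal{O} = \mathcal{O}_1$, forcing $\mathcal{O}_2 = \mathcal{O}_1$, a contradiction. There is no serious obstacle here; the only subtlety is making sure the positive reality of $a$ is used to guarantee smoothness of $\hat{T}_a$ so that Richardson's theorem is applicable, since the non-reduced case genuinely fails as flagged in the remark after \cref{lemma: orbit finiteness for fixed points}.
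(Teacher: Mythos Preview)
Your proof is correct and follows essentially the same route as the paper: invoke Richardson's theorem (\cref{lemma: orbit finiteness for fixed points}) to see that each $G^{\hat{T}_a}$-orbit is closed in $\mathcal{O}^{\hat{T}_a}$, then use elementary topology to conclude. One small notational slip: $\hat{T}_a$ is a subgroup of $\hat{T} = T \times (\mathbb{G}_m)^I$, not of $G$, so the correct instantiation of Richardson's theorem is with ambient group $G \times (\mathbb{G}_m)^I$, diagonalizable subgroup $\mathcal{D} = \hat{T}_a$, and normal subgroup $H = G \trianglelefteq G \times (\mathbb{G}_m)^I$; your $H = G \trianglelefteq G$ doesn't quite parse, but the intended application is clear and matches the paper's.
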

\begin{proof}
    By \cref{lemma: orbit finiteness for fixed points} the orbits $\mathcal{O}_1$ and $ \mathcal{O}_2$ are both closed in $\mathcal{O}^{\hat{T}_a}$. Thus, $\overline{\mathcal{O}}_1 \cap \mathcal{O}^{\hat{T}_a} = \mathcal{O}_1$ which implies $\mathcal{O}_2 \cap \overline{\mathcal{O}}_1 = \emptyset$.
\end{proof}
\begin{theorem}\label{thm: DL correspondence for simply connected and positive real central character}
    Let $V$ be an $I$-rooted $G$-representation and let $a \in \hat{\mathcal{T}}$. Assume that
    \begin{itemize}
        \item $G$ has simply connected derived subgroup,
        \item $a$ is positive real,
        \item (A1), (A2) and (A3) hold for $(G,V)$.
    \end{itemize}
    Then there is a bijection
    \begin{equation*}
        \Irr(\mathcal{H}^{\aff}_{\chi_a}) \overset{1:1}{\leftrightarrow}  \mathcal{P}(G,V,a).
    \end{equation*}
    If in addition (A2') holds for $(G,V)$, then there is a bijection
    \begin{equation*}
        \Irr(\mathcal{H}^{\aff}_{\chi_a})  \overset{1:1}{\leftrightarrow}  \mathfrak{N}(V)^{\hat{T}_a} / G^{\hat{T}_a}.
    \end{equation*}
\end{theorem}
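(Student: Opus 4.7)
The plan is to combine the injection $\Irr(\mathcal{H}^{\aff}_{\chi_a}) \hookrightarrow \mathcal{P}(G,V,a)$ already supplied by \cref{lemma: description of DL corr with standard modules} with a matching surjectivity argument. To activate the injection I first apply \cref{lemma: positive real implies Ta is a torus} to conclude $\hat{T}_a$ is a torus, and then \cref{cor: a positive real implies orbit finiteness} (using (A1)) to obtain that $\mathfrak{N}(V)^{\hat{T}_a}$ has only finitely many $G^{\hat{T}_a}$-orbits, which is precisely the hypothesis of \cref{lemma: description of DL corr with standard modules}.

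For surjectivity I would follow the strategy of \cite[Thm. 8.6.23]{chriss2009representation}: for each $(x,\rho) \in \mathcal{P}(G,V,a)$, construct an irreducible module $L(a,x,\rho)$ as a quotient of the standard module $M(a,x,\rho) = H_*(\mathcal{B}_x^{\hat{T}_a})_\rho$, supported (in the orbit-filtration sense used in the proof of \cref{lemma: description of DL corr with standard modules}) on the closure $\overline{G^{\hat{T}_a}\cdot x}$, and then verify that $(x,\rho) \mapsto L(a,x,\rho)$ is the two-sided inverse of the injection $L \mapsto (x_L, \rho_L)$. Via the Ext-algebra description $\mathcal{H}^{\aff}_{\chi_a} \cong \Hom^*_{D^b_c(\mathfrak{N}(V)^{\hat{T}_a})}(\textbf{S}^a, \textbf{S}^a)$ of \cref{thm: affine Hecke algebra at central character is Ext algebra} combined with \eqref{eq: irreps of Ext algebra are simple perv sheaves}, this is equivalent to showing that $IC(G^{\hat{T}_a}\cdot x, \rho)$ occurs as a simple summand of the semisimple complex $\textbf{S}^a$ for every $(x,\rho) \in \mathcal{P}(G,V,a)$.

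The main geometric input I would use is the deformation \cref{lemma: deformation result for H(Bx)}: thanks to (A3) it gives an isomorphism $H_*(\mathcal{B}_x) \cong H_*(\mathcal{B}_x^{\hat{T}_a})$ as $\mathbb{C}[W \times A(a,x)]$-modules, and under this isomorphism the $W$-action (transported from the embedding $\mathbb{C}[W] \hookrightarrow H_*(Z^{\hat{T}_a})$ of \cref{lemma: top Borel Moore is group algebra of W}) matches the classical Springer representation on $H_*(\mathcal{B}_x)$. Together with the exotic Springer correspondence of \cref{cor: exotic springer correspondence} (available under (A1), (A2)) this supplies enough Springer-theoretic bookkeeping at $a_e$ to run the inductive argument on orbit closures: at each stage one subtracts off the $IC$-summands of $\textbf{S}^a$ supported on strictly smaller orbits, and the corresponding $W$-character on the $\rho$-isotypic part of $H_*(\mathcal{B}_x^{\hat{T}_a})$ forces a residual nonzero contribution, hence $IC(G^{\hat{T}_a}\cdot x, \rho) \in \Irr(\textbf{S}^a)$.

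For the refined bijection under (A2') the same deformation lemma transfers the triviality of the $A(x)$-action on $H_*(\mathcal{B}_x)$ to triviality of the $A(a,x)$-action on $H_*(\mathcal{B}_x^{\hat{T}_a})$, leaving only the trivial character as a possible $\rho$; non-emptiness of $\mathcal{B}_x^{\hat{T}_a}$ follows from the Borel fixed point theorem applied to the projective $\hat{T}_a$-variety $\mathcal{B}_x$, so each orbit in $\mathfrak{N}(V)^{\hat{T}_a}/G^{\hat{T}_a}$ contributes exactly one pair. The hard part will be the surjectivity induction: one must keep track of the composition factors of standard modules under the deformation and ensure that each classical $W$-isotypic multiplicity at $a_e$ isolates a single exotic $IC$-summand at positive real $a$, rather than spreading across the possibly finer $G^{\hat{T}_a}$-orbit decomposition of $\mathfrak{N}(V)^{\hat{T}_a}$ where the component groups $A(a,x)$ may be strictly smaller than $A(x)$.
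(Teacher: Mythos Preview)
Your overall approach matches the paper's: establish orbit-finiteness via \cref{cor: a positive real implies orbit finiteness}, invoke the injection of \cref{lemma: description of DL corr with standard modules}, and then prove surjectivity by transporting the exotic Springer correspondence at $a_e$ to general positive real $a$ via the deformation isomorphism of \cref{lemma: deformation result for H(Bx)}. The (A2') argument is also essentially the same; the paper uses the nonvanishing of $H_*(\mathcal{B}_x) \cong H_*(\mathcal{B}_x^{\hat{T}_a})$ rather than the Borel fixed point theorem, but either works.

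However, you have correctly identified the hard part in your final paragraph and then left it unresolved. The issue is real: a single $G$-orbit $\mathcal{O} \subset \mathfrak{N}(V)$ may break into several $G^{\hat{T}_a}$-orbits in $\mathcal{O}^{\hat{T}_a}$, so the classical Springer datum at $a_e$ does not a priori pin down a unique $G^{\hat{T}_a}$-orbit. The paper resolves this using an ingredient you do not mention, namely \cref{lemma: orbits in fixed point orbit are disjoint}: distinct $G^{\hat{T}_a}$-orbits inside the same $G$-orbit are closure-incomparable. Concretely, given $(x,\rho) \in \mathcal{P}(G,V,a)$, one lifts $\rho$ to some $\tilde{\rho} \in \Irr(A(x))$ with $H_*(\mathcal{B}_x)_{\tilde{\rho}} \neq 0$, and takes the unique $W$-irreducible $L_{x,\tilde{\rho}}$ supplied by the Springer correspondence at $a_e$ which appears in $H_*(\mathcal{B}_x)_{\tilde{\rho}}$ but in no $H_*(\mathcal{B}_{x'})$ with $x' \notin \overline{G\cdot x}$. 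Via the injection $H_*(\mathcal{B}_x)_{\tilde{\rho}} \hookrightarrow H_*(\mathcal{B}_x^{\hat{T}_a})_\rho$ this $W$-module sits inside the standard module $M(a,x,\rho)$. Now if $L_{x,\tilde{\rho}}$ also appears in $H_*(\mathcal{B}_{x'}^{\hat{T}_a}) \cong H_*(\mathcal{B}_{x'})$ for some $x'$ with $x \in \overline{G^{\hat{T}_a}\cdot x'}$, the defining property forces $G\cdot x = G\cdot x'$, and then \cref{lemma: orbits in fixed point orbit are disjoint} forces $G^{\hat{T}_a}\cdot x = G^{\hat{T}_a}\cdot x'$. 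Any simple $\mathcal{H}^{\aff}_{\chi_a}$-subquotient $M$ of $M(a,x,\rho)$ containing $L_{x,\tilde{\rho}}$ therefore satisfies both characterizing properties of \cref{lemma: description of DL corr with standard modules}, so $(x_M,\rho_M) = (x,\rho)$. Without \cref{lemma: orbits in fixed point orbit are disjoint} your inductive ``subtraction of $IC$-summands'' cannot conclude, because nothing prevents the relevant $W$-multiplicity from being absorbed by another $G^{\hat{T}_a}$-orbit in the same $G$-orbit.
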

\begin{proof}
    Recall from \cref{thm: affine Hecke algebra at central character is Ext algebra} that we have an isomorphism $\mathcal{H}^{\aff}_{\chi_a} \cong H_*(Z^{\hat{T}_a})$. The number of $G^{\hat{T}_a}$-orbits on $\mathfrak{N}(V)^{\hat{T}_a}$ is finite by \cref{cor: a positive real implies orbit finiteness}. Thus, it suffices to show that the injective map $\Irr(\mathcal{H}^{\aff}_{\chi_a}) \hookrightarrow \mathcal{P}(G,V,a)$ from \eqref{eq: injective DL parametrization} is surjective. For $a = a_e$, this follows from (A2) and the isomorphism $\mathbb{C}[W] \# H^*(\mathcal{B}) \cong \mathcal{H}^{\aff}_{\chi_{a_e}}$. Thus, it follows from \cref{lemma: description of DL corr with standard modules} that for any $x \in \mathfrak{N}(V)$ and $\rho \in A(x)$, there is a unique irreducible representation $L_{x, \rho}$ of $W$ such that $L_{x, \rho}$ appears in $H_*(\mathcal{B}_x)_{\rho}$ but not in $H_*(\mathcal{B}_{x'})$ unless $x' \in \overline{G \cdot x}$. 
    
    Now let $\rho \in \Irr( A(a,x))$ such that $H_*(\mathcal{B}^{\hat{T}_a}_x)_{\rho} \neq 0$. By \cref{lemma: deformation result for H(Bx)} we have an isomorphism of $W \times A(a,x)$-representations $H_*(\mathcal{B}_x) \cong H_*(\mathcal{B}_x^{\hat{T}_a})$. Since the $A(a,x)$-action on $H_*(\mathcal{B}_x)$ factors through $A(x)$, we can find $\tilde{\rho} \in \Irr( A(x))$ such that $\rho$ appears in the restriction of $\tilde{\rho}$ to $A(a,x)$ and $H_*(\mathcal{B}_x)_{\tilde{\rho}} \neq 0$. Note that any non-zero morphism of $A(a,x)$-representations $\tilde{\rho} \rightarrow H_*({\mathcal{B}_x})$ is injective since $\tilde{\rho}$ is irreducible. In particular, the restriction of such a map along $\rho \hookrightarrow \tilde{\rho}$ is still non-zero. Thus, we get an injection
    \begin{equation*}
        H_*({\mathcal{B}_x})_{\tilde{\rho}} \hookrightarrow H_*(\mathcal{B}_x)_{\rho} \cong H_*(\mathcal{B}_x^{\hat{T}_a})_{\rho}.
    \end{equation*}
    Hence, the simple $W$-module $L_{x, \tilde{\rho}}$ appears in $H_*(\mathcal{B}_x^{\hat{T}_a})_{\rho}$. Now assume that $L_{x, \tilde{\rho}}$ appears in $H_*(\mathcal{B}_{x'}^{\hat{T}_a})$ with $x' \in \mathfrak{N}(V)^{\hat{T}_a}$ and $x \in \overline{G^{\hat{T}_a} \cdot x'}$. Since $H_*(\mathcal{B}_{x'}) \cong H_*(\mathcal{B}_{x'}^{\hat{T}_a})$ as $W$-modules, the defining property of $L_{x, \tilde{\rho}}$ implies that $x' \in \overline{G \cdot x}$ and thus $G\cdot x = G \cdot x'$. By \cref{lemma: orbits in fixed point orbit are disjoint} this implies that $x$ and $x'$ lie in the same $G^{\hat{T}_a}$-orbit. Let $M$ be any of the simple $H_*(Z^{\hat{T}_a})$-modules in a composition series of $ H_*(\mathcal{B}_x^{\hat{T}_a})_{\rho}$ containing the $W$-representation $L_{x, \tilde{\rho}}$. It then follows that $M$ does not appear in $H_*(\mathcal{B}_{x'}^{\hat{T}_a})$ for any $x' \in \mathfrak{N}(V)^{\hat{T}_a}$ with $x \in \overline{G^{\hat{T}_a} \cdot x'}$ unless $x$ and $x'$ lie in the same $G^{\hat{T}_a}$-orbit. By \cref{lemma: description of DL corr with standard modules} this implies that $M$ maps to $ (x, \rho)$ under the map $\Irr(\mathcal{H}^{\aff}_{\chi_a}) \hookrightarrow \mathcal{P}(G,V,a)$. Since $M$ was arbitrary, this map surjective.

    If in addition (A2') holds, then $A(x)$ and thus also $A(a,x)$ act trivially on $H_*(\mathcal{B}_x) \cong H_*(\mathcal{B}_x^{\hat{T}_a})$. Hence, the projection onto the first component $\mathcal{P}(G,V,a) \rightarrow \mathfrak{N}(V)^{\hat{T}_a} / G^{\hat{T}_a}$ is injective. It is also surjective since $H_*(\mathcal{B}_x) \cong H_*(\mathcal{B}_x^{\hat{T}_a})$ is always non-zero. Hence, we get bijections $\Irr(\mathcal{H}^{\aff}_{\chi_a}) \overset{1:1}{\leftrightarrow}  \mathcal{P}(G,V,a) \overset{1:1}{\leftrightarrow}  \mathfrak{N}(V)^{\hat{T}_a} / G^{\hat{T}_a}$.
\end{proof}
Our next goal is to show that the simply connectedness assumption in the theorem above can be removed. Given a central isogeny $G \rightarrow G'$ we get corresponding isogenies $ \hat{T} \rightarrow \hat{T}'$ and $\hat{\mathcal{T}} \rightarrow \hat{\mathcal{T}}'$. For any $a \in \hat{\mathcal{T}}$, we write $a' \in \hat{\mathcal{T}}'$ for the image of $a$ under $\hat{\mathcal{T}} \rightarrow \hat{\mathcal{T}}'$. Note that if $a \in \hat{\mathcal{T}}_{\mathbb{R}_{>0}}$ then $a' \in \hat{\mathcal{T}}'_{\mathbb{R}_{>0}}$.
\begin{lemma}\label{lemma: central isogeny surjective on fixed points}
    Let $G \rightarrow G'$ be a central isogeny of connected reductive groups and $V$ a rooted $G'$-representation. Then for any $a \in \hat{\mathcal{T}}_{\mathbb{R}_{>0}}$ the canonical map $G^{\hat{T}_a} \rightarrow (G')^{\hat{T}'_{a'}}$ is surjective and $\mathfrak{N}(V)^{\hat{T}_a} = \mathfrak{N}(V)^{\hat{T}'_{a'}}$.
\end{lemma}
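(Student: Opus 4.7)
The plan is to reduce the statement to a direct lifting argument using that $\hat{T}_a$ is a torus in the positive real case (by \cref{lemma: positive real implies Ta is a torus}). Let $Z := \ker(f)$, which is a finite (possibly non-reduced) central subgroup scheme of $G$. The action of $\hat{T} = T \times (\mathbb{G}_m)^I$ on $G$ by conjugation factors through the projection $\hat{T} \to T$ (since $(\mathbb{G}_m)^I$ acts only on $V$), so $G^{\hat{T}_a} = G^S$ where $S \subset T$ is the scheme-theoretic image of $\hat{T}_a \to T$; likewise $(G')^{\hat{T}'_{a'}} = (G')^{S'}$ for $S' \subset T'$. Both $S$ and $S'$ are tori since $\hat{T}_a, \hat{T}'_{a'}$ are, and $f(S) = S'$.

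For the surjectivity, pick $g' \in (G')^{S'}(k)$ and lift to some $g \in G(k)$ via the surjection $f: G(k) \to G'(k)$. Consider the morphism of schemes
\begin{equation*}
    \phi : S \rightarrow G, \quad t \mapsto g t g^{-1} t^{-1}.
\end{equation*}
Applying $f$ gives $f \circ \phi (t) = g' f(t) (g')^{-1} f(t)^{-1} = e$ since $g'$ centralizes $S' \ni f(t)$, so $\phi$ factors through $Z$. Because $S$ is connected and reduced, $\phi$ factors through $Z_{\mathrm{red}}$, which is a finite reduced discrete scheme; since $S$ is connected and $\phi(e) = e$, the morphism $\phi$ is constant with value $e$. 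Therefore $g \in G^S(k) = G^{\hat{T}_a}(k)$, establishing surjectivity.

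For the equality of fixed-point loci, I will show that the induced map $\hat{T}_a \to \hat{T}'_{a'}$ is surjective and that this forces $V^{\hat{T}_a} = V^{\hat{T}'_{a'}}$. The inclusion $X^*(\hat{T}') \hookrightarrow X^*(\hat{T})$ induced by $f$ identifies $X^*_{a'}$ with $X^*(\hat{T}') \cap X^*_a$, so the induced map on quotients $X^*(\hat{T}')/X^*_{a'} \hookrightarrow X^*(\hat{T})/X^*_a$ is injective; dualizing yields a surjection $\hat{T}_a \twoheadrightarrow \hat{T}'_{a'}$. Since the $\hat{T}_a$- and $\hat{T}'_{a'}$-actions on $V$ are intertwined by this surjection (both come from restriction of the $G \times (\mathbb{G}_m)^I$- and $G' \times (\mathbb{G}_m)^I$-actions along compatible maps), a vector $v \in V$ is fixed by $\hat{T}_a$ iff it is fixed by $\hat{T}'_{a'}$. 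Intersecting with $\mathfrak{N}(V) \subset V$ (which is the same subset under either action) gives $\mathfrak{N}(V)^{\hat{T}_a} = \mathfrak{N}(V)^{\hat{T}'_{a'}}$.

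The main subtlety is handling the possibly non-reduced kernel $Z$ in positive characteristic; this is what makes the $\phi$-argument need to invoke the reducedness of $S$ (guaranteed by the positive real hypothesis) rather than just the smoothness of $\hat{T}_a$, and it is also the reason we had to be careful to work with scheme-theoretic images rather than naive set-theoretic ones.
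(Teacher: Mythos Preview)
Your proof is correct but proceeds along a different line than the paper's. The paper argues via the explicit structure of centralizers of tori in reductive groups (\cref{lemma: explicit descrpition of centralizer in reductive group}): since $\hat{T}_a$ and $\hat{T}'_{a'}$ are tori, both $G^{\hat{T}_a}$ and $(G')^{\hat{T}'_{a'}}$ are connected reductive and are generated by the maximal torus together with the root subgroups $U_\alpha$ for which $\alpha(a)=1$ (respectively $\alpha(a')=1$). Because every root $\alpha$ lies in $X^*(T')\subset X^*(T)$, one has $\alpha(a)=\alpha(a')$, so the two centralizers have the same root subgroups and surjectivity reduces to the surjectivity of $T\to T'$ and $U_\alpha\to U'_\alpha$. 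The same observation $\alpha(a)=\alpha(a')$ for all weights of $V$ gives $V^{\hat{T}_a}=V^{\hat{T}'_{a'}}$ directly.

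Your approach instead lifts an arbitrary element $g'\in (G')^{S'}$ and uses the commutator map $\phi:S\to Z$ together with connectedness and reducedness of $S$ to force $\phi\equiv e$; for the fixed-point loci you argue on character lattices that $\hat{T}_a\twoheadrightarrow\hat{T}'_{a'}$. This is a clean, general argument that does not rely on the reductive structure theory and would apply to centralizers in more general connected groups. The paper's argument, on the other hand, makes transparent exactly \emph{which} roots contribute to the centralizer and why they coincide for $a$ and $a'$, which is the conceptual content driving both the surjectivity and the equality of fixed-point loci; it also avoids having to separately justify that the canonical map lands in $(G')^{\hat{T}'_{a'}}$, something your argument uses implicitly but does not spell out (it follows from your surjection $\hat{T}_a\twoheadrightarrow\hat{T}'_{a'}$, which you prove only in the second half).
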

\begin{proof}
    Since $\hat{T}_a$ is a subtorus of $T$ (\cref{lemma: positive real implies Ta is a torus}), the group $G^{\hat{T}_a}$ is connected reductive and generated by $T$ together with the $\{ U_{\alpha}  \mid \alpha(a) = 1\}$. Similarly, since $a' \in \hat{\mathcal{T}}_{\mathbb{R}_{>0}}'$ the group $\hat{T}'_{a'}$ is a subtorus of $T'$ and $(G')^{\hat{T}'_{a'}}$ is connected reductive and generated by $T'$ together with the $\{ U'_{\alpha} \mid \alpha(a') = 1 \}$. Note that $\alpha(a') = \alpha(a)$ and the canonical maps $T \rightarrow T'$ and $U_{\alpha} \rightarrow U_{\alpha}'$ are surjective. Thus, the map $G^{\hat{T}_a} \rightarrow (G')^{\hat{T}'_{a'}}$ is also surjective. The equality $\alpha(a') = \alpha(a)$ also implies that $V^{\hat{T}_a} = V^{\hat{T}'_{a'}}$ and thus $\mathfrak{N}(V)^{\hat{T}_a} = \mathfrak{N}(V)^{\hat{T}'_{a'}}$.
\end{proof}
\begin{corollary}\label{cor: central isogeny preserves parameter sets}
    Let $G \rightarrow G'$ be a central isogeny of connected reductive groups and $V$ a rooted $G'$-representation. For any $x \in \mathfrak {N}(V)^{\hat{T}_a}$ the canonical map $A_G(a,x) \rightarrow A_{G'}(a',x)$ is surjective. In particular, there is a canonical bijection $\mathcal{P}(G,V,a) \overset{1:1}{\leftrightarrow} \mathcal{P}(G',V,a')$.
\end{corollary}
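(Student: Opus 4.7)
The plan is to first establish the surjectivity of $A_G(a,x) \to A_{G'}(a',x)$ by combining \cref{lemma: central isogeny surjective on fixed points} with a stabilizer-level argument, and then deduce the bijection on parameter sets by checking that both factors in the definition of $\mathcal{P}$ behave well under the isogeny.

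For the first claim, I would start from the surjection $G^{\hat{T}_a} \twoheadrightarrow (G')^{\hat{T}'_{a'}}$ of \cref{lemma: central isogeny surjective on fixed points}. Restricting to the stabilizer of $x \in \mathfrak{N}(V)^{\hat{T}_a} = \mathfrak{N}(V)^{\hat{T}'_{a'}}$ gives a map $G^{\hat{T}_a}_x \to (G')^{\hat{T}'_{a'}}_x$, whose kernel is contained in $\ker(G \to G') \subset Z(G)$. Since $Z(G)$ acts trivially on $V$ (all weights of $V$ are roots, which vanish on $Z(G)$, as already exploited in the proof of \cref{lemma: Geometric conditions compatible with isogeny}), every element of $(G')^{\hat{T}'_{a'}}_x$ lifts to an element of $G^{\hat{T}_a}_x$: indeed, by \cref{lemma: central isogeny surjective on fixed points} pick any lift in $G^{\hat{T}_a}$; it automatically fixes $x$ because its image does and the kernel is central hence acts trivially on $V$. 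This gives surjectivity of the stabilizer map, which then descends to a surjection on component groups $A_G(a,x) \twoheadrightarrow A_{G'}(a',x)$.

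For the bijection of parameter sets, I would verify the three ingredients of the definition of $\mathcal{P}$ separately. First, $\mathfrak{N}(V)^{\hat{T}_a} = \mathfrak{N}(V)^{\hat{T}'_{a'}}$ by \cref{lemma: central isogeny surjective on fixed points}, and the orbit spaces agree because $G^{\hat{T}_a}$ surjects onto $(G')^{\hat{T}'_{a'}}$. Second, the central isogeny induces an isomorphism of flag varieties $\mathcal{B}(G) \cong \mathcal{B}(G')$ which restricts to $\mathcal{B}_x \cong \mathcal{B}'_x$, hence to $\mathcal{B}_x^{\hat{T}_a} \cong \mathcal{B}_x^{\hat{T}'_{a'}}$ (the equality $\hat{T}_a \to \hat{T}'_{a'}$ suffices on points because $\mathcal{B}$ and $\mathcal{B}'$ are abstractly identified). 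Third, the kernel $K$ of $A_G(a,x) \twoheadrightarrow A_{G'}(a',x)$ lies in the image of $Z(G)^{\hat{T}_a}$ and therefore acts trivially on $\mathcal{B}_x^{\hat{T}_a}$ and on its Borel--Moore homology. Consequently, pullback along the surjection identifies $\Irr(A_{G'}(a',x))$ with the subset of $\Irr(A_G(a,x))$ on which $K$ acts trivially, and this subset contains precisely those irreducibles $\rho$ for which $H_*(\mathcal{B}_x^{\hat{T}_a})_\rho \neq 0$.

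Combining these three points, the map $(x,\rho) \mapsto (x, \rho \circ \pi)$, where $\pi : A_G(a,x) \twoheadrightarrow A_{G'}(a',x)$, gives a well-defined bijection $\mathcal{P}(G',V,a') \overset{1:1}{\leftrightarrow} \mathcal{P}(G,V,a)$ after passing to orbits. The main (minor) subtlety will be the verification that the kernel of the stabilizer map lies in $Z(G)$ and thus acts trivially on the Springer fiber; everything else is formal once \cref{lemma: central isogeny surjective on fixed points} is available.
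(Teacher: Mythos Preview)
Your proposal is correct and follows essentially the same approach as the paper's proof: use \cref{lemma: central isogeny surjective on fixed points} to get surjectivity on centralizers, deduce surjectivity on stabilizers (hence on component groups), identify the Springer fibers via the isomorphism $\mathcal{B}(G)\cong\mathcal{B}(G')$, and conclude by inflation along $A_G(a,x)\twoheadrightarrow A_{G'}(a',x)$. You are in fact more explicit than the paper on two points it leaves implicit: why a lift of an element of $(G')^{\hat{T}'_{a'}}_x$ automatically fixes $x$ (the $G$-action on $V$ factors through $G'$, equivalently the kernel is central and the non-zero weights of $V$ are roots), and why the kernel of the component-group surjection acts trivially on $H_*(\mathcal{B}_x^{\hat{T}_a})$, so that exactly the inflated irreducibles appear.
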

\begin{proof}
    The map $G^{\hat{T}_a} \rightarrow (G')^{\hat{T}'_{a'}}$ is surjective (\cref{lemma: central isogeny surjective on fixed points}) so $G^{\hat{T}_a}_x \rightarrow (G')^{\hat{T}'_{a'}}_x$ is also surjective. This implies that is also $A_G(a,x) \rightarrow A_{G'}(a',x)$ surjective. Note that the canonical map $\mathcal{B}(G) \rightarrow \mathcal{B}(G')$ and thus also the canonical map $\mathcal{B}(G)_x \rightarrow \mathcal{B}(G')_x$ is an isomorphism. It follows from this that there is a bijection $\mathcal{P}(G,V,a) \overset{1:1}{\leftrightarrow} \mathcal{P}(G',V,a')$ which inflates along $A_G(a,x) \rightarrow A_{G'}(a',x)$.
\end{proof}
The following lemma can be thought of as a special case of the results in \cite[§3]{reeder2002isogenies}.
\begin{lemma}\label{lemma: clifford theory at positive real central character}
    Let $G \rightarrow G'$ be a central isogeny and $a \in \hat{\mathcal{T}}_{\mathbb{R}_{>0}}$. Then restriction along the canonical map $\mathcal{H}^{' \aff}_{\chi_{a'}} \rightarrow \mathcal{H}^{\aff}_{\chi_a}$ induces a bijection $\Irr(\mathcal{H}^{\aff}_{\chi_a}) \overset{1:1}{\leftrightarrow} \Irr(\mathcal{H}^{' \aff}_{\chi_{a'}})$
\end{lemma}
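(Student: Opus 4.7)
The plan is to show directly that the canonical map $\mathcal{H}^{'\aff}_{\chi_{a'}} \to \mathcal{H}^{\aff}_{\chi_a}$ is in fact an isomorphism of $\mathbb{C}$-algebras, from which the asserted bijection on simples follows immediately by viewing restriction as the inverse of the identity functor. The framework is Clifford-theoretic. Set $F := X^*(T)/X^*(T')$; since $G \to G'$ is central, $F$ is a finite abelian group on which $W$ acts trivially. Dually, $K := \Hom(F, \mathbb{C}^\times) \cong \ker(\hat{\mathcal{T}} \to \hat{\mathcal{T}}')$ will act on $\mathcal{H}^{\aff}$ by algebra automorphisms, induced by the $F$-grading of $\mathbb{C}[X^*(T)]$ over $\mathbb{C}[X^*(T')]$, with fixed subalgebra $(\mathcal{H}^{\aff})^K = \mathcal{H}^{'\aff}$. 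A choice of coset representatives for $X^*(T')$ in $X^*(T)$ exhibits $\mathcal{H}^{\aff}$ as a free (hence faithfully flat) left $\mathcal{H}^{'\aff}$-module of rank $|F|$.

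The first key step will be to use the positive real hypothesis to analyze the fiber over $\chi_{a'}$. Central characters of $\mathcal{H}^{\aff}$ lifting $\chi_{a'}$ correspond to $W$-orbits in the $K$-torsor $aK \subset \hat{\mathcal{T}}$. If $wa\omega = a\omega'$ for some $w \in W$ and $\omega,\omega' \in K$, then $wa = a(\omega'\omega^{-1})$; as $wa$ is positive real and $\omega'\omega^{-1}$ is torsion, this forces $\omega = \omega'$. Hence the fiber consists of exactly $|K|$ distinct reduced points, so $Z(\mathcal{H}^{'\aff}) \hookrightarrow Z(\mathcal{H}^{\aff})$ is étale there, and the Chinese remainder theorem yields
\begin{equation*}
    \mathcal{H}^{\aff}/\mathfrak{m}_{a'}\mathcal{H}^{\aff} \;\cong\; \prod_{\chi_b \mid \chi_{a'}} \mathcal{H}^{\aff}_{\chi_b},
\end{equation*}
where $\mathfrak{m}_{a'} := \ker(\chi_{a'})$, and $K$ permutes the factors freely and transitively.

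The second key step will be to take $K$-invariants of both sides of this decomposition. Faithful flatness of $\mathcal{H}^{\aff}$ over $\mathcal{H}^{'\aff}$ gives $\mathfrak{m}_{a'}\mathcal{H}^{\aff} \cap \mathcal{H}^{'\aff} = \mathfrak{m}_{a'}\mathcal{H}^{'\aff}$; combined with exactness of $K$-invariants in characteristic zero (Maschke's theorem), this yields $(\mathcal{H}^{\aff}/\mathfrak{m}_{a'}\mathcal{H}^{\aff})^K = \mathcal{H}^{'\aff}_{\chi_{a'}}$. Meanwhile, a free transitive $K$-action on the factors of a product algebra forces any $K$-invariant tuple to be determined by its value at a single factor, so projection to the $\chi_a$-component restricts to an isomorphism $(\prod_b \mathcal{H}^{\aff}_{\chi_b})^K \overset{\sim}{\to} \mathcal{H}^{\aff}_{\chi_a}$. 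Chasing an element $h' \in \mathcal{H}^{'\aff}$ through both identifications shows that the resulting map is exactly the canonical map $\mathcal{H}^{'\aff}_{\chi_{a'}} \to \mathcal{H}^{\aff}_{\chi_a}$, which is therefore an isomorphism.

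The main obstacle will be the étale-fiber analysis in the second paragraph, where the positive real hypothesis is essential: without it, torsion elements of $K$ could be absorbed by the $W$-action on $a$, causing the fiber to degenerate and the decomposition to fail. The remaining steps are standard descent/Clifford theory for Galois-type extensions of finite rank, and the whole argument realizes a special case of the results in \cite{reeder2002isogenies}.
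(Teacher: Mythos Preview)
Your argument is correct and proves a stronger statement than the paper does, by a genuinely different route.

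The paper works at the level of modules: it observes (exactly as you do) that for $a$ positive real the twists ${}^cM$ of a simple $\mathcal{H}^{\aff}$-module $M$ by $c \in C \setminus \{e\}$ are pairwise non-isomorphic, and then invokes the Clifford theory of Ram--Rammage \cite[Appendix A]{ram2003affine} for the crossed product $\mathcal{H}^{\aff} \rtimes C$ together with the identification $\mathcal{H}^{'\aff} \cong e(\mathcal{H}^{\aff}\rtimes C)e$ to produce the bijection on irreducibles directly. Your approach instead proves that the canonical map $\mathcal{H}^{'\aff}_{\chi_{a'}} \to \mathcal{H}^{\aff}_{\chi_a}$ is an \emph{isomorphism of algebras}: the same positive-reality observation shows that $K$ acts freely on the fiber of $\Spec Z(\mathcal{H}^{\aff}) \to \Spec Z(\mathcal{H}^{'\aff})$ over $\chi_{a'}$, and since this map is the quotient by $K$ (as $Z(\mathcal{H}^{'\aff}) = Z(\mathcal{H}^{\aff})^K$), freeness gives \'etaleness there, whence the CRT decomposition and the descent by $K$-invariants go through. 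The bijection on simples then follows trivially. So both proofs pivot on the identical key computation (the polar decomposition forcing $\omega = \omega'$), but you extract a sharper algebraic conclusion while the paper cites a black-box module-theoretic result.

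One remark on presentation: your sentence ``the fiber consists of exactly $|K|$ distinct reduced points, so $Z(\mathcal{H}^{'\aff}) \hookrightarrow Z(\mathcal{H}^{\aff})$ is \'etale there'' deserves one more clause. Counting $|K|$ distinct closed points does not by itself give reducedness for a finite map that is not known to be flat; what makes it work here is precisely that $\Spec Z(\mathcal{H}^{\aff}) \to \Spec Z(\mathcal{H}^{'\aff})$ is the quotient by the finite group $K$, and a quotient by a finite group in characteristic zero is \'etale exactly where the action is free. Equivalently, writing the Artinian ring $C/\mathfrak{m}_{a'}C$ as a product of local pieces permuted simply transitively by $K$, one has $(\prod_\omega R_\omega)^K \cong R_e$, while $(C/\mathfrak{m}_{a'}C)^K = A/\mathfrak{m}_{a'} = \mathbb{C}$ by your Maschke/averaging argument, forcing each $R_\omega \cong \mathbb{C}$. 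Adding this line would make the \'etaleness claim self-contained.
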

\begin{proof}
    The group $C:= ker(\mathcal{T} \rightarrow \mathcal{T}')$ acts on $\mathcal{H}^{\aff}_{q_{\textbf{V}}} $ via $c \cdot T_{s_{\alpha}} = T_{s_{\alpha}}$ and $c \cdot e^{\lambda} = \lambda(c) e^{\lambda}$. Note that $e^{\lambda}$ is fixed under the action of $C$ if and only if $\lambda \in X^*(\mathcal{T}')$. Hence, we can identify $\mathcal{H}^{' \aff} = (\mathcal{H}^{\aff})^C$. If $M$ is an $\mathcal{H}^{\aff}$-module, we denote by ${}^cM$ the $\mathcal{H}^{\aff}$-module with the action twisted by $c$ (i.e. $h \in \mathcal{H}^{\aff}$ acts on ${}^cM$ by the action of $c(h)$ on $M$). If $M$ is a simple $\mathcal{H}^{\aff}$-module with central character $\chi_a$, then ${}^c M$ has central character $\chi_{ca}$. Since $a \in \hat{\mathcal{T}}_{\mathbb{R}_{>0}}$ and $C \backslash \{ e\} \subset \mathcal{T}_{S^1}$, it follows from the polar decomposition that $W\cdot a \neq W \cdot ca$ for all $c \in C \backslash \{ e \}$. Thus, $M \not\cong {}^cM$ for all $c \in C \backslash \{e\}$. It then follows by the Clifford theory of \cite[Theorem A.6, Lemma A.9]{ram2003affine} that $\Ind^{\mathcal{H}^{\aff} \rtimes C}_{\mathcal{H}^{\aff}} M $ is a simple $\mathcal{H}^{\aff}\rtimes C$-module and $e\Ind^{\mathcal{H}^{\aff} \rtimes C}_{\mathcal{H}^{\aff}} M$ is a simple $ e(\mathcal{H}^{\aff}\rtimes C)e$-module where $e \in \mathbb{C}[C]$ is the idempotent corresponding to the trivial representation. Moreover, by \cite[Lemma A.9]{ram2003affine} we have
    \begin{equation*}
        \mathcal{H}^{' \aff} = (\mathcal{H}^{\aff})^C \cong e(\mathcal{H}^{\aff} \rtimes C)e
    \end{equation*}
    and under this isomorphism the simple $e(\mathcal{H}^{\aff} \rtimes C)e$-module $e\Ind^{\mathcal{H}^{\aff} \rtimes C}_{\mathcal{H}^{\aff}} M$ corresponds to the $\mathcal{H}^{' \aff} $-module $M$ obtained via restriction along the inclusion $\mathcal{H}^{' \aff}  \hookrightarrow \mathcal{H}^{\aff}$. Thus, restriction induces a map
    \begin{equation}\label{eq: isogeny map on simples}
        \Irr(\mathcal{H}^{\aff}_{\chi_a}) \rightarrow  \Irr(\mathcal{H}^{' \aff}_{\chi_{a'}}).
    \end{equation}
    This map is surjective since any simple $\mathcal{H}^{' \aff}$-module $M$ is a submodule of $\mathcal{H}^{\aff} \otimes_{\mathcal{H}^{' \aff}} M$ and thus appears in some simple $\mathcal{H}^{\aff}$-module. If $M_1$ and $M_2$ are two simple $\mathcal{H}^{\aff}$-modules with central character $\chi_a$ and $e\Ind^{\mathcal{H}^{\aff} \rtimes C}_{\mathcal{H}^{\aff}} M_1  \cong e\Ind^{R \rtimes C}_R M_2$, then by standard results about idempotents (c.f. \cite[6.2f]{green2007polynomial}) we have $\Ind^{\mathcal{H}^{\aff} \rtimes C}_{\mathcal{H}^{\aff}} M_1  \cong \Ind^{\mathcal{H}^{\aff} \rtimes C}_{\mathcal{H}^{\aff}} M_2$ which implies ${}^cM_1 \cong M_2$ for some $c \in C$ by \cite[Theorem A.6]{ram2003affine}. Since $M_1$ and $M_2$ were both assumed to have central character $\chi_a$, this is only possible if $c = e$ and $M_1 \cong M_2$. This shows that the map from \eqref{eq: isogeny map on simples} is also injective and thus a bijection.
\end{proof}
\begin{remark}
    The lemma above can also be deduced from the reduction theorems in \cite{lusztig1989affine,barbasch1993reduction} which match up the irreducible representations of $\mathcal{H}^{\aff}$ with central character ${\chi_a}$ (resp. of $\mathcal{H}^{' \aff}$ with central character at ${\chi_{a'}}$) with those of a corresponding graded Hecke algebra at a certain central character. For positive real central characters the resulting graded Hecke algebras for $\mathcal{H}^{\aff}_{\chi_a} $ and $\mathcal{H}^{' \aff}_{\chi_{a'}}$ and the respective central characters are the same which recovers the bijection $\Irr(\mathcal{H}^{\aff}_{\chi_a}) \overset{1:1}{\leftrightarrow}  \Irr(\mathcal{H}^{' \aff}_{\chi_{a'}})$.
\end{remark}
\begin{corollary}\label{cor: DL correspondonce without simply connected for positive real central character}
    The result from \cref{thm: DL correspondence for simply connected and positive real central character} holds without the simply connectedness assumption, i.e. if $V$ is an $I$-rooted $G$-representation such that (A1), (A2) and (A3) hold for $(G,V)$ and $a$ is positive real, there is a bijection
    \begin{equation*}
        \Irr(\mathcal{H}^{\aff}_{\chi_a}) \overset{1:1}{\leftrightarrow}  \mathcal{P}(G,V,a).
    \end{equation*}
    If in addition (A2') holds, then there is a bijection
    \begin{equation*}
        \Irr(\mathcal{H}^{\aff}_{\chi_a}) \overset{1:1}{\leftrightarrow}   \mathfrak{N}(V)^{\hat{T}_a} / G^{\hat{T}_a}.
    \end{equation*}
\end{corollary}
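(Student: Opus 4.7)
The plan is to reduce to the simply connected case established in \cref{thm: DL correspondence for simply connected and positive real central character} by means of a central isogeny, and then transport the resulting bijection using \cref{cor: central isogeny preserves parameter sets} and \cref{lemma: clifford theory at positive real central character}. Set
\begin{equation*}
    \tilde{G} := Z(G)^{\circ} \times \mathcal{D}(G)^{sc},
\end{equation*}
where $\mathcal{D}(G)^{sc}$ is the simply connected cover of $\mathcal{D}(G)$. The multiplication map $\tilde{G} \to G$ is a central isogeny, and $\tilde{G}$ has simply connected derived subgroup $\mathcal{D}(G)^{sc}$. Since $\mathbb{R}_{>0}$ is a divisible, torsion-free abelian group, tensoring the finite-index inclusion $X_*(\tilde{T}) \hookrightarrow X_*(T)$ over $\mathbb{Z}$ with $\mathbb{R}_{>0}$ yields an isomorphism $\hat{\tilde{\mathcal{T}}}_{\mathbb{R}_{>0}} \overset{\sim}{\rightarrow} \hat{\mathcal{T}}_{\mathbb{R}_{>0}}$. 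Hence our positive real $a$ admits a unique positive real lift $\tilde{a} \in \hat{\tilde{\mathcal{T}}}_{\mathbb{R}_{>0}}$, and by \cref{lemma: Geometric conditions compatible with isogeny} the conditions (A1), (A2), (A3), and when assumed (A2'), hold for $(\tilde{G},V)$ if and only if they hold for $(G,V)$.

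I would then apply \cref{thm: DL correspondence for simply connected and positive real central character} to $(\tilde{G},V,\tilde{a})$, obtaining a bijection $\Irr(\mathcal{H}^{\aff}_{\chi_{\tilde{a}}}) \overset{1:1}{\leftrightarrow} \mathcal{P}(\tilde{G},V,\tilde{a})$, together with the refined bijection $\Irr(\mathcal{H}^{\aff}_{\chi_{\tilde{a}}}) \overset{1:1}{\leftrightarrow} \mathfrak{N}(V)^{\hat{\tilde{T}}_{\tilde{a}}}/\tilde{G}^{\hat{\tilde{T}}_{\tilde{a}}}$ when (A2') is assumed. \cref{cor: central isogeny preserves parameter sets} (applied to the isogeny $\tilde{G} \to G$) provides a canonical bijection $\mathcal{P}(\tilde{G},V,\tilde{a}) \overset{1:1}{\leftrightarrow} \mathcal{P}(G,V,a)$, while \cref{lemma: clifford theory at positive real central character} supplies a canonical bijection $\Irr(\mathcal{H}^{\aff}_{\chi_{\tilde{a}}}) \overset{1:1}{\leftrightarrow} \Irr(\mathcal{H}^{\aff}_{\chi_a})$ via restriction along the canonical algebra map. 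Composing these three bijections yields the desired parameterization $\Irr(\mathcal{H}^{\aff}_{\chi_a}) \overset{1:1}{\leftrightarrow} \mathcal{P}(G,V,a)$.

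For the refined statement under (A2'), I would invoke \cref{lemma: central isogeny surjective on fixed points}, which gives $\mathfrak{N}(V)^{\hat{T}_a} = \mathfrak{N}(V)^{\hat{\tilde{T}}_{\tilde{a}}}$ and surjectivity of $\tilde{G}^{\hat{\tilde{T}}_{\tilde{a}}} \to G^{\hat{T}_a}$; this forces the orbit spaces $\mathfrak{N}(V)^{\hat{\tilde{T}}_{\tilde{a}}}/\tilde{G}^{\hat{\tilde{T}}_{\tilde{a}}}$ and $\mathfrak{N}(V)^{\hat{T}_a}/G^{\hat{T}_a}$ to coincide, and the second bijection follows. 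There is no real obstacle here: all the substantive work has been absorbed into \cref{thm: DL correspondence for simply connected and positive real central character}, \cref{cor: central isogeny preserves parameter sets} and \cref{lemma: clifford theory at positive real central character}, and what remains is a purely formal assembly.
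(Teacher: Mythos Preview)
Your proposal is correct and follows essentially the same route as the paper: pass to $\tilde{G}=Z(G)^{\circ}\times\mathcal{D}(G)^{sc}$, lift $a$ to the unique positive real $\tilde{a}$, apply \cref{thm: DL correspondence for simply connected and positive real central character} to $(\tilde{G},V,\tilde{a})$, and transport back via \cref{lemma: clifford theory at positive real central character}, \cref{cor: central isogeny preserves parameter sets}, and \cref{lemma: central isogeny surjective on fixed points}. Your explicit invocation of \cref{lemma: Geometric conditions compatible with isogeny} to justify that (A1)--(A3) and (A2') transfer to $(\tilde{G},V)$ is a point the paper leaves implicit; as a minor notational remark, the Hecke algebra for $\tilde{G}$ should be distinguished from $\mathcal{H}^{\aff}$ (the paper writes $\tilde{\mathcal{H}}^{\aff}$).
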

\begin{proof}
    Let $\tilde{G} \rightarrow G$ be a central isogeny such that $\tilde{G}$ has simply connected derived subgroup (e.g. take $\tilde{G} = \mathcal{D}(G)^{sc} \times Z(G)^{\circ}$ where $\mathcal{D}(G)^{sc}$ is the simply connected cover of the semisimple group $\mathcal{D}(G)$). Let $\tilde{a} \in \hat{\tilde{\mathcal{T}}}_{\mathbb{R}_{>0}} \subset \tilde{G}$ be the unique real element lifting $a \in \hat{\mathcal{T}}_{\mathbb{R}_{>0}}$. Then we have bijections
    \begin{equation*}
        \Irr(\mathcal{H}^{\aff}_{\chi_a}) \overset{\cref{lemma: clifford theory at positive real central character}}{\leftrightarrow} \Irr(\tilde{\mathcal{H}}^{\aff}_{\chi_{\tilde{a}}})\overset{\cref{thm: DL correspondence for simply connected and positive real central character}}{\leftrightarrow} \mathcal{P}(\tilde{G}, V , \tilde{a}) \overset{\cref{cor: central isogeny preserves parameter sets}}{\leftrightarrow} \mathcal{P}(G,V, a).
    \end{equation*}
    If in addition (A2') holds, we get bijections
    \begin{equation*}
        \mathcal{P}(\tilde{G}, V , \tilde{a}) \overset{\cref{thm: DL correspondence for simply connected and positive real central character}}{\leftrightarrow} \mathfrak{N}(V)^{\hat{\tilde{T}}_{\tilde{a}}} / (\tilde{G})^{\hat{\tilde{T}}_{\tilde{a}}} \overset{\cref{lemma: central isogeny surjective on fixed points}}{\leftrightarrow} \mathfrak{N}(V)^{\hat{T}_a} / G^{\hat{T}_a}.
    \end{equation*}
    Combining these bijections yields the bijection $\Irr(\mathcal{H}^{\aff}_{\chi_a}) \overset{1:1}{\leftrightarrow }\mathfrak{N}(V)^{\hat{T}_a} / G^{\hat{T}_a}$.
\end{proof}

\subsection{Reduction to positive real central character}\label{section: reduction to pos real central char}
In this section, we show how the study of a general central character $\chi_a$ can be reduced to a positive real central character. This can be thought of as a geometric version of the reduction theorems in \cite{lusztig1989affine,barbasch1993reduction}. Let $\mathcal{D} \subset T$ be a diagonalizable subgroup scheme and $K_{\mathcal{D}} = \ker(X^*(T) \rightarrow X^*(\mathcal{D}))$.
\begin{lemma}
    $V^{\mathcal{D}}$ is a rooted $(G^{\mathcal{D}})^{\circ}$-representation.
\end{lemma}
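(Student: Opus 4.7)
The plan is to compute $V^{\mathcal{D}}$ explicitly using the weight decomposition with respect to $T$ and then match the resulting weights against the root system of $(G^{\mathcal{D}})^{\circ}$ described in Lemma~\ref{lemma: explicit descrpition of centralizer in reductive group}. First I would write the $T$-weight decomposition $V = V_0 \oplus \bigoplus_{\alpha \in \Phi} V_{\alpha}$, which holds since $V$ is a rooted $G$-representation (so the non-zero $T$-weights are exactly $\Phi$, each occurring with multiplicity one). Because $\mathcal{D} \subset T$ is diagonalizable and its action on the line $V_{\alpha}$ is through the character $\alpha|_{\mathcal{D}} \in X^*(\mathcal{D})$, the line $V_{\alpha}$ is $\mathcal{D}$-fixed (in the scheme-theoretic sense, which equals the pointwise sense on a linear representation) if and only if $\alpha|_{\mathcal{D}}$ is trivial, i.e.\ $\alpha \in K_{\mathcal{D}}$. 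Similarly $V_0$ is fixed by all of $T$, hence by $\mathcal{D}$. This yields
\begin{equation*}
    V^{\mathcal{D}} = V_0 \oplus \bigoplus_{\alpha \in \Phi \cap K_{\mathcal{D}}} V_{\alpha},
\end{equation*}
so the non-zero $T$-weights of $V^{\mathcal{D}}$ are precisely $\Phi \cap K_{\mathcal{D}}$, each appearing with multiplicity one.

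Next I would invoke Lemma~\ref{lemma: explicit descrpition of centralizer in reductive group} to identify the root datum of $(G^{\mathcal{D}})^{\circ}$: the torus $T$ remains a maximal torus of $(G^{\mathcal{D}})^{\circ}$, and its root system is exactly $\Phi \cap K_{\mathcal{D}}$. Comparing with the previous step, the non-zero weights of $V^{\mathcal{D}}$ viewed as a $(G^{\mathcal{D}})^{\circ}$-representation (with respect to the maximal torus $T$) are exactly the roots of $(G^{\mathcal{D}})^{\circ}$, each with multiplicity one. This is the defining condition in Definition~\ref{def: rooted representation}, so $V^{\mathcal{D}}$ is a rooted $(G^{\mathcal{D}})^{\circ}$-representation.

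There is no real obstacle here; the only point that requires a moment of care is that the fixed-point locus of a diagonalizable group scheme acting linearly on a vector space coincides with the sum of the weight spaces on which $\mathcal{D}$ acts trivially, even when $\mathcal{D}$ is non-reduced. This is because the weight-space decomposition of $V$ with respect to $T$ is compatible with restriction of characters to $\mathcal{D}$, so the subspace cut out by "$t \cdot v = v$ for all $A$-points $t$" is exactly the sum of those $V_{\alpha}$ with $\alpha|_{\mathcal{D}} = 0$ in $X^*(\mathcal{D})$, i.e.\ $\alpha \in K_{\mathcal{D}}$.
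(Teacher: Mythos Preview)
Your proof is correct and follows essentially the same approach as the paper: compute $V^{\mathcal{D}} = V_0 \oplus \bigoplus_{\alpha \in \Phi \cap K_{\mathcal{D}}} V_{\alpha}$ from the weight decomposition, then invoke Lemma~\ref{lemma: explicit descrpition of centralizer in reductive group} to identify $\Phi \cap K_{\mathcal{D}}$ as the root system of $(G^{\mathcal{D}})^{\circ}$. Your additional remark about the fixed-point computation working for non-reduced $\mathcal{D}$ is a helpful clarification that the paper leaves implicit.
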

\begin{proof}
    We have $V^{\mathcal{D}} = V_0 \oplus \bigoplus_{\alpha \in \Phi \cap K_{\mathcal{D}}} V_{\alpha}$. By \cref{lemma: explicit descrpition of centralizer in reductive group} the set of roots in $(G^{\mathcal{D}})^{\circ}$ is $\Phi \cap K_{\mathcal{D}}$. Hence, $V^{\mathcal{D}}$ is a rooted $(G^{\mathcal{D}})^{\circ}$-representation.
\end{proof}
Recall that we have morphisms
\begin{align*}
    \tilde{V} & \cong G \times^B V^- \overset{\mu}{\rightarrow} V\\
    \widetilde{V^{\mathcal{D}}} &\cong (G^{\mathcal{D}})^{\circ} \times^{B^{\mathcal{D}}} (V^{\mathcal{D}})^- \overset{\mu_{\mathcal{D}}}{\rightarrow} V^{\mathcal{D}}
\end{align*}
with nilcones $\mathfrak{N}(V) = im(\mu)$ and $\mathfrak{N}(V^{\mathcal{D}})=im(\mu^{\mathcal{D}})$.
\begin{lemma}\label{lemma: taking fixed points is compatible with nilcones}
    We have $\mathfrak{N}(V)^{\mathcal{D}} = \mathfrak{N}(V^{\mathcal{D}})$.
\end{lemma}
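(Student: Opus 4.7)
The containment $\mathfrak{N}(V^{\mathcal{D}}) \subset \mathfrak{N}(V)^{\mathcal{D}}$ is immediate: for $x = g v$ with $g \in (G^{\mathcal{D}})^{\circ}$ and $v \in (V^{\mathcal{D}})^{-} = V^{-} \cap V^{\mathcal{D}}$, the point $x$ lies in both $G \cdot V^{-} = \mathfrak{N}(V)$ and $V^{\mathcal{D}}$, since $g$ and $v$ are each $\mathcal{D}$-fixed.

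For the reverse inclusion the plan is to produce a $\mathcal{D}$-fixed flag inside the Springer fiber over $x$ and then read off the desired decomposition from a Bruhat calculation. Given $x \in \mathfrak{N}(V)^{\mathcal{D}}$, I first observe that $\mathcal{B}_{x} = \mu^{-1}(x)$ is a non-empty projective $\mathcal{D}$-variety (using that $\mu$ is proper and $\mathcal{D}$-equivariant), and invoke the fixed point theorem for diagonalizable group schemes acting on projective varieties to pick $gB \in \mathcal{B}_{x}^{\mathcal{D}}$. Writing $g = u \dot{w}$ with $u \in \prod_{\alpha \in \Phi^{-} \cap w \Phi^{+}} U_{\alpha}$ as in the proof of \cref{lemma: fixed points of flag variety}, that same argument yields $u \in G^{\mathcal{D}}$. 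Because the $T$-action on this unipotent product is semisimple with $U_{\alpha}$ as the $\alpha$-eigenspace, the $\mathcal{D}$-fixed locus is exactly the connected unipotent group $\prod_{\alpha \in \Phi^{-} \cap w \Phi^{+} \cap K_{\mathcal{D}}} U_{\alpha}$, so in fact $u \in (G^{\mathcal{D}})^{\circ}$.

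Setting $\tilde{x} = u^{-1} x$, I get $\tilde{x} \in V^{\mathcal{D}}$ while $\dot{w}^{-1} \tilde{x} = g^{-1} x \in V^{-}$, and hence
\begin{equation*}
    \tilde{x} \in V^{\mathcal{D}} \cap \dot{w} V^{-} = \bigoplus_{\alpha \in w \Phi^{-} \cap K_{\mathcal{D}}} V_{\alpha}.
\end{equation*}
By \cref{lemma: explicit descrpition of centralizer in reductive group} and \cref{lemma: fixed points of flag variety}, $w \Phi^{-} \cap K_{\mathcal{D}}$ is a system of negative roots of $(G^{\mathcal{D}})^{\circ}$, corresponding to the Borel $(\dot{w} B \dot{w}^{-1})^{\mathcal{D}}$, so the right-hand side is the negative-root part of $V^{\mathcal{D}}$ for that Borel. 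Since the subvariety $\mathfrak{N}(V^{\mathcal{D}}) \subset V^{\mathcal{D}}$ is independent of the chosen Borel of $(G^{\mathcal{D}})^{\circ}$ (any two choices give $(G^{\mathcal{D}})^{\circ}$-translates of the same negative part), this forces $\tilde{x} \in \mathfrak{N}(V^{\mathcal{D}})$, and then $x = u \tilde{x} \in (G^{\mathcal{D}})^{\circ} \cdot \mathfrak{N}(V^{\mathcal{D}}) = \mathfrak{N}(V^{\mathcal{D}})$.

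The one step I expect to need extra care is the existence of a $\mathcal{D}$-fixed point in $\mathcal{B}_{x}$ when $\mathcal{D}$ is non-reduced: the paper's \cref{lemma: orbit finiteness for fixed points} is only stated for smooth $\mathcal{D}$, and the classical Borel fixed point theorem is usually formulated for connected solvable groups. I would close this gap by appealing to the linear reductivity of diagonalizable group schemes applied to a $\mathcal{D}$-equivariant projective embedding of $\mathcal{B}_{x}$, or equivalently by analysing the scheme-theoretic fixed locus via the $\mathcal{D}$-character decomposition of a $\mathcal{D}$-equivariant ample line bundle on $\mathcal{B}_{x}$. Everything else reduces to the Bruhat calculation above and the Borel-independence of the nilcone.
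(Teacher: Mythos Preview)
Your argument is correct and the flagged fixed-point issue can indeed be resolved (diagonalizable group schemes are linearly reductive, so an equivariant ample line bundle on $\mathcal{B}_x$ decomposes into character pieces and produces a fixed closed point), but the paper takes a cleaner route that bypasses this step entirely.

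Rather than locating a $\mathcal{D}$-fixed flag in $\mathcal{B}_x$, the paper exploits \cref{lemma: basic results about V le 0} twice. Since $V^{\mathcal{D}}$ is a rooted $(G^{\mathcal{D}})^{\circ}$-representation, part~(i) of that lemma applied to the pair $((G^{\mathcal{D}})^{\circ}, V^{\mathcal{D}})$ gives some $g \in (G^{\mathcal{D}})^{\circ}$ with $gx \in (V^{\mathcal{D}})^{\le 0} \subset V^{\le 0}$. Then part~(ii) applied to the original pair $(G,V)$ forces $gx \in \mathfrak{N}(V) \cap V^{\le 0} = V^{-}$, whence $gx \in V^{\mathcal{D}} \cap V^{-} = (V^{\mathcal{D}})^{-}$ and $x \in \mathfrak{N}(V^{\mathcal{D}})$. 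This is a two-line argument once \cref{lemma: basic results about V le 0} is in hand, and it never touches Springer fibers or fixed-point theorems for non-reduced $\mathcal{D}$.

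Your approach has the virtue of being more geometric and reusable in contexts where an analogue of \cref{lemma: basic results about V le 0} is unavailable; the paper's approach is shorter here precisely because it has already invested in that structural lemma.
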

\begin{proof}
    Clearly $ \mathfrak{N}(V^{\mathcal{D}}) \subset \mathfrak{N}(V)^{\mathcal{D}}$. If $x \in \mathfrak{N}(V)^{\mathcal{D}} \subset V^{\mathcal{D}}$ we can find by \cref{lemma: basic results about V le 0} a $g \in (G^{\mathcal{D}})^{\circ}$ such that $gx \in (V^{\mathcal{D}})^{\le 0}$. Hence,
    \begin{equation*}
        gx \in V^{\mathcal{D}} \cap V^{\le 0} \cap \mathfrak{N}(V) \overset{\cref{lemma: basic results about V le 0}}{=} V^{\mathcal{D}} \cap V^- = (V^{\mathcal{D}})^-.
    \end{equation*}
    This shows that $x \in \mathfrak{N}(V^{\mathcal{D}})$.
\end{proof}
Restricting $\mu$ to $\mathcal{D}$-fixed points also yields a morphism
\begin{equation*}
    \tilde{V}^{\mathcal{D}} \overset{\mu^{\mathcal{D}}}{\rightarrow} V^{\mathcal{D}}.
\end{equation*}
\begin{lemma}\label{lemma: connnected components of fixed point resolution}
    Each connected component of $\tilde{V}^{\mathcal{D}} $ is $(G^{\mathcal{D}})^{\circ}$-equivariantly isomorphic to $\widetilde{V^{\mathcal{D}}}$ over $V^{\mathcal{D}}$.
\end{lemma}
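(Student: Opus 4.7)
The plan is to identify the connected components of $\tilde{V}^{\mathcal{D}}$ with those of $\mathcal{B}^{\mathcal{D}}$ via the projection $\pi$, pick a convenient basepoint in each component, and transport the resulting homogeneous vector bundle to $\widetilde{V^{\mathcal{D}}}$ by conjugation inside $(G^{\mathcal{D}})^{\circ}$.

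First I would observe that since $\tilde{V} \to \mathcal{B}$ is a $G$-equivariant vector bundle and $\mathcal{D}$ acts through $\mathcal{D} \subset T \subset G$, taking $\mathcal{D}$-fixed points yields a sub-vector-bundle $\tilde{V}^{\mathcal{D}} \to \mathcal{B}^{\mathcal{D}}$ (the $\mathcal{D}$-action is fiberwise linear). By \cref{lemma: fixed points of flag variety}(iii), the connected components of $\mathcal{B}^{\mathcal{D}}$ are exactly the $(G^{\mathcal{D}})^{\circ}$-orbits, each $(G^{\mathcal{D}})^{\circ}$-equivariantly isomorphic to the flag variety of $(G^{\mathcal{D}})^{\circ}$. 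Consequently the connected components of $\tilde{V}^{\mathcal{D}}$ are the preimages of the connected components of $\mathcal{B}^{\mathcal{D}}$, and are themselves $(G^{\mathcal{D}})^{\circ}$-equivariant vector bundles.

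Next, fix a component $\mathcal{C}_0 \subset \mathcal{B}^{\mathcal{D}}$ with basepoint $g_0 B \in \mathcal{C}_0$. Using \cref{lemma: fixed points of flag variety}(i) I pick a maximal torus $S \subset g_0 B g_0^{-1}$ containing $\mathcal{D}$; after replacing $g_0$ by $g_0 b$ for a suitable $b \in B$ (which leaves $g_0 V^-$ unchanged since $V^-$ is $B$-stable) I may further assume $S = g_0 T g_0^{-1}$. By \cref{lemma: fixed points of flag variety}(ii), $B' := (g_0 B g_0^{-1})^{\mathcal{D}}$ is then a Borel of $(G^{\mathcal{D}})^{\circ}$ containing the maximal torus $S$, and an easy computation identifies the fiber of $\tilde{V}^{\mathcal{D}}$ over $g_0 B$ with $F_0 := g_0 V^- \cap V^{\mathcal{D}}$. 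This gives
\begin{equation*}
\mathcal{C} \;\cong\; (G^{\mathcal{D}})^{\circ} \times^{B'} F_0, \qquad [k,v] \longmapsto kv \in V^{\mathcal{D}}.
\end{equation*}
Since any two (Borel, maximal torus) pairs in the connected reductive group $(G^{\mathcal{D}})^{\circ}$ are conjugate by an element of $(G^{\mathcal{D}})^{\circ}$, I can pick $h \in (G^{\mathcal{D}})^{\circ}$ with $hB'h^{-1} = B^{\mathcal{D}}$ and $hSh^{-1} = T$.

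The heart of the argument — and the only non-cosmetic step — is to verify $h \cdot F_0 = (V^{\mathcal{D}})^-$. My plan is a direct weight computation: decomposing $g_0 V^-$ into $S$-eigenspaces exhibits its $S$-weights as the negative roots of $G$ relative to $(g_0 B g_0^{-1}, S)$, and intersecting with $V^{\mathcal{D}}$ selects those vanishing on $\mathcal{D}$, which by \cref{lemma: explicit descrpition of centralizer in reductive group} are exactly the negative roots of $(G^{\mathcal{D}})^{\circ}$ with respect to $(B', S)$. Conjugation by $h$ then sends these to the negative roots of $(G^{\mathcal{D}})^{\circ}$ with respect to $(B^{\mathcal{D}}, T)$, which are precisely the $T$-weights of $(V^{\mathcal{D}})^-$. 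The rootedness of $V$ forces each weight space to be one-dimensional, so matching weights yields the equality $h F_0 = (V^{\mathcal{D}})^-$ of subspaces of $V$. Once this is in hand, the map $[k, v] \mapsto [kh^{-1}, hv]$ is a well-defined $(G^{\mathcal{D}})^{\circ}$-equivariant isomorphism $\mathcal{C} \xrightarrow{\sim} \widetilde{V^{\mathcal{D}}}$ intertwining the projections to $V^{\mathcal{D}}$, since $kh^{-1} \cdot hv = kv$; its inverse is the analogous construction using $h^{-1}$.
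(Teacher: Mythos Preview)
Your proof is correct and follows essentially the same strategy as the paper: reduce to connected components of $\mathcal{B}^{\mathcal{D}}$ via the projection, pick a basepoint in each component, and identify the fiber with $(V^{\mathcal{D}})^-$ by a weight computation. The only cosmetic difference is that the paper chooses the basepoint to be a Weyl translate $\dot{w}B$ (the unique point in the component whose $(G^{\mathcal{D}})^{\circ}$-stabilizer is already $B^{\mathcal{D}}$), so the fiber $(\dot{w}V^-)^{\mathcal{D}}$ equals $(V^{\mathcal{D}})^-$ on the nose via the identity $\Phi^- \cap K_{\mathcal{D}} = w(\Phi^-) \cap K_{\mathcal{D}}$, whereas you pick an arbitrary basepoint and then conjugate by an element $h \in (G^{\mathcal{D}})^{\circ}$ to achieve the same effect.
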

\begin{proof}
    We identify $\mathcal{B} = G/B$ with the variety of all Borel subgroups of $G$. By \cref{lemma: fixed points of flag variety} each connected component $C \subset \mathcal{B}^{\mathcal{D}}$ is $(G^{\mathcal{D}})^{\circ}$-equivariantly isomorphic to $(G^{\mathcal{D}})^{\circ}/B^{\mathcal{D}}$. Hence, there is a unique element $B_C \in C$ with $(B_C)^{\mathcal{D}} = B^{\mathcal{D}}$. Note that $T \subset B^{\mathcal{D}} = (B_C)^{\mathcal{D}} \subset B_C$ and thus $B_C = \dot{w} B\dot{w} ^{-1}$ for some $w \in W$. Comparing the weights of $Lie(B^{\mathcal{D}}) = Lie(B)^{\mathcal{D}}$ and $Lie(B_C^{\mathcal{D}}) = Lie(\dot{w} B \dot{w}^{-1})^{\mathcal{D}}$ yields
    \begin{equation*}
        \Phi^- \cap K_{\mathcal{D}} = w(\Phi^-)\cap K_{\mathcal{D}}.
    \end{equation*}
    Using the Bruhat decomposition, one can easily check that $\pi^{\mathcal{D}}: \tilde{V}^{\mathcal{D}} \rightarrow \mathcal{B}^{\mathcal{D}}$ is a $(G^{\mathcal{D}})^{\circ}$-equivariant vector bundle. Note that the fiber of $B_C = \dot{w} B\dot{w} ^{-1}$ under $\pi^{\mathcal{D}}$ is
    \begin{equation*}
        (\dot{w} V^-)^{\mathcal{D}} = \bigoplus_{w(\Phi^-)\cap K_{\mathcal{D}}}V_{\alpha} = \bigoplus_{\Phi^-\cap K_{\mathcal{D}}}V_{\alpha} = (V^-)^{\mathcal{D}} = (V^{\mathcal{D}})^-.
    \end{equation*} Hence, we get an isomorphism
    \begin{equation*}
        (\pi^{\mathcal{D}})^{-1}(C) \cong(G^{\mathcal{D}})^{\circ} \times^{B^{\mathcal{D}}} (V^{\mathcal{D}})^-  =  \widetilde{V^{\mathcal{D}}}  .
    \end{equation*}
    Since $C$ is a connected component of $\mathcal{B}^{\mathcal{D}}$ and $(\pi^{\mathcal{D}})^{-1}(C)$ is connected, we see that $(\pi^{\mathcal{D}})^{-1}(C)$ is a connected component of $\tilde{V}^{\mathcal{D}}$. Thus, all the connected components of $ \tilde{V}^{\mathcal{D}}$ are isomorphic to $\widetilde{V^{\mathcal{D}}} $.
\end{proof}

\begin{lemma}\label{lemma: simply connected implies connected centralizer}
    Assume that $G$ has simply connected derived subgroup. Then for any $a \in \hat{\mathcal{T}}$, the reductive group $G^{\hat{T}_a}$ is connected.
\end{lemma}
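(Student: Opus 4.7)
The plan is to reformulate the claim as a purely root-theoretic equality of Weyl-group subgroups and then invoke a classical theorem of Steinberg.

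First I observe that the factors $(\mathbb{G}_m)^I \subset \hat{T}$ commute with $G$, so $\hat{T}_a$ acts on $G$ by conjugation only through its image $\mathcal{D} \subset T$ under the projection $\hat{T} \twoheadrightarrow T$. Consequently $G^{\hat{T}_a} = G^{\mathcal{D}}$, and I may work throughout with the diagonalizable subgroup scheme $\mathcal{D} \subset T$, whose kernel lattice I denote $K := K_{\mathcal{D}} = K_{\hat{T}_a} \cap X^*(T)$. By \cref{lemma: explicit descrpition of centralizer in reductive group}, the Weyl group of $(G^{\mathcal{D}})^{\circ}$ is
\[W_K^{\circ} := \langle s_{\alpha} \mid \alpha \in \Phi \cap K \rangle \subset W,\]
while $G^{\mathcal{D}}$ itself is generated by $(G^{\mathcal{D}})^{\circ}$ together with lifts $\dot{w}$ of the elements of
\[W_K := \{w \in W \mid (w-1)X^*(T) \subset K\}.\]
Since $w|_{X^*(T)/K}=\mathrm{id}$ automatically forces $w(K)=K$, this matches the condition in \cref{lemma: explicit descrpition of centralizer in reductive group}. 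Identifying the component group then gives $G^{\hat{T}_a}/(G^{\hat{T}_a})^{\circ} \cong W_K/W_K^{\circ}$, so the claim reduces to showing $W_K = W_K^{\circ}$.

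This equality is a classical theorem of Steinberg which holds, for any subgroup $K \subset X^*(T)$, precisely under the hypothesis that $\mathcal{D}(G)$ is simply connected (the example $G = PGL_2$ with $K = 4\mathbb{Z} \subset 2\mathbb{Z} = X^*(T)$ shows that the hypothesis cannot be dropped). The assumption is used to guarantee that the coroot lattice $\mathbb{Z}\Phi^{\vee}$ is a direct summand of $X_*(T)$, which is the combinatorial input allowing every element of $W_K$ to be written as a product of reflections from $W_K^{\circ}$, typically by an induction on the length of $w$. I expect the main obstacle to a self-contained proof to be reproducing Steinberg's inductive argument, but since the assertion is independent of the characteristic and of the possibly non-reduced structure of $\mathcal{D}$, I will simply invoke Steinberg's theorem to conclude.
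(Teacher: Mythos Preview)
Your argument is correct and follows essentially the same route as the paper: both reduce the connectedness of $G^{\hat{T}_a}$, via \cref{lemma: explicit descrpition of centralizer in reductive group}, to the purely combinatorial equality $W_K=W_K^{\circ}$ and then appeal to Steinberg. The only presentational difference is that the paper, rather than quoting the lattice form of Steinberg's theorem directly, transfers the combinatorial condition back to the complex group $\mathcal{G}$ with the same root datum, identifies $\mathcal{G}^{\hat{\mathcal{T}}_a}=\mathcal{G}^{a}$ as the centralizer of the single semisimple element $a$, and then cites Steinberg's theorem in its classical form; your preliminary projection $\hat{T}_a\twoheadrightarrow\mathcal{D}\subset T$ is a harmless simplification that the paper omits.
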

\begin{proof}
    Let $\mathcal{G}$ be the complex reductive group with the same root datum as $G$. By \cref{lemma: explicit descrpition of centralizer in reductive group} the group $G^{\hat{T}_a}$ is connected if and only if for any $w \in W$ such that
    \begin{align*}
        w (X^*_a) &= X^*_a  \quad \text{and} \quad w|_{X^*(\hat{T})/X^*_a} = \id_{X^*(\hat{T})/X^*_a}
    \end{align*}
    we have $w \in \langle s_{\alpha } \mid \alpha \in \Phi \cap X^*_a \rangle$. Applying \cref{lemma: explicit descrpition of centralizer in reductive group} again, this is also equivalent to $\mathcal{G}^{\hat{\mathcal{T}}_a}$ being connected where $\hat{\mathcal{T}}_a = Spec(\mathbb{C}[X^*(\hat{\mathcal{T}})/X^*_a ])$. However, $\mathcal{G}^{\hat{\mathcal{T}}_a}= \mathcal{G}^a$ since $\hat{\mathcal{T}}_a$ is the closed subgroup of $\hat{\mathcal{T}}$ generated by $a$. A well-known result of Steinberg (c.f. \cite[3.9]{springer1970conjugacy}) states that $\mathcal{G}^a$ is connected if $\mathcal{G}$ has simply-connected derived subgroup. Hence, $G^{\hat{T}_a}$ is connected.
\end{proof}
For any $a = (s,(t_i)_{i \in I}) \in \hat{\mathcal{T}}$ we define
\begin{equation}\label{eq: def of Sa}
    \begin{aligned}
    Q_a &:= \{ \lambda \in X^*(T) \mid \lambda(s) \in \langle t_i \mid i \in I \rangle \} \\
        S_a &:= \Spec(k[X^*(T) /Q_a] ) \subset T.
    \end{aligned}
\end{equation}
Note that $X^*_a \subset Q_a \oplus X^*((\mathbb{G}_m)^I)$ and thus $S_a \subset \hat{T}_a$. In particular, for any $\hat{T}$-variety $Y$ we have
\begin{equation*}
    (Y^{S_a})^{\hat{T}_a} = Y^{\hat{T}_a}.
\end{equation*}
\begin{lemma}\label{lemma: simply connected implies double fixed points well behaved}
    If $G$ has simply connected derived subgroup, then $((G^{S_a})^{\circ})^{\hat{T}_a} = G^{\hat{T}_a}$.
\end{lemma}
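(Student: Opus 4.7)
The plan is to prove the equality by checking both inclusions separately, with the nontrivial direction following from the connectedness of $G^{\hat{T}_a}$ supplied by \cref{lemma: simply connected implies connected centralizer}.

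The inclusion $((G^{S_a})^{\circ})^{\hat{T}_a} \subset G^{\hat{T}_a}$ is immediate: any element fixed by $\hat{T}_a$ inside $(G^{S_a})^{\circ} \subset G$ is \emph{a fortiori} fixed by $\hat{T}_a$ in $G$.

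For the reverse inclusion, the key input is that $S_a \subset \hat{T}_a$, which has already been observed in the excerpt from the inclusion $X^*_a \subset Q_a \oplus X^*((\mathbb{G}_m)^I)$. This gives $G^{\hat{T}_a} \subset G^{S_a}$. Because $G$ has simply connected derived subgroup, \cref{lemma: simply connected implies connected centralizer} tells us that $G^{\hat{T}_a}$ is connected. A connected subgroup of $G^{S_a}$ necessarily lies in the identity component $(G^{S_a})^{\circ}$, so $G^{\hat{T}_a} \subset (G^{S_a})^{\circ}$. Since every element of $G^{\hat{T}_a}$ is by definition fixed by $\hat{T}_a$, we obtain $G^{\hat{T}_a} \subset ((G^{S_a})^{\circ})^{\hat{T}_a}$, as required.

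There is no real obstacle here; the entire content of the statement is the connectedness of $G^{\hat{T}_a}$, which is precisely the simply-connectedness-type result established in the preceding lemma (itself relying on Steinberg's classical theorem). If one dropped the simply-connectedness hypothesis, the argument would fail because $G^{\hat{T}_a}$ could acquire components not lying in $(G^{S_a})^{\circ}$, even though they would still be $\hat{T}_a$-fixed in $G^{S_a}$.
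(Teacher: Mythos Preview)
Your proof is correct and follows essentially the same approach as the paper: both use that $S_a \subset \hat{T}_a$ gives $G^{\hat{T}_a} \subset G^{S_a}$, and then invoke the connectedness of $G^{\hat{T}_a}$ from \cref{lemma: simply connected implies connected centralizer} to force $G^{\hat{T}_a}$ into the identity component $(G^{S_a})^{\circ}$. The paper phrases the last step via the discreteness of the quotient $(G^{S_a})^{\hat{T}_a}/((G^{S_a})^{\circ})^{\hat{T}_a}$, while you argue directly that a connected subgroup lies in the identity component; these are equivalent.
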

\begin{proof}
    There is an injection $ (G^{S_a})^{\hat{T}_a} /  ((G^{S_a})^{\circ})^{\hat{T}_a} \hookrightarrow G^{S_a} / (G^{S_a})^{\circ}$ so the quotient $ (G^{S_a})^{\hat{T}_a} /  ((G^{S_a})^{\circ})^{\hat{T}_a} $ is discrete. Since $(G^{S_a})^{\hat{T}_a} = G^{\hat{T}_a}$ is connected by \cref{lemma: simply connected implies connected centralizer}, this implies $G^{\hat{T}_a}= (G^{S_a})^{\hat{T}_a} =  ((G^{S_a})^{\circ})^{\hat{T}_a}$.
\end{proof}
For any $a \in \hat{T}$, we can take $\hat{T}_a$-fixed points to get morphisms
\begin{align*}
        \tilde{V}^{\hat{T}_a} \overset{\mu^a}&{\rightarrow} V^{\hat{T}_a} \\
        (\widetilde{V^{S_a}})^{\hat{T}_a} \overset{(\mu_{S_a})^a}&{\rightarrow} (V^{S_a})^{\hat{T}_a} = V^{\hat{T}_a}
\end{align*}
and associated sheaves as in \eqref{eq: def of fixed point Springer sheaf}:
\begin{align*}
    \textbf{S}^a &= \mu^a_* \textbf{1}_{\tilde{V}^{\hat{T}_a}}  \in D^b_c(\mathfrak{N}(V)^{\hat{T}_a})  \\
    \bar{\textbf{S}}^a &= (\mu_{S_a})^a_* \textbf{1}_{(\widetilde{V^{S_a}})^{\hat{T}_a} } \in D^b_c(\mathfrak{N}(V^{S_a})^{\hat{T}_a}).
\end{align*}
Note that $\textbf{S}^a$ and $\bar{\textbf{S}}^a$ live on the same space:
\begin{equation}\label{eq: Ta fixed points on nilcone for Sa and without}
    \mathfrak{N}(V)^{\hat{T}_a} = (\mathfrak{N}(V)^{S_a})^{\hat{T}_a}\overset{\cref{lemma: taking fixed points is compatible with nilcones}}{=}\mathfrak{N}(V^{S_a})^{\hat{T}_a}.
\end{equation}
\begin{lemma}\label{lemma: Sa and bar Sa are the same up to dir sum}
    Assume that $G$ has simply-connected derived subgroup. Then
    \begin{equation}\label{eq: iso of springer sheaves under taking fixed points}
        \textbf{S}^a \cong ( \bar{\textbf{S}}^a)^{\oplus n}
    \end{equation}
    for some $n \ge 1$ and
    \begin{equation}\label{eq: quality of parameter space under taking fixed points}
        \mathcal{P}(G,V,a) = \mathcal{P}((G^{S_a})^{\circ}, V^{S_a}, a).
    \end{equation}
\end{lemma}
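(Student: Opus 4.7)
My plan is to derive both assertions from the decomposition of $\tilde{V}^{S_a}$ provided by \cref{lemma: connnected components of fixed point resolution} applied with $\mathcal{D} = S_a$. That result yields a $(G^{S_a})^{\circ}$-equivariant isomorphism over $V^{S_a}$ of the form $\tilde{V}^{S_a} \cong \bigsqcup_{j=1}^n \widetilde{V^{S_a}}$, where $n$ equals the number of connected components of $\mathcal{B}^{S_a}$, and under this identification the restriction of $\mu$ to each summand becomes $\mu_{S_a}$.

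The key step will be to check that taking $\hat{T}_a$-fixed points is compatible with this decomposition. Since $\hat{T}_a \subset \hat{T}$ is commutative and hence centralizes $S_a$, its action on $\tilde{V}$ preserves $\tilde{V}^{S_a}$, so $\tilde{V}^{\hat{T}_a} = (\tilde{V}^{S_a})^{\hat{T}_a}$. Moreover $\hat{T}_a$ acts on $\mathcal{B}$ through $\hat{T}_a \to T \hookrightarrow (G^{S_a})^{\circ}$, and by \cref{lemma: fixed points of flag variety} the group $(G^{S_a})^{\circ}$ preserves each component of $\mathcal{B}^{S_a}$; hence so does $\hat{T}_a$. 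I would then conclude $\tilde{V}^{\hat{T}_a} \cong \bigsqcup_{j=1}^n (\widetilde{V^{S_a}})^{\hat{T}_a}$ as varieties over $V^{\hat{T}_a}$, with $\mu^a$ restricting on each summand to $(\mu_{S_a})^a$. Proper pushforward of the constant sheaf then delivers $\textbf{S}^a \cong (\bar{\textbf{S}}^a)^{\oplus n}$, establishing \eqref{eq: iso of springer sheaves under taking fixed points}.

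For the equality of parameter sets, I plan to combine three ingredients: the identity $\mathfrak{N}(V)^{\hat{T}_a} = \mathfrak{N}(V^{S_a})^{\hat{T}_a}$ from \eqref{eq: Ta fixed points on nilcone for Sa and without}; the identity $G^{\hat{T}_a} = ((G^{S_a})^{\circ})^{\hat{T}_a}$ from \cref{lemma: simply connected implies double fixed points well behaved}, which upon passing to stabilizers gives $A_G(a,x) = A_{(G^{S_a})^{\circ}}(a,x)$ for each $x$; and the fiberwise consequence of the previous paragraph, namely an $A(a,x)$-equivariant identification $\mathcal{B}_x^{\hat{T}_a} \cong \bigsqcup_{j=1}^n (\mathcal{B}_x')^{\hat{T}_a}$, where $\mathcal{B}_x'$ denotes the exotic Springer fiber for $(G^{S_a})^{\circ}$ acting on $V^{S_a}$. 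The $A(a,x)$-equivariance follows once one observes that $A(a,x)$ is represented inside $G^{\hat{T}_a} \subset (G^{S_a})^{\circ}$, which preserves each component of $\tilde{V}^{S_a}$. These identifications match all of the data entering the definition of the parameter sets, in particular rendering the conditions $H_*(\mathcal{B}_x^{\hat{T}_a})_\rho \neq 0$ and $H_*((\mathcal{B}_x')^{\hat{T}_a})_\rho \neq 0$ equivalent.

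The main obstacle I anticipate is the verification that the decomposition of $\tilde{V}^{S_a}$ is $\hat{T}_a$-equivariant and hence passes to $\hat{T}_a$-fixed points summand by summand; this is the only place where the precise structure of $\hat{T}_a$, as opposed to just $S_a$, enters in a nontrivial way. After that, the remainder of the argument is essentially unwinding definitions and invoking the previously established identifications.
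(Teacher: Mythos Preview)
Your proposal is correct and follows essentially the same route as the paper's proof: decompose $\tilde{V}^{S_a}$ into copies of $\widetilde{V^{S_a}}$ via \cref{lemma: connnected components of fixed point resolution}, pass to $\hat{T}_a$-fixed points, and push forward to obtain \eqref{eq: iso of springer sheaves under taking fixed points}; then combine \eqref{eq: Ta fixed points on nilcone for Sa and without} and \cref{lemma: simply connected implies double fixed points well behaved} with the resulting fiberwise identification to deduce \eqref{eq: quality of parameter space under taking fixed points}. The only cosmetic difference is that the paper reads off the $A(a,x)$-module identification on fibers via $H_*(\mathcal{B}_x^{\hat{T}_a}) \cong H^*(\iota_x^!\textbf{S}^a)$ rather than by direct inspection of the Springer fibers, but this is the same content; your explicit check that $\hat{T}_a$ (and hence $G^{\hat{T}_a}_x$) preserves each component of $\mathcal{B}^{S_a}$ is exactly what makes that identification $A(a,x)$-equivariant.
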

\begin{proof}
    By \cref{lemma: connnected components of fixed point resolution} $\tilde{V}^{S_a}$ is a disjoint union of several copies of $\widetilde{V^{S_a}}$ over $V^{S_a}$. Taking $\hat{T}_a$-fixed points, we see that $\tilde{V}^{\hat{T}_a} = (\tilde{V}^{S_a})^{\hat{T}_a}$ is a disjoint union of several copies of $(\widetilde{V^{S_a}})^{\hat{T}_a}$ over $ (V^{S_a})^{\hat{T}_a} = V^{\hat{T}_a}$. This implies \eqref{eq: iso of springer sheaves under taking fixed points}.
    
    Since $G^{\hat{T}_a} = ((G^{S_a})^{\circ})^{\hat{T}_a}$ by \cref{lemma: simply connected implies double fixed points well behaved} we have $A_G(a,x) = A_{(G^{S_a})^{\circ}}(a,x)$ for any $x \in \mathfrak{N}(V)^{\hat{T}_a}$. Using \eqref{eq: iso of springer sheaves under taking fixed points} and the isomorphisms of $A(a,x)$-modules
    \begin{align*}
        H_*(\mathcal{B}(G)^{\hat{T}_a}_x) &\cong H^*(\iota_x^!\textbf{S}^a) \\
        H_*(\mathcal{B}((G^{S_a})^{\circ})^{\hat{T}_a}_x) &\cong H^*(\iota_x^!\bar{\textbf{S}}^a)
    \end{align*}
    we see that $\rho \in \Irr(A(a,x))$ appears in $H_*(\mathcal{B}(G)^{\hat{T}_a}_x)$ if and only if it appears in $H_*(\mathcal{B}(G^{S_a})^{\hat{T}_a}_x)$. This proves \eqref{eq: quality of parameter space under taking fixed points}.
\end{proof}
\begin{lemma}\label{lemma: condition when a can be replaced with real element}
    Let $a = (s, (t_i)_{i \in I}) \in \hat{\mathcal{T}}$ such that $\langle t_i \mid i \in I \rangle$ is torsion-free and $\alpha(s) \in \langle t_i \mid i \in I \rangle$ for all $\alpha \in \Phi$. Then there is an element $a' \in \hat{\mathcal{T}}_{\mathbb{R}_{>0}}$ with $V^{\hat{T}_a} = V^{\hat{T}_{a'}}$, $\tilde{V}^{\hat{T}_a} = \tilde{V}^{\hat{T}_{a'}}$ and $(G^{\hat{T}_a})^{\circ} = (G^{\hat{T}_{a'}})^{\circ} =  G^{\hat{T}_{a'}}$. In particular, we have $\textbf{S}^a = \textbf{S}^{a'}$. If $G^{\hat{T}_a}$ is connected, we also have $\mathcal{P}(G,V,a) = \mathcal{P}(G,V,a')$.
\end{lemma}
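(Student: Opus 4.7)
The plan is to build $a' = (s', (t'_i)) \in \hat{\mathcal{T}}_{\mathbb{R}_{>0}}$ so that $a$ and $a'$ share the same ``vanishing patterns'': for every $\alpha \in \Phi$, $\alpha(s) = 1 \Leftrightarrow \alpha(s') = 1$ and $\alpha(s) = t_{i_\alpha} \Leftrightarrow \alpha(s') = t'_{i_\alpha}$, and for every $i \in I$, $t_i = 1 \Leftrightarrow t'_i = 1$. These three coincidences, together with the fact that $\hat{T}_{a'}$ is a torus, will yield all the claimed equalities.

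To construct $a'$, I would set $\Lambda := \langle t_i \mid i \in I \rangle \subset \mathbb{C}^{\times}$. By hypothesis, $\Lambda$ is a finitely generated torsion-free abelian group containing $\alpha(s)$ for every $\alpha \in \Phi$. Fix any injection $\iota \colon \Lambda \hookrightarrow \mathbb{R}_{>0}$ (e.g.\ send a $\mathbb{Z}$-basis to $\mathbb{Q}$-linearly independent positive reals) and set $t'_i := \iota(t_i)$. The map $\mathbb{Z}\Phi \to \mathbb{R}_{>0}$, $\alpha \mapsto \iota(\alpha(s))$, is well-defined on the root lattice precisely because $\alpha(s) \in \Lambda$, and since $\mathbb{R}_{>0}$ is divisible and hence injective as an abelian group, it extends to $X^*(T) \to \mathbb{R}_{>0}$; the corresponding element of $\Hom(X^*(T), \mathbb{R}_{>0}) = \mathcal{T}_{\mathbb{R}_{>0}}$ is my $s'$. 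Injectivity of $\iota$ gives the three set-coincidences.

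Given these coincidences, $V^{\hat{T}_a} = V^{\hat{T}_{a'}}$ is immediate from the weight decomposition of $V$, and \cref{lemma: explicit descrpition of centralizer in reductive group} yields $(G^{\hat{T}_a})^\circ = (G^{\hat{T}_{a'}})^\circ$. By \cref{lemma: positive real implies Ta is a torus}, $\hat{T}_{a'}$ is a torus, so its centralizer $G^{\hat{T}_{a'}}$ is connected, giving $(G^{\hat{T}_{a'}})^\circ = G^{\hat{T}_{a'}}$. For $\tilde{V}^{\hat{T}_a} = \tilde{V}^{\hat{T}_{a'}}$, I would use the $\hat{T}$-equivariant embedding $\tilde{V} \hookrightarrow \mathcal{B} \times V$, $[g,v]\mapsto (gB, gv)$, under which $\tilde{V}^{\hat{T}_a} = \tilde{V} \cap (\mathcal{B}^{\hat{T}_a} \times V^{\hat{T}_a})$. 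Then \cref{lemma: fixed points of flag variety} shows that $\mathcal{B}^{\hat{T}_a}$ is the union over $w \in W$ of the $(G^{\hat{T}_a})^\circ$-orbits through $\dot{w}B$, so it is determined by $(G^{\hat{T}_a})^\circ$ alone; this forces $\mathcal{B}^{\hat{T}_a} = \mathcal{B}^{\hat{T}_{a'}}$ and hence $\tilde{V}^{\hat{T}_a} = \tilde{V}^{\hat{T}_{a'}}$. The equality $\textbf{S}^a = \textbf{S}^{a'}$ is then immediate from the definition \eqref{eq: def of fixed point Springer sheaf}.

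Finally, if $G^{\hat{T}_a}$ is connected, then $G^{\hat{T}_a} = (G^{\hat{T}_a})^\circ = (G^{\hat{T}_{a'}})^\circ = G^{\hat{T}_{a'}}$, so the orbit spaces $\mathfrak{N}(V)^{\hat{T}_\bullet}/G^{\hat{T}_\bullet}$ coincide for $\bullet \in \{a, a'\}$, the component groups $A(a,x)$ and $A(a',x)$ agree, and $\mathcal{B}_x^{\hat{T}_a} = \mathcal{B}_x \cap \mathcal{B}^{\hat{T}_a} = \mathcal{B}_x^{\hat{T}_{a'}}$, so the datum defining $\mathcal{P}(G,V,a)$ is literally identical to that defining $\mathcal{P}(G,V,a')$. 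The step requiring the most care is the assertion that $\mathcal{B}^{\hat{T}_a}$ depends only on $(G^{\hat{T}_a})^\circ$; this is where \cref{lemma: fixed points of flag variety}, and specifically the description of each connected component as a single $(G^{\hat{T}_a})^\circ$-orbit through a $T$-fixed point, is essential.
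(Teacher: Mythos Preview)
Your proof is correct and follows essentially the same strategy as the paper: both construct $a'$ by transporting the torsion-free group $\langle t_i\rangle$ into $\mathbb{R}_{>0}$ via an injection and then verify that the relevant fixed loci depend only on the coincidence patterns $\{\alpha:\alpha(s)=1\}$, $\{\alpha:\alpha(s)=t_{i_\alpha}\}$, and $\{i:t_i=1\}$. The only cosmetic differences are that the paper extends the homomorphism $\mathbb{Z}\Phi\to\mathbb{R}_{>0}$ to $X^*(T)$ by explicit linear algebra (via $\mathbb{Z}\Phi^\vee\subset X_*(T)$) rather than invoking divisibility of $\mathbb{R}_{>0}$, and obtains $\mathcal{B}^{\hat{T}_a}=\mathcal{B}^{\hat{T}_{a'}}$ directly from the Bruhat decomposition rather than via \cref{lemma: fixed points of flag variety}.
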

\begin{proof}
    Pick free generators $c_1,...,c_k$ of the torsion-free abelian group $\langle t_i \mid i \in I \rangle \cong \mathbb{Z}^k$. We can then pick $d_1 ,...,d_k \in \mathbb{C}$ such that $exp(d_i) = c_i$. Then the exponential map restricts to an isomorphism of abelian groups $\langle d_1,..,d_k \rangle \overset{\sim}{\rightarrow} \langle c_1,...,c_k \rangle = \langle t_i \mid i \in I \rangle$. Pick elements $r_1, ...,r_k \in \mathbb{R}$ which are $\mathbb{Z}$-linearly independent. We then get a commutative diagram of isomorphisms of abelian groups
    \begin{equation*}
        \begin{tikzcd}
            {\langle exp(r_1), ... ,  exp(r_k) \rangle} \arrow[r, "f"]        & {\langle t_i \mid i \in I \rangle}                  \\
            {\langle r_1,...,r_k \rangle} \arrow[r, "g"] \arrow[u, "exp"] & {\langle d_1,...,d_k\rangle} \arrow[u, "exp"]
        \end{tikzcd}
    \end{equation*}
    where $g(r_i) = d_i$ and $f(exp(r_i)) = c_i$. For each $\alpha \in \Phi$ let $r_{\alpha} \in \langle r_1, ..., r_k \rangle \subset \mathbb{R}$ be the unique element with $f(exp(r_{\alpha})) = \alpha(s)$. Since $ev_s : \mathbb{Z}\Phi \rightarrow \langle t_i \mid i \in I \rangle$ is a group homomorphism, the map $\alpha \mapsto r_{\alpha}$ induces a group homomorphism $\mathbb{Z}\Phi \rightarrow \mathbb{R}$. Hence, the map
    \begin{equation*}
        \mathbb{R} \otimes_{\mathbb{Z}} \mathbb{Z}\Phi \rightarrow \mathbb{R}, \quad \alpha \mapsto r_{\alpha}
    \end{equation*}
    is $\mathbb{R}$-linear. In particular, this map is given by the evaluation at some element $x_s \in \mathbb{R} \otimes_{\mathbb{Z}} \mathbb{Z}\Phi^{\vee} \subset \mathbb{R} \otimes_{\mathbb{Z}} X_*(T)$. Applying the exponential map to $x_s$ yields an element $s' \in \mathbb{R}_{>0} \otimes_{\mathbb{Z}} X_*(T) = \mathcal{T}_{\mathbb{R}_{>0}}$. Note that by construction, we have
    \begin{equation*}
        f(\alpha(s')) = f(\alpha(exp(x_s))) = f(exp(\alpha(x_s))) = f(exp(r_{\alpha})) = \alpha(s).
    \end{equation*}
    For each $i \in I$ let $t_i' \in \langle exp(r_1), ..., exp(r_k) \rangle \subset \mathbb{R}_{>0}$ be the unique element with $f(t_i') = t_i$. Then the bijection $f$ makes the following statements equivalent:
    \begin{align*}
        \alpha(s') = 1 &\Leftrightarrow \alpha(s) = 1, \\
        \alpha(s') = t_i' & \Leftrightarrow \alpha(s) = t_i.
    \end{align*}
    Set $a' := (s', (t_i')_{i \in I}) \in \hat{\mathcal{T}}_{\mathbb{R}_{>0}}$. Then the first equivalence implies $(G^{\hat{T}_a})^{\circ} = (G^{\hat{T}_{a'}})^{\circ}$ by \cref{lemma: explicit descrpition of centralizer in reductive group}. It also implies that $\mathcal{B}^{\hat{T}_a} = \mathcal{B}^{\hat{T}_{a'}}$ by the Bruhat decomposition. The second equivalence implies $V^{\hat{T}_a} = V^{\hat{T}_{a'}}$. Since $\tilde{V} \subset \mathcal{B} \times V$, this also shows that $\tilde{V}^{\hat{T}_a} = \tilde{V}^{\hat{T}_{a'}}$. It follows from this that $\textbf{S}^a = \textbf{S}^{a'}$.
    
    Note that $\hat{T}_{a'}$ is a torus by \cref{lemma: positive real implies Ta is a torus} so $(G^{\hat{T}_{a'}})^{\circ} = G^{\hat{T}_{a'}}$. If $G^{\hat{T}_a}$ is connected, we get $G^{\hat{T}_a} = (G^{\hat{T}_a})^{\circ} \overset{\cref{lemma: explicit descrpition of centralizer in reductive group}}{=} (G^{\hat{T}_{a'}})^{\circ} = G^{\hat{T}_{a'}}$ and thus $A(a,x) = A(a',x)$ for all $x \in \mathfrak{N}(V)^{\hat{T}_a} = \mathfrak{N}(V)^{\hat{T}_{a'}}$. Hence, we have $\mathcal{P}(G,V,a) = \mathcal{P}(G,V,a')$.
\end{proof}
\begin{theorem}\label{thm: general DL correspondence}
    Let $V$ be an $I$-rooted $G$-representation and let $a = (s,(t_i)_{i \in I}) \in \hat{\mathcal{T}}$. Assume that
    \begin{itemize}
        \item $G$ has simply connected derived subgroup,
        \item $\langle t_i \mid i \in I \rangle \subset \mathbb{C}^{\times}$ is torsion-free,
        \item (A1)-(A3) hold for $((G^{S_a})^{\circ}, V^{S_a})$.
    \end{itemize}
    Then the number of $G^{\hat{T}_a}$-orbits on $\mathfrak{N}(V)^{\hat{T}_a}$ is finite and there is a canonical bijection
    \begin{equation*}
        \Irr(\mathcal{H}^{\aff}_{\chi_a}) \overset{1:1}{\leftrightarrow} \mathcal{P}(G,V,a).
    \end{equation*}
    If in addition (A2') holds for $((G^{S_a})^{\circ}, V^{S_a})$, then there is a bijection
    \begin{equation*}
        \Irr(\mathcal{H}^{\aff}_{\chi_a}) \overset{1:1}{\leftrightarrow}  \mathcal{P}(G,V,a) \overset{1:1}{\leftrightarrow} \mathfrak{N}(V)^{\hat{T}_a}/ G^{\hat{T}_a}.
    \end{equation*}
\end{theorem}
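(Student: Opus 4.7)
The plan is to bootstrap the positive real case (\cref{cor: DL correspondonce without simply connected for positive real central character}) to the general case via the $S_a$-reduction assembled in \cref{lemma: Sa and bar Sa are the same up to dir sum} and \cref{lemma: condition when a can be replaced with real element}. By \cref{thm: affine Hecke algebra at central character is Ext algebra} and \eqref{eq: irreps of Ext algebra are simple perv sheaves} there is a canonical bijection $\Irr(\mathcal{H}^{\aff}_{\chi_a}) \leftrightarrow \Irr(\textbf{S}^a)$, and since $G$ has simply connected derived subgroup, \cref{lemma: Sa and bar Sa are the same up to dir sum} gives $\textbf{S}^a \cong (\bar{\textbf{S}}^a)^{\oplus n}$ and $\mathcal{P}(G,V,a) = \mathcal{P}((G^{S_a})^{\circ}, V^{S_a}, a)$. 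Thus the theorem reduces to producing a bijection $\Irr(\bar{\textbf{S}}^a) \leftrightarrow \mathcal{P}((G^{S_a})^{\circ}, V^{S_a}, a)$, working now with the smaller pair $((G^{S_a})^{\circ}, V^{S_a})$.

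Next I would replace $a$ by a positive real element relative to this smaller group. By \cref{lemma: explicit descrpition of centralizer in reductive group} the root system of $(G^{S_a})^{\circ}$ is $\Phi \cap Q_a$, so every such root $\alpha$ satisfies $\alpha(s) \in \langle t_i \mid i \in I \rangle$ by the definition of $Q_a$ in \eqref{eq: def of Sa}. Combined with the torsion-free hypothesis, this is precisely the hypothesis of \cref{lemma: condition when a can be replaced with real element} applied to $((G^{S_a})^{\circ}, V^{S_a})$, yielding a positive real $a'$ with $\bar{\textbf{S}}^a = \bar{\textbf{S}}^{a'}$. Because $G$ has simply connected derived subgroup, \cref{lemma: simply connected implies connected centralizer} and \cref{lemma: simply connected implies double fixed points well behaved} ensure that $((G^{S_a})^{\circ})^{\hat{T}_a} = G^{\hat{T}_a}$ is connected, so the connectedness clause in the replacement lemma also provides $\mathcal{P}((G^{S_a})^{\circ}, V^{S_a}, a) = \mathcal{P}((G^{S_a})^{\circ}, V^{S_a}, a')$.

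With the reduction to the positive real character $a'$ on $((G^{S_a})^{\circ}, V^{S_a})$ complete, I would invoke \cref{cor: DL correspondonce without simply connected for positive real central character}—whose hypotheses (A1)-(A3) (and (A2')) are exactly the assumptions of the theorem—together with \cref{thm: affine Hecke algebra at central character is Ext algebra} applied to the smaller pair, to obtain $\Irr(\bar{\textbf{S}}^{a'}) \leftrightarrow \mathcal{P}((G^{S_a})^{\circ}, V^{S_a}, a')$. Chaining the identifications $\Irr(\mathcal{H}^{\aff}_{\chi_a}) \leftrightarrow \Irr(\textbf{S}^a) = \Irr(\bar{\textbf{S}}^a) = \Irr(\bar{\textbf{S}}^{a'})$ and $\mathcal{P}((G^{S_a})^{\circ}, V^{S_a}, a') = \mathcal{P}(G,V,a)$ then gives the first bijection. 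Orbit finiteness of $\mathfrak{N}(V)^{\hat{T}_a}$ under $G^{\hat{T}_a}$ follows from \cref{cor: a positive real implies orbit finiteness} applied to $((G^{S_a})^{\circ}, V^{S_a}, a')$ via the identifications $\mathfrak{N}(V)^{\hat{T}_a} = \mathfrak{N}(V^{S_a})^{\hat{T}_{a'}}$ (from \eqref{eq: Ta fixed points on nilcone for Sa and without} and \cref{lemma: condition when a can be replaced with real element}) and $G^{\hat{T}_a} = ((G^{S_a})^{\circ})^{\hat{T}_{a'}}$; the (A2') bijection with $\mathfrak{N}(V)^{\hat{T}_a}/G^{\hat{T}_a}$ then follows from the second bijection of the same corollary via the same identifications. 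The main obstacle is purely bookkeeping—all genuine content has been distilled into the preceding lemmas—so the care required lies in tracking which central character lives in which torus, verifying that the ``smaller'' Hecke algebra to which \cref{thm: affine Hecke algebra at central character is Ext algebra} is re-applied really is the one attached to the reduced datum $((G^{S_a})^{\circ}, V^{S_a})$, and confirming that each passage between $a$ and $a'$ is lossless on both the sheaf and parameter sides.
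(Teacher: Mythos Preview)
Your proposal is correct and follows essentially the same route as the paper's proof: reduce to the pair $((G^{S_a})^{\circ},V^{S_a})$ via \cref{lemma: Sa and bar Sa are the same up to dir sum}, replace $a$ by a positive real $a'$ via \cref{lemma: condition when a can be replaced with real element}, and then invoke \cref{cor: DL correspondonce without simply connected for positive real central character}. The only cosmetic difference is that the paper runs the chain as a cardinality count $\#\Irr(\mathcal{H}^{\aff}_{\chi_a}) = \#\Irr(\textbf{S}^a) = \#\Irr(\bar{\textbf{S}}^{a'}) = \#\mathcal{P}((G^{S_a})^{\circ},V^{S_a},a') = \#\mathcal{P}(G,V,a)$ and then concludes that the already-constructed canonical injection \eqref{eq: injective DL parametrization} is surjective, whereas you chain the bijections directly; this makes the word ``canonical'' in the theorem statement transparent.
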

\begin{proof}
    By \cref{lemma: explicit descrpition of centralizer in reductive group} the roots in $(G^{S_a})^{\circ}$ are precisely the $\alpha \in \Phi$ with $\alpha(s) \in \langle t_i \mid i \in I \rangle$. Hence, \cref{lemma: condition when a can be replaced with real element} applies to the triple $((G^{S_a})^{\circ},V^{S_a},a)$ which yields an element $a' \in \hat{\mathcal{T}}_{\mathbb{R}_{>0}}$ such that
    \begin{equation}\label{eq: G and nilcone at a is the same when taking Sa fixed points}
        \begin{aligned}
        ((G^{S_a})^{\circ})^{\hat{T}_{a'}} &= ((G^{S_a})^{\circ})^{\hat{T}_a} \overset{\cref{lemma: simply connected implies double fixed points well behaved}}{=}  G^{\hat{T}_a}  \\
        \mathfrak{N}(V^{S_a})^{\hat{T}_{a'}} &= \mathfrak{N}(V^{S_a})^{\hat{T}_a} \overset{\eqref{eq: Ta fixed points on nilcone for Sa and without}}{=} \mathfrak{N}(V)^{\hat{T}_a}  .
        \end{aligned}
    \end{equation}
    Since (A1) holds for $((G^{S_a})^{\circ},V^{S_a})$ there are only finitely  many $((G^{S_a})^{\circ})^{\hat{T}_{a'}}$-orbits on $\mathfrak{N}(V^{S_a})^{\hat{T}_{a'}}$ by \cref{cor: a positive real implies orbit finiteness}. By \eqref{eq: G and nilcone at a is the same when taking Sa fixed points} this implies that there are only finitely many $G^{\hat{T}_a}$-orbits on $\mathfrak{N}(V)^{\hat{T}_a}$.

    The group $((G^{S_a})^{\circ})^{\hat{T}_a}=G^{\hat{T}_a}$ is connected by \cref{lemma: simply connected implies connected centralizer}. Hence, \cref{lemma: condition when a can be replaced with real element} yields that
    \begin{equation}\label{eq: parameter spaces at Sa for a and a prime}
        \mathcal{P}((G^{S_a})^{\circ}, V^{S_a}, a)  =\mathcal{P}((G^{S_a})^{\circ}, V^{S_a}, a').
    \end{equation}
    Let $\bar{\mathcal{H}}^{\aff}_{\textbf{q}_{V^{S_a}}}$ be the affine Hecke algebra associated to $(G^{S_a})^{\circ}$ with parameter function $\textbf{q}_{V^{S_a}}$ coming from $V^{S_a}$. Since we assume that (A1), (A2) and (A3) hold for $((G^{S_a})^{\circ}, V^{S_a})$ and $a'$ is positive real, we can apply \cref{cor: DL correspondonce without simply connected for positive real central character} which yields a bijection
    \begin{equation}\label{eq: Irr for bar hecke}
        \Irr(\bar{\mathcal{H}}^{\aff}_{\chi_{a'}}) \overset{1:1}{\leftrightarrow}\mathcal{P}((G^{S_a})^{\circ}, V^{S_a}, a').
    \end{equation}
    Note that by \cref{thm: affine Hecke algebra at central character is Ext algebra} we also have
    \begin{equation}\label{eq: irr for bar Hecke and bar Sa}
        \Irr(\bar{\mathcal{H}}^{\aff}_{\chi_{a'}}) \overset{1:1}{\leftrightarrow} \Irr(\bar{\textbf{S}}^{a'}).
    \end{equation}
    Hence, we get
    \begin{align*}
        \# \Irr(\mathcal{H}^{\aff}_{\chi_a}) \overset{\cref{thm: affine Hecke algebra at central character is Ext algebra}}&{=} \#\Irr(\textbf{S}^a) \\
        \overset{\cref{lemma: Sa and bar Sa are the same up to dir sum}}&{=} \# \Irr(\bar{\textbf{S}}^a) \\
        \overset{\cref{lemma: condition when a can be replaced with real element}}&{=}\# \Irr(\bar{\textbf{S}}^{a'}) \\
        \overset{\eqref{eq: Irr for bar hecke}, \eqref{eq: irr for bar Hecke and bar Sa}}&{=}\#\mathcal{P}((G^{S_a})^{\circ}, V^{S_a}, a') \\
        \overset{\eqref{eq: parameter spaces at Sa for a and a prime}}&{=}\#\mathcal{P}((G^{S_a})^{\circ}, V^{S_a}, a)  \\
        \overset{\cref{lemma: Sa and bar Sa are the same up to dir sum}}&{=}\# \mathcal{P}(G,V,a).
    \end{align*}
    This shows that the canonical map $\Irr(\mathcal{H}^{\aff}_{\chi_a})\hookrightarrow \mathcal{P}(G, V, a)$ from \eqref{eq: injective DL parametrization} is a bijection.

    Assume now that in addition (A2') holds $((G^{S_a})^{\circ}, V^{S_a})$. Then the projection map
    \begin{equation*}
        \mathcal{P}(G,V,a) \rightarrow \mathfrak{N}(V)^{\hat{T}_a}/G^{\hat{T}_a}
    \end{equation*} 
    is the same as the projection map
    \begin{equation*}
        \mathcal{P}((G^{S_a})^{\circ}, V^{S_a}, a') \rightarrow \mathfrak{N}(V^{S_a})^{\hat{T}_{a'}}/ ((G^{S_a})^{\circ})^{\hat{T}_{a'}}
    \end{equation*}
    by \cref{lemma: Sa and bar Sa are the same up to dir sum}, \eqref{eq: parameter spaces at Sa for a and a prime} and \eqref{eq: G and nilcone at a is the same when taking Sa fixed points}. This is a bijection by \cref{cor: DL correspondonce without simply connected for positive real central character}. Hence, we get
    \begin{equation*}
        \Irr(\mathcal{H}^{\aff}_{\chi_a})\overset{1:1}{\leftrightarrow} \mathcal{P}(G, V, a) \overset{1:1}{\leftrightarrow} \mathfrak{N}(V)^{\hat{T}_a}/G^{\hat{T}_a}.
    \end{equation*}
\end{proof}
\section{The affine Hecke algebra of $G_2$}\label{section: affine Hecke for G2}
In this section we show that the conditions of \cref{thm: general DL correspondence} hold for $G_2$.
\begin{lemma}\label{lemma: conditions A hold for type A}
    Let $G$ be a reductive group such that all simple components of $G$ are of type $A$. Then (A1)-(A3) and (A2') hold for any rooted $G$-representation $V$.
\end{lemma}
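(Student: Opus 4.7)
The plan is to reduce the statement to classical Springer theory for $\mathfrak{sl}_n$ through a sequence of reductions. By parts (i) and (ii) of \cref{lemma: Geometric conditions compatible with isogeny}, the conditions (A1)-(A3) and (A2') are invariant under central isogeny and under passing to the derived subgroup, so I may assume $G = \prod_i SL_{n_i}$ is a product of simply-connected simple groups of type $A$. Next, I reduce to the case $V = \bigoplus_i \mathfrak{sl}_{n_i}$. The key observation is that in type $A$ the only dominant roots of $G$ are the highest roots of the individual simple factors, so any rooted $G$-module has composition factors only among the trivial module and the adjoint modules $\mathfrak{sl}_{n_j}$, and the multiplicity-one condition on root weight spaces forces the ``root part'' of $V$ to be isomorphic to $\bigoplus_i \mathfrak{sl}_{n_i}$. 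Quotienting out the $G$-invariants and then projecting onto the root-weight content produces a $G$-equivariant morphism
\begin{equation*}
    V \longrightarrow \bigoplus_i \mathfrak{sl}_{n_i}
\end{equation*}
that is an isomorphism on every root space. \cref{lem: conditions Ai are equivalent for rooted morphisms} then reduces the problem to the pair $(\prod_i SL_{n_i}, \bigoplus_i \mathfrak{sl}_{n_i})$.

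The latter pair is a direct product of the standard pairs $(SL_{n_i}, \mathfrak{sl}_{n_i})$: the nilcones, Springer varieties and Springer fibers are Cartesian products, the component groups $A(x)$ are products, $\Irr(W)$ factors via exterior tensor products over the $\Irr(S_{n_i})$, and Borel-Moore homology of Springer fibers factors by K\"unneth. Each of (A1)-(A3) and (A2') is therefore multiplicative over the simple factors, and it suffices to verify them for the single pair $(SL_n, \mathfrak{sl}_n)$. Here the results are classical Springer theory in type $A$: the partition classification of nilpotent orbits gives (A1) together with the equality $\#\Irr(S_n) = \#\mathfrak{N}(\mathfrak{sl}_n)/SL_n$; Spaltenstein's affine paving of type $A$ Springer fibers gives (A3); and the fact that every irreducible representation of $S_n$ appears in the Springer correspondence attached to the trivial local system gives triviality of the $A(x)$-action on $H_*(\mathcal{B}_x)$, proving (A2') and hence (A2).

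The main subtle step is the structural analysis in the first paragraph, since in positive characteristic rooted $G$-modules need not split as direct sums of their composition factors; the equivariant morphism to $\bigoplus_i \mathfrak{sl}_{n_i}$ must therefore be produced by careful bookkeeping of the weight decomposition and of the available $G$-equivariant maps between composition factors, exploiting that the list of possible highest weights is very restricted in type $A$. Once this morphism is in hand, the remainder of the proof is a formal assembly of the cited lemmas with classical results.
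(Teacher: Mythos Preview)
Your overall strategy---reduce to $G=\prod_i SL_{n_i}$ via \cref{lemma: Geometric conditions compatible with isogeny}, then to the adjoint representation via \cref{lem: conditions Ai are equivalent for rooted morphisms}, then invoke classical type-$A$ Springer theory---matches the paper's, and the final verification for $(SL_n,\mathfrak{sl}_n)$ is fine. The gap is in the reduction from an arbitrary rooted $V$ to $\bigoplus_i\mathfrak{sl}_{n_i}$. You assert a $G$-equivariant morphism $V\to\bigoplus_i\mathfrak{sl}_{n_i}$ that is an isomorphism on root spaces, produced by ``quotienting out the $G$-invariants and then projecting onto the root-weight content''. The projection onto the sum of root weight spaces is only $T$-equivariant, not $G$-equivariant, and in fact no such $G$-morphism need exist. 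Take $G=SL_2$ in characteristic~$2$ and $V=\mathfrak{sl}_2^*$: here $\mathfrak{sl}_2$ is a nonsplit extension $0\to k\to\mathfrak{sl}_2\to L\to 0$ with $L$ the $2$-dimensional simple module carrying the root weights, while $V$ is the dual nonsplit extension $0\to L\to V\to k\to 0$. Any $G$-map $V\to\mathfrak{sl}_2$ must send the socle $L$ of $V$ into the socle $k$ of $\mathfrak{sl}_2$, hence kills the root spaces. (Your parenthetical claim that the composition factors of $V$ are ``trivial and the adjoint modules $\mathfrak{sl}_{n_j}$'' is also imprecise when $p\mid n_j$, since $\mathfrak{sl}_{n_j}$ is then not simple.)

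The paper avoids this by zigzagging. First it takes $V_i\subset V$ to be the submodule generated by the root spaces of the $i$-th factor and uses the sum map $\bigoplus_i V_i\to V$, which goes \emph{into} $V$ and is tautologically an isomorphism on root spaces; this reduces to a single factor $SL_n$. Then it observes that the simple module $L$ with highest weight the highest root is a subquotient $V'/V''$ of $V$ with $V''\subset V_0$, giving rooted morphisms $V\hookleftarrow V'\twoheadrightarrow L$; since $L$ is likewise a quotient of $\mathfrak{sl}_n$, applying \cref{lem: conditions Ai are equivalent for rooted morphisms} along the chain $V\leftarrow V'\to L\leftarrow\mathfrak{sl}_n$ completes the reduction. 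Your argument can be repaired in exactly this way.
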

\begin{proof}
    By \cref{lemma: Geometric conditions compatible with isogeny} we may replace $G$ first with its derived subgroup and then its simply connected cover. Hence, we may assume $G \cong SL_{n_1} \times ... \times SL_{n_l}$. For each $i = 1,..., l$, let $V_i \subset V$ be the subrepresentation of $V$ generated by the weight spaces corresponding to the roots of $SL_{n_i}$. Then the weights of $V_i$ are the roots of $SL_{n_i}$. The canonical morphism $V_1 \oplus ... \oplus V_l \rightarrow V$ is an isomorphism on all non-zero weight spaces, so by \cref{lem: conditions Ai are equivalent for rooted morphisms} we may replace $V$ with $V_1 \oplus ... \oplus V_l$. Note that each $SL_{n_i}$ acts trivially on $V_j$ for $j \neq i$. Hence, we may assume $G = SL_n$. There is an irreducible representation of $SL_n$ whose weights are precisely the roots of $SL_n$. This irreducible representation appears as a subquotient of both $V$ and $\mathfrak{sl}_n$. Hence, by \cref{lem: conditions Ai are equivalent for rooted morphisms} we may assume $V= \mathfrak{sl}_n$. For $\mathfrak{sl}_n$ the conditions (A1)-(A3) are well known: The nilpotent orbits are determined by Jordan normal form, so there are $\# \Irr(S_n)$ many of them proving (A1). The action of $A_{SL_n}(x)$ on $H_*(\mathcal{B}_x)$ is trivial since it factors through $A_{GL_n}(x) = \{ e \}$. This proves (A2') and (A2). Finally, it is known that nilpotent orbits in type A admit an affine paving in all characteristics: For any $x \in \mathfrak{N}(\mathfrak{sl}_n)$, the element $1 +x \in SL_n$ is unipotent and $\mathcal{B}_x$ is equal to the Springer fiber of $1+x$ which has an affine paving by \cite[5.9]{spaltenstein2006classes}. This implies (A3).
\end{proof}
For the rest of this section, let $G = G_2$ in characteristic $3$. We then have
\begin{equation*}
    \Phi^- = \{ \alpha, \beta, \alpha + \beta , 2\alpha + \beta , 3 \alpha + \beta, 3\alpha + 2\beta \}
\end{equation*}
where the sets of short and long negative roots are given by
\begin{align*}
    \Phi^-_s &= \{ \alpha, \alpha+\beta, 2 \alpha + \beta \} \\
    \Phi^-_l &= \{ \beta, 3\alpha + \beta, 3\alpha +2 \beta\}.
\end{align*}
We pick a Chevalley basis $\{ X_{\gamma}, H_{\alpha}, H_{\beta} \mid \gamma \in \Phi \}$ of $\mathfrak{g}$ and denote the corresponding root group homomorphism by $x_{\gamma} : \mathbb{G}_a \rightarrow U_{\gamma} \subset G$. The adjoint representation of $G$ fits into a short exact sequence
\begin{equation*}
    0 \rightarrow \mathfrak{g}_s \rightarrow \mathfrak{g}_s \rightarrow \mathfrak{g}/ \mathfrak{g}_s
\end{equation*}
where $\mathfrak{g}_s$ and $\mathfrak{g} / \mathfrak{g}_s$  are the simple $G$-modules of lowest weight $2 \alpha + \beta$ and $3 \alpha + 2 \beta$ respectively (c.f. \cite[Hauptsatz]{hiss1984adjungierten}). Then
\begin{equation*}
    V := \mathfrak{g}_s \oplus \mathfrak{g} / \mathfrak{g}_s
\end{equation*}
is an $I$-rooted $G$-representation for $I = \{ 1,2 \}$. Note that for each $\gamma \in \Phi$ there is a canonical isomorphism $V_{\gamma} \cong \mathfrak{g}_{\gamma}$ induced by the inclusion $\mathfrak{g}_s \hookrightarrow \mathfrak{g}$ and the projection $\mathfrak{g} \rightarrow \mathfrak{g}/\mathfrak{g}_s$. We denote the image of $X_{\gamma} \in \mathfrak{g}_{\gamma}$ under this isomorphism by $v_{\gamma} \in V_{\gamma}$. The following formulas are a direct consequence of Chevalley's formulas for the action of $G$ on the Chevalley basis:
\begin{equation}\label{eq: chevalley formulas}
\begin{aligned}
    x_{\beta}(t) v_{\alpha} &= v_{\alpha} +tv_{\alpha + \beta}\\
    x_{\alpha + \beta}(t) v_{\alpha} &= v_{\alpha} - tv_{2\alpha + \beta}\\
    x_{\alpha}(t) v_{\alpha + \beta} &= v_{\alpha+ \beta} + t v_{2 \alpha + \beta}\\
    x_{\alpha}(t) v_{\beta} &= v_{\beta} - t^3 v_{3 \alpha + \beta} \\
    x_{3\alpha + \beta}(t) v_{\beta} &= v_{\beta} -tv_{3\alpha + 2\beta}\\
    x_{\beta}(t) v_{3\alpha+ \beta} &= v_{3\alpha +\beta} +tv_{3\alpha + 2\beta}
\end{aligned}
\end{equation}
and 
\begin{equation}\label{eq: trivial chevalley formula}
    x_{\gamma_1}(t) v_{\gamma_2} = v_{\gamma_2}
\end{equation}
for $\gamma_1,\gamma_2 \in \Phi^-$ whenever $x_{\gamma_1}(t) v_{\gamma_2}$ does not appear in \eqref{eq: chevalley formulas}.
\begin{lemma}\label{lemma: Stabilizers in adjoint and split representations are the same}
    Let $\gamma \in \Phi_l$ and $x \in \mathfrak{g}_s$. Then for any $\lambda \in k$, the elements $x + \lambda  X_{\gamma} \in \mathfrak{g}$ and $x + \lambda v_{\gamma} \in V$ have the same stabilizer in $G$.
\end{lemma}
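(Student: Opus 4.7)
The plan is to identify both stabilizers as subgroups of $G_{v_\gamma}$, the $G$-stabilizer of $v_\gamma$ in $\mathfrak{g}/\mathfrak{g}_s$, and then reduce the lemma to the equality $G_{v_\gamma} = G_{X_\gamma}$. Without loss of generality assume $\lambda \neq 0$ (the case $\lambda = 0$ is immediate). Applying the $G$-equivariant projection $\pi : \mathfrak{g} \twoheadrightarrow \mathfrak{g}/\mathfrak{g}_s$ to the identity $g \cdot (x + \lambda X_\gamma) = x + \lambda X_\gamma$, and applying the analogous projection $V \twoheadrightarrow \mathfrak{g}/\mathfrak{g}_s$ to $g \cdot (x + \lambda v_\gamma) = x + \lambda v_\gamma$, both yield $g \cdot v_\gamma = v_\gamma$, so both stabilizers are contained in $G_{v_\gamma}$ (taken as the reduced $k$-point stabilizer). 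For $g \in G_{v_\gamma}$ the difference $\psi(g) := g \cdot X_\gamma - X_\gamma$ lies in $\ker \pi = \mathfrak{g}_s$, and inside $G_{v_\gamma}$ the stabilizer of $x + \lambda X_\gamma$ in $\mathfrak{g}$ becomes $\{g \in G_{v_\gamma} : g \cdot x = x - \lambda \psi(g)\}$, while the stabilizer of $x + \lambda v_\gamma$ in $V$ becomes $G_{v_\gamma} \cap G_x$. Specialising to $x = 0$ one sees that these subgroups agree for every $x \in \mathfrak{g}_s$ if and only if $\psi$ vanishes identically, i.e.\ $G_{v_\gamma} = G_{X_\gamma}$.

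To prove the equality $G_{v_\gamma} = G_{X_\gamma}$ in $G_2$ I would proceed as follows. The inclusion $G_{X_\gamma} \subseteq G_{v_\gamma}$ is clear from $\pi(X_\gamma) = v_\gamma$. Because $G_2$ is simply connected, $G_{X_\gamma}$ is connected, so it suffices to show that the tangent space to the reduced variety $G_{v_\gamma}$ at the identity coincides with $\mathrm{Lie}(G_{X_\gamma})$. I would compute this tangent space root subgroup by root subgroup: for each $\gamma' \in \Phi$, one expands $x_{\gamma'}(t) \cdot X_\gamma$ using the divided-power Chevalley formula in $\mathfrak{g}$ and compares with $x_{\gamma'}(t) \cdot v_\gamma$ using \eqref{eq: chevalley formulas}--\eqref{eq: trivial chevalley formula}. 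When $\gamma' + \gamma$ is not a root and $\gamma' \neq -\gamma$, the bracket $[X_{\gamma'}, X_\gamma]$ vanishes and $U_{\gamma'}$ lies in $G_{X_\gamma} \subseteq G_{v_\gamma}$. In the remaining cases one checks that the equation $x_{\gamma'}(t) \cdot v_\gamma = v_\gamma$ already forces $t = 0$ (the surviving long-root components of $x_{\gamma'}(t) \cdot X_\gamma$, such as the $t^3$ terms in \eqref{eq: chevalley formulas}, are precisely what is detected in $\mathfrak{g}/\mathfrak{g}_s$), so $U_{\gamma'} \cap G_{v_\gamma} = \{e\} = U_{\gamma'} \cap G_{X_\gamma}$. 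Combining this with the evident equality $T \cap G_{X_\gamma} = \ker(\gamma) = T \cap G_{v_\gamma}$ and using that $G_{v_\gamma}$ and $G_{X_\gamma}$ admit compatible Bruhat-type decompositions in terms of these root subgroups and torus, we conclude that the closed inclusion $G_{X_\gamma} \hookrightarrow G_{v_\gamma}$ is an equality of connected varieties of the same dimension.

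The main obstacle is not the reduction itself but the delicate comparison of reduced and scheme-theoretic stabilizers. The scheme-theoretic Lie algebra $\{Z \in \mathfrak{g} : [Z, X_\gamma] \in \mathfrak{g}_s\}$ is strictly larger than $\mathrm{Lie}(G_{X_\gamma})$, reflecting the fact that $G_{v_\gamma}^{\mathrm{sch}}$ is non-reduced in characteristic $3$; this non-reducedness corresponds precisely to the Frobenius $t^3$ terms appearing in \eqref{eq: chevalley formulas} and is a shadow of the nonsplit extension $0 \to \mathfrak{g}_s \to \mathfrak{g} \to \mathfrak{g}/\mathfrak{g}_s \to 0$. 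One must therefore be careful to argue at the level of the reduced variety $G_{v_\gamma}^{\mathrm{red}}$ rather than its Lie algebra, which is what forces the root-by-root computation above to go through tangent directions to actual one-parameter subgroups rather than merely elements of the scheme-theoretic Lie algebra. Once this is noted the finite nature of the root system of $G_2$ makes the remaining verification a short case analysis powered by the formulas already recorded in \eqref{eq: chevalley formulas}--\eqref{eq: trivial chevalley formula}.
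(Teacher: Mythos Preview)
Your reduction to the equality $G_{v_\gamma} = G_{X_\gamma}$ is exactly the paper's argument: both stabilizers sit inside $G_{v_\gamma}$ via the $G$-equivariant projections, and once $G_{v_\gamma}=G_{X_\gamma}$ is known the chain $G_{x+\lambda X_\gamma} = G_x \cap G_{X_\gamma} = G_x \cap G_{v_\gamma} = G_{x+\lambda v_\gamma}$ finishes the proof.

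Where your argument for $G_{v_\gamma}=G_{X_\gamma}$ has a genuine gap is the connectedness step. The assertion that $G_{X_\gamma}$ is connected ``because $G_2$ is simply connected'' mis-applies Steinberg's theorem, which concerns centralizers of \emph{semisimple} elements; stabilizers of nilpotent vectors can be disconnected in simply connected groups (the paper's own table of orbits exhibits one with component group $S_2$). More seriously, even granting connectedness of $G_{X_\gamma}$, knowing that $U_{\gamma'}\cap G_{v_\gamma}=U_{\gamma'}\cap G_{X_\gamma}$ for each root $\gamma'$ and that the torus intersections agree does not by itself force $G_{v_\gamma}=G_{X_\gamma}$: arbitrary stabilizers carry no Bruhat decomposition, and a dimension count only yields $G_{X_\gamma}=(G_{v_\gamma})^\circ$, leaving the component group of $G_{v_\gamma}$ unaddressed.

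The paper avoids all of this with one move you are missing: act by the Weyl group to assume $\gamma=3\alpha+2\beta$. Then $\gamma+\gamma'\notin\Phi$ for every $\gamma'\in\Phi^-$, so the whole of $U^-$ fixes both $X_\gamma$ and $v_\gamma$. Writing an arbitrary $g\in G$ as $u'\dot{w}ut$ via the Bruhat decomposition of $G$ itself (not of the stabilizer), the condition $gv_\gamma=v_\gamma$ (respectively $gX_\gamma=X_\gamma$) collapses to $w(\gamma)=\gamma$ and $\gamma(t)=1$, visibly the same in both cases. No connectedness hypothesis, dimension count, or root-by-root case analysis is needed. Your final paragraph on the non-reduced scheme-theoretic stabilizer is correct and interesting, but it plays no role in the argument.
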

\begin{proof}
    After acting with an element of the Weyl group, we may assume that $\gamma = 3 \alpha + 2\beta$. If $\lambda = 0$, the claim is clear since $\mathfrak{g}_s \subset \mathfrak{g}$ is a subrepresentation. Let us now assume that $\lambda \neq 0$. Note that $U^- \cdot v_{\gamma } = v_{\gamma}$ by \eqref{eq: trivial chevalley formula}. Hence, if $u'\dot{w}ut \in G$ with $u , u' \in U^-$, $t \in T$ and $\dot{w} \in N_G(T)$ then $u'\dot{w}utv_{\gamma} = v_{\gamma}$ if and only if $w( \gamma) = \gamma$ and $\gamma(t) = 1$. Similarly, it follows from the Chevalley formulas that $u'\dot{w}utX_{\gamma} = X_{\gamma}$ if and only if $w( \gamma) = \gamma$ and $\gamma(t) = 1$. Thus, we get $G_{v_{\gamma}} = G_{X_{\gamma}}$. Since the $G$-equivariant map $\mathfrak{g} \rightarrow \mathfrak{g}/\mathfrak{g}_s$ sends $x+ \lambda X_{\gamma}$ to $ \lambda v_{\gamma}$, we also have
    \begin{equation*}
        G_{x + \lambda X_{\gamma}} \subset G_{\lambda v_{\gamma}} = G_{v_{\gamma}} = G_{X_{\gamma}}.
    \end{equation*}
    Hence, we get
    \begin{equation*}
        G_{x + \lambda X_{\gamma}}  = G_{x + \lambda X_{\gamma}}  \cap G_{X_{\gamma}} = G_x \cap G_{X_{\gamma}} = G_x \cap G_{v_{\gamma}} = G_{x + \lambda v_{\gamma}}.
    \end{equation*}
\end{proof}
\begin{lemma}\label{lemma: exotic nilpotent orbits for G2}
    (A1) holds for $(G,V)$, i.e. the number of $G$-orbits on $\mathfrak{N}(V)$ is finite. The following table lists orbit representatives together with their stabilizer dimensions and their components groups:

    \begin{table}
        \caption{Exotic nilpotent orbits for $G_2$}
        \begin{equation*}
        \begin{array}{c||c|c|c|c|c|c}
            \text{Representative }\xi & 0 & v_{\alpha} & v_{\beta}   & v_{\alpha + \beta}+ v_{\beta}  &  v_{2\alpha+ \beta}+ v_{\beta}  & v_{\alpha}+ v_{\beta} \\ \hline
            \text{dim }G_{\xi} & 14 & 8 & 8 & 6 & 4 & 2\\ \hline
            A(\xi) & 1 & 1 & 1 & 1 & S_2 & 1
        \end{array}
        \end{equation*}
    \end{table}
\end{lemma}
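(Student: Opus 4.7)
The plan is to exploit the fact that $\mathfrak{N}(V) = G \cdot V^-$ by construction, so every $G$-orbit on $\mathfrak{N}(V)$ meets the $6$-dimensional subspace $V^- = \bigoplus_{\gamma \in \Phi^-} k v_\gamma$. Thus, writing $v = \sum_{\gamma \in \Phi^-} c_\gamma v_\gamma$, it suffices to reduce $v$ to one of the six listed representatives using the action of $B$ on $V^-$ (via torus rescalings and the root subgroups $U_\gamma$ with $\gamma \in \Phi^-$), together with occasional Weyl group representatives whose action permutes the weight lines $V_\gamma$. The explicit formulas in \eqref{eq: chevalley formulas}--\eqref{eq: trivial chevalley formula} give a triangular cancellation pattern in which root vectors associated with higher roots can eliminate coefficients attached to lower roots. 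The only characteristic-specific subtlety is the cubic term in $x_\alpha(t)\cdot v_\beta = v_\beta - t^3 v_{3\alpha+\beta}$; since $k$ is algebraically closed of characteristic $3$, the map $t \mapsto t^3$ is a bijection on $k$, so this does not obstruct normalization, although it will influence the stabilizer computations.

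To verify that the six listed representatives lie in pairwise distinct $G$-orbits, I would combine the two $G$-equivariant projections $V \twoheadrightarrow \mathfrak{g}_s$ and $V \twoheadrightarrow \mathfrak{g}/\mathfrak{g}_s$ with a dimension count: the projections separate orbits according to whether each component is zero (in particular, $v_\alpha$ and $v_\beta$ are distinguished even though they are both to be of orbit dimension $6$), and orbits with different stabilizer dimensions are separated by dimension alone. For the stabilizer dimensions themselves I would compute $\mathfrak{g}_\xi = \ker(\mathfrak{g} \to V,\; x \mapsto x\cdot \xi)$ directly using the infinitesimal form of the Chevalley formulas, obtaining $\dim G_\xi = \dim \mathfrak{g}_\xi$. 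For the four representatives of mixed type (nonzero long-root component), \cref{lemma: Stabilizers in adjoint and split representations are the same} identifies $G_\xi$ with the stabilizer in the adjoint representation of a corresponding nilpotent element, so the dimensions can be read off from the known centralizer dimensions of the analogous nilpotent orbits of $G_2$ in characteristic $3$ (see \cite{hesselink1979nilpotency,holt1985nilpotent}).

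For the component groups, I would argue that in five of the six cases $G_\xi$ is connected: for $\xi = 0$ this is clear, and for the remaining four I would combine the Lie algebra computation with the Bruhat decomposition to enumerate all elements of $G_\xi$ and check that none lie outside the identity component. The nontrivial component group $A(\xi) = S_2$ at $\xi = v_{2\alpha+\beta}+v_\beta$ would be exhibited by writing down a single explicit element (a suitably torus-modified representative of an order-$2$ Weyl group element fixing the support structure of $\xi$) lying in $G_\xi \setminus G_\xi^\circ$, after which a dimension and Lie algebra count rules out any larger component group. The main obstacle I expect is the combinatorial case analysis in the normalization step, which in principle splits over the $2^6$ possible supports of $v \in V^-$ and must carefully track the mixing between the short-root and long-root parts induced by the cubic Frobenius term; bookkeeping the precise $B$-action needed to eliminate each coefficient requires patience but no new ideas.
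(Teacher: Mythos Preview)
Your proposal is correct and follows essentially the same route as the paper: reduce elements of $V^-$ to the six representatives via the $B$-action using the Chevalley formulas, separate $v_\alpha$ from $v_\beta$ by the projections onto $\mathfrak{g}_s$ and $\mathfrak{g}/\mathfrak{g}_s$, and invoke \cref{lemma: Stabilizers in adjoint and split representations are the same} together with known centralizer data for nilpotents of $G_2$ in characteristic~$3$. Two small corrections: first, the paper notes that \emph{all six} representatives are of the form $x+\lambda v_\beta$ with $x\in\mathfrak{g}_s$, so \cref{lemma: Stabilizers in adjoint and split representations are the same} applies uniformly and you do not need a separate Lie-algebra computation for $v_\alpha$; second, the nontrivial element of $A(v_{2\alpha+\beta}+v_\beta)$ is \emph{not} a Weyl-type element but the pure torus element $t_0$ with $\alpha(t_0)=-1$, $\beta(t_0)=1$---indeed a Bruhat argument shows $G_{v_{2\alpha+\beta}+v_\beta}\subset B$, so no nontrivial $w$ can occur.
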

\begin{proof}
A straightforward computation (similar to the one in \cite[II 3)]{stuhler1971unipotente}) using the formulas in \eqref{eq: chevalley formulas} shows that any element in $\mathfrak{N}(V)$ is conjugate to one of listed representatives. Note that all the representatives are of the form $x + \lambda v_{\beta}$ for some $x \in \mathfrak{g}_s$ and $\lambda \in k$. Thus, by \cref{lemma: Stabilizers in adjoint and split representations are the same} their stabilizers agree with the stabilizer of $x + \lambda X_{\beta} \in \mathfrak{g}$. The dimensions of these stabilizers and their component groups have been computed and are is in our table (c.f. \cite[Table 12]{xue2017springer}). Alternatively, the stabilizers of the representatives can also be easily computed directly. For example, if $g \in G_{v_{2 \alpha + \beta} + v_{\beta}}$ is of the form $b\dot{w}b'$ with $b,b' \in B$ and $\dot{w} \in N_G(T)$, then we must have $w(2 \alpha + \beta) = 2 \alpha + \beta$ and $w(\beta) \in \Phi^-$ which implies $w = e$. Hence, we get
\begin{equation}\label{eq: centralizer for the non triv component group}
    G_{v_{2 \alpha + \beta} + v_{\beta}}  = B_{v_{2 \alpha + \beta} + v_{\beta}}  = \{e,t_0 \} \cdot U_{\beta} U_{\alpha + \beta} U_{ 2 \alpha + \beta} U_{3 \alpha + \beta}
\end{equation}
where $t_0 \in T$ is the unique element with $\alpha (t_0) = -1$ and $\beta(t_0) = 1$. In particular, $\dim G_{v_{2 \alpha + \beta} + v_{\beta}} = 4$ and $A(v_{2 \alpha + \beta} + v_{\beta}) \cong S_2$ is generated by $\bar{t}_0$.

It remains to show that the orbit representatives are pairwise not conjugate. This follows immediately by comparing the dimensions of their stabilizers except possibly for $v_{\alpha}$ and $v_{\beta}$. But $v_{\alpha}$ and $v_{\beta}$ are clearly not conjugate since they lie in $\mathfrak{g}_s$ and $\mathfrak{g}/\mathfrak{g}_s$ respectively.
\end{proof}
\begin{lemma}\label{lemma: A3 and A3 hold for G2}
    (A3) and (A2') (and thus also (A2)) hold for $(G,V)$.
\end{lemma}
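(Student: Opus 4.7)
The plan is to verify both properties by an explicit case-by-case analysis of the six Springer fibers $\mathcal{B}_\xi$ corresponding to the six orbit representatives $\xi$ listed in \cref{lemma: exotic nilpotent orbits for G2}. The basic tool is the Bruhat decomposition: writing $\mathcal{B} = \bigsqcup_{w \in W} U^-\dot{w}B/B$ yields a $T$-stable stratification
\begin{equation*}
    \mathcal{B}_\xi = \bigsqcup_{w \in W} \mathcal{B}_\xi^w, \qquad \mathcal{B}_\xi^w := \mathcal{B}_\xi \cap U^-\dot{w}B/B.
\end{equation*}
Each $\mathcal{B}_\xi^w$ can be identified with the set of $u \in U^- \cap \dot{w}U\dot{w}^{-1}$ satisfying $\dot{w}^{-1}u^{-1}\xi \in V^-$. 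Using the Chevalley formulas \eqref{eq: chevalley formulas} and \eqref{eq: trivial chevalley formula}, this last condition translates into an explicit system of polynomial equations in the root coordinates parametrizing $u$.

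The first step is to verify, for each of the six $\xi$, that every non-empty stratum $\mathcal{B}_\xi^w$ is isomorphic to an affine space. For $\xi=0$ this is the Bruhat decomposition of $\mathcal{B}$ itself, and for the subregular case $\xi = v_\alpha + v_\beta$ (with $\dim G_\xi = 2$) the fiber is zero-dimensional. The remaining four cases (each a single $\xi$ of the form $v_\gamma$ or $v_\gamma + v_\delta$) require solving the Chevalley equations cell by cell; using the explicit root action in $G_2$, the equations cut out affine subspaces of the Bruhat cells, giving an affine paving. Once this is established, standard arguments (c.f. the proof of \cite{de1988homology}) immediately yield the vanishing $H_{\text{odd}}(\mathcal{B}_\xi)=0$, proving (A3).

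For (A2'), I first note $\#\Irr(W(G_2)) = 6 = \#\mathfrak{N}(V)/G$ by \cref{lemma: exotic nilpotent orbits for G2}, since the dihedral Weyl group of $G_2$ has six irreducible representations. It remains to show $A(\xi)$ acts trivially on $H_*(\mathcal{B}_\xi)$. By \cref{lemma: exotic nilpotent orbits for G2} this is only an issue for $\xi = v_{2\alpha+\beta}+v_\beta$, where $A(\xi) \cong S_2$ is generated by $\bar{t}_0$ with $t_0 \in T$. Since $t_0 \in T$ and each Bruhat stratum $\mathcal{B}_\xi^w$ is preserved by $T$, the affine paving of $\mathcal{B}_\xi$ constructed above is $t_0$-stable. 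The fundamental classes of the closures $\overline{\mathcal{B}_\xi^w}$ form a basis of $H_*(\mathcal{B}_\xi)$, and each is fixed by $t_0$ since $t_0$ preserves the stratum set-theoretically. Hence $t_0$ acts trivially on $H_*(\mathcal{B}_\xi)$, proving (A2'). Since (A2') implies (A2), this completes the lemma.

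The main obstacle is the case-by-case verification that every non-empty Bruhat stratum $\mathcal{B}_\xi^w$ is actually an affine space. A priori the defining equations are polynomial rather than linear, so one must exploit the low rank of $G_2$ and the explicit form of the Chevalley relations (in particular the vanishing of many terms in characteristic $3$) to show that the equations either determine some of the coordinates as polynomial functions of the remaining free ones, or are trivially satisfied. The representatives $v_\alpha$ and $v_\beta$ are handled symmetrically by swapping $\mathfrak{g}_s$ and $\mathfrak{g}/\mathfrak{g}_s$ via the Weyl group element interchanging short and long roots (up to the duality of the two simple summands), while the mixed representatives require a somewhat more delicate bookkeeping.
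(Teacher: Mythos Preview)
Your plan is sound and matches the paper's approach: both establish an affine paving of each $\mathcal{B}_\xi$ via the Bruhat decomposition, then use that the $T$-element $t_0$ preserves each stratum to conclude triviality of the $A(\xi)$-action in the one nontrivial case.

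The paper executes this more efficiently by first conjugating most representatives to ``highest weight'' elements. For instance, it replaces $v_\alpha$ by its conjugate $v_{2\alpha+\beta}$ and $v_\beta$ by $v_{3\alpha+2\beta}$; since these are annihilated by every $U_\gamma$ with $\gamma\in\Phi^-$ (formula \eqref{eq: trivial chevalley formula}), the condition $g^{-1}\xi\in V^-$ reduces to the purely combinatorial condition $w^{-1}(\gamma)\in\Phi^-$ on the Bruhat component, and the Springer fiber is a union of \emph{full} Bruhat cells. The same trick handles $v_{\alpha+\beta}+v_\beta$ by replacing it with $v_{3\alpha+2\beta}+v_{2\alpha+\beta}$. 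Only for $\xi=v_{2\alpha+\beta}+v_\beta$ does one actually need to solve equations inside cells, and there the paper finds three strata $B/B$, $U_\alpha\dot s_\alpha B/B$, $U_{3\alpha+\beta}\dot s_\alpha\dot s_\beta B/B$. Your direct approach would of course also work, but involves more case analysis than necessary.

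One correction: your remark that $v_\alpha$ and $v_\beta$ are ``handled symmetrically by swapping $\mathfrak{g}_s$ and $\mathfrak{g}/\mathfrak{g}_s$ via the Weyl group element interchanging short and long roots'' is not right---no Weyl group element of $G_2$ exchanges root lengths. (There is a special isogeny in characteristic $3$ that does this, but it is not an inner automorphism and is not what is being used here.) The paper treats $v_\alpha$ and $v_\beta$ by parallel but separate arguments, each exploiting the conjugation trick above.
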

\begin{proof}
    It is clear that $\mathcal{B}_0 = \mathcal{B}$. Let $g = b\dot{w} b' \in G$ with $b,b' \in B$. If $g^{-1}(v_{\alpha} + v_{\beta}) \in V^-$, then $w^{-1}( \alpha ) \in \Phi^-$ and $w^{-1} ( \beta) \in \Phi^-$ which implies $w = e$. Hence, $\mathcal{B}_{v_{\alpha} + v_{\beta}} = B/B$. It follows from \eqref{eq: trivial chevalley formula} that $g^{-1} v_{3 \alpha + 2 \beta} \in V^-$ if and only if $w^{-1}(3 \alpha + 2 \beta) \in \Phi^-$. Similarly, we have $g^{-1} v_{2 \alpha +  \beta} \in V^-$ if and only if $w^{-1}(2 \alpha +  \beta) \in \Phi^-$. Hence, we get
    \begin{align*}
        \mathcal{B}_{v_{\alpha}} & \cong \mathcal{B}_{v_{2 \alpha + \beta}} = \bigsqcup_{w^{-1} (2 \alpha + \beta) \in \Phi^-} B\dot{w}B/B \\
        \mathcal{B}_{v_{\beta}} & \cong \mathcal{B}_{v_{3 \alpha + 2 \beta}} = \bigsqcup_{w^{-1} (3 \alpha + 2\beta) \in \Phi^-} B\dot{w}B /B .
    \end{align*}
    Note that $v_{\alpha + \beta} + v_{\beta}$ and $v_{3 \alpha + 2 \beta} +v_{2 \alpha + \beta}$ are in the same $G$-orbit. Hence, we have 
    \begin{align*}
        \mathcal{B}_{v_{\alpha + \beta} + v_{\beta}}  & \cong \mathcal{B}_{v_{3 \alpha + 2 \beta} +v_{2 \alpha + \beta}} \\
        & = \mathcal{B}_{v_{3 \alpha + 2 \beta}} \cap \mathcal{B}_{v_{2 \alpha + \beta}}\\
        &=  \bigsqcup_{w^{-1}  (3 \alpha + 2 \beta), w^{-1}  (2 \alpha + \beta) \in \Phi^-} B\dot{w}B/B.
    \end{align*}
    If $g^{-1}(v_{2 \alpha + \beta} + v_{\beta}) \in V^-$ then $w^{-1} ( 2 \alpha + \beta) \in \Phi^-$ and $ w^{-1}(\beta) \in \Phi^-$ which implies $w^{-1} \in \{e, s_{\alpha},  s_{\alpha}s_{\beta} \}$. A straightforward computation with the Bruhat decomposition shows that
    \begin{equation*}
        \mathcal{B}_{v_{2 \alpha + \beta} + v_{\beta}} = B/B \sqcup U_{\alpha} \dot{s}_{\alpha}B/B \sqcup U_{3\alpha + \beta} \dot{s}_{\alpha} \dot{s}_{\beta}B/B.
    \end{equation*}
    Hence, for all $x \in \mathfrak{N}(V)$ the variety $\mathcal{B}_x$ has an affine paving. This implies (A3).
    
    To prove (A2'), we want to determine the action of $S_2 \cong A( v_{2 \alpha + \beta} + v_{\beta})$ on $H_*(\mathcal{B}_{v_{2 \alpha + \beta} + v_{\beta}} )$. Recall from \eqref{eq: centralizer for the non triv component group} that the non-trivial element in $ A(v_{2 \alpha + \beta} + v_{\beta})$ lifts to the unique element $t_0 \in T $ with $\alpha(t_0)=-1$ and $\beta(t_0) = 1$. The vector space $H_*(\mathcal{B}_{v_{2 \alpha + \beta} + v_{\beta}} )$ is three dimensional with basis given by the fundamental classes
    \begin{equation*}
        [B/B], [B/B \sqcup U_{\alpha}\dot{s}_{\alpha}B/B],  [B/B  \sqcup U_{3\alpha + \beta} \dot{s}_{\alpha} \dot{s}_{\beta}B/B].
    \end{equation*}
    The element $t_0$ fixes each of these fundamental classes point-wise. This shows that $A( v_{2 \alpha + \beta} + v_{\beta})$ acts trivially on $H_*(\mathcal{B}_{v_{2 \alpha + \beta} + v_{\beta}} )$. Note that the Weyl group of $G_2$ has $6$ irreducible representations which is equal to the number of orbits of $\mathfrak{N}(V)$. Hence, we have proved (A2').
\end{proof}

\begin{theorem}\label{theorem: dl correspondence for G2}
    Consider the representation $V = \mathfrak{g}_s \oplus \mathfrak{g}/\mathfrak{g}_s$ of $G = G_2$ defined over an algebraically closed field $k$ of characteristic $3$. Let $\mathcal{H}^{\aff}_{q_1, q_2}$ be the affine Hecke algebra of type $G_2$ with two parameters (where short roots have parameter $q_1$ and long roots have parameter $q_2$). Then for any $a = (s, t_1, t_2) \in \hat{\mathcal{T}}$ such that $\langle t_1, t_2 \rangle \subset \mathbb{C}^{\times}$ is torsion-free, there is a bijection
    \begin{equation*}
        \Irr(\mathcal{H}^{\aff}_{\chi_a}) \overset{1:1}{\leftrightarrow} \mathfrak{N}(V)^{\hat{T}_a}/G^{\hat{T}_a}.
    \end{equation*}
\end{theorem}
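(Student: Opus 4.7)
The plan is to apply \cref{thm: general DL correspondence} directly. Since $G_2$ is simply connected and $\langle t_1, t_2 \rangle$ is torsion-free by hypothesis, the only content is to verify that conditions (A1), (A2), (A3) and (A2') hold for the pair $((G^{S_a})^\circ, V^{S_a})$ for every admissible $a$. By \cref{lemma: explicit descrpition of centralizer in reductive group} the root system of $(G^{S_a})^\circ$ equals $\Phi \cap Q_a$, and since $Q_a$ is a subgroup of $X^*(T)$, this intersection is stable under all additions and subtractions of roots which remain in $\Phi$.

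The first key step is to enumerate the possible subsystems $\Phi \cap Q_a \subset \Phi$. A short case-check yields that they are: the empty set; the rank-one subsystems $\{\pm\gamma\}$ (short or long); the orthogonal pairs of type $A_1 \times A_1$, for example $\{\pm\alpha, \pm(3\alpha+2\beta)\}$; the long $A_2$ subsystem $\{\pm\beta, \pm(3\alpha+\beta), \pm(3\alpha+2\beta)\}$; and $G_2$ itself. Crucially, the short $A_2$ subsystem $\{\pm\alpha, \pm(\alpha+\beta), \pm(2\alpha+\beta)\}$ does \emph{not} appear, since it is not closed under subtraction: $\alpha - (\alpha+\beta) = -\beta$ lies outside it. Hence it never occurs as $\Phi \cap Q_a$.

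In every case except $\Phi \cap Q_a = \Phi$, the connected centralizer $(G^{S_a})^\circ$ has all simple factors of type $A$, and $V^{S_a}$ is a rooted $(G^{S_a})^\circ$-representation whose nonzero weight spaces are the one-dimensional $V_\gamma$ for $\gamma \in \Phi \cap Q_a$. Therefore (A1)-(A3) and (A2') all follow from \cref{lemma: conditions A hold for type A}. In the remaining case $\Phi \cap Q_a = \Phi$ we have $(G^{S_a})^\circ = G$, and the inclusion $V^{S_a} \hookrightarrow V$ is an isomorphism on every root weight space; by \cref{lem: conditions Ai are equivalent for rooted morphisms}, the four conditions for $(G, V^{S_a})$ are equivalent to those for $(G, V)$, which are proved in \cref{lemma: exotic nilpotent orbits for G2} and \cref{lemma: A3 and A3 hold for G2}.

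Once these verifications are in place, \cref{thm: general DL correspondence} immediately produces the desired bijection. No step presents a substantial obstacle; the most delicate point is the subsystem enumeration, and specifically the exclusion of the short $A_2$ subsystem, which relies on $Q_a$ being a genuine subgroup rather than merely an additively closed subset of $X^*(T)$.
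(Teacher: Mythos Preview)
Your proof is correct and follows essentially the same approach as the paper: reduce to \cref{thm: general DL correspondence}, observe that the root system of $(G^{S_a})^\circ$ is a closed subroot system of $G_2$, and note that every proper closed subroot system has only type $A$ components (handled by \cref{lemma: conditions A hold for type A}) while the full case is \cref{lemma: exotic nilpotent orbits for G2} and \cref{lemma: A3 and A3 hold for G2}. One small remark: your emphasis on excluding the short $A_2$ as ``crucial'' is overstated---even if it arose it would be of type $A$ and hence covered by \cref{lemma: conditions A hold for type A}; its exclusion is a true fact about closed subroot systems but not a load-bearing step, and likewise the appeal to \cref{lem: conditions Ai are equivalent for rooted morphisms} in the full case is unnecessary since $\Phi \subset Q_a$ forces $Q_a = X^*(T)$ (as $G_2$ is simply connected) and hence $V^{S_a} = V$ on the nose.
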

\begin{proof}
    By \cref{thm: general DL correspondence}, it suffices to show that (A1), (A2') and (A3) hold for $((G^{S_a})^{\circ}, V^{S_a})$. Note that $(G^{S_a})^{\circ} $ and $ G$ share a maximal torus, so the root system of $(G^{S_a})^{\circ}$ is a closed subroot system of the root system of $G$. The proper closed subroot systems of the root system of type $G_2$ only contain components of type $A$. Thus, we either have $G^{S_a} = G$ and $V = V^{S_a}$ or all simple components of $(G^{S_a})^{\circ}$ are of type $A$. In the former case (A1), (A2') and (A3) hold by \cref{lemma: exotic nilpotent orbits for G2} and \cref{lemma: A3 and A3 hold for G2} and in the latter case they hold by \cref{lemma: conditions A hold for type A}.
\end{proof}
Note that the correspondence above does not involve local systems. In particular, it is different from the classical Deligne-Langlands correspondence \cite{kazhdan1987proof,chriss2009representation} for equal parameters.
\begin{example}\label{example: comparison with classical DL corr}
    Let $s \in \mathcal{T}$ such that $\alpha(s) = 1$ and $\beta (s) = q$ for some $q \in \mathbb{C}^{\times}$ not a root of unity. Then for $a = (s, q, q) \in \hat{\mathcal{T}}$, we have
    \begin{equation*}
        \mathfrak{N}(V)^{\hat{T}_a} = V^{\hat{T}_a} = V_{\beta} \oplus V_{\alpha + \beta} \oplus V_{2 \alpha + \beta} \oplus V_{3 \alpha + \beta}.
    \end{equation*}
    Using the formulas in \eqref{eq: chevalley formulas} one can easily check that this space has $5$ orbits under the $G^{\hat{T}_a}$-action with representatives
    \begin{equation*}
        0, v_{3 \alpha + \beta}, v_{2 \alpha + \beta}, v_{2 \alpha + \beta} + v_{3\alpha + \beta}, v_{\alpha + \beta} + v_{3 \alpha + \beta}.
    \end{equation*}
    Thus, there are $5$ simple $\mathcal{H}^{\aff}_{\chi_a}$-modules. Note that component group $A(a, v_{\alpha + \beta} + v_{3 \alpha + \beta})$ is isomorphic to $S_2$ but the corresponding non-trivial local system does not contribute to our parameterization. On the other hand, the fixed points space of the (complex) nilpotent cone
    \begin{equation*}
        \mathfrak{N}(\mathfrak{g}_{\mathbb{C}})^a = \mathfrak{g}_{\mathbb{C}}^a = \mathfrak{g}_{\beta} \oplus  \mathfrak{g}_{\alpha + \beta} \oplus  \mathfrak{g}_{2 \alpha + \beta} \oplus  \mathfrak{g}_{3 \alpha + \beta}
    \end{equation*}
    has $4$ orbits under the $G_{\mathbb{C}}^a$-action. In the classical Deligne-Langlands correspondence, each orbit, together with the trivial local system, corresponds to an irreducible representation which accounts for $4$ of the $5$ irreducible $\mathcal{H}^{\aff}_{\chi_a}$-representations here. The component group $A(a, X_{\beta} + X_{2 \alpha + \beta} )$ is isomorphic to $S_3$ and the remaining fifth irreducible representation corresponds to the non-trivial local system associated to the partition $(21)$ (c.f. \cite[Table 6.1, central character $t_e$]{ram2003representations}).
\end{example}
The assumption that $\langle t_1, t_2 \rangle$ is torsion-free cannot be removed from \cref{theorem: dl correspondence for G2}. In fact, for arbitrary parameters it is no longer true that $\mathfrak{N}(V)^{\hat{T}_a}$ has only finitely many $G^{\hat{T}_a}$-orbits.
\begin{example}\label{example: example where fixed point space has infinitely many orbits}
    Let $q \in \mathbb{C}^{\times}$ be not a root of unity and $s \in \mathcal{T}$ such that $\beta(s) = q$ and $\alpha(s) = \zeta$ for a primitive third root of unity $\zeta \in \mathbb{C}^{\times}$. Then setting $a = (s, \zeta q, q) \in \hat{\mathcal{T}}$, we get
    \begin{equation*}
        \mathfrak{N}(V)^{\hat{T}_a} = V^{\hat{T}_a} =  V_{\beta} \oplus V_{\alpha + \beta} \oplus V_{3\alpha + \beta}.
    \end{equation*}
    Moreover, we have $G^{\hat{T}_a} = T$. Since $\dim T < \dim \mathfrak{N}(V)^{\hat{T}_a}$ there are infinitely many $G^{\hat{T}_a}$-orbits on $\mathfrak{N}(V)^{\hat{T}_a}$. However, since $\mathcal{H}^{\aff}_{\chi_a}$ is finite-dimensional, there are only finitely many simple $\mathcal{H}^{\aff}_{\chi_a}$-modules.
\end{example}

\begin{remark}\label{remark: conjecture of Xi}
    It would be interesting to determine precisely for which $t_1, t_2$ with $\langle t_1, t_2 \rangle$ not necessarily torsion-free the parameterization of simple modules in \cref{theorem: dl correspondence for G2} (and more generally in \cref{thm: general DL correspondence}) still holds. The conjecture in \cite[5.20]{xi2006representations} suggests that for $G_2$ a sufficient condition for this should be that $(t_1, t_2)$ is not a root of the polynomial
    \begin{equation*}
        f(\textbf{u},\textbf{v}) = (1+\textbf{u})(1+\textbf{v})(1+\textbf{u}\textbf{v}+\textbf{u}^2\textbf{v}^2).
    \end{equation*}
    Unfortunately, this turns out to be too optimistic for our geometric parameterization: Taking $(t_1,t_2) = ( \zeta q, q)$ where $q \in \mathbb{C}^{\times}$ is of infinite order, $\zeta \in \mathbb{C}^{\times}$ is a primitive third root of unity, we get $f(\zeta q, q ) \neq 0$ but for $a= (s, \zeta q, q)$ with $\alpha(s) = \zeta$, $\beta(s) = q$ the space $\mathfrak{N}(V)^{\hat{T}_a}$ has infinitely many $G^{\hat{T}_a}$-orbits by \cref{example: example where fixed point space has infinitely many orbits}. Note however, that $f(\zeta q, q^{-1}) = 0$ which suggests that the condition $f(u,v) \neq 0$ for all $u,v \in \langle t_1, t_2 \rangle$ may be sufficient.
\end{remark}

\appendix
\numberwithin{equation}{section}
\section{Convolution, localization and the Chern character}\label{section: appendix}
This appendix summarizes some standard facts about equivariant $K$-theory and Borel-Moore homology. For $k = \mathbb{C}$ most of these results can be found in \cite{chriss2009representation} but the arguments also work in positive characteristic.

\subsection{Equivariant $K$-theory}\label{section: app k theory}
Let $G$ be an algebraic group (i.e. a group object in the category of varieties) or a diagonalizable group scheme defined over an algebraically closed field $k$ (not necessarily of characteristic $0$) and let $X$ be a $G$-variety. All our varieties are assumed to be quasi-projective. We denote by $\Coh^G(X)$ the category of $G$-equivariant coherent sheaves, by $D^b(\Coh^G(X))$ its bounded derived category and by $D^b_{\perf}(\Coh^G(X))$ the category of perfect complexes. Consider the Grothendieck groups
\begin{align*}
    K^G(X) &:= K_0( D^b(\Coh^G(X))) \\
    K_G(X) &:= K_0(D^b_{\perf}(\Coh^G(X))).
\end{align*}
There are the usual pushforward and pullback maps
\begin{align*}
    f_* &: K^G(X) \rightarrow K^G(Y)\\
    f^* &: K^G(Y) \rightarrow K^G(X)
\end{align*}
which are defined for $f: X \rightarrow Y$ proper (resp. flat) and satisfy base change. Moreover, there is a pullback map
\begin{equation*}
    f^* : K_G(Y) \rightarrow K_G(X)
\end{equation*}
which is defined without any assumptions of $f$. The tensor product induces maps
\begin{align*}
    - \otimes - &: K_G(X) \times K_G(X) \rightarrow K_G(X) \\
    - \otimes - &: K^G(X) \times K_G(X) \rightarrow K^G(X).
\end{align*}
This equips $K_G(X)$ with a ring structure and $K^G(X)$ with the structure of a $K_G(X)$-module. For any proper $f: X\rightarrow Y $ and $[\mathcal{F}] \in K^G(X), [\mathcal{E}] \in K_G(Y)$, we have the projection formula
\begin{equation}\label{eq: projection formula for coherent sheaves and vector bundles}
    f_*[\mathcal{F}] \otimes [\mathcal{E}] = f_*([\mathcal{F}] \otimes f^*[\mathcal{E}]).
\end{equation}
If $X$ is smooth, the canonical map
\begin{equation*}
    [\mathcal{O}_X] \otimes - : K_G(X) \rightarrow K^G(X)
\end{equation*}
is an isomorphism. If $V \rightarrow X$ is a $G$-equivariant vector bundle, the pullback map
\begin{equation*}
    \pi^* : K^G(X) \rightarrow K^G(V)
\end{equation*}
is an isomorphism called the Thom isomorphism. If $H \subset G$ is a closed subgroup and $X$ is an $H$-variety such that $G \times^H X$ exists as a variety, then there is an induction isomorphism
\begin{equation}\label{eq: induction iso}
    K^G(G \times^H X) \cong K^H(X)
\end{equation}
which is compatible with proper pushforward and flat pullback.

For any $G$-stable closed subvariety $Z \subset  X$, we denote by $\iota_Z : Z \hookrightarrow X$ the inclusion. Let $\Coh^G_Z(X) \subset \Coh^G(X)$ be the full subcategory consisting of those coherent sheaves $\mathcal{F}$ on $X$ which are (set-theoretically) supported on $Z$ (i.e. the restriction of $\mathcal{F}$ to $X \backslash Z$ vanishes). Similarly, we denote by $D^b_Z(\Coh^G(X)) \subset D^b(\Coh^G(X))$ the full subcategory consisting of those complexes whose cohomology lies in $\Coh^G_Z(X)$. The $K$-group with support is defined as
\begin{equation*}
    K^G_Z(X) := K_0(D^b_Z(\Coh^G(X))).
\end{equation*}
Pushing forward along the inclusion $ Z \hookrightarrow X$ yields an isomorphism
\begin{equation*}
    K^G(Z) \overset{\sim}{\rightarrow} K_Z^G(X).
\end{equation*}
We will from now on implicitly identify $K^G(Z)$ and $ K_Z^G(X)$ via this isomorphism. If $X$ is smooth, we have the derived tensor product
\begin{equation*}
    \otimes_X^L : D^b(\Coh^G(X)) \times D^b(\Coh^G(X)) \rightarrow D^b(\Coh^G(X)).
\end{equation*}
Passing to $K$-theory and taking supports into account, this induces a tensor product
\begin{equation*}
    \otimes^L_X: K^G(Z_1) \times K^G(Z_2) \rightarrow K^G(Z_1 \cap Z_2)
\end{equation*}
for any  $Z_1, Z_2 \subset X$ closed. For locally free sheaves on $X$, the tensor product with support reduces to the ordinary tensor product in the following sense.
\begin{lemma}\label{lemma: cap product with support for vector bundle}
    Let $X$ be smooth and $ \iota_Z: Z \hookrightarrow X$ a closed subvariety. Then
    \begin{equation*}
       [\mathcal{F}] \otimes^L_X [\mathcal{E}] = [\mathcal{F} \otimes \iota_Z^* \mathcal{E}]
    \end{equation*}
    for any $[\mathcal{F}]  \in K^G(Z)$ and $[\mathcal{E}] \in K_G(X)\cong K^G(X)$.
\end{lemma}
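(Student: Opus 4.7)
The plan is to verify the identity at the level of the derived category $D^b(\Coh^G(X))$ and then pass to $K$-groups. Writing $\mathcal{F}$ for a representative of the class in $K^G(Z) \cong K^G_Z(X)$, what we need is a $G$-equivariant isomorphism
\begin{equation*}
    (\iota_Z)_* \mathcal{F} \otimes^L_X \mathcal{E} \;\cong\; (\iota_Z)_*\bigl(\mathcal{F} \otimes \iota_Z^* \mathcal{E}\bigr)
\end{equation*}
in $D^b(\Coh^G(X))$, where the left-hand side is the formula defining $\otimes^L_X$ on $K$-theory with supports.

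First I would invoke the projection formula for the closed immersion $\iota_Z$ in the equivariant derived category:
\begin{equation*}
    (\iota_Z)_* \mathcal{F} \otimes^L_X \mathcal{E} \;\cong\; (\iota_Z)_*\bigl(\mathcal{F} \otimes^L_Z L\iota_Z^* \mathcal{E}\bigr).
\end{equation*}
This is standard and valid for any $\mathcal{F} \in D^b(\Coh^G(Z))$ and any $\mathcal{E} \in D^b(\Coh^G(X))$, since $X$ and the action are sufficiently nice (quasi-projective).

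Next I would eliminate the two derived functors on the right. The complex $\mathcal{E}$ represents a class in $K_G(X)$, so by definition may be taken to be a bounded complex of $G$-equivariant locally free sheaves of finite rank on $X$. In particular $\mathcal{E}$ is a bounded complex of flat $\mathcal{O}_X$-modules, so $L\iota_Z^* \mathcal{E} = \iota_Z^* \mathcal{E}$ with no higher derived terms. Since pullback of a locally free sheaf is locally free, $\iota_Z^* \mathcal{E}$ is a bounded complex of $G$-equivariant locally free sheaves on $Z$, hence flat over $\mathcal{O}_Z$; this makes the usual tensor product $\mathcal{F} \otimes \iota_Z^* \mathcal{E}$ compute $\mathcal{F} \otimes^L_Z \iota_Z^* \mathcal{E}$. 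Substituting both simplifications into the projection formula gives the desired isomorphism.

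Finally I would pass to $K$-theory: both sides are represented by bounded complexes of coherent sheaves, and the identification $K^G(Z) \cong K^G_Z(X)$ via $(\iota_Z)_*$ translates this derived-category isomorphism into the claimed equality of classes in $K^G(Z_1 \cap Z_2)$ where $Z_1 = Z$ and $Z_2 = X$. No obstacle of substance arises; the only point requiring a little care is the verification that $\mathcal{E}$ may be replaced by a complex of locally free sheaves and that the projection formula is available $G$-equivariantly, both of which are standard under our running quasi-projectivity assumptions.
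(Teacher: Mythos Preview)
The paper states this lemma without proof, treating it as a standard fact about equivariant $K$-theory collected in the appendix. Your argument via the derived projection formula for the closed immersion $\iota_Z$, combined with the observation that a perfect complex $\mathcal{E}$ has no higher $L\iota_Z^*$ and that $\iota_Z^*\mathcal{E}$ is flat on $Z$, is correct and is exactly the standard justification one would give.
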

The tensor product with support behaves well with respect to transverse intersections.
\begin{lemma}\label{lemma: transversal derived tensor product is trivial}
    Let $Z_1,Z_2 \subset X$ be smooth closed subvarieties of a smooth variety $X$. Assume that $Z_1$ and $Z_2$ intersect transversally, i.e. for all $p \in Z_1 \cap Z_2$, we have the equality of Zariski tangent spaces $T_p Z_1 + T_pZ_2 = T_p X$. Then we have
    \begin{equation*}
        [\mathcal{O}_{Z_1}] \otimes^L_X [\mathcal{O}_{Z_2}] = [\mathcal{O}_{Z_1 \cap Z_2}]
    \end{equation*}
    where the tensor product with support is taken with respect to $[\mathcal{O}_{Z_1}] \in K^G(Z_1)$ and $[\mathcal{O}_{Z_2}] \in K^G(Z_2)$.
\end{lemma}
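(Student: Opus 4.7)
The plan is to pass to the sheaf level and compute via Tor. By definition of the tensor product with support, one has
\begin{equation*}
[\mathcal{O}_{Z_1}] \otimes^L_X [\mathcal{O}_{Z_2}] \;=\; \sum_{i\ge 0} (-1)^i [\mathcal{T}or_i^{\mathcal{O}_X}(\mathcal{O}_{Z_1},\mathcal{O}_{Z_2})] \;\in\; K^G(Z_1\cap Z_2),
\end{equation*}
so the lemma reduces to two statements: (a) $\mathcal{O}_{Z_1}\otimes_{\mathcal{O}_X}\mathcal{O}_{Z_2} \cong \mathcal{O}_{Z_1\cap Z_2}$, and (b) $\mathcal{T}or_i^{\mathcal{O}_X}(\mathcal{O}_{Z_1},\mathcal{O}_{Z_2})=0$ for $i>0$. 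Statement (a) comes down to identifying $\mathcal{O}_X/(\mathcal{I}_{Z_1}+\mathcal{I}_{Z_2})$ with the structure sheaf of the reduced intersection, which will fall out of the Koszul computation below.

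Both (a) and (b) are local on $X$, so I would work in an affine neighbourhood of a point $p\in Z_1\cap Z_2$. Since $Z_1\subset X$ is smooth of codimension $c=\mathrm{codim}(Z_1,X)$ inside the smooth variety $X$, the ideal sheaf $\mathcal{I}_{Z_1}$ is generated near $p$ by a regular sequence $f_1,\dots,f_c\in\mathcal{O}_{X,p}$, and the associated Koszul complex $K_\bullet(f_1,\dots,f_c)$ is a finite locally free resolution of $\mathcal{O}_{Z_1}$. Applying $-\otimes_{\mathcal{O}_X}\mathcal{O}_{Z_2}$ produces the Koszul complex $K_\bullet(\bar f_1,\dots,\bar f_c)$ on $Z_2$, whose higher cohomology vanishes precisely when $\bar f_1,\dots,\bar f_c\in\mathcal{O}_{Z_2,p}$ is itself a regular sequence, and whose zeroth cohomology is $\mathcal{O}_{Z_2}/(\bar f_1,\dots,\bar f_c)$.

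The transversality hypothesis enters at exactly this point. The differentials $df_1(p),\dots,df_c(p)$ form a basis of the conormal space $(T_pZ_1)^{\perp}\subset T_p^*X$. Dualising the hypothesis $T_pZ_1+T_pZ_2=T_pX$ yields $(T_pZ_1)^{\perp}\cap (T_pZ_2)^{\perp}=0$, so the restrictions of $df_1(p),\dots,df_c(p)$ to $T_p^*Z_2$ are still linearly independent. By the smoothness criterion in the regular local ring $\mathcal{O}_{Z_2,p}$, this means the $\bar f_i$ cut out a smooth subscheme of $Z_2$ of codimension $c$ with tangent space $T_pZ_1\cap T_pZ_2$; equivalently, $\bar f_1,\dots,\bar f_c$ form a regular sequence. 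Hence the restricted Koszul complex is a resolution of $\mathcal{O}_{Z_2}/(\bar f_1,\dots,\bar f_c)$, which by construction is the structure sheaf of the (reduced) intersection $Z_1\cap Z_2$ near $p$. This simultaneously gives (a) and (b).

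The only loose end is equivariance: the Koszul resolution chosen above is local and need not be $G$-equivariant. This is not a genuine obstacle, since the Tor sheaves $\mathcal{T}or_i^{\mathcal{O}_X}(\mathcal{O}_{Z_1},\mathcal{O}_{Z_2})$ carry canonical $G$-equivariant structures induced from those on $\mathcal{O}_{Z_1}$ and $\mathcal{O}_{Z_2}$, and the vanishing/identification established above takes place at the level of these intrinsic objects, hence holds in $K^G(Z_1\cap Z_2)$. The one real conceptual step is the transversality-to-regular-sequence translation via the conormal bundle, which I expect to be the main (though entirely standard) ingredient.
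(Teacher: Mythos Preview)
The paper does not actually prove this lemma: it is stated without proof in the appendix, which explicitly ``summarizes some standard facts about equivariant $K$-theory and Borel-Moore homology.'' Your argument is correct and is precisely the standard one (Koszul resolution of $\mathcal{O}_{Z_1}$ coming from the regular embedding, restricted to $Z_2$, with transversality forcing the restricted sequence to remain regular in the regular local ring $\mathcal{O}_{Z_2,p}$); the handling of equivariance via the intrinsic $G$-structure on the Tor sheaves is also the right way to close the argument.
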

If $f: \tilde{X} \rightarrow X$ is a morphism between smooth varieties there is a pullback functor
\begin{equation*}
    Lf^* : D^b(\Coh^G(X)) \rightarrow D^b(\Coh^G(\tilde{X})).
\end{equation*}
Passing to $K$-theory and taking supports into account, this induces a `pullback with support' map on $K$-theory
\begin{equation*}
    f^* : K^G(Z) \rightarrow K^G(f^{-1}(Z))
\end{equation*}
for any $Z \subset X$ closed. The pullback with support construction is functorial (i.e. $f^* \circ g^*  = (g \circ f)^*$) and if $f$ is flat, it agrees with the ordinary flat pullback. Moreover, pullback with support essentially commutes with the tensor product with support in the following sense (c.f. \cite[(6.4.1)]{nakajima2001quiver}).
\begin{lemma}\label{lemma: K theory restriction with supp commutes with tensor product with supp}
    Let $f : \tilde{X} \rightarrow X$ be a morphism between smooth varieties and let $Z_1, Z_2 \subset X$ be closed. Then for any $[\mathcal{F}_1] \in K^G(Z_1)$ and $[\mathcal{F}_2] \in K^G(Z_2)$ we have
    \begin{equation*}
        f^*([\mathcal{F}_1] \otimes^L_X [\mathcal{F}_2]) = f^*[\mathcal{F}_1] \otimes^L_{\tilde{X}} f^*[\mathcal{F}_2].
    \end{equation*}
\end{lemma}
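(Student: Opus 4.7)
The plan is to lift the identity from $K$-theory to the level of derived categories, where both sides will be represented by the \emph{same} complex of sheaves, and then descend to $K$-theory classes. The key geometric input is the smoothness of $X$ and $\tilde X$, which guarantees that coherent sheaves admit bounded locally free resolutions; in the $G$-equivariant setting these exist in the generality required here (diagonalizable or reductive $G$ acting on smooth quasi-projective varieties with enough $G$-equivariant line bundles).

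Concretely, I would begin by choosing $G$-equivariant bounded resolutions $\mathcal{E}_1^\bullet \twoheadrightarrow \mathcal{F}_1$ and $\mathcal{E}_2^\bullet \twoheadrightarrow \mathcal{F}_2$ by locally free sheaves on $X$. (Here I am implicitly writing $\mathcal{F}_i$ for the pushforward $\iota_{Z_i,*}\mathcal{F}_i \in \Coh^G(X)$ supported on $Z_i$.) Since the $\mathcal{E}_i^\bullet$ are complexes of flat $\mathcal{O}_X$-modules, the derived tensor product on $X$ is computed naively as $\mathcal{F}_1 \otimes_X^L \mathcal{F}_2 \simeq \mathcal{E}_1^\bullet \otimes_X \mathcal{E}_2^\bullet$, and the derived pullback $Lf^*$ applied to any of the $\mathcal{E}_i^\bullet$ or to $\mathcal{E}_1^\bullet \otimes_X \mathcal{E}_2^\bullet$ reduces to the underived pullback $f^*$.

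The tautological identity $f^*(\mathcal{E}_1^\bullet \otimes_X \mathcal{E}_2^\bullet) = f^*\mathcal{E}_1^\bullet \otimes_{\tilde X} f^*\mathcal{E}_2^\bullet$, valid at the level of complexes of sheaves, then gives
\begin{equation*}
Lf^*\bigl(\mathcal{F}_1 \otimes_X^L \mathcal{F}_2\bigr) \;\simeq\; f^*\mathcal{E}_1^\bullet \otimes_{\tilde X} f^*\mathcal{E}_2^\bullet \;\simeq\; Lf^*\mathcal{F}_1 \otimes_{\tilde X}^L Lf^*\mathcal{F}_2
\end{equation*}
in $D^b(\Coh^G(\tilde X))$, where the second isomorphism again uses that $f^*\mathcal{E}_i^\bullet$ is a complex of locally free sheaves. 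Taking $K$-classes yields the desired equality.

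Finally, one must verify that both sides lie in the same $K$-group with support. The complex $\mathcal{E}_1^\bullet \otimes_X \mathcal{E}_2^\bullet$ has cohomology supported on $Z_1 \cap Z_2$, so $f^*$ of it has cohomology supported on $f^{-1}(Z_1 \cap Z_2) = f^{-1}(Z_1) \cap f^{-1}(Z_2)$; the same holds for $f^*\mathcal{E}_1^\bullet \otimes_{\tilde X} f^*\mathcal{E}_2^\bullet$. Thus both classes naturally live in $K^G(f^{-1}(Z_1) \cap f^{-1}(Z_2))$ and agree there. The only real obstacle is the existence of $G$-equivariant locally free resolutions on $X$, which is the standard hypothesis running through the rest of the appendix; granting this, the argument is essentially a bookkeeping exercise with complexes.
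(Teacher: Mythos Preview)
Your argument is correct and is essentially the standard one: the paper itself does not supply a proof but merely cites Nakajima \cite[(6.4.1)]{nakajima2001quiver}, and what you have written is precisely the computation underlying that reference. The only point worth tightening is the claim that the cohomology of $\mathcal{E}_1^\bullet \otimes_X \mathcal{E}_2^\bullet$ is supported on $Z_1\cap Z_2$: this holds because these cohomology sheaves are $\mathcal{T}or_i^{\mathcal{O}_X}(\mathcal{F}_1,\mathcal{F}_2)$, which are quotients of $\mathcal{E}_1^p\otimes\mathcal{F}_2$ (hence supported on $Z_2$) and by symmetry also on $Z_1$.
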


There is also the following `projection formula with support' (c.f. \cite{chriss2009representation}, \cite[(6.5.1)]{nakajima2001quiver}).
\begin{lemma}\label{lemma: K-theory proj form with supp}
    Let $f : \tilde{X} \rightarrow X$ be a morphism between smooth varieties and let $Z \subset X$ and $\tilde{Z} \subset \tilde{X}$ be closed subvarieties such that the restriction $f: \tilde{Z} \rightarrow f(\tilde{Z})$ (and hence also $f: \tilde{Z} \cap f^{-1}(Z) \rightarrow f(\tilde{Z}) \cap Z$) is proper. Then
    \begin{equation*}
        f_*[\mathcal{F}] \otimes^L_X [\mathcal{G}] = f_*([\mathcal{F}]  \otimes^L_{\tilde{X}}  f^*[ \mathcal{G} ])
    \end{equation*}
    for any $[\mathcal{F}]  \in K^G(\tilde{Z})$ and $[\mathcal{G}] \in K^G(Z)$.
\end{lemma}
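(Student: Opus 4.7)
The plan is to reduce the identity with supports to the classical projection formula in derived categories of coherent sheaves, by resolving the second argument by locally free sheaves and carefully tracking where each representative lives.

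First, since $X$ is smooth, I would replace a representative of $[\mathcal{G}] \in K^G(Z) = K^G_Z(X)$ by a bounded $G$-equivariant complex $\mathcal{P}^\bullet$ of locally free sheaves on $X$ whose cohomology is supported on $Z$; this is possible because $\mathcal{O}_X$ has finite global dimension and (via standard equivariant arguments) there are enough $G$-equivariant locally free objects. The pullback $f^*\mathcal{P}^\bullet$ is then a bounded complex of locally free sheaves on $\tilde{X}$ representing $f^*[\mathcal{G}] \in K^G(f^{-1}(Z))$. Because $f^*\mathcal{P}^\bullet$ is termwise locally free, the derived tensor product $[\mathcal{F}] \otimes^L_{\tilde{X}} f^*[\mathcal{G}]$ can be computed as the ordinary termwise tensor product $\mathcal{F} \otimes_{\mathcal{O}_{\tilde X}} f^*\mathcal{P}^\bullet$, with support in $\tilde Z \cap f^{-1}(Z)$.

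Next, I would apply the classical projection formula term by term: for each locally free $\mathcal{P}^i$ the counit of the $(f^*, f_*)$-adjunction provides a natural isomorphism
\begin{equation*}
  f_*\bigl( \mathcal{F} \otimes_{\mathcal{O}_{\tilde X}} f^*\mathcal{P}^i \bigr) \;\cong\; f_*\mathcal{F} \otimes_{\mathcal{O}_X} \mathcal{P}^i.
\end{equation*}
Although $f$ is not globally proper, $f_*$ is well-defined on complexes supported on $\tilde Z$ because the restriction $f|_{\tilde Z}:\tilde Z \to f(\tilde Z)$ is proper by hypothesis; concretely, the pushforward factors as the proper pushforward $(f|_{\tilde Z})_*$ followed by the closed embedding $f(\tilde Z)\hookrightarrow X$. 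Assembling these isomorphisms into a map of total complexes gives $f_*\bigl(\mathcal{F}\otimes f^*\mathcal{P}^\bullet\bigr) \cong f_*\mathcal{F} \otimes \mathcal{P}^\bullet$, and since $\mathcal{P}^\bullet$ is a locally free representative of $[\mathcal{G}]$, the right-hand side represents $f_*[\mathcal{F}] \otimes^L_X [\mathcal{G}]$ in $K^G(f(\tilde Z)\cap Z)$, where both sides of the claimed equality naturally live.

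The main point requiring care is to check that the termwise pushforward correctly computes the derived pushforward $Rf_*$, so that the above isomorphism holds in the derived category rather than merely termwise. This reduces to the vanishing of higher direct images for $G$-equivariant coherent sheaves supported on $\tilde Z$ after tensoring with a locally free complex, which follows from the properness of $f|_{\tilde Z}$ (allowing one to work locally on $X$ where the relevant pushforwards of coherent sheaves become coherent) together with the fact that tensoring by a locally free sheaf preserves quasi-isomorphisms. Once this derived-category identity is established, descending to Grothendieck groups yields the stated projection formula with supports; for this last step one should also verify independence of the chosen locally free resolution, which is automatic from the fact that two such resolutions are quasi-isomorphic and the derived tensor and pushforward operations respect quasi-isomorphisms of locally free complexes.
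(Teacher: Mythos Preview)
The paper does not actually prove this lemma; it is stated in the appendix with a reference to \cite{chriss2009representation} and \cite[(6.5.1)]{nakajima2001quiver} and no further argument. Your approach---resolve $\mathcal{G}$ by a bounded complex of $G$-equivariant locally free sheaves on the smooth ambient $X$, pull back, and invoke the ordinary projection formula---is precisely the standard argument one finds in those references (see \cite[\S5.8]{chriss2009representation}), so in that sense your proposal matches the intended proof.

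One point deserves correction. In your final paragraph you say the derived identity ``reduces to the vanishing of higher direct images''. This is not right, and no such vanishing holds in general. What you actually need is the \emph{derived} projection formula
\[
Rf_*\bigl(\mathcal{F} \otimes^L_{\mathcal{O}_{\tilde X}} Lf^*\mathcal{G}\bigr) \;\simeq\; Rf_*\mathcal{F} \otimes^L_{\mathcal{O}_X} \mathcal{G},
\]
which holds for $f$ proper (on supports) and $\mathcal{G}$ perfect, without any acyclicity hypothesis. Concretely, once $\mathcal{G}$ is represented by a bounded locally free complex $\mathcal{P}^\bullet$, the identity $R^i f_*(\mathcal{F}\otimes f^*\mathcal{P}^j)\cong R^i f_*\mathcal{F}\otimes\mathcal{P}^j$ holds for \emph{each} $i$ (check locally on $X$, where $\mathcal{P}^j$ is free), and a routine spectral-sequence or totalization argument assembles these into the derived statement. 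The higher direct images need not vanish; they simply commute with $-\otimes\mathcal{P}^j$. With this adjustment your argument is complete.
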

Let $X_1, X_2 , X_3$ be smooth varieties, let $Z_{12} \subset X_1 \times X_2$ and $Z_{23} \subset X_2 \times X_3$ be closed subvarieties such that $p_{13}: p_{12}^{-1}(Z_{12}) \cap p_{23}^{-1}(Z_{23}) \rightarrow X_1 \times X_3$ is proper and define
\begin{equation*}
    Z_{12} \circ Z_{23} := p_{13}(p_{12}^{-1}(Z_{12}) \cap p_{23}^{-1}(Z_{23})).
\end{equation*}
Then there is a convolution operation
\begin{equation}\label{eq: convolution def}
    \star: K^{G} (Z_{12}) \times K^{G}(Z_{23}) \rightarrow K^{G} (Z_{12} \circ Z_{23})
\end{equation}
defined via
\begin{equation*}
    [\mathcal{F}] \star [\mathcal{G}] := (p_{13})_*( p_{12}^* [\mathcal{F}] \otimes^L_{X_1 \times X_2 \times X_3} p_{23}^* [\mathcal{G}]).
\end{equation*}
One can check using \cref{lemma: K-theory proj form with supp} that this operation is associative. Moreover, convolution with the diagonal has the following concrete description (c.f. \cite[Lemma 8.1.1]{nakajima2001quiver}).
\begin{lemma}\label{lemma: convolution along diagonal}
    Let $X_1, X_2$ be smooth, $Z \subset X_1 \times X_2$ closed and $\Delta_{X_1}: X_1 \rightarrow X_1 \times X_1$ the diagonal embedding. Then for any $[\mathcal{E}] \in K_G(X_1) \cong K^G(X_1)$ and $[\mathcal{F}] \in K^G(Z_{23})$ we have
    \begin{equation*}
        (\Delta_{X_1})_* [\mathcal{E}] \star [\mathcal{F}] =  [p_1^*\mathcal{E}] \otimes [\mathcal{F}]
    \end{equation*}
    where $p_1: Z\rightarrow X_1$ is the projection onto the first factor.
\end{lemma}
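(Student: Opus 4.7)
The plan is as follows. Introduce the closed immersion $\tilde{\Delta}: X_1 \times X_2 \hookrightarrow X_1 \times X_1 \times X_2$, $(x,y) \mapsto (x,x,y)$, and write $\pi_1: X_1 \times X_2 \rightarrow X_1$ for the projection. The essential geometric observation is that $\tilde{\Delta}$ fits into a cartesian square whose right vertical arrow is the flat projection $p_{12}: X_1 \times X_1 \times X_2 \rightarrow X_1 \times X_1$ and whose bottom row is $\Delta_{X_1}$; moreover $p_{13} \circ \tilde{\Delta} = \id_{X_1 \times X_2} = p_{23} \circ \tilde{\Delta}$. These three trivial-looking identities are precisely what reduces the entire convolution to the ordinary $K_G(Z)$-module product $[p_1^*\mathcal{E}] \otimes [\mathcal{F}]$ on $Z$.

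Concretely, I would proceed in four steps. First, apply flat base change (using that $p_{12}$ is flat) to the square above to rewrite $p_{12}^* (\Delta_{X_1})_*[\mathcal{E}]$ as $\tilde{\Delta}_* \pi_1^*[\mathcal{E}]$. Second, invoke the projection formula with support (\cref{lemma: K-theory proj form with supp}) along the closed immersion (hence proper) $\tilde{\Delta}$ to bring $\tilde{\Delta}_*$ outside the derived tensor product, obtaining $\tilde{\Delta}_* \bigl( \pi_1^*[\mathcal{E}] \otimes^L_{X_1 \times X_2} \tilde{\Delta}^* p_{23}^*[\mathcal{F}] \bigr)$. Third, use $p_{23} \circ \tilde{\Delta} = \id$ to collapse $\tilde{\Delta}^* p_{23}^*[\mathcal{F}]$ to $[\mathcal{F}]$, and then apply \cref{lemma: cap product with support for vector bundle} on the smooth ambient $X_1 \times X_2$ with the perfect class $\pi_1^*[\mathcal{E}] \in K_G(X_1 \times X_2)$ to identify
\begin{equation*}
\pi_1^*[\mathcal{E}] \otimes^L_{X_1 \times X_2} [\mathcal{F}] = [\iota_Z^* \pi_1^* \mathcal{E} \otimes \mathcal{F}] = [p_1^*\mathcal{E}] \otimes [\mathcal{F}]
\end{equation*}
inside $K^G(Z)$, where $p_1 = \pi_1 \circ \iota_Z$. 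Fourth, push forward along $p_{13}$ and exploit $p_{13} \circ \tilde{\Delta} = \id$ to cancel the outer $\tilde{\Delta}_*$, yielding the claimed equality.

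There is no substantial obstacle beyond careful bookkeeping: the argument is a textbook string of flat base change, projection formula with support, and the module-structure identification from \cref{lemma: cap product with support for vector bundle}. The only points demanding a moment of care are verifying that the square involving $\tilde{\Delta}$ is genuinely cartesian, so that flat base change applies, and checking the support hypothesis in the projection formula, both of which are automatic since $\tilde{\Delta}$ is a closed immersion and therefore proper.
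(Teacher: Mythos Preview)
Your argument is correct and is precisely the standard computation: flat base change along the cartesian square with $p_{12}$ flat, then the projection formula with support along the closed immersion $\tilde{\Delta}$, followed by the collapse via $p_{23}\circ\tilde{\Delta}=p_{13}\circ\tilde{\Delta}=\id_{X_1\times X_2}$ and the identification from \cref{lemma: cap product with support for vector bundle}. The paper gives no proof of its own and simply refers to \cite[Lemma 8.1.1]{nakajima2001quiver}; your write-up is exactly the argument behind that reference, so there is nothing to compare.
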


    Let $T$ be a torus over $k$ and define the corresponding complex torus
    \begin{equation*}
        \mathcal{T} := Spec(\mathbb{C}[X^*(T)]) = \mathbb{C}^{\times} \otimes X_*(T).
    \end{equation*}
    Note that there is a canonical isomorphism $X^*(\mathcal{T}) \cong X^*(T)$. Any $a \in \mathcal{T}$ defines an evaluation map
    \begin{equation*}
       ev_a : R(T)=R(\mathcal{T}) =\mathbb{Z}[X^*(\mathcal{T})] \rightarrow \mathbb{C}. 
    \end{equation*}
    We define
    \begin{align*}
    \rho_a &:= \ker(ev_a)\\
        X_a^* &:= \{ \lambda \in X^*(\mathcal{T}) \mid \lambda - 1 \in \rho_a\} =\{ \lambda \in X^*(\mathcal{T}) \mid \lambda(a) = 1 \}.
    \end{align*}
    Then $X_a^*$ is a subgroup of $ X^*(T)$ and $\rho_a \subset R(T)$ is a prime ideal. Moreover, there is an associated diagonalizable subgroup scheme of $T$
    \begin{equation*}
        T_a := Spec(k[X^*(T)/X_a^*]).
    \end{equation*}
    If $Y$ is smooth, then $Y^{T_a}$ is also smooth (see \cref{lemma: taking fixed points preserves smoothness}) and we can consider the element 
    \begin{equation*}
        \lambda(Y) := \sum_{i \ge 0} (-1)^i [\wedge^i \mathcal{N}^{\vee}_{Y^{T_a}/Y}] \in K_{T_a}(Y^{T_a})
    \end{equation*}
    where $\mathcal{N}^{\vee}_{Y^{T_a}/Y} \in \Coh^{T_a}(Y^{T_a})$ is the conormal sheaf. We then have the following special case of Thomason's localization theorem.
    \begin{theorem}\label{thm: localization in equivariant K theory}\cite{thomason1992formule}
    Let $Y$ be a $T$-variety.
    \begin{enumerate}[label = (\roman*)]
        \item The inclusion $\iota: Y^{T_a} \hookrightarrow Y$ induces an isomorphism of localized $K$-groups
    \begin{equation*}
        \iota_* : K^{T_a}(Y^{T_a})_{\rho_a} \rightarrow K^{T_a}(Y)_{\rho_a}.
    \end{equation*}
        \item If $Y$ is smooth, then $\lambda(Y)$ is invertible in $ K_{T}(Y^{T_a})_{\rho_a}$ and the composition
        \begin{equation*}
            \iota^* \iota_* : K^{T_a}(Y^{T_a})_{\rho_a} \rightarrow K^{T_a}(Y^{T_a})_{\rho_a}
        \end{equation*}
        is given by multiplication with $\lambda(Y)$.
        \item If $Z \subset Y$ is a closed subvariety with $Y$ smooth, then
        \begin{equation*}
            \iota^* \iota_* : K^{T_a}(Z^{T_a})_{\rho_a} \rightarrow K^{T_a}(Z^{T_a})_{\rho_a}
        \end{equation*}
        is still given by multiplication with $\lambda(Y)$ where $\iota^*$ is the restriction with support.
    \end{enumerate}
    \end{theorem}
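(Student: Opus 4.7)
The plan is to follow Thomason's classical strategy, adapted to the (possibly non-reduced) diagonalizable subgroup scheme $T_a \subset T$. Part (i) is the heart of the theorem; parts (ii) and (iii) follow formally from a self-intersection computation together with invertibility of $\lambda(Y)$ after localization. For (i), the main step is to establish the vanishing lemma: for any $T_a$-variety $U$ with $U^{T_a} = \emptyset$, one has $K^{T_a}(U)_{\rho_a} = 0$. Given this, the excision long exact sequence
\begin{equation*}
K^{T_a}(Y^{T_a}) \xrightarrow{\iota_*} K^{T_a}(Y) \to K^{T_a}(Y \setminus Y^{T_a}) \to 0
\end{equation*}
(together with its $K_1^{T_a}$-extension on the left) becomes an isomorphism after inverting $\rho_a$. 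The vanishing lemma itself is proved by Noetherian induction: stratify $U$ into finitely many $T_a$-stable locally closed pieces on which the stabilizer group scheme is constant; on each piece, the fact that the generic stabilizer is a proper closed subgroup scheme of $T_a$ produces a character $\lambda \in X^*(T)$ with $\lambda(a) \neq 1$ acting as the identity on every equivariant coherent sheaf restricted to that piece, so $1 - e^{\lambda}$ annihilates the corresponding $K$-group while being a unit in $R(T_a)_{\rho_a}$.

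For (ii), the lemma that taking $T_a$-fixed points preserves smoothness (already cited in the paper) ensures that $\iota: Y^{T_a} \hookrightarrow Y$ is a regular closed embedding, so the Koszul resolution of $\iota_*\mathcal{O}_{Y^{T_a}}$ yields the derived self-intersection formula
\begin{equation*}
L\iota^*\iota_*\mathcal{O}_{Y^{T_a}} \;\cong\; \bigoplus_{i \ge 0} \bigwedge^i \mathcal{N}^\vee_{Y^{T_a}/Y}[i],
\end{equation*}
hence $\iota^*\iota_*[\mathcal{O}_{Y^{T_a}}] = \lambda(Y)$ in $K_{T_a}(Y^{T_a})$, and the projection formula extends this to all of $K^{T_a}(Y^{T_a})$. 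Invertibility of $\lambda(Y)$ in $K_{T_a}(Y^{T_a})_{\rho_a}$ is checked by restricting to a geometric fixed point $y$: the weights of $T_a$ on $\mathcal{N}^\vee_{Y^{T_a}/Y}|_y$ are, by the very definition of $Y^{T_a}$, exactly the non-trivial characters of $T_a$, i.e.\ those $\lambda$ with $\lambda(a) \neq 1$, so each Koszul factor $1 - e^{\lambda_i}$ is a unit in $R(T_a)_{\rho_a}$ by definition of $\rho_a$; a standard stratification and Nakayama argument promotes pointwise invertibility to global invertibility in $K_{T_a}(Y^{T_a})_{\rho_a}$. Part (iii) is the same Koszul computation done with supports: since $[\mathcal{F}] \in K^{T_a}(Z^{T_a})$ is already supported on $Z^{T_a}$, tensoring with the Koszul resolution keeps the support inside $Z^{T_a}$, and the identity $\iota^*\iota_*[\mathcal{F}] = \lambda(Y) \cdot [\mathcal{F}]$ persists in $K^{T_a}(Z^{T_a})$.

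The main obstacle is the potential non-reducedness of $T_a$, which prevents a direct appeal to published versions of Thomason's theorem that are stated for tori. One must work directly with equivariant coherent sheaves for a general diagonalizable group scheme and verify that the definition of $Y^{T_a}$ employed in the paper (via $A$-points for every $k$-algebra $A$) is the correct one to make both the Koszul computation and the stabilizer-character argument go through. Concretely, the vanishing step requires that on every stratum with non-fixed geometric stabilizer some character of $T_a$ non-trivial at $a$ really acts as the identity on all sections; this holds because the stabilizer at such a point is a proper closed subgroup scheme of $T_a$ whose character lattice is a proper quotient of $X^*(T_a)$, but setting this up carefully in the non-reduced setting — handling the scheme-theoretic rather than naive fixed point locus — is the technical heart of the argument.
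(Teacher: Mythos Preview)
Your proposal is correct and reproduces the essential content of Thomason's argument (vanishing lemma via stabilizer stratification and Noetherian induction for (i); Koszul self-intersection for (ii); the same computation with supports for (iii)). However, the paper's own proof is far more abbreviated: it simply cites \cite[Thm.~2.1]{thomason1992formule} for (i) and \cite[Lemma~3.3]{thomason1992formule} for (ii), noting only that $\iota^*\iota_*$ is tensoring with $\iota^*\iota_*\mathcal{O}_{Y^{T_a}} = \lambda(Y)$. For (iii), the paper observes that since the restriction-with-support pullback $\iota^*$ is defined by restricting the derived functor $L\iota^*\iota_*$ on $D^b(\Coh^{T_a}(Y^{T_a}))$ to the full subcategory $D^b_{Z^{T_a}}(\Coh^{T_a}(Y^{T_a}))$, the same multiplication-by-$\lambda(Y)$ formula persists automatically --- no separate Koszul computation with supports is needed.

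One correction to your framing: you identify non-reducedness of $T_a$ as ``the main obstacle'' preventing a direct appeal to Thomason. This is misplaced. Thomason's paper \cite{thomason1992formule} is formulated for arbitrary diagonalizable group schemes over a base, not merely for tori, so the non-reduced case is already covered by the published reference. The paper is therefore entitled to cite it without further comment, and the extra care you propose --- while not wrong --- is unnecessary.
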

    \begin{proof}
        The first claim is \cite[Thm. 2.1]{thomason1992formule}. The second claim is proved in \cite[Lemma 3.3]{thomason1992formule} by observing that $\iota^*\iota_*$ is the same as tensoring with the element $\iota^* \iota_* \mathcal{O}_{Y^{T_a}}$ which corresponds to $\lambda(Y)$ in the Grothendieck group. Since $\iota^* \iota_*$ in our third claim is defined by restricting $\iota^* \iota_* : D^b(\Coh^{T_a}(Y^{T_a})) \rightarrow D^b(\Coh^{T_a}(Y^{T_a}))$ to $D^b_{Z^{T_a}}(\Coh^{T_a}(Y))$, the map is  still given by multiplication with $\lambda(Y)$.
    \end{proof}
    
    Localization is also compatible with convolution in the following sense: Let $X_1, X_2, X_3$ be smooth $T$-varieties and $Z_{12}\subset X_1 \times X_2$, $Z_{23} \subset X_2 \times X_3$ closed $T$-stable subvarieties such that $p_{13} : Z_{12} \times X_2 \cap X_1 \times Z_{23} \rightarrow Z_{12} \circ Z_{23}$ is proper. Then by \cref{thm: localization in equivariant K theory} the composition
    \begin{equation*}
        r_a: K^{T_a}(Z_{ij})_{\rho_a} \overset{\iota_{ij}^*(-)}{\longrightarrow} K^{T_a}(Z_{ij}^{T_a})_{\rho_a} \overset{[\mathcal{O}_{X_i}] \boxtimes \lambda(X_j)^{-1} \cdot }{\longrightarrow} K^{T_a}(Z_{ij}^{T_a})_{\rho_a}
    \end{equation*}
    is an isomorphism (where $\iota_{ij} : Z_{ij}^{T_{a}} \rightarrow Z_{ij}$ is the inclusion).
    \begin{lemma}\label{lemma: convolution compatible with K theory localization}
        The following diagram commutes
        \begin{equation*}
            \begin{tikzcd}
                K^{T_a}(Z_{12})_{\rho_a} \otimes K^{T_a}(Z_{23})_{\rho_a} \arrow[d, "r_a \otimes r_a"] \arrow[r, "\star"] & K^{T_a}(Z_{12} \circ Z_{23})_{\rho_a} \arrow[d, "r_a"] \\
                K^{T_a}(Z_{12}^{T_a})_{\rho_a} \otimes K^{T_a}(Z_{23}^{T_a})_{\rho_a} \arrow[r, "\star"]                  & K^{T_a}((Z_{12} \circ Z_{23})^{T_a})_{\rho_a} .        
            \end{tikzcd}
        \end{equation*}
    \end{lemma}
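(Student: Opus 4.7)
The plan is to verify commutativity of the diagram by a direct diagram chase, reducing it to (i) base change for the Cartesian squares obtained by taking $T_a$-fixed points, (ii) the compatibility of pullback with support and derived tensor product from \cref{lemma: K theory restriction with supp commutes with tensor product with supp}, and (iii) the multiplicativity $\lambda(X_i\times X_j)=\lambda(X_i)\boxtimes\lambda(X_j)$ of the Euler class with respect to the product decomposition of the normal bundle. The key preliminary observation is that taking $T_a$-fixed points commutes with products, so $(X_1\times X_2\times X_3)^{T_a}=X_1^{T_a}\times X_2^{T_a}\times X_3^{T_a}$, and each projection $p_{ij}$ restricts to the corresponding fixed-point projection $p_{ij}^a$. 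All fixed-point varieties involved are smooth by \cref{lemma: taking fixed points preserves smoothness}, so all the constructions (pullback with support, derived tensor product, and convolution) are available on the fixed-point side.

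Starting from classes $[\mathcal{F}]\in K^{T_a}(Z_{12})_{\rho_a}$ and $[\mathcal{G}]\in K^{T_a}(Z_{23})_{\rho_a}$, I would compute $r_a([\mathcal{F}]\star[\mathcal{G}])$ by first expanding the definition of convolution from \eqref{eq: convolution def}, then using base change applied to the Cartesian square formed by $\iota$ and $p_{13}$ to push $\iota_{13}^*$ through $(p_{13})_*$, and finally combining \cref{lemma: K theory restriction with supp commutes with tensor product with supp} with the functoriality identity $\iota^*\circ p_{ij}^*=(p_{ij}^a)^*\circ\iota_{ij}^*$ to move the fixed-point restriction all the way inside, obtaining an expression entirely in terms of $(p_{ij}^a)^*\iota_{ij}^*[\mathcal{F}]$ and $(p_{ij}^a)^*\iota_{ij}^*[\mathcal{G}]$ together with the prefactor $[\mathcal{O}_{X_1}]\boxtimes\lambda(X_3)^{-1}$.

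On the other side, $r_a([\mathcal{F}])\star r_a([\mathcal{G}])$ is computed directly inside the smooth ambient space $X_1^{T_a}\times X_2^{T_a}\times X_3^{T_a}$. After expanding and using the projection formula with support (\cref{lemma: K-theory proj form with supp}) to pull the $\lambda$-prefactors out of the pullbacks $(p_{12}^a)^*$ and $(p_{23}^a)^*$, the two expressions should differ only by a configuration of $\lambda$-classes. Using $\lambda(X_i\times X_j)=\lambda(X_i)\boxtimes\lambda(X_j)$, this residual factor should reduce to the one associated to the middle variable $X_2$, which then cancels along the push $(p_{13}^a)_*$.

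The main obstacle will be the careful bookkeeping of the $\lambda$-corrections, particularly the Euler class factor associated to the middle variable $X_2^{T_a}$, which does not appear in the normalization $[\mathcal{O}_{X_1}]\boxtimes\lambda(X_3)^{-1}$ used for $Z_{12}\circ Z_{23}\subset X_1\times X_3$ but does appear separately in the normalizations for $Z_{12}$ and $Z_{23}$. The cancellation of this $X_2$-factor under the proper pushforward $(p_{13}^a)_*$ is the real content of the lemma; everything else is a formal manipulation of the base change and projection formula with supports.
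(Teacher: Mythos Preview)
Your proposal is correct and matches the standard argument; the paper itself does not spell out the proof but simply cites \cite[Thm.~5.11.10]{chriss2009representation} and invokes \cref{thm: localization in equivariant K theory}, which is exactly the computation you have outlined (base change for fixed points, compatibility of restriction with support with $\otimes^L$, multiplicativity $\lambda(X_i\times X_j)=\lambda(X_i)\boxtimes\lambda(X_j)$, and the cancellation of the middle $\lambda(X_2)$-factor under $(p_{13}^a)_*$).
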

    \begin{proof}
        This is proved exactly as in \cite[Thm 5.11.10]{chriss2009representation} using the results from \cref{thm: localization in equivariant K theory}.
    \end{proof}

    \subsection{Borel-Moore homology}\label{section: app Borel Moore}
    For any $k$-variety $X$ let $D^b_c(X)$ be the constructible derived category with coefficients in $\mathbb{C} \cong \overline{\mathbb{Q}}_{\ell}$ where $\ell$ is coprime to the characteristic of $k$. We denote the constant sheaf by $\textbf{1}_X$ and the dualizing complex by $\omega_X := \mathbb{D} \textbf{1}_X$. Cohomology and Borel-Moore homology are defined as
    \begin{align*}
        H^i(X) &:= \Hom_{D^b_c(X)}^i(\textbf{1}_X, \textbf{1}_X), \\
        H_i(X) &:= \Hom_{D^b_c(X)}^{-i}(\textbf{1}_X, \omega_X).
    \end{align*}
    Composition of morphisms defines cup- and cap-products
    \begin{align*}
        \cup &: H^i(X) \otimes H^j(X) \rightarrow H^{i+j}(X), \\
        \cap &: H_i(X) \otimes H^j(X) \rightarrow H_{i-j}(X).
    \end{align*}
    This equips $H^*(X)$ with the structure of a commutative $\mathbb{C}$-algebra and $H_*(X)$ with the structure of an $H^*(X)$-module. For any $f :X \rightarrow Y$, the canonical map $\textbf{1}_X \rightarrow f_*f^* \textbf{1}_X \cong f_* \textbf{1}_Y$ gives rise to a pull-back map
    \begin{equation*}
        f^*: H^i(X) \rightarrow H^i(Y)
    \end{equation*}
    which is an algebra homomorphism. If $f$ is proper, the map $f_* \omega_X \cong f_!f^! \omega_Y \rightarrow \omega_Y$ induces a pushforward map
    \begin{equation*}
        f_* : H_i(X) \rightarrow H_i(Y)
    \end{equation*}
    and if $f$ is smooth of relative dimension $d$, the map $\omega_X \rightarrow f_*f^* \omega_X \cong f_* \omega_Y [d]$ induces a pullback map 
    \begin{equation*}
        f^*: H_i(Y) \rightarrow H_{i+d}(X).
    \end{equation*}
    If $\iota: Z \hookrightarrow  X$ is closed, the cohomology of $X$ with support in $Z$ is defined as
    \begin{equation*}
        H^i_Z(X) :=  \Hom_{D^b_c(X)}^i(\textbf{1}_X, \iota_! \iota^! \textbf{1}_X).
    \end{equation*}
    For $\iota_i : Z_i \hookrightarrow X$ closed ($i=1,2$), the sheaf $\iota_{1!} \iota_1^! \textbf{1}_X \otimes \iota_{2!}\iota_2^! \textbf{1}_X$ is supported on $\iota_{12}: Z_1 \cap Z_2 \hookrightarrow X$. Thus the canonical map $\iota_{1!} \iota_1^! \textbf{1}_X \otimes \iota_{2!}\iota_2^! \textbf{1}_X \rightarrow \textbf{1}_X \otimes \textbf{1}_X \cong \textbf{1}_X$ gives rise to a map
    \begin{equation*}
        \iota_{1!} \iota_1^! \textbf{1}_X \otimes \iota_{2!}\iota_2^! \textbf{1}_X \cong \iota_{12!}\iota_{12}^!(\iota_{1!} \iota_1^! \textbf{1}_X \otimes \iota_{2!}\iota_2^! \textbf{1}_X) \rightarrow \iota_{12!}\iota_{12}^!\textbf{1}_X.
    \end{equation*}
    This induces the cup product with support
    \begin{equation*}
        \cup_X : H^i_{Z_1}(X) \otimes H^j_{Z_2}(X) \rightarrow H^{i+j}_{Z_1 \cap Z_2}(X).
    \end{equation*}
    If $f:\tilde{X} \rightarrow X$ is a morphism and $Z \subset X$ is closed, we get a cartesian diagram
    \begin{equation*}
        \begin{tikzcd}
            f^{-1}(Z) \arrow[d, "f"] \arrow[r, "\tilde{\iota}"] & \tilde{X} \arrow[d, "f"] \\
            Z \arrow[r, "\iota", hook]                          & X.                       
            \end{tikzcd}
    \end{equation*}
    The map $\iota_!\iota^! \textbf{1}_X \rightarrow \iota_! \iota^! f_*f^* \textbf{1}_X \cong f_* \tilde{\iota}_!\tilde{\iota}^!\textbf{1}_{\tilde{X}}$ gives rise to a restriction with support map
    \begin{equation*}
        f^*: H^i_Z(X) \rightarrow H^i_{f^{-1}(Z)}(\tilde{X}).
    \end{equation*} 
    If $\iota:Z \hookrightarrow X$ is closed with $X$ smooth, then there is a canonical isomorphism $\iota_!\iota^!\textbf{1}_X \cong \iota_* \omega_Z[- 2 \dim X]$ which yields an isomorphism
    \begin{equation*}
        H_i(Z) \cong H^{2\dim X -i}_Z(X).
    \end{equation*}
    In particular, the cup product with support for $Z_1,Z_2 \subset X$ yields a cap product with support
    \begin{equation*}
        \cap_X : H_i(Z_1) \otimes H_j(Z_2) \rightarrow H_{i+j - 2\dim X}(Z_1 \cap Z_2).
    \end{equation*}
    Similarly, if $f: \tilde{X} \rightarrow X$ is a morphism and $Z \subset X$ is closed, the restriction map on cohomology with support induces a restriction with support map
    \begin{equation*}
        f^* : H_i(Z) \rightarrow H_{i +  2 \dim \tilde{X} -  2 \dim X} (f^{-1}(Z)).
    \end{equation*}
    
    If $X_1,X_2,X_3$ are smooth and $Z_{12} \subset X_1 \times X_2$ and $Z_{23} \subset X_2 \times X_3$ are closed with $p_{13}: (Z_{12} \times X_3) \cap (X_1 \times Z_{23}) \rightarrow X_1 \times X_3$ proper, we can define the convolution
    \begin{equation}\label{eq: convol in BM homology}
        \begin{aligned}
        \star : H_i(Z_{12}) \otimes H_j(Z_{23}) & \rightarrow H_{i+j -2\dim X_2 }(Z_{12} \circ Z_{23})\\
         \alpha \otimes \beta & \mapsto p_{13*}(p_{12}^*\alpha \otimes p_{23}^* \beta).
    \end{aligned}
    \end{equation} 
    It can be shown that this operation is associative (c.f. \cite{chriss2009representation,joshua1998modules}). Moreover, it is shown in \cite[Prop. 8.6.35]{chriss2009representation} that for any $f_i:X_i \rightarrow Y$ with $X_i$ smooth, $f_i$ proper and $Z_{ij} := X_i \times_Y X_j$ there is an isomorphism (which shifts the grading)
    \begin{equation}\label{eq: identification of BM homology and Ext algebra}
        H_*(Z_{ij}) \cong \Hom_{D^b_c(Y)}^*(f_{j *} \textbf{1}_{X_j}, f_{i*}\textbf{1}_{X_i})
    \end{equation}
    such that convolution corresponds to composition of morphisms.
    
    Now let $G$ be an algebraic group over $k$ and $X$ a $G$-variety. All the previous constructions in $D^b_c(X)$ can also be applied to the equivariant derived category $D^b_{G,c}(X)$ (c.f. \cite{bernstein2006equivariant} and \cite[Chapter 6]{achar2021perverse}). In particular, we can consider equivariant cohomology and Borel-Moore homology
    \begin{align*}
        H^i_G(X)&:= \Hom^i_{D^b_{G,c}(X)}(\textbf{1}_X, \textbf{1}_X) \\
        H_i^G(X)&: =  \Hom^{-i}_{D^b_{G,c}(X)}(\textbf{1}_X, \omega_X)
    \end{align*}
    which come with the analogous push, pull and convolution operations (see also \cite{lusztig1988cuspidal}).
    
    There also is a localization theorem for Borel-Moore homology similar to the one for $K$-theory. Let $T$ be a torus defined over an algebraically closed field $k$ with corresponding complex Lie algebra
    \begin{equation*}
        \mathfrak{t} :=  \mathbb{C} \otimes_{\mathbb{Z}} X_*(T).
    \end{equation*}
    Any $\sigma \in \mathfrak{t}$ defines an evaluation map
    \begin{equation*}
        ev_{\sigma} : H^*_{T}(pt) \cong Sym^*(\mathfrak{t}^{\vee}) \rightarrow \mathbb{C}
    \end{equation*}
    and thus a 1-dimensional $H^*_{T}(pt)$-module $\mathbb{C}_\sigma $. Let
    \begin{align*}
        \rho_{\sigma} & := \ker(ev_{\sigma}) \\
        X^*_{\sigma} &:= \{ \lambda \in X^*(T) \mid \lambda(\sigma) = 0\}.
    \end{align*}
    Then $\rho_{\sigma}$ is a maximal ideal and $X^*_{\sigma} \subset X^*(T)$ is a subgroup with $X^*(T) / X^*_{\sigma}$ torsion-free. Hence, 
    \begin{equation*}
        T_\sigma := Spec(k[X^*(T) / X^*_{\sigma}])
    \end{equation*}
    is a subtorus of $T$.
    \begin{theorem}\label{thm: localization for BM homology}\cite{lusztig1995cuspidal,evens1997fourier}
        For any morphisms of $T$-varieties $f_i : X_i \rightarrow Y$ with $X_i$ smooth and $Z_{ij} := X_i \times_Y X_j$ such that each $X_i$ admits a locally closed embedding into a smooth projective $T$-variety, there are isomorphisms
        \begin{equation*}
            H_*^T(Z_{ij}) \otimes_{H^*_T(pt)} \mathbb{C}_{\sigma} \cong H_*^{T}(Z_{ij}^{T_{\sigma}}) \otimes_{H^*_T(pt)} \mathbb{C}_{\sigma}
        \end{equation*}
        that are compatible with convolution.
    \end{theorem}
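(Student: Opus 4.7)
The plan is to establish this by the standard strategy for torus localization in equivariant Borel--Moore homology, namely reducing the isomorphism to a vanishing statement for $T$-varieties on which $T_\sigma$ has no fixed points. The natural map in question is the pushforward $\iota_* \colon H_*^T(Z_{ij}^{T_\sigma}) \to H_*^T(Z_{ij})$ along the closed inclusion, and I want to show it becomes an isomorphism after applying $- \otimes_{H^*_T(\pt)} \mathbb{C}_\sigma$. Setting $U := Z_{ij} \smallsetminus Z_{ij}^{T_\sigma}$, the open--closed long exact sequence in equivariant Borel--Moore homology reads
\[
    \cdots \to H_*^T(Z_{ij}^{T_\sigma}) \xrightarrow{\iota_*} H_*^T(Z_{ij}) \to H_*^T(U) \to \cdots.
\]
Tensoring with $\mathbb{C}_\sigma$ over $H^*_T(\pt)$ is exact after first localizing at the maximal ideal $\rho_\sigma$, so the theorem reduces to proving the vanishing $H_*^T(U) \otimes_{H^*_T(\pt)} \mathbb{C}_\sigma = 0$.

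The key lemma to prove is therefore: for any $T$-variety $Y$ with $Y^{T_\sigma} = \emptyset$, one has $H_*^T(Y) \otimes_{H^*_T(\pt)} \mathbb{C}_\sigma = 0$. The strategy is to produce a finite filtration $Y = Y_n \supset Y_{n-1} \supset \cdots \supset Y_0 = \emptyset$ by $T$-stable closed subvarieties such that each stratum $Y_m \smallsetminus Y_{m-1}$ is a $T$-equivariant fibration over a single $T$-orbit $T/T'$ with $T_\sigma \not\subset T'$. Existence of such a stratification uses the hypothesis that each $X_i$ admits a $T$-equivariant locally closed embedding into a smooth projective $T$-variety $\bar X_i$: then $Z_{ij}$ is a closed $T$-subvariety of the smooth projective $T$-variety $\bar X_i \times \bar X_j$, and a standard orbit-type stratification argument (combined with Sumihiro's equivariant embedding theorem and induction on dimension) produces the desired filtration of $U$. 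By induction along the filtration and the long exact sequence, the vanishing reduces to the case of a single orbit $Y = T/T'$ with $T_\sigma \not\subset T'$. There, $H_*^T(T/T') \cong H^*_{T'}(\pt)$, with the $H^*_T(\pt)$-action given by restriction along $\mathfrak{t}' \hookrightarrow \mathfrak{t}$. Since $T_\sigma \not\subset T'$, there exists a character $\lambda \in X^*(T)$ vanishing on $T'$ with $\lambda(\sigma) \neq 0$; then $d\lambda \in \mathfrak{t}^\vee \subset H^2_T(\pt)$ acts as zero on $H^*_{T'}(\pt)$ yet as the nonzero scalar $\lambda(\sigma)$ on $\mathbb{C}_\sigma$, forcing the localized module to vanish.

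Compatibility with convolution then follows essentially formally. The functor $(-)^{T_\sigma}$ commutes with products of $T$-varieties and with fiber products over a $T$-base, so on underlying reduced schemes
\[
    (Z_{ij})^{T_\sigma} = X_i^{T_\sigma} \times_{Y^{T_\sigma}} X_j^{T_\sigma}.
\]
Combined with the smoothness of $X_i^{T_\sigma}$ (\cref{lemma: taking fixed points preserves smoothness}) and the fact that properness of the restriction of $p_{13}$ to fixed points is inherited from the ambient map, this yields a Cartesian cube relating the convolution diagrams for $Z_{ij}$ and $Z_{ij}^{T_\sigma}$ through the closed inclusions $\iota$. Proper base change and the projection formula in equivariant Borel--Moore homology then show that the localization isomorphism intertwines the convolution product on $H_*^T(Z_{ij}) \otimes_{H^*_T(\pt)} \mathbb{C}_\sigma$ with the one on $H_*^T(Z_{ij}^{T_\sigma}) \otimes_{H^*_T(\pt)} \mathbb{C}_\sigma$.

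The main technical obstacle is the inductive stratification argument: one must control the $T$-action on $Z_{ij}$ carefully enough in positive characteristic to produce a sufficiently fine $T$-stable filtration, rather than relying on the Bia\l ynicki--Birula cell decomposition in its cleanest form, which uses compactness and a generic $\mathbb{G}_m$-cocharacter. The standard workaround is to combine Sumihiro's theorem with an induction on dimension via the long exact sequence, reducing the problem to the orbit case where the character-theoretic vanishing is transparent. Once this vanishing is established, both the isomorphism $\iota_* \otimes \id_{\mathbb{C}_\sigma}$ and its compatibility with convolution follow directly.
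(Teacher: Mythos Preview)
The paper does not actually prove this; it cites \cite{evens1997fourier,lusztig1995cuspidal} for the complex case and remarks that the same argument goes through in positive characteristic. Your sketch is essentially the argument in those references, so you are following the intended route.

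Two places need tightening. First, ``tensoring with $\mathbb{C}_\sigma$ is exact after localizing at $\rho_\sigma$'' is false as stated: passing to a residue field is not an exact functor. The correct reduction is to localize the open--closed long exact sequence at $\rho_\sigma$ (which \emph{is} exact) and prove the stronger vanishing $H_*^T(U)_{\rho_\sigma}=0$; your orbit computation in fact delivers this, since $d\lambda$ becomes a unit in $H^*_T(\pt)_{\rho_\sigma}$ while annihilating $H^*_{T'}(\pt)$. Only your stated goal is too weak, not the argument itself.

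Second, compatibility with convolution does not follow from proper base change applied to $\iota_*$ alone. The convolution product uses the cap product with support in the ambient smooth variety $X_i\times X_j$, and $\iota_*$ does not commute with this on the nose: there is an excess-intersection contribution from the Euler class of the normal bundle of $(X_i\times X_j)^{T_\sigma}$ in $X_i\times X_j$. In the cited references, and in the $K$-theoretic parallel \cref{lemma: convolution compatible with K theory localization}, the convolution-compatible map is the restriction-with-support $\iota^*$ twisted by the inverse Euler class on the second factor, not the pushforward $\iota_*$. Your Cartesian-cube sketch would need this correction to go through.
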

    \begin{proof}
        This is proved for complex varieties in \cite[Thm B.2, Lemma 4.9]{evens1997fourier} (see also \cite[Prop. 4.9]{lusztig1995cuspidal}). The same proof goes through in positive characteristic.
    \end{proof}
    There also is the following criterion for a space to be equivariantly formal.
    \begin{lemma}\cite{lusztig1988cuspidal}\label{lemma: odd homology vanishing implies equivariantly formal}
        Let $T$ be a torus and $X$ a $T$-variety with $H_{odd}(X) = 0$. Then $H_*^T(X)$ is a free $H_T^*(pt)$-module and the canonical map $\mathbb{C} \otimes_{H^*_T(pt)} H_*^T(X) \rightarrow H_*(X)$ is an isomorphism.
    \end{lemma}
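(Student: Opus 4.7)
The plan is to deduce both statements from the degeneration at $E_2$ of the Leray spectral sequence for equivariant Borel-Moore homology associated with the bundle $X \to ET\times^T X \to BT$. Concretely, I would use the spectral sequence
\[
    E_2^{p,q} = H^p(BT)\otimes H_{-q}(X) \Longrightarrow H^T_{-p-q}(X),
\]
which is a spectral sequence of $H^*_T(pt)$-modules, with the $H^*_T(pt)$-action on the $E_2$-page coming from the obvious action on the first tensor factor. The identification of $E_2$ uses that $H^*(BT) \cong \operatorname{Sym}^*(\mathfrak{t}^\vee)$ is free over $\mathbb{C}$, so the Künneth-type description holds on the nose.

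The key parity argument is then as follows. By hypothesis $H_{\mathrm{odd}}(X)=0$, and $H^*(BT)$ is concentrated in even cohomological degrees. Hence every non-zero term on the $E_2$-page lies in total degree of fixed parity, whereas each differential $d_r$ changes total degree by one. Therefore every differential must vanish, the spectral sequence collapses at $E_2$, and we obtain an isomorphism of $H^*_T(pt)$-modules
\[
    H^T_*(X) \;\cong\; H^*_T(pt)\otimes_{\mathbb{C}} H_*(X).
\]
In particular $H^T_*(X)$ is free over $H^*_T(pt)$, which is the first claim.

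For the second claim, the canonical forgetful map $\mathbb{C}\otimes_{H^*_T(pt)} H^T_*(X) \to H_*(X)$ is the edge homomorphism of the spectral sequence, sending a filtered class to the image of its top associated graded piece in $E_\infty^{0,\bullet} = E_2^{0,\bullet}$. Under the collapse isomorphism above this edge map becomes the obvious projection $\mathbb{C}\otimes_{H^*_T(pt)}(H^*_T(pt)\otimes H_*(X)) \xrightarrow{\sim} H_*(X)$, hence an isomorphism.

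The only real obstacle is setting up the spectral sequence in the $\ell$-adic setting over an arbitrary algebraically closed ground field (rather than working with classical topology over $\mathbb{C}$). This is standard but requires care: one approximates $BT$ by finite-dimensional smooth algebraic varieties (e.g.\ products of projective spaces for $T=\mathbb{G}_m^n$), works with the corresponding Borel construction $ET_N\times^T X$ as in the Bernstein--Lunts or Lusztig formalism for equivariant derived categories, and takes a limit. Once this framework is in place, the Leray spectral sequence for the smooth morphism $ET_N\times^T X\to BT_N$ exists with the stated $E_2$-page, and the parity argument above applies verbatim.
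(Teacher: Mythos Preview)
Your proposal is correct and is precisely the standard spectral sequence argument that underlies the reference the paper cites: the paper itself does not spell out a proof but simply invokes \cite[Proposition 7.2]{lusztig1988cuspidal} (stated there under the hypothesis $H^{odd}_c(X)=0$, which is equivalent to $H_{odd}(X)=0$ via $H_i(X)\cong H^i_c(X)^\vee$) and notes that the same argument works in positive characteristic. One small point worth tightening: degeneration at $E_2$ a priori only identifies the \emph{associated graded} of $H_*^T(X)$ with $H^*_T(pt)\otimes H_*(X)$, so to conclude freeness you should remark that lifting a homogeneous $\mathbb{C}$-basis of $H_*(X)$ along the edge map produces a free $H^*_T(pt)$-basis (the filtration is finite in each degree, so this is immediate).
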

    \begin{proof}
        This is proved for complex varieties in \cite[Proposition 7.2]{lusztig1988cuspidal} under the assumption that $H^{odd}_c(X) = 0$. The same proof goes through in positive characteristic. Note that we have $H_i(X) \cong H^i_c(X)^{\vee}$ as vector spaces, so the vanishing conditions for $H_{odd}(X)$ and $H^{odd}_c(X)$ are equivalent.
    \end{proof}
    For any variety $Y$, let $A(Y)$ be its Chow group. Recall that the Riemann-Roch transformation is an isomorphism
    \begin{equation*}
        \tau^Y : \mathbb{C} \otimes_{\mathbb{Z}} K(Y) \rightarrow \mathbb{C} \otimes_{\mathbb{Z}} A(Y)
    \end{equation*}
    which is compatible with proper push-forward.
    \begin{theorem}\label{thm: chern character}\cite{baum1975riemann,iversen1976local,olsson2015borel}
        Let $X$ be smooth and $Y \subset X$ closed. Then there is a map $ch^X_Y : \mathbb{C} \otimes_{\mathbb{Z}}K(Y) \rightarrow H_*(Y)$ with the following properties:
        \begin{enumerate}[label = (\roman*)]
            \item $ch^X_Y$ is additive;
            \item $ch^X_Y$ is functorial (i.e. commutes with pullback with support);
            \item $ch^X_Y$ is multiplicative (i.e. $ch^X_{Y \cap Y'}([\mathcal{F}] \otimes_X^L [\mathcal{G}]) = ch^X_{Y}([\mathcal{F}]) \cap_X ch^X_{Y'}([\mathcal{F}]) $);
            \item $Td(X) \cdot ch^X_Y = cl_Y \circ \tau^Y$ where $Td(X)$ is the Todd class of the tangent bundle of $X$ (which is an invertible element of $H^*(X)$).
        \end{enumerate}
    \end{theorem}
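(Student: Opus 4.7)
The plan is to define $ch^X_Y$ by the formula forced by property (iv), namely
\[
ch^X_Y(\mathcal{F}) := \iota_Y^*\bigl(Td(X)^{-1}\bigr) \cap \bigl(cl_Y \circ \tau^Y\bigr)(\mathcal{F}),
\]
where $\iota_Y : Y \hookrightarrow X$ is the inclusion and $Td(X)^{-1} \in H^*(X)$ makes sense because the constant term of the Todd class equals $1$. With this definition, property (iv) holds tautologically, and additivity (i) is immediate since $\tau^Y$, $cl_Y$, and cap product with a fixed cohomology class are all additive.

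For multiplicativity (iii), unpacking the definition reduces the claim to a statement entirely about $\tau^Y$ and $cl_Y$: one must show that the Baum--Fulton--MacPherson transformation $\tau$ intertwines the derived tensor product on $K$-groups with supports with a Todd-twisted intersection product on Chow groups with supports, and that the cycle class map sends intersection in $A(X)$ to cap product with support in Borel--Moore homology. The multiplicativity of $\tau$ for closed subschemes of a smooth ambient variety is precisely the content of \cite{baum1975riemann}, and the required $\ell$-adic version in positive characteristic is supplied by \cite{olsson2015borel}. The two Todd factors in the product $ch^X_Y(\mathcal{F}) \cap_X ch^X_{Y'}(\mathcal{G})$ then match the single Todd factor appearing in the left-hand side of (iii) because $\iota_{Y \cap Y'}^*Td(X)$ is the restriction of a global class and splits across the cap product.

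For functoriality (ii), let $f : \tilde X \to X$ be a morphism between smooth varieties. Then $f$ is automatically a local complete intersection morphism, with virtual relative tangent bundle $T_f$ satisfying $f^* Td(X) = Td(\tilde X) \cdot Td(T_f)^{-1}$. The Verdier--Riemann--Roch theorem in the form
\[
f^* \tau^Y(\mathcal{F}) = Td(T_f) \cap \tau^{f^{-1}(Y)}\bigl(Lf^*\mathcal{F}\bigr),
\]
together with the naturality of the cycle class map under pullback with support, makes the relative Todd twists cancel and yields $f^* \circ ch^X_Y = ch^{\tilde X}_{f^{-1}(Y)} \circ f^*$.

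The hardest step will be (ii), since Verdier--Riemann--Roch with supports for an arbitrary morphism of smooth schemes is not elementary. Over $\mathbb{C}$ it is contained in the local cohomological constructions of Baum--Fulton--MacPherson and Iversen \cite{baum1975riemann,iversen1976local}. In the $\ell$-adic, positive-characteristic setting relevant to this paper, we invoke \cite{olsson2015borel}, where these arguments are carried out in the étale framework with $\overline{\mathbb{Q}}_\ell$-coefficients and directly yield the naturality compatibility needed here.
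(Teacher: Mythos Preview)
Your approach is correct but organized differently from the paper's. The paper treats this as a pure citation result: it invokes Iversen \cite[Theorem~1.3]{iversen1976local} for the direct construction of the local Chern character $ch^X_Y$ (which already packages (i)--(iii) together), notes that Iversen's argument carries over verbatim to the $\ell$-adic setting in positive characteristic (spelled out in \cite[\S3.1]{olsson2015borel}), and then cites \cite[Theorem~1.5]{olsson2015borel} for the comparison (iv) with $cl_Y \circ \tau^Y$. You instead take (iv) as the \emph{definition} of $ch^X_Y$ and then derive (i)--(iii) from known properties of $\tau$ and $cl$: additivity is free, multiplicativity reduces to the Todd-twisted module property of $\tau$ over the Gysin product (Baum--Fulton--MacPherson / Fulton), and functoriality reduces to Verdier--Riemann--Roch for l.c.i.\ morphisms plus naturality of the cycle class map. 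Both routes ultimately rest on the same body of results in \cite{baum1975riemann,iversen1976local,olsson2015borel}; the paper's route is shorter because Iversen's construction delivers (i)--(iii) as a single black box, whereas your route makes the link to Riemann--Roch explicit from the outset but then has to reassemble (ii) and (iii) from separate ingredients (Verdier--Riemann--Roch with supports, compatibility of $cl$ with refined pullback) that are somewhat scattered in the literature, especially in the positive-characteristic $\ell$-adic setting.
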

    \begin{proof}
        It is shown in \cite[Theorem 1.3]{iversen1976local} that for complex varieties, there is a transformation $ch^X_Y : K(Z) \rightarrow H^*_Y(X) \cong H_*(Y)$ with the first three properties. As pointed out in the abstract of \cite{iversen1976local} the same argument applies for positive characteristic and étale/$\ell$-adic cohomology (this is spelled out in more detail in \cite[§3.1]{olsson2015borel}). The last property is shown in \cite[Theorem 1.5]{olsson2015borel}.
    \end{proof}
    \begin{lemma}\label{lemma: convolution compatible with Chern character}
        For $X_i,X_j$ smooth and $Z_{ij} \subset X_i \times X_j$, the map
        \begin{equation*}
            RR: \mathbb{C} \otimes_{\mathbb{Z}} K(Z_{ij}) \overset{ch_Z^{X_i \times X_j}}{\longrightarrow} H_*(Z_{ij}) \overset{1 \boxtimes Td(X_j)}{\longrightarrow } H_*(Z_{ij})
        \end{equation*}
        is compatible with convolution.
    \end{lemma}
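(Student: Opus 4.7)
The strategy is to unravel the $K$-theoretic convolution $[\mathcal{F}] \star [\mathcal{G}] = (p_{13})_*(p_{12}^*[\mathcal{F}] \otimes^L_{X^3} p_{23}^*[\mathcal{G}])$ (writing $X^3 := X_1 \times X_2 \times X_3$) using the four properties of the Chern character in \cref{thm: chern character} together with a Grothendieck-Riemann-Roch identity for $(p_{13})_*$, then match the result with the convolution on Borel-Moore homology applied to $RR([\mathcal{F}])$ and $RR([\mathcal{G}])$.

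First I would establish GRR for $p_{13}$: property (iv) identifies $ch$ with $cl \circ \tau$ up to a Todd-class twist, and the Baum-Fulton-MacPherson transformation $\tau$ (as well as $cl$) commutes with proper pushforward; applied to the restriction of $p_{13}$ to the closed subvariety $p_{12}^{-1}(Z_{12}) \cap p_{23}^{-1}(Z_{23})$ on which it is proper, this yields
\begin{equation*}
    ch(p_{13*} \mathcal{H}) \cap \bigl(Td(X_1) \boxtimes Td(X_3)\bigr) = (p_{13})_*\Bigl( ch(\mathcal{H}) \cap \bigl(Td(X_1) \boxtimes Td(X_2) \boxtimes Td(X_3)\bigr) \Bigr)
\end{equation*}
for any sheaf $\mathcal{H}$ supported on $p_{12}^{-1}(Z_{12}) \cap p_{23}^{-1}(Z_{23})$.

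Next I would take $\mathcal{H} = p_{12}^*[\mathcal{F}] \otimes^L_{X^3} p_{23}^*[\mathcal{G}]$ and use multiplicativity (iii) together with functoriality (ii) to factor $ch(\mathcal{H}) = p_{12}^* ch([\mathcal{F}]) \cap_{X^3} p_{23}^* ch([\mathcal{G}])$. The key bookkeeping is the identity $Td(X_1) \boxtimes Td(X_2) \boxtimes Td(X_3) = p_{12}^*(Td(X_1) \boxtimes Td(X_2)) \cdot p_{23}^*(1 \boxtimes Td(X_3))$, which lets me regroup the total Todd class so that $p_{23}^*(1 \boxtimes Td(X_3))$ meets $p_{23}^* ch([\mathcal{G}])$ and combines into $p_{23}^* RR([\mathcal{G}])$, while $p_{12}^*(1 \boxtimes Td(X_2))$ meets $p_{12}^* ch([\mathcal{F}])$ to give $p_{12}^* RR([\mathcal{F}])$, leaving an extra factor $p_{13}^*(Td(X_1) \boxtimes 1)$. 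The projection formula for $(p_{13})_*$ then pulls this last factor out of the pushforward; substituting back into the GRR identity and canceling the invertible class $Td(X_1) \boxtimes 1$ yields $ch([\mathcal{F}] \star [\mathcal{G}]) \cap (1 \boxtimes Td(X_3)) = (p_{13})_*(p_{12}^* RR([\mathcal{F}]) \cap p_{23}^* RR([\mathcal{G}]))$, which is exactly $RR([\mathcal{F}] \star [\mathcal{G}]) = RR([\mathcal{F}]) \star RR([\mathcal{G}])$.

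The main obstacle is the GRR step with supports: the compatibility of $\tau$ with proper pushforward is standard, but one has to be careful about how \cref{thm: chern character}(iv) interacts with supports when $p_{13}$ is only proper on the relevant closed subvariety. This is essentially the content of the references cited in \cref{thm: chern character}, and the remaining Todd-class bookkeeping runs exactly as in \cite[Thm.\ 5.11.11]{chriss2009representation}.
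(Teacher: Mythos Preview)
Your argument is correct and is precisely the proof the paper has in mind: the paper's own proof is a one-line reference to \cite[Thm.~5.11.11]{chriss2009representation}, and what you have written is exactly that argument spelled out, using the properties of $ch$ from \cref{thm: chern character} together with the GRR-with-supports step derived from (iv). The only point worth tightening is the Todd-class regrouping, where you should write directly $Td(X_1)\boxtimes Td(X_2)\boxtimes Td(X_3) = p_{12}^*(1\boxtimes Td(X_2))\cdot p_{23}^*(1\boxtimes Td(X_3))\cdot p_{13}^*(Td(X_1)\boxtimes 1)$ rather than passing through an intermediate decomposition.
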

    \begin{proof}
        This is proved exactly as in \cite[Thm. 5.11.11]{chriss2009representation} using the properties from \cref{thm: chern character}.
    \end{proof}

\bibliographystyle{alpha}
\bibliography{bibliography}
\end{document}